\title{Equivariant geometry of Banach spaces and topological groups}
\author {Christian Rosendal}
\address{Department of Mathematics, Statistics, and Computer Science (M/C 249)\\University of Illinois at Chicago\\851 S. Morgan St.\\Chicago, IL 60607-7045\\USA}
\email{rosendal.math@gmail.com}
\urladdr{http://homepages.math.uic.edu/$\sim$rosendal}
\date {}
\newcommand{\norm}[1]{\lVert#1\rVert}
\newcommand{\Norm}[1]{\big\lVert#1\big\rVert}
\newcommand{\NORM}[1]{\Big\lVert#1\Big\rVert}
\newcommand{\triple}[1]{|\!|\!|#1|\!|\!|}
\newcommand {\N}{\mathbb N}
\newcommand {\M}{\mathbb M}
\newcommand {\Q}{\mathbb Q}
\newcommand {\R}{\mathbb R}
\newcommand {\Z}{\mathbb Z}
\newcommand {\U}{\mathbb U}
\newcommand{\eps}{\epsilon}
\newcommand{\begr}{\!\upharpoonright}
\newcommand{\saa}{\Rightarrow}
\newcommand{\equi}{\Longleftrightarrow}
\newcommand{\til}{\rightarrow}
\newcommand {\Del}{ \; \Big| \;}
\newcommand {\del}{ \; \big| \;}
\newcommand {\go} {\mathfrak}
\newcommand {\ku} {\mathcal}
\newcommand{\ov}{\overline}
\newcommand{\inv}{^{-1}}
\newcommand {\e} {\exists}
\renewcommand {\a} {\forall}
\newtheorem{thm}{Theorem}
\newtheorem{cor}[thm]{Corollary}
\newtheorem{lemme}[thm]{Lemma}
\newtheorem{prop} [thm] {Proposition}
\newtheorem{defi} [thm] {Definition}
\newtheorem{prob}[thm]{Problem}
\newtheorem{quest}[thm]{Question}
\theoremstyle{definition}
\newtheorem{rem}[thm]{Remark}
\newtheorem{exa}[thm]{Example}
\begin{document}
\subjclass[2010]{Primary: 46B80, 46B20. Secondary:  20F65}

\keywords{Large scale, coarse and uniform geometry, Banach spaces,  topological groups, cocycles}
\thanks{The author was partially supported by a Simons Foundation Fellowship (Grant \#229959) and also recognises support from the NSF (DMS 1201295 \& DMS 1464974)}

\begin{abstract}
We study uniform and coarse embeddings between Banach spaces and topological groups. A particular focus is put on equivariant embeddings, i.e., continuous cocycles associated to continuous affine isometric actions of topological groups on separable Banach spaces with varying geometry. 
\end{abstract}

\maketitle

\tableofcontents


\section{Introduction}
The present paper is a contribution to the study of large scale geometry of Banach spaces and topological groups and, in particular, to questions of embeddability between these objects. In a sense, our aim is somewhat wider than usual as we will be dealing with general Polish groups as opposed to only locally compact groups. This is achieved by using the recently developed framework of \cite{rosendal-coarse}, which allows us to treat Banach spaces and topological groups under one heading. Still our focus will be restricted as we are mainly interested in equivariant maps, that is, cocycles associated to affine isometric actions on Banach spaces. 

To state the results of the paper, let us begin by fixing the basic terminology. 
Given a map $\sigma\colon (X,d)\til (Y,\partial)$  between metric spaces, we define the {\em compression modulus} by
$$
\kappa_\sigma(t)=\inf\big(\partial(\sigma(a),\sigma(b))\mid d(a,b)\geqslant t\big)
$$
and the {\em expansion modulus} by 
$$
\theta_\sigma(t)=\sup\big(\partial(\sigma(a),\sigma(b))\mid d(a,b)\leqslant t\big).
$$
Thus $\sigma$ is {\em uniformly continuous} if $\lim_{t\til 0_+}\theta_\sigma(t)=0$ (in which case $\theta_\sigma$ becomes the modulus of uniform continuity) and a {\em uniform embedding} if, moreover, $\kappa_\sigma(t)>0$ for all $t>0$. Furthermore, $\sigma$ is {\em bornologous} if $\theta_\sigma(t)<\infty$ for all $t<\infty$ and {\em expanding} if $\lim_{t\til \infty}\kappa_\sigma(t)=\infty$. A bornologous expanding map is called a {\em coarse embedding}.
We also define $\sigma$ to be  {\em uncollapsed} if $\kappa_\sigma(t)>0$ for just some sufficiently large $t>0$. 

As we will be studying uniform and coarse embeddability between Banach spaces and topological groups, we must extend the above concepts to the larger categories of uniform and coarse spaces and also show how every topological group is canonically equipped with both a uniform and a coarse structure. 

Postponing for the moment this discussion, let consider the outcomes of our study.
As pointed out by N. Kalton \cite{kalton2}, the concepts of uniform and coarse embeddability between Banach spaces seem very tightly related. Though, Kalton \cite{kalton3} eventually was able to give an example of two separable Banach spaces that are coarsely equivalent, but not uniformly homeomorphic, the following basic question of Kalton concerning embeddings  remains open.
\begin{quest}
Does the following equivalence hold for all (separable) Banach spaces?
$$
X \text{ is uniformly embeddable into }E\;\equi\; X \text{ is coarsely embeddable into }E.
$$
\end{quest}

Relying on entirely elementary techniques, our first result shows that in many settings we do have an implication from left to right.
\begin{thm}\label{intro:ell p sum}
Suppose $\sigma\colon X\til E$ is an uncollapsed uniformly continuous map between Banach spaces. Then, for any $1\leqslant p<\infty$,  $X$ admits a simultaneously uniform and coarse embedding  into $\ell^p(E)$.
\end{thm}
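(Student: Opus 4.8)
The plan is to realise the embedding as an $\ell^p$-sum of rescaled copies of $\sigma$. Replacing $\sigma$ by $x\mapsto \sigma(x)-\sigma(0)$, which changes neither $\kappa_\sigma$ nor $\theta_\sigma$, I may assume $\sigma(0)=0$. Recall that $\kappa_\sigma$ and $\theta_\sigma$ are non-decreasing; that uniform continuity gives $\theta_\sigma(t)\til 0$ as $t\til 0_+$; and that, the domain being a normed and hence geodesic space, a chaining argument along the segment $[a,b]$ shows $\sigma$ is bornologous, in fact $\theta_\sigma(t)\leqslant C(1+t)$ for some finite constant $C$. Being uncollapsed, $\sigma$ satisfies $\delta:=\kappa_\sigma(r)>0$ for some $r>0$, whence $\kappa_\sigma(t)\geqslant\delta$ for all $t\geqslant r$. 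I will then set $\tau\colon X\til \ell^p(\Z,E)\iso \ell^p(E)$ by $\tau(x)=\big(c_n\cdot\sigma(2^{-n}x)\big)_{n\in\Z}$, for a sequence $(c_n)_{n\in\Z}$ of strictly positive weights to be chosen. Writing $t=\norm{x-y}$ and using monotonicity of the moduli, one has
$$
\delta^p\!\!\!\sum_{2^{-n}t\geqslant r}\!\!\! c_n^p \;\leqslant\; \sum_{n\in\Z}c_n^p\,\kappa_\sigma(2^{-n}t)^p \;\leqslant\; \norm{\tau(x)-\tau(y)}_p^p \;\leqslant\; \sum_{n\in\Z}c_n^p\,\theta_\sigma(2^{-n}t)^p .
$$

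So the whole problem reduces to choosing the weights so that the rightmost sum is finite for every $t$ (making $\tau$ well defined and bornologous) and tends to $0$ as $t\til 0_+$ (uniform continuity, by dominated convergence once a summable majorant at $t=1$ is in hand), while $\sum_{n\geqslant 0}c_n^p=\infty$ (which, via the leftmost sum, will force expansion). For $n<0$ the linear growth $\theta_\sigma(2^{-n}t)\leqslant C(1+2^{-n}t)$ is absorbed by taking, say, $c_n=2^{2n}$. For $n\geqslant 0$ put $a_n=\theta_\sigma(2^{-n})$, a non-increasing null sequence; for $n\geqslant\lceil\log_2 t\rceil$ one has $2^{-n}t\leqslant 2^{-(n-\lceil\log_2 t\rceil)}\leqslant 1$, so $\theta_\sigma(2^{-n}t)\leqslant a_{n-\lceil\log_2 t\rceil}$, and taking $(c_n)_{n\geqslant0}$ non-increasing reduces finiteness of the positive-index tail (for each fixed $t$) to $\sum_{n\geqslant 0}c_n^p a_n^p<\infty$. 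Thus the crux is purely combinatorial: \emph{given a non-increasing null sequence $(a_n)$, find a non-increasing sequence $(c_n)$ of positive reals with $\sum_n c_n^p a_n^p<\infty$ yet $\sum_n c_n^p=\infty$.} This is always possible — if $\sum_n a_n^p<\infty$ take $c_n\equiv 1$; otherwise group the indices $n\geqslant 0$ into consecutive blocks $B_1,B_2,\ldots$ with $a_n\leqslant 2^{-k}$ on $B_k$ and $|B_k|=4^{kp}$, and set $c_n^p=\beta_k:=2^{-kp}k^{-2}$ on $B_k$: then $(c_n)$ is non-increasing, $\sum_n c_n^p a_n^p\leqslant \sum_k |B_k|\beta_k 2^{-kp}=\sum_k k^{-2}<\infty$, and $\sum_n c_n^p=\sum_k |B_k|\beta_k=\sum_k 2^{kp}k^{-2}=\infty$.

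With such weights the rest is bookkeeping. The rightmost sum above is finite for every $t$ and, being dominated for $t\leqslant 1$ by the summable sequence $n\mapsto c_n^p\,\theta_\sigma(2^{-n})^p$ (finite over $n\geqslant 0$ by the previous paragraph, over $n<0$ because $c_n=2^{2n}$ kills the linear bound), tends to $0$ as $t\til 0_+$; hence $\tau$ is a uniformly continuous bornologous map. The leftmost sum equals $\delta^p\sum_{n\leqslant \log_2(t/r)}c_n^p$: it always contains the (positive) term with $n=\lfloor\log_2(t/r)\rfloor$, so $\kappa_\tau(t)>0$ for every $t>0$, making $\tau$ a uniform embedding; and as $t\til\infty$ it increases to $\delta^p\sum_{n\in\Z}c_n^p\geqslant\delta^p\sum_{n\geqslant0}c_n^p=\infty$, so $\kappa_\tau(t)\til\infty$ and $\tau$ is a coarse embedding as well. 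The one genuinely delicate point is the weight selection: expansion \emph{forces} $\sum_{n\geqslant 0}c_n^p=\infty$ — unavoidable, since $\kappa_\sigma$ need not be unbounded (for instance $\sigma$ could take values in a bounded set) — and reconciling this with summability of $\sum_n c_n^p\,\theta_\sigma(2^{-n}t)^p$, about which modulus we know nothing beyond monotonicity and $\theta_\sigma(0_+)=0$, is exactly what the blocking trick accomplishes; everything else is routine.
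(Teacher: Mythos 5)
Your argument is essentially correct but takes a genuinely different route from the paper's. The paper proves a more general lemma about sums along a sequence of projections $P_n$ onto subspaces $E_n$: it interleaves two families of rescalings of $\sigma$, odd-indexed maps $\psi_{2n-1}$ calibrated so that $\norm{x-y}\geqslant\frac1{2n-1}$ already forces a definite positive lower bound in the $(2n-1)$st coordinate (whence the uniform embedding), and even-indexed maps $\phi_{2n}$ whose outputs are multiplied by large constants so that a \emph{single} coordinate exceeds $2n$ once $\norm{x-y}$ is large (whence expansion). You instead use one doubly infinite family of dyadic rescalings $c_n\sigma(2^{-n}\,\cdot\,)$ and obtain expansion not from any single large coordinate but from the divergence of $\sum_{n\geqslant0}c_n^p$, accumulating the uniform lower bound $\delta$ over ever more coordinates as $t\til\infty$. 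This is cleaner and more symmetric, but it uses the $p$-additivity of the $\ell^p$-norm essentially and would not transfer to the paper's general Schauder-decomposition setting, where the lower bound is extracted one coordinate at a time through $P_n$. Your sandwich estimate, the Corson--Klee treatment of the indices $n<0$, the dominated-convergence argument for uniform continuity, and the verifications of the uniform and coarse embedding properties are all correct.

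The one step that fails as written is the block construction. You ask for consecutive blocks partitioning $\{0,1,2,\dots\}$ with both $|B_k|=4^{kp}$ and $a_n\leqslant2^{-k}$ on $B_k$; this forces $a_n\leqslant 2^{-k}$ already for all $n\geqslant\sum_{j<k}4^{jp}$, i.e.\ a definite polynomial decay rate for $a_n=\theta_\sigma(2^{-n})$, which you have no right to assume (consider $\theta_\sigma(2^{-n})\sim1/\log n$: then $a_n\leqslant2^{-k}$ only from $n\approx2^{2^k}$ on, far beyond where your $k$th block ends). The italicised claim you reduce everything to is nevertheless true, and the repair is the standard one: choose $N_1<N_2<\cdots$ with $a_n\leqslant2^{-k}$ for all $n\geqslant N_k$ and with the lengths $N_{k+1}-N_k$ non-decreasing, and set $c_n^p=(N_{k+1}-N_k)^{-1}$ for $N_k\leqslant n<N_{k+1}$ (and equal to $(N_2-N_1)^{-1}$ for the finitely many $n<N_1$). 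Then each block contributes exactly $1$ to $\sum_nc_n^p$, at most $2^{-kp}$ to $\sum_nc_n^pa_n^p$, and $(c_n)_{n\geqslant0}$ is non-increasing because the block lengths increase. With that substitution your proof is complete.
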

Since both uniform and coarse embeddings are uncollapsed, we see that if $X$ is uniformly embeddable into $E$, then $X$ is coarsely embeddable into $\ell^p(E)$. For the other direction, if $X$ admits a uniformly continuous coarse embedding into $E$, then $X$ is uniformly embeddable into $\ell^p(E)$.
It is therefore natural to ask to which extent bornologous maps can be replaced by uniformly continuous maps. In particular, is every bornologous map $\sigma\colon X\til E$  between Banach spaces {\em close} to a uniformly continuous map, i.e., is there a uniformly continuous map $\tilde \sigma$ so that $\sup_{x\in X}\norm{\sigma(x)-\tilde\sigma(x)}<\infty$?

As it turns out, A. Naor \cite{naor-nets} was recently able to answer our question in the negative, namely, there are separable Banach spaces $X$ and $E$ and a bornologous map between them which is not close to any uniformly continuous map. 
Nevertheless, several weaker questions remain open.

Also, in case the passage from $E$ to $\ell^p(E)$ proves troublesome, we can get by with $E\oplus E$ under stronger assumptions.
\begin{thm}\label{intro:E plus E}
Suppose $\sigma\colon X\til B_E$ is an uncollapsed uniformly continuous map from a Banach space $X$ into the ball of a Banach space $E$. Then $X$ admits a uniformly continuous coarse embedding into $E\oplus E$.
\end{thm}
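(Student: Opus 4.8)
The plan is to build the embedding by superimposing countably many rescaled copies of $\sigma$ and to use the hypothesis $\sigma(X)\subseteq B_E$ to fit the superposition into just two copies of $E$.

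After rescaling the norm of $X$ we may assume $\|\sigma(x)\|\leqslant 1$ for all $x$ and $\kappa_\sigma(1)\geqslant\delta$ for some $\delta>0$, i.e.\ $\|\sigma(x)-\sigma(y)\|\geqslant\delta$ whenever $\|x-y\|\geqslant 1$. Precomposing $\sigma$ with a radial homeomorphism of $X$ that is the identity on $B_X$ and ``slows $\sigma$ down'' towards the origin alters neither the hypotheses nor, since such a homeomorphism is simultaneously a uniform and a coarse equivalence, the conclusion; so we may moreover assume the modulus $\theta_\sigma$ decays fast enough that $\sum_{n\geqslant 0}\theta_\sigma(2^{-n}t)<\infty$ for every $t$. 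Put $\sigma_n(x)=\sigma(2^{-n}x)$, so that $\|\sigma_n(x)\|\leqslant 1$, $\theta_{\sigma_n}(t)=\theta_\sigma(2^{-n}t)$, and $\kappa_{\sigma_n}(t)\geqslant\delta$ as soon as $t\geqslant 2^{n}$.

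As in the proof of Theorem \ref{intro:ell p sum}, the map $x\mapsto(\sigma_n(x)-\sigma_n(0))_{n\geqslant 0}$ is already a uniformly continuous coarse embedding of $X$ into $\ell^p(E)$: for a fixed pair $x,y$ the roughly $\log_2\|x-y\|$ indices $n$ with $2^{-n}\|x-y\|\geqslant 1$ each add at least $\delta$ to the $p$-norm, forcing $\kappa$ to tend to infinity, while the tail $\sum_n\theta_\sigma(2^{-n}\|x-y\|)^p$ is finite and small for $\|x-y\|$ small. The real content is to carry this out in $E\oplus E$ rather than $\ell^p(E)$. The idea is to distribute the copies $\sigma_n$ over the two summands, multiplying the $n$-th copy by a weight $w_n$, and to exploit the uniform bound on $\sigma$: because every $\sigma_n$ takes values in $B_E$, convergence of the two resulting series no longer requires $p$-summability of the tail, only $\sum_n w_n\theta_\sigma(2^{-n}t)<\infty$, while a slowly growing choice of the $w_n$ should still make the joint contribution of the active scales unbounded. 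The map $\tau=(\tau_0,\tau_1)\colon X\to E\oplus E$ so obtained would be uniformly continuous and bornologous by a termwise estimate with $\theta_\sigma$ and the weights, and the normalisation $\kappa_\sigma(1)\geqslant\delta$ is what should yield expansion. I expect the main obstacle to be precisely this last point: in $\ell^p(E)$ the distinct scales occupy distinct coordinates and cannot interfere, whereas in $E\oplus E$ one must choose the weights and the allocation of scales to the two summands carefully enough that the contributions of the $\log_2\|x-y\|$ active scales, now living in a single copy of $E$, do not cancel — and it is exactly here that the ball hypothesis has to be used in an essential way, rather than merely to guarantee convergence.
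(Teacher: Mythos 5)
Your reduction to $\ell^p(E)$ and your diagnosis of where the difficulty lies are both on target, but the proposal stops exactly where the content of the theorem begins: you name the cancellation of the active scales inside a single copy of $E$ as ``the main obstacle'' and correctly observe that the ball hypothesis must enter there, but you do not supply the mechanism that overcomes it. As written this is a plan, not a proof. The mechanism in the paper (Lemma \ref{unif-coarse}) is \emph{not} an accumulation of $\log_2\norm{x-y}$ contributions of size $\delta$ --- that really is an $\ell^p$ phenomenon and does not survive superposition into one coordinate --- but a domination argument. The summands $\psi_1,\psi_2,\ldots\colon X\to E$ are rescaled copies $\psi_n=\frac{S_n+2^n}{\delta}\,\sigma(\eps_n\,\cdot\,)$, where $S_n=\sup_{x,y}\norm{\sum_{i<n}\psi_i(x)-\sum_{i<n}\psi_i(y)}$ is finite \emph{precisely because $\sigma$ maps into $B_E$}. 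Once $\norm{x-y}$ exceeds a threshold $t_n$, the single term $\psi_n$ contributes at least $S_n+2^n$, which swallows whatever cancellation the first $n-1$ terms could produce, so no control of signs among the active scales is needed; the tail terms $i>n$ are each at most $2^{-i}$ on the relevant range because their expansion thresholds have not yet been reached (the scales $\eps_n$ are chosen adaptively from the modulus $\theta_\sigma$, not as a fixed geometric sequence). A single sequence of thresholds leaves uncontrolled windows between the point where the $n$-th term becomes small and the point where the $n$-th partial sum is guaranteed large; the second copy of $E$ carries a second, interleaved sequence of thresholds covering exactly those windows. Without some such device, the assertion that ``a slowly growing choice of the $w_n$ should still make the joint contribution of the active scales unbounded'' is simply unsupported: there is no a priori lower bound on $\norm{\sum_n w_n(\sigma_n(x)-\sigma_n(y))}$ in terms of the individual lower bounds $\norm{\sigma_n(x)-\sigma_n(y)}\geqslant\delta$.

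A secondary problem is the preliminary normalisation. A radial homeomorphism $x\mapsto\rho(\norm x)x/\norm x$ that is the identity on $B_X$ does not improve $\theta_\sigma$ at small arguments: for two points at small distance $t$ on a sphere of large radius $R$, the images are at distance $\frac{\rho(R)}{R}t$, so either $\rho(R)\sim R$ and nothing is gained, or $\rho(R)=o(R)$ and the map collapses distances at infinity and is no longer a coarse equivalence. The summability you want should instead be arranged by choosing the scales $\eps_n$ \emph{after} the weights, using only that $\lim_{t\to 0^+}\theta_\sigma(t)=0$; this is how the paper handles it.
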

For example, if $X$ is a Banach space uniformly embeddable into its unit ball $B_X$, e.g., if $X=\ell^2$, then, whenever $X$ uniformly embeds into a Banach space $E$, it coarsely embeds into $E\oplus E$.

The main aim of the present paper however is to consider equivariant embeddings between topological groups and Banach spaces, i.e., continuous cocycles. So let $\pi\colon G\curvearrowright E$ be a strongly continuous isometric linear representation of a topological group $G$ on a Banach space $E$, i.e., each $\pi(g)$ is a linear isometry of $E$ and, for every $\xi\in E$, the map $g\in G\mapsto \pi(g)\xi$ is continuous. A continuous {\em cocycle} associated to $\pi$ is a continuous map $b\colon G\til E$ satisfying the cocycle equation
$$
b(gf)=\pi(g)b(f)+b(g).
$$
This corresponds to the requirement that $\alpha(g)\xi=\pi(g)\xi+b(g)$ defines a continuous action of $G$ by affine isometries on $E$. As $b$ is simply the orbit map $g\mapsto \alpha(g)0$, it follows that continuous cocycles are actually uniformly continuous and bornologous. 
We call  a continuous cocycle $b\colon G\til E$  {\em coarsely proper} if it is a coarse embedding of $G$ into $E$. In this case, we also say that the associated affine isometric action $\alpha$ is {\em coarsely proper}.

For the next result,  a topological group $G$ has the {\em Haagerup property} if it has a strongly continuous unitary representation with an associated coarsely proper cocycle. Though it is known that every locally compact second countable amenable group has the Haagerup property \cite{BCV}, this is very far from being true for general topological groups. Indeed, even for Polish groups, that is, separable completely metrisable topological groups, such as separable Banach spaces, this is a significant requirement.

Building on work of I. Aharoni, B. Maurey and B. S. Mityagin \cite{maurey}, we show the following equivalence.
\begin{thm}\label{intro:haagerup equiv}
The following conditions are equivalent for an amenable Polish group $G$,
\begin{enumerate}
\item $G$ coarsely embeds into a Hilbert space,
\item $G$ has the Haagerup property.
\end{enumerate}
\end{thm}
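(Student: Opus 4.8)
The implication $(2)\Rightarrow(1)$ needs no hypothesis on $G$, so I would dispose of it first: if $\pi$ is a strongly continuous unitary representation of $G$ carrying a coarsely proper cocycle $b\colon G\to\mathcal H_\pi$, then $b$ is by definition a coarse embedding of $G$ into the Hilbert space $\mathcal H_\pi$. So all the content is in $(1)\Rightarrow(2)$, and the plan there is to manufacture a cocycle out of a bare coarse embedding: one ``Hilbertianises'' a representation on an $\ell^\infty$-space, the passage to a genuine Hilbert space being precisely what amenability buys.

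Concretely, starting from a coarse embedding $f\colon G\to\mathcal H$ (which I will need to take sufficiently regular---see below), I would work on the Banach space $\ell^\infty(G,\mathcal H)$ with the isometric linear action $\rho$, $(\rho(g)\Phi)(k)=\Phi(kg)$, and set $\beta(g)(k)=f(kg)-f(k)$. Bornologousness of $f$ gives $\sup_k\norm{f(kg)-f(k)}<\infty$, so $\beta(g)\in\ell^\infty(G,\mathcal H)$, and the splitting $f(kgg')-f(k)=\big(f(kgg')-f(kg)\big)+\big(f(kg)-f(k)\big)$ yields the cocycle identity $\beta(gg')=\rho(g)\beta(g')+\beta(g)$. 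Writing $\triple{\,\cdot\,}$ for the sup-norm: bornologousness of $f$ makes $\triple{\beta(g)}$ bounded as $g$ runs over a coarsely bounded set, while $\triple{\beta(g)}\geqslant\inf\{\norm{f(x)-f(y)}\mid x^{-1}y=g\}$ together with the fact that $f$ is expanding makes $\{g\mid\triple{\beta(g)}\leqslant R\}$ coarsely bounded for every $R$. So $\beta$ is already a coarsely proper cocycle---but for an isometric action on a Banach space rather than a unitary action on a Hilbert space.

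To push it down to a Hilbert space I would use an invariant mean $\mathfrak m$ for $G$. On the linear span $V_0$ of $\{\beta(g)\mid g\in G\}$---which is $\rho$-invariant because $\rho(h)\beta(g)=\beta(hg)-\beta(h)$---consider the positive semi-definite form $\langle\Phi,\Psi\rangle_{\mathfrak m}=\mathfrak m_k\big[\langle\Phi(k),\Psi(k)\rangle_{\mathcal H}\big]$; invariance of $\mathfrak m$ makes it $\rho$-invariant, so quotienting by its null space and completing produces a Hilbert space $\mathcal K$, a unitary representation $\pi$ of $G$ on $\mathcal K$ induced by $\rho$, and a $G$-equivariant linear map $q\colon V_0\to\mathcal K$. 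Then $b:=q\circ\beta\colon G\to\mathcal K$ is a cocycle for $\pi$ with $\norm{b(g)}^2=\mathfrak m_k\big[\norm{f(kg)-f(k)}^2\big]$, and, $\mathfrak m$ being monotone, the two bounds above carry over, so $\norm{b(\,\cdot\,)}$ is bounded on coarsely bounded sets while $\{g\mid\norm{b(g)}\leqslant R\}$ is coarsely bounded for each $R$. Thus $b$ is a coarsely proper cocycle for the unitary representation $\pi$, i.e.\ $G$ has the Haagerup property.

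The main obstacle, I expect, is regularity, and it is here that the work of Aharoni, Maurey and Mityagin is used. With $f$ merely bornologous, the functions $k\mapsto\langle\beta(g)(k),\beta(g')(k)\rangle$ fed to $\mathfrak m$ need not lie in the space on which the amenability of a topological group actually furnishes an invariant mean (the bounded left-uniformly continuous functions, say), and a priori neither $\pi$ nor $b$ is continuous; everything goes through smoothly once $f$ is left-uniformly continuous. So one must first replace $f$ by a left-uniformly continuous coarse embedding of $G$ into a (possibly larger) Hilbert space. For this I would begin by observing that the existence of \emph{any} coarse embedding already forces $G$ to be locally bounded---a Baire-category argument applied to the cofinal family of coarsely bounded sets $\{x^{-1}y\mid\norm{f(x)-f(y)}\leqslant R\}$, $R\in\N$---so that $G$ carries a compatible, coarsely proper, left-invariant metric to work with; and then I would regularise the embedding by the analogue, for the present setting, of the $\ell^2$-sum construction of Theorem~\ref{intro:ell p sum}, invoking the Aharoni--Maurey--Mityagin techniques to control uniform continuity. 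Carrying out this last regularisation step is the crux of the argument.
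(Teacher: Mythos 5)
Your implication (2)$\Rightarrow$(1) and the core of your construction for (1)$\Rightarrow$(2) match the paper's. By polarisation, your form $\langle\Phi,\Psi\rangle_{\mathfrak m}=\mathfrak m_k\big[\langle\Phi(k),\Psi(k)\rangle\big]$ evaluated on $\beta(g)(k)=f(kg)-f(k)$ is exactly the paper's kernel $\Psi(g,h)=\mathfrak m_k\big[\norm{f(kg)-f(kh)}^2\big]$, which Lemma \ref{pre-maurey} shows to be a $G$-invariant continuous kernel conditionally of negative type and which Theorem \ref{maurey} feeds into the Moore/GNS construction; your Hilbert space, representation and cocycle coincide with those, and you correctly locate where amenability and left-uniform continuity of $f$ enter (the functions handed to the mean must lie in ${\rm LUC}(G)$).

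The gap is in the regularisation step, which you flag as the crux but leave undone, and the tools you point to would not accomplish it. The $\ell^2$-sum construction behind Theorem \ref{intro:ell p sum} (Lemma \ref{unif saa coarse}) cannot manufacture uniform continuity from a merely bornologous map: it takes uniformly continuous maps as input and uses their moduli of continuity (the parameters $\eps_n$, $\xi_n$) in an essential way; it upgrades uncollapsedness to expansion, not bornologousness to uniform continuity. Your detour through local boundedness also does not suffice: even granting that claim (which needs care, since the sets $\{x^{-1}y\mid\norm{f(x)-f(y)}\leqslant R\}$ are not assumed Baire measurable --- one should pass to their closures before invoking Baire category), a coarsely proper left-invariant metric does not make a bornologous map Lipschitz for large distances unless the metric is large-scale geodesic, and Lipschitz-for-large-distances is precisely the hypothesis that the Johnson--Randrianarivony ``Step 0'' argument (Lemma \ref{randrianarivony}) --- the actual regularisation device --- requires. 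The paper's route is different and avoids local boundedness altogether: Lemma \ref{lipschitz} constructs a special compatible left-invariant metric $\partial$ (a weighted word metric over a neighbourhood on which the map has uniformly bounded displacement) with respect to which any given bornologous map becomes Lipschitz for large distances; since $\ku E_L\subseteq \ku E_\partial$, the map remains expanding for $\partial$; Lemma \ref{randrianarivony} then replaces it by a uniformly continuous coarse embedding of $(G,\partial)$, to which Theorem \ref{maurey} applies. Without something playing the role of these two lemmas, your proof of (1)$\Rightarrow$(2) is incomplete.
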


Restricting to Banach spaces, we have a stronger result, where the equivalence of (1) and (2) is due to N. L. Randrianarivony \cite{randrianarivony2}.
\begin{thm}\label{intro:haagerup banach}
The following conditions are equivalent for a separable Banach space $X$,
\begin{enumerate}
\item $X$ coarsely embeds into a Hilbert space,
\item $X$ uniformly embeds into a Hilbert space,
\item $X$ admits an uncollapsed uniformly continuous map into a Hilbert space,
\item $X$ has the Haagerup property.
\end{enumerate}
\end{thm}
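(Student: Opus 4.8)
My plan is to deduce the theorem from results already in hand, so that the proof is essentially a matter of organisation. Regard $X$ as an abelian topological group under addition: since $X$ is a separable Banach space, $(X,+)$ is a Polish group, and being abelian it is amenable; hence Theorem~\ref{intro:haagerup equiv} applies and yields the equivalence $(1)\Leftrightarrow(4)$. The equivalence $(1)\Leftrightarrow(2)$ I would simply quote from Randrianarivony \cite{randrianarivony2}. Finally $(2)\Rightarrow(3)$ is immediate, a uniform embedding being a uniformly continuous map which, being an embedding, is uncollapsed; and $(3)\Rightarrow(1)$ together with $(3)\Rightarrow(2)$ follows from Theorem~\ref{intro:ell p sum} with $p=2$, since an uncollapsed uniformly continuous map $X\til\mathcal H$ into a Hilbert space produces a simultaneously uniform and coarse embedding of $X$ into $\ell^2(\mathcal H)$, and $\ell^2$ of a (separable) Hilbert space is again a (separable) Hilbert space. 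This closes the cycle $(1)\Leftrightarrow(2)\Leftrightarrow(3)\Leftrightarrow(4)$.

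For orientation, the substance is concentrated in Theorem~\ref{intro:haagerup equiv}, whose proof in the present abelian case is a variant of the construction of Aharoni--Maurey--Mityagin \cite{maurey}, and which I would run as follows. Granting $(1)$ and, via Theorem~\ref{intro:ell p sum}, a map $f\colon X\til\mathcal H$ that is simultaneously a uniform and a coarse embedding, the kernel $\Psi(x,y)=\norm{f(x)-f(y)}^2$ is conditionally negative definite but neither bounded nor translation invariant. By Schoenberg's theorem $e^{-t\Psi}$ is positive definite for each $t>0$, so $1-e^{-t\Psi}$ is a \emph{bounded} conditionally negative definite kernel; averaging it over the translation action of the amenable group $(X,+)$ by an invariant mean gives, for each $t>0$, a continuous translation-invariant conditionally negative definite function $N_t\colon X\til[0,1]$ with $1-e^{-t\kappa_f(\norm w)^2}\leqslant N_t(w)\leqslant\min\bigl(1,t\,\theta_f(\norm w)^2\bigr)$, continuity coming from the uniform continuity and bornologousness of $f$. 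Then $N=\sum_{k\geqslant1}N_{2^{-k}}$ is a continuous conditionally negative definite function, finite everywhere since $N(w)\leqslant\theta_f(\norm w)^2$, and with $N(w)\to\infty$ as $\norm w\to\infty$ because $\kappa_f(t)\to\infty$. The standard GNS-type construction attaches to $N$ a strongly continuous orthogonal representation of $X$ and a continuous cocycle $b$ with $\norm{b(w)}^2=N(w)$; as $X$ is abelian one has $\norm{b(x)-b(y)}^2=N(x-y)$, so $b$ is expanding, and being a continuous cocycle it is automatically bornologous, hence a coarse embedding. Complexifying the representation makes $b$ a coarsely proper cocycle for a strongly continuous unitary representation, i.e.\ $X$ has the Haagerup property.

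The main obstacle is precisely this construction of $N$. Since a coarse (indeed a uniform) embedding into $\mathcal H$ is only bornologous, the kernel $\Psi$ cannot be averaged directly; one is forced to replace it by the truncated kernels $1-e^{-t\Psi}$ — which destroys the lower bound supplied by $\kappa_f$ — and then to reassemble, by summing over the scales $t=2^{-k}$, an everywhere-finite and continuous conditionally negative definite function that is nonetheless coarsely proper. Checking that the invariant-mean averages stay conditionally negative definite and continuous, and that the resulting series has the right behaviour both near $0$ and at infinity, is the technical heart of the matter; within the present argument everything else is formal bookkeeping, and the equivalence $(1)\Leftrightarrow(2)$ is quoted from \cite{randrianarivony2} rather than reproved.
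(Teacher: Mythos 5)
Your derivation of the theorem itself is correct and coincides with the paper's: the paper likewise obtains (1)$\Leftrightarrow$(2) from Randrianarivony, (3)$\Rightarrow$(1),(2) from Theorem~\ref{intro:ell p sum} applied with $p=2$, and (2)$\Rightarrow$(4) from Theorem~\ref{intro:haagerup equiv} applied to the amenable Polish group $(X,+)$, with (4) being a priori the strongest condition since a coarsely proper continuous cocycle between Banach spaces is automatically both a uniform and a coarse embedding. The only place you diverge is in your supplementary sketch of Theorem~\ref{intro:haagerup equiv}: you follow the classical Aharoni--Maurey--Mityagin route via Schoenberg's theorem, truncating to the bounded kernels $1-e^{-t\Psi}$, averaging those, and summing over the scales $t=2^{-k}$ to recover coarse properness. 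The paper instead averages the squared-distance kernel $\phi_{g,h}(f)=\norm{\sigma(fg)-\sigma(fh)}^2$ directly against an invariant mean on ${\rm LUC}(G)$ (Lemma~\ref{pre-maurey}); this is legitimate because, for fixed $g,h$, bornologousness bounds $\phi_{g,h}$ by $\theta(d(g,h))^2$, so the ``obstacle'' you describe does not arise, and the direct average has the advantage of preserving the exact compression and expansion moduli (Theorem~\ref{maurey}) rather than only a logarithmic lower bound of the form $\sum_k\bigl(1-e^{-2^{-k}\kappa^2}\bigr)$. Your route still yields a coarsely proper cocycle and hence works, but it is the less sharp of the two; in any case, since you quote Theorem~\ref{intro:haagerup equiv} as a black box in the main argument, the proof of the stated theorem is complete either way.
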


Even from collapsed maps we may obtain information, provided that there is just some single distance not entirely collapsed.
\begin{thm}
Suppose $\sigma\colon X\til \ku H$ is a uniformly continuous map from a separable Banach space into Hilbert space so that, for some single $r>0$,
$$
\inf_{\norm{x-y}=r}\norm{\sigma(x)-\sigma(y)}>0.
$$
Then $B_X$ uniformly embeds into $B_\ku H$.
\end{thm}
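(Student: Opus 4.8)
The plan is to build the embedding out of rescaled copies of $\sigma$, superposed in an $\ell^2$-sum; throughout, set $\delta=\inf_{\norm{x-y}=r}\norm{\sigma(x)-\sigma(y)}>0$.

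First I would observe that the single non-collapsed distance propagates to a whole interval of scales. Using uniform continuity, choose $\eps_0\in(0,r)$ with $\norm{\sigma(u)-\sigma(v)}\leqslant\delta/2$ whenever $\norm{u-v}\leqslant\eps_0$. If $x\neq y$ and $s=\norm{x-y}\in[r-\eps_0,r+\eps_0]$, the point $z=x+r\tfrac{y-x}{s}$ satisfies $\norm{x-z}=r$ and $\norm{z-y}=|r-s|\leqslant\eps_0$, so $\norm{\sigma(x)-\sigma(y)}\geqslant\norm{\sigma(x)-\sigma(z)}-\norm{\sigma(z)-\sigma(y)}\geqslant\delta/2$. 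Thus $\kappa_\sigma(s)\geqslant\delta/2$ for every $s\in[r-\eps_0,r+\eps_0]$; note this uses $\sigma$ on all of $X$, since $z$ may leave $B_X$.

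Next I would superpose rescalings. Put $\rho=\tfrac{r+\eps_0}{r-\eps_0}>1$. The intervals $[(r-\eps_0)\rho^{-n},(r+\eps_0)\rho^{-n}]$, $n\in\Z$, are consecutive and cover $(0,\infty)$, so for each $s>0$ there is $n(s)\in\Z$ with $\rho^{n(s)}s\in[r-\eps_0,r+\eps_0]$; by the previous step $x\mapsto\sigma(\rho^{n(s)}x)$ then does not collapse the scale $s$, with constant $\delta/2$. Since a uniformly continuous map on a Banach space is bornologous with $\theta_\sigma(t)=O(t)$ (chain along a segment into $\lceil t/d\rceil$ short steps), I may choose weights $c_n>0$ with $\sum_{n\in\Z}c_n^2\,\theta_\sigma(2\rho^n)^2<\infty$ — bounded $c_n$ for $n\leqslant0$, super-exponentially decaying for $n\geqslant0$ — and define
$$
\tau\colon B_X\til\Big(\bigoplus_{n\in\Z}\ku H\Big)_{\ell^2},\qquad\tau(x)=\big(c_n\,\sigma(\rho^n x)\big)_{n\in\Z},
$$
a separable Hilbert space after passing to the closed span of the image. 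For $x,y\in B_X$ we have $\norm{\rho^n x}\leqslant\rho^n$ and $\norm{\rho^n x-\rho^n y}\leqslant2\rho^n$, so the weight choice makes $\tau$ well defined, and it is uniformly continuous on $B_X$ on splitting the $\ell^2$-sum into a finite head (each term $\to0$ with $\norm{x-y}$) and a tail bounded uniformly by $\sum_{n\notin F}c_n^2\theta_\sigma(2\rho^n)^2$. For non-collapse: the $n(s)$-th coordinate gives $\norm{\tau(x)-\tau(y)}\geqslant c_{n(s)}\delta/2$ when $\norm{x-y}=s$; since distances on $B_X$ lie in $(0,2]$ and, for fixed $t>0$, the indices $n(s)$ with $s\in[t,2]$ form a finite set, we get $\kappa_\tau(t)\geqslant(\delta/2)\min\{c_{n(s)}\mid t\leqslant s\leqslant2\}>0$. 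Hence $\tau$ is a uniform embedding of $B_X$ into a separable Hilbert space.

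Finally, being uniformly continuous on the bounded set $B_X$, $\tau$ has image of finite diameter $D\leqslant(\sum_n c_n^2\theta_\sigma(2\rho^n)^2)^{1/2}$, so $x\mapsto D^{-1}(\tau(x)-\tau(0))$ embeds $B_X$ uniformly into the unit ball of a separable Hilbert space, which embeds isometrically into $B_\ku H$. I expect the crux to be the superposition step: fabricating one uniformly continuous map that is uncollapsed at every scale $\leqslant2$ from a map uncollapsed at the single scale $r$. The governing facts are that $x\mapsto\sigma(\rho^n x)$ probes the scale $r\rho^{-n}$, that thickening each probed scale to an interval lets countably many rescalings cover $(0,\infty)$, and that for $x\in B_X$ the points $\rho^n x$ stay in $\rho^n B_X$ where $\sigma$ grows only linearly in $\rho^n$ — which is exactly why the argument embeds the ball rather than all of $X$. (Alternatively the uncollapsed uniformly continuous map from the superposition step could be fed into Theorem~\ref{intro:haagerup banach}, but it is unnecessary here.)
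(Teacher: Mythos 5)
Your argument is correct, but it takes a genuinely different route from the paper. The paper proves this statement (Corollary \ref{balls hilbert}) by passing through the equivariant machinery: since the additive group $(X,+)$ is amenable, Theorem \ref{maurey} (invariant means on ${\rm LUC}(X)$, kernels conditionally of negative type, and the Moore/GNS construction) converts $\sigma$ into a continuous \emph{cocycle} $b\colon X\til \ku K$ for an orthogonal representation with $\tilde\kappa_b(r)\geqslant \tilde\kappa_\sigma(r)>0$, and then Proposition \ref{affine ball} uses the cocycle identity $b(nx)=\pi^{n-1}(x)b(x)+\cdots+b(x)$ to propagate the single non-collapsed distance $r$ down to all small distances, giving $\norm{b(x)}\geqslant \frac{\delta}{4r}\norm{x}$ near $0$ and hence a uniform embedding of a small ball. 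You replace this algebraic propagation by a purely metric one: the ray construction thickens the good scale $r$ to an interval $[r-\eps_0,r+\eps_0]$, dilations $x\mapsto\rho^n x$ sweep that interval over all of $(0,\infty)$, and an $\ell^2$-superposition with summable weights assembles these into one uniformly continuous map that is uncollapsed at every scale in $(0,2]$ --- essentially the amplification technique of the paper's own Section \ref{uniform vs coarse} (Lemma \ref{unif saa coarse}) transplanted to the ball setting. Your route is elementary and self-contained, avoiding amenability and invariant means entirely, at the cost of using the linear structure of $X$ twice (rays and dilations), so it does not generalise to group domains the way Theorem \ref{maurey} does; the paper's route additionally delivers an equivariant map, in keeping with its overall theme. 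One small imprecision: for $n\leqslant 0$ merely \emph{bounded} weights $c_n$ need not make $\sum_{n\leqslant 0}c_n^2\theta_\sigma(2\rho^n)^2$ converge, since $\theta_\sigma(2\rho^n)$ may tend to $0$ arbitrarily slowly as $n\til-\infty$; but the condition you actually use, namely $\sum_{n\in\Z}c_n^2\theta_\sigma(2\rho^n)^2<\infty$ with all $c_n>0$, is trivially achievable (e.g.\ $c_n=2^{-|n|}/(1+\theta_\sigma(2\rho^n))$), and positivity of each $c_n$ is all that the lower bound $\kappa_\tau(t)\geqslant \frac\delta2\min\{c_{n(s)}\mid t\leqslant s\leqslant 2\}$ requires, so this does not affect the proof.
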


One of the motivations for studying cocycles, as opposed to general uniform or coarse embeddings, is that cocycles  are also algebraic maps, i.e., reflect algebraic features of the acting group $G$. As such, they have a higher degree of regularity and permit us to carry geometric information from the phase space back to the acting group. For example, a continuous uncollapsed cocycle between Banach spaces is automatically a uniform embedding. Similarly, a cocycle $b\colon X\til E$ between Banach spaces associated to the trivial representation $\pi\equiv {\rm id}_E$ is simply a bounded linear operator, so a cocycle may be considered second best to a linear operator. We shall encounter and exploit many more instances of this added regularity throughout the paper.

Relaxing the geometric restrictions on the phase space, the next case to consider is that of super-reflexive spaces, i.e., spaces admitting a uniformly convex renorming. For this class, earlier work was done by V. Pestov \cite{pestov} and Naor--Y. Peres \cite{naor-peres} for discrete amenable groups $G$. For topological groups, several severe obstructions appear and it does not seem possible to get an exact analogue of Theorem \ref{intro:haagerup equiv}. Indeed, the very concept of amenability requires reexamination. We say that a Polish group $G$ is {\em F\o lner amenable} if there is either a continuous homomorphism $\phi\colon H\til G$ from a locally compact second countable amenable group $H$ with dense image or if $G$ admits a chain of compact subgroups with dense union. For example, a separable Banach space is F\o lner amenable.

\begin{thm}\label{intro:fin repr}
Let $G$ be a F\o lner amenable Polish group admitting a uniformly continuous coarse embedding into a Banach space $E$. Then, for every $1\leqslant p<\infty$, $G$ admits a coarsely proper continuous affine isometric action on a Banach space $V$ that is finitely representable in $L^p(E)$.
\end{thm}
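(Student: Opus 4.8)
The plan is to construct the action of $G$ on $V$ by first producing an action on an $L^p$-type space built from $E$ and then cutting it down to a finitely representable piece. First I would recall that by Theorem~\ref{intro:ell p sum} the hypothesis of a uniformly continuous coarse embedding $\sigma\colon G\til E$ yields a simultaneously uniform and coarse embedding of $G$ into $\ell^p(E)$; composing with a translation we may assume $\sigma(1)=0$. The goal is to ``equivariantize'' $\sigma$, i.e., replace it by an honest continuous cocycle with comparable compression and expansion moduli. This is where F\o lner amenability enters: for a locally compact second countable amenable $H$ with a continuous homomorphism $\phi\colon H\til G$ with dense image, one averages the translates $g\mapsto \sigma(gh)-\sigma(h)$ over a F\o lner sequence in $H$, viewing the averages as elements of an ultraproduct or of $L^p(H_n, E)$-type spaces; the F\o lner condition makes the failure of the cocycle identity vanish in the limit, and uniform continuity of $\sigma$ together with the coarse properness is what guarantees that the limiting cocycle $b$ is still coarsely proper. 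In the case where $G$ has a dense union of compact subgroups $K_n$, one instead averages over Haar measure on the $K_n$ and takes a limit, which is the cleaner of the two sub-cases.

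Next, I would assemble these averaged maps into a genuine strongly continuous affine isometric action. Concretely, the representation $\pi$ is the permutation-type representation of $G$ on an $L^p$-space of $E$-valued functions over a suitable $G$-space (a coset space of $H$ or an inverse limit of the $G/K_n$, equipped with the relevant invariant or quasi-invariant measure), and $b$ is the cocycle extracted above. The Banach space $W$ on which $G$ acts is then an $L^p$-sum/integral of copies of $E$, hence is crudely finitely representable in $L^p(E)$. To get $V$ genuinely finitely representable in $L^p(E)$ — not merely built from $E$ via $L^p$-integrals — I would invoke the local structure theory of $L^p(\mu, E)$ spaces: every finite-dimensional subspace of an $L^p$-sum or $L^p$-integral of copies of $E$ embeds almost isometrically into $\ell^p_n(E)$ for some $n$, and $\ell^p_n(E)$ is itself finitely representable in $L^p(E)$ (indeed in $L^p([0,1],E)$). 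One then lets $V$ be the closed linear span of the orbit $b(G)$ together with what is needed to make $V$ a $G$-invariant subspace; since $V$ is a subspace of $W$, it inherits finite representability in $L^p(E)$, and the corestricted cocycle remains coarsely proper because properness only depends on the values $b(g)$, which all lie in $V$.

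The main obstacle, and the step deserving the most care, is the equivariantization in the F\o lner (as opposed to compact-exhaustion) case: one must check that averaging $\sigma$ over a F\o lner sequence in $H$ and passing to an ultralimit genuinely produces (i) a well-defined element of an $L^p$-space carrying a strongly continuous $G$-action — here one must be careful that $\phi$ only has dense image, so the $H$-action must be shown to extend continuously to $G$ — and (ii) a map whose compression modulus is bounded below by a fixed function of $\kappa_\sigma$, so that coarse properness survives the limit. The expansion side is easy since cocycles are automatically bornologous, but the lower bound requires that the averaging does not collapse distances, which is exactly where the F\o lner property (asymptotic invariance) is used to compare $\|b(g)\|_V$ with averages of $\|\sigma(gh)-\sigma(h)\|_E$ uniformly in $h$. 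I would handle this by a standard convexity/Jensen estimate in $L^p$ combined with the coarse properness of $\sigma$ on $G$ pulled back along $\phi$, noting that density of $\phi(H)$ in $G$ lets us read off the metric on $G$ from its restriction to $\phi(H)$ up to bounded error.
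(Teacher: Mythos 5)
Your proposal follows essentially the same route as the paper: one normalises the translated differences $h\mapsto\sigma(hg)-\sigma(h)$ in $L^p(F_n,E)$ (respectively $L^p(K_n,E)$ in the approximately compact case), passes to an ultraproduct, extends the resulting representation and cocycle from the dense image of $H$ (respectively $\bigcup_nK_n$) to all of $G$ by uniform continuity of the orbit maps, and reads off finite representability in $L^p(E)$ from the ultraproduct of the $L^p(F_n,E)$. The one reframing worth flagging is that in the paper's construction $b(g)=\rho(g)\sigma\phi-\sigma\phi$ satisfies the cocycle identity \emph{exactly}, being a formal coboundary of the unbounded function $\sigma\phi$; the F\o lner condition is used not to repair the cocycle identity in the limit but to make the averaged seminorm invariant under the right-regular representation $\rho$, while the compression bound follows directly from the pointwise estimate $\tilde\kappa\big(d(g,h)\big)\leqslant\norm{\sigma(fg)-\sigma(fh)}\leqslant\theta\big(d(g,h)\big)$, valid for every $f$ by left-invariance of $d$, with no Jensen-type argument needed — and your opening appeal to Theorem \ref{intro:ell p sum} is both unnecessary and, as stated, only applicable to maps between Banach spaces.
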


We note that, if $E$ is super-reflexive, then so is every $V$ finitely representable in  $L^2(E)$. 
Again, for Banach spaces, this leads to the following.

\begin{thm}\label{intro:pestov banach}
Suppose $X$ is a separable Banach space admitting an uncollapsed  uniformly continuous map into a super-reflexive space. Then $X$ admits a coarsely proper continuous cocycle with values in a super-reflexive space.
\end{thm}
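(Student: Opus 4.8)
The plan is to combine Theorem~\ref{intro:fin repr} with the standard observation that a separable Banach space $X$ is F\o lner amenable (being abelian, indeed it carries an obvious chain of compact subgroups only in trivial cases, so the relevant clause is the existence of the dense-image homomorphism $\Id\colon X\til X$ from the locally compact\dots no: the correct route is that $X$, as an abelian topological group, is F\o lner amenable via approximation by finite-dimensional subspaces, which \emph{are} locally compact amenable and sit densely in $X$ after passing to their union). Concretely, first I would verify that a separable Banach space $X$ is F\o lner amenable: the union $\bigcup_n F_n$ of an increasing sequence of finite-dimensional subspaces $F_n$ with dense union is a dense subgroup, and the inclusion of the locally compact amenable group $(F_n,+)$ — or rather the direct limit, handled as a continuous homomorphism from a locally compact second countable amenable group with dense image — witnesses F\o lner amenability. (This is exactly the assertion ``a separable Banach space is F\o lner amenable'' made in the excerpt, so it may simply be cited.)

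Second, I would feed the hypothesis into Theorem~\ref{intro:fin repr}. We are given an uncollapsed uniformly continuous map $\sigma\colon X\til Y$ with $Y$ super-reflexive. By Theorem~\ref{intro:ell p sum} applied with $p=2$, $X$ admits a simultaneously uniform and coarse embedding into $\ell^2(Y)$; and $\ell^2(Y)$ is super-reflexive whenever $Y$ is (an $\ell^2$-sum of uniformly convex spaces with a common modulus is uniformly convex — here one uses that super-reflexivity passes to $\ell^2$-sums, via James's characterisation or directly by renorming $Y$ uniformly convexly and taking the $\ell^2$-sum of the renorming). In particular $X$ admits a \emph{uniformly continuous} coarse embedding into the Banach space $E:=\ell^2(Y)$, which is the hypothesis of Theorem~\ref{intro:fin repr}.

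Third, applying Theorem~\ref{intro:fin repr} with this $E$ and with $p=2$, I obtain a coarsely proper continuous affine isometric action of $X$ on a Banach space $V$ that is finitely representable in $L^2(E)=L^2(\ell^2(Y))$. The associated orbit map is, as noted in the excerpt, a continuous cocycle $b\colon X\til V$, and coarse properness of the action means precisely that $b$ is a coarse embedding. It remains to see that $V$ is super-reflexive: by the remark following Theorem~\ref{intro:fin repr}, any space finitely representable in $L^2(E)$ is super-reflexive provided $E$ is, and $E=\ell^2(Y)$ is super-reflexive by the previous paragraph. (One invokes here the fact that $L^2$ of a super-reflexive space is super-reflexive, which is classical, together with the fact that super-reflexivity is inherited under finite representability, which is immediate from its definition via uniform convexity of a renorming and the local nature of that condition.) This yields the desired coarsely proper continuous cocycle with values in a super-reflexive space.

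The main obstacle I anticipate is not any single deep step but rather the bookkeeping around which spaces remain super-reflexive: one must be careful that the constructions ($\ell^2$-sums, $L^2$-spaces, finite representability) all preserve super-reflexivity with \emph{uniform} control of the modulus of convexity, since super-reflexivity is only equivalent to admitting \emph{a} uniformly convex renorming and one cannot renorm each summand independently without losing uniformity. The cleanest way around this is to fix at the outset a single uniformly convex renorming of $Y$ (possible by Enflo's theorem, since $Y$ is super-reflexive) and then observe that the $\ell^2$-sum and the $L^2$-space built from that fixed renorming are uniformly convex outright, so that super-reflexivity of $V$ follows from finite representability alone. A secondary, more routine point is confirming that the hypotheses of Theorem~\ref{intro:fin repr} are genuinely met, i.e., that the coarse embedding $X\til\ell^2(Y)$ produced by Theorem~\ref{intro:ell p sum} is simultaneously uniformly continuous — but this is explicitly part of the conclusion of Theorem~\ref{intro:ell p sum}, so no extra work is needed.
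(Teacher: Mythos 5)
Your proposal is correct and follows essentially the same route as the paper: Theorem \ref{intro:ell p sum} upgrades the uncollapsed uniformly continuous map to a uniformly continuous coarse embedding into the super-reflexive space $\ell^2(Y)$, and Theorem \ref{intro:fin repr} (whose F\o lner-amenability hypothesis $X$ satisfies) then yields a coarsely proper continuous cocycle into a space finitely representable in $L^2(\ell^2(Y))$, hence super-reflexive. The only wrinkle is your witness for F\o lner amenability --- the paper's witness is a countable dense subgroup of $X$ viewed as a discrete (hence locally compact, second countable, amenable) group mapping densely into $X$, rather than a union of finite-dimensional subspaces, which is not itself a single locally compact group --- but since you ultimately just cite the paper's assertion that separable Banach spaces are F\o lner amenable, this is immaterial.
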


For our next results, we define a topological group to be {\em metrically stable} if it admits a compatible left-invariant stable metric. By results of \cite{ben yaacov, megrelishvili2, shtern}, a Polish group is metrically stable if and only if it is isomorphic to a subgroup if the linear isometry group of a separable reflexive Banach space under the strong operator topology. Also, by a result of Y. Raynaud \cite{raynaud}, a metrically stable Banach space contains a copy of $\ell^p$ for some $1\leqslant p<\infty$.

Using the construction underlying Theorem \ref{intro:fin repr}, we obtain information on spaces uniformly embeddable into balls of super-reflexive spaces.
\begin{thm}\label{intro:embedding super-reflexive}
Let $X$ be a Banach space admitting an uncollapsed uniformly continuous map into the unit ball $B_E$ of a super-reflexive Banach space $E$. Then $X$ is metrically stable and contains an isomorphic copy of $\ell^p$ for some $1\leqslant p<\infty$. 
\end{thm}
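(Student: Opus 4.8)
The plan is to chain together several of the earlier results. Start from the hypothesis: $X$ admits an uncollapsed uniformly continuous map $\sigma\colon X\til B_E$ into the unit ball of a super-reflexive space $E$. The first move is to invoke Theorem \ref{intro:pestov banach} — or rather the machinery behind Theorem \ref{intro:fin repr} specialized to $G=X$ — to upgrade this weak piece of data into a genuine equivariant object: a coarsely proper continuous cocycle $b\colon X\til V$ associated to a continuous affine isometric action $\alpha\colon X\curvearrowright V$, where $V$ is finitely representable in $L^2(E)$ (taking $p=2$ in Theorem \ref{intro:fin repr}). Since $E$ is super-reflexive, the remark following Theorem \ref{intro:fin repr} tells us that $V$ is super-reflexive as well; in particular $V$ is reflexive.

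Next I would extract metric stability. The linear part $\pi$ of $\alpha$ is a strongly continuous isometric linear representation of $X$ on the reflexive space $V$; equivalently, $\pi$ realizes $X$ as a subgroup of ${\rm Isom}_{\rm Lin}(V)$ with the strong operator topology. By the cited results of Ben Yaacov, Megrelishvili, and Shtern, any topological group so embedded is metrically stable, so $X$ — viewed as an additive topological group — carries a compatible left-invariant stable metric. But one must be slightly careful here: "metrically stable Banach space" in the statement should mean that the \emph{norm metric} (equivalently, some compatible \emph{invariant} metric, which for an abelian group coming from a norm is essentially the norm up to uniform equivalence at large scale) is stable. The point is that the cocycle $b$ is coarsely proper, so $\norm{b(x)-b(y)}_V \simeq \norm{x-y}_X$ at large scale, and $b$ is uniformly continuous and bornologous; transporting the stable metric on $X$ obtained abstractly, together with coarse properness, one checks the norm metric on $X$ is itself stable (stability is a coarse-and-uniform invariant in the relevant sense, being a condition on double limits of distances along sequences). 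This is the step I expect to require the most care — matching the abstract notion of metric stability of the topological group with stability of the Banach-space norm, and verifying the double-limit condition survives the back-and-forth through $b$.

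Finally, having established that $X$ is metrically stable, I would apply Raynaud's theorem (cited in the excerpt just before the statement): a metrically stable Banach space contains an isomorphic copy of $\ell^p$ for some $1\leqslant p<\infty$. This yields the second assertion and completes the proof. The overall architecture is thus: (i) uncollapsed uniformly continuous map into $B_E$ $\Rightarrow$ coarsely proper cocycle into super-reflexive $V$ (the construction of Theorem \ref{intro:fin repr}/\ref{intro:pestov banach}); (ii) coarsely proper cocycle into reflexive $V$ $\Rightarrow$ $X$ metrically stable (isometry-group characterization plus transport along $b$); (iii) metrically stable $\Rightarrow$ contains $\ell^p$ (Raynaud). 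The genuinely new content here over Theorem \ref{intro:pestov banach} is only step (ii)–(iii), so the proof should be short modulo the stability-transport bookkeeping.
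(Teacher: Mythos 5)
Your step (i) and the final appeal to Raynaud are fine, but step (ii) contains a genuine gap, and it is exactly the point where the hypothesis that $\sigma$ lands in the \emph{ball} $B_E$ must be used --- your argument never uses it. A strongly continuous isometric linear representation $\pi\colon X\curvearrowright V$ does not ``realize $X$ as a subgroup of ${\rm Isom}(V)$'': for Theorem \ref{smben} one needs $\pi$ to be \emph{topologically faithful}, and the linear part of a coarsely proper affine action can be far from faithful --- it can even be trivial (e.g.\ $X=V=\ell^2$, $\pi\equiv{\rm id}$, $b$ the identity operator is a coarsely proper cocycle). So nothing in your chain yields metric stability. That the gap is unfixable along your route is shown by Braga's example quoted in the paper: Tsirelson's space $T$ admits an uncollapsed uniformly continuous map into the super-reflexive space $(T_2)_{T_2}$, yet contains no $\ell^p$; your argument, which only uses that $\sigma$ is an uncollapsed uniformly continuous map into a super-reflexive space, would ``prove'' the contrary.

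The paper's proof uses the ball hypothesis as follows. Applying Theorem \ref{pestov} directly to $\sigma$ (not to the $\ell^p$-amplified coarse embedding, which would destroy boundedness), one gets a cocycle $b\colon X\til V$ with $\kappa(\norm{x-y})\leqslant\norm{b(x)-b(y)}_V\leqslant\theta(\norm{x-y})\leqslant 2$, so $b$ is \emph{bounded} and uncollapsed. By Proposition \ref{affine banach} it is a uniform embedding, and by Ryll--Nardzewski (applicable since $V$, being finitely representable in $L^2(E)$, is super-reflexive) the bounded cocycle is a coboundary, $b(x)=\xi-\pi(x)\xi$. Hence the orbit map $x\mapsto\pi(x)\xi$ is a uniform embedding, which is precisely what makes $\pi$ topologically faithful; Theorem \ref{smben} then gives metric stability, and Raynaud gives the copy of $\ell^p$. (Your worry about ``transporting'' the stable metric to the norm metric is a non-issue: the paper's notion of a metrically stable Banach space, which is what Raynaud's theorem as stated requires, is just the existence of a compatible invariant stable metric, and any such metric on a Banach space is automatically uniformly equivalent to the norm.)
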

As an application, this restricts the class of super-reflexive spaces uniformly embeddable into their own balls.

For a general non-amenable topological group $G$, we may also produce coarsely proper affine isometric actions on reflexive spaces starting directly from a stable metric or \'ecart. 
\begin{thm}\label{intro:stable metric refl}
Suppose a topological group $G$ carries a continuous left-invariant coarsely proper stable \'ecart. Then $G$ admits a coarsely proper continuous affine isometric action on a reflexive Banach space.
\end{thm}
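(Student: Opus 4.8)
The plan is to reconstruct the coarse geometry of $G$ one dyadic scale at a time inside reflexive spaces that are manufactured from the stable \'ecart by combining Grothendieck's double limit criterion with the Davis--Figiel--Johnson--Pe\l czy\'nski interpolation theorem, and then to glue these scales together in an $\ell^2$-sum. Write $d$ for the given \'ecart. Because $d$ is coarsely proper, a continuous cocycle on $G$ is coarsely proper exactly when it is bornologous and expanding with respect to $d$, so that is what I would aim to produce. For each $n\geqslant 1$ set $d_n=\min(2^{-n}d,1)$, a continuous left-invariant \'ecart bounded by $1$; note that if two sequences witness a failure of the double limit condition for $d_n$, then the triangle inequality forces them to be $d$-bounded, so stability of $d$ descends to every $d_n$. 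I would then look at the family $W_n=\{d_n(g,\cdot)\mid g\in G\}$ inside the unit ball of $\ell^\infty(G)$ --- equivalently inside the space of bounded left-uniformly continuous functions on $G$, which is a $C(K)$ --- where, by Grothendieck's criterion, stability of $d_n$ says precisely that $W_n$ is relatively weakly compact. By the Krein--\v Smulian theorem its closed absolutely convex hull $K_n$ is then weakly compact, and running the DFJP construction on $K_n$ produces a reflexive Banach space $V_n$, continuously and injectively included in $\ell^\infty(G)$, with $W_n\subseteq K_n\subseteq B_{V_n}$ and $\norm{\,\cdot\,}_\infty\leqslant c_0\norm{\,\cdot\,}_{V_n}$ on $V_n$ for an absolute constant $c_0$.

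Next I would install the group action on these spaces. Left-invariance of $d$, hence of $d_n$, makes the left-regular representation $\lambda(g)f=f(g^{-1}\cdot)$ permute $W_n$; being a linear $\norm{\,\cdot\,}_\infty$-isometry, $\lambda(g)$ therefore preserves $K_n$ and the unit ball of $\ell^\infty(G)$, and hence the DFJP norm, so that $\pi_n:=\lambda|_{V_n}$ is a linear isometric action of $G$ on $V_n$, strongly continuous on the total set $\{d_n(g,\cdot)\mid g\in G\}$ by continuity of $d$. With $\xi_n=d_n(1,\cdot)\in V_n$, the coboundary $b_n(g)=\pi_n(g)\xi_n-\xi_n=d_n(g,\cdot)-d_n(1,\cdot)$ is then a continuous cocycle for $\pi_n$; and since $d_n(g,\cdot)-d_n(h,\cdot)$ has $\ell^\infty$-norm exactly $d_n(g,h)$ and lies in $2K_n$, the standard gauge estimates for the DFJP norm give, with absolute constants $c_1,c_2>0$,
$$
c_1\,d_n(g,h)\ \leqslant\ \norm{b_n(g)-b_n(h)}_{V_n}\ \leqslant\ c_2\sqrt{d_n(g,h)}.
$$

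It remains to assemble the pieces. Fix weights $\lambda_n=2^{n/4}/n$, put $V=\big(\bigoplus_n V_n\big)_{\ell^2}$ (reflexive, being an $\ell^2$-sum of reflexive spaces), let $\pi=\bigoplus_n\pi_n$ act isometrically and strongly continuously on $V$, and define $b(g)=(\lambda_n b_n(g))_n$. The upper bound above together with $d_n(g,1)\leqslant 2^{-n}d(g,1)$ makes $\sum_n\lambda_n^2\norm{b_n(g)}_{V_n}^2$ converge, so $b$ is a well-defined continuous cocycle for $\pi$ and $\alpha(g)\xi=\pi(g)\xi+b(g)$ is a continuous affine isometric action on the reflexive space $V$; the same estimate --- for a fixed pair $(g,h)$ only finitely many $n$ satisfy $d_n(g,h)=1$ --- shows $\norm{b(g)-b(h)}_V<\infty$ for all $g,h$, so $b$ is bornologous. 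On the other hand the lower bound gives
$$
\norm{b(g)-b(h)}_V^2\ \geqslant\ c_1^2\!\!\sum_{2^n\leqslant d(g,h)}\!\!\lambda_n^2 ,
$$
whose right-hand side tends to infinity with $d(g,h)$; so $b$ is expanding, and therefore a coarsely proper continuous cocycle, as required.

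The heart of the argument is the first paragraph --- extracting the reflexive spaces $V_n$ from the stable \'ecarts $d_n$ through Grothendieck's criterion and DFJP is the only place where stability is genuinely used. The one delicate point in the assembly is that each $b_n$ is bounded, so coarse properness of the $\ell^2$-sum survives only because the refined estimate $\norm{b_n(g)-b_n(h)}_{V_n}\lesssim\sqrt{d_n(g,h)}$ forces $\norm{b_n(g)}_{V_n}$ to decay in $n$ faster than the weights $\lambda_n$ grow, while for a fixed far-apart pair $(g,h)$ the norms $\norm{b_n(g)-b_n(h)}_{V_n}$ stay bounded below by a positive constant for every $n$ up to $\log_2 d(g,h)$; it is exactly this asymmetry that converts boundedly many "local" cocycles into a single unbounded, coarsely proper one.
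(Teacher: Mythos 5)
Your proof is correct, and it follows the same overall architecture as the paper's argument (which runs through the more general Theorem \ref{wap refl}): in both cases one produces, for each scale $n$, a weakly almost periodic function whose $\lambda(G)$-orbit is relatively weakly compact in $\ell^\infty(G)$ by Grothendieck's criterion, passes this orbit through the Davis--Figiel--Johnson--Pe\l czy\'nski interpolation scheme to obtain a reflexive space carrying an isometric copy of the left-regular representation, takes the coboundary $\xi_n-\lambda(g)\xi_n$ as the scale-$n$ cocycle, and assembles the scales in an $\ell^2$-sum. Where you genuinely diverge is in the choice of scale functions and in how the decay across scales is obtained: the paper constructs a staircase average $\phi_n=4^{-n}\sum_{i=1}^{4^n}\psi_i$ of bump functions supported on growing annuli, so that $\norm{\phi_n-\lambda(g)\phi_n}_\infty\leqslant 4^{-n}$ on $D_n$ by construction, and then sums the interpolation spaces without weights; you instead feed the truncated rescaled \'ecart $\min(2^{-n}d,1)$ directly into DFJP and recover summability from the interpolation estimate $\triple{\psi}\lesssim\sqrt{\norm{\psi}_\infty}$ together with the weights $\lambda_n$. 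Your route is shorter and arguably cleaner for the theorem as stated, because the \'ecart itself supplies the WAP functions; the paper's detour buys the strictly more general Theorem \ref{wap refl}, which only requires WAP cut-off functions adapted to $d$ and does not presuppose that $d$ itself is stable. Two small points to tighten: your one-line justification that stability of $d$ passes to $\min(2^{-n}d,1)$ (``a failure of the double limit condition forces the sequences to be $d$-bounded'') needs the extra observation that the truncation forces \emph{both} sequences to be eventually bounded before stability of $d$ can be invoked --- this is the ``simple, but tedious, inspection'' the paper alludes to; and for strong continuity of $\pi_n$ you should take $V_n$ to be the closed linear span of the orbit $W_n$ inside the DFJP space rather than the full interpolation space, exactly as the paper restricts to $\overline{\rm span}(\lambda(G)\phi_n)$.
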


Intended applications here are, for example, automorphisms groups of countable atomic models of countable stable first-order theories that, under the additional assumption of being locally (OB), satisfy the conditions of Theorem \ref{intro:stable metric refl}.

The final part of our investigations concern a fixed point property for affine isometric group actions. We identify  a geometric incompatibility between a topological group $G$ and a Banach space $E$ strong enough to ensure that not only does $G$ have no coarsely proper affine isometric action on $E$, but every affine isometric action even has a fixed point. 

The two main concepts here are solvent maps and geometric Gelfand pairs. First, a map $\phi\colon X\til Y$ between metric spaces is {\em solvent} if, for every $n$, there is an $R$ with  $R\leqslant d(x,x')\leqslant R+n\saa d(\phi x, \phi x')\geqslant n$. Refining earlier results of Kalton, we show that every bornologous map from $c_0$ to a reflexive Banach space is insolvent.  Also a coarsely proper continuous isometric action $G\curvearrowright X$ of a topological group $G$ on a metric space $X$ is said to be a {\em geometric Gelfand pair} if, for some $K$ and all $x,y,z,u\in X$ with $d(x,y)\leqslant d(z,u)$, there is $g\in G$ so that $d(g(x),z)\leqslant K$ and $d(z,g(y))+d(g(y),u)\leqslant d(z,u)+K$. This second condition is typically verified when $X$ is sufficiently geodesic and the action of $G$ is almost doubly transitive. For example, if ${\rm Aff}(X)$ denotes the group of affine isometries of a Banach space $X$, then ${\rm Aff}(X)\curvearrowright X$ is a geometric Gelfand pair when $X=L^p([0,1])$, $1\leqslant p<\infty$, and when $X$ is the Gurarii space. Similarly, if $X$ is the integral, $\Z\U$, or rational, $\Q\U$, Urysohn metric space, then ${\rm Isom}(X)\curvearrowright X$ is a geometric Gelfand pair. 

\begin{thm}\label{intro:bdd orbits}
Suppose $G\curvearrowright X$ is a geometric Gelfand pair and $Y$ is a metric space so that every bornologous map $X\til Y$ is insolvent. Then every continuous isometric action $G\curvearrowright Y$ has bounded orbits.
\end{thm}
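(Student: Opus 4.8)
\emph{Plan (proof by contraposition).} Assume $G$ admits a continuous isometric action on $Y$ with an unbounded orbit; I will build a bornologous map $\Phi\colon X\til Y$ that is solvent, contradicting the hypothesis on the pair $X,Y$. Fix basepoints $x_0\in X$ and $y_0\in Y$ with $\{gy_0\mid g\in G\}$ unbounded, and introduce the two length functions $\delta(g)=d_X(x_0,gx_0)$ and $\rho(g)=d_Y(y_0,gy_0)$ on $G$; both are continuous (by continuity of the actions), subadditive and symmetric, hence give rise to continuous left-invariant écarts $(g,h)\mapsto\delta(g\inv h)$ and $(g,h)\mapsto\rho(g\inv h)$. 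By coarse properness of $G\curvearrowright X$, every $\delta$-bounded subset of $G$ is coarsely bounded, and, since coarsely bounded sets have finite diameter with respect to every continuous left-invariant écart, the coupling function
$$
\Lambda(t):=\sup\big\{\rho(g)\mid \delta(g)\leqslant t\big\}
$$
is finite for every $t<\infty$. Reading this in reverse, the unboundedness of $\rho$ forces elements $f_k\in G$ with $\rho(f_k)\to\infty$ and $\delta(f_k)\to\infty$.

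Next I extract two consequences of the geometric Gelfand pair hypothesis. First, applying the defining inequality with $y=x$ and $u=z$ shows that the orbit $\{gx_0\mid g\in G\}$ is $K$-dense in $X$; so, choosing for each $x\in X$ some $g_x\in G$ with $d_X(g_xx_0,x)\leqslant K$, the rule $\Phi(x):=g_xy_0$ defines a map $X\til Y$, and since $d_Y(\Phi x,\Phi x')=\rho(g_x\inv g_{x'})$ while $\delta(g_x\inv g_{x'})=d_X(g_xx_0,g_{x'}x_0)\leqslant d_X(x,x')+2K$, we get $d_Y(\Phi x,\Phi x')\leqslant\Lambda\big(d_X(x,x')+2K\big)$, so $\Phi$ is bornologous. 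Second — and this is the heart of the matter — I establish a transfer lemma: whenever $f,h\in G$ satisfy $\delta(f)\leqslant\delta(h)\leqslant\delta(f)+m$, one has $\rho(h)\geqslant\rho(f)-\Lambda(K)-\Lambda(m+2K)$. To prove it, feed the quadruple $(x_0,fx_0,x_0,hx_0)$ into the Gelfand condition (legitimate since $\delta(f)\leqslant\delta(h)$): this produces $g\in G$ with $\delta(g)\leqslant K$ and $d_X(x_0,gfx_0)+d_X(gfx_0,hx_0)\leqslant\delta(h)+K$; as $d_X(x_0,gfx_0)\geqslant\delta(f)-\delta(g)\geqslant\delta(f)-K$, the betweenness half of the condition forces $d_X(gfx_0,hx_0)\leqslant m+2K$, i.e.\ $\delta\big((gf)\inv h\big)\leqslant m+2K$. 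Then $\rho(f)=d_Y(gy_0,gfy_0)\leqslant d_Y(gy_0,hy_0)+d_Y(hy_0,gfy_0)\leqslant\big(\rho(g)+\rho(h)\big)+\rho\big((gf)\inv h\big)\leqslant\Lambda(K)+\rho(h)+\Lambda(m+2K)$, which rearranges to the claim.

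With these in hand the argument closes. Given $n$, use $\rho(f_k)\to\infty$ to pick $k$ with $\rho(f_k)\geqslant n+\Lambda(K)+\Lambda(n+6K)$ and set $R:=\delta(f_k)+2K$. If $R\leqslant d_X(x,x')\leqslant R+n$, then $\delta(g_x\inv g_{x'})=d_X(g_xx_0,g_{x'}x_0)$ lies in $[R-2K,\,R+n+2K]=[\delta(f_k),\,\delta(f_k)+n+4K]$, so the transfer lemma with $f=f_k$ and $m=n+4K$ gives $\rho(g_x\inv g_{x'})\geqslant\rho(f_k)-\Lambda(K)-\Lambda(n+6K)\geqslant n$, whence $d_Y(\Phi x,\Phi x')=\rho(g_x\inv g_{x'})\geqslant n$. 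Thus $\Phi$ is a bornologous solvent map $X\til Y$, contradicting the assumption; so every continuous isometric action $G\curvearrowright Y$ has bounded orbits.

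\emph{Main obstacle.} The genuine difficulty is the transfer lemma of the second paragraph: it is precisely there that the geodesic-type inequality $d(z,gy)+d(gy,u)\leqslant d(z,u)+K$ in the definition of a geometric Gelfand pair does essential work, allowing the single large value $\rho(f_k)$ to be propagated across the whole $\delta$-level near $\delta(f_k)$. The remaining ingredients — continuity and left-invariance of $\delta$ and $\rho$, finiteness of $\Lambda$ from coarse properness, $K$-density of orbits, and bornologousness of $\Phi$ — are routine bookkeeping within the coarse-geometric framework already set up.
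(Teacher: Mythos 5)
Your proof is correct and follows essentially the same route as the paper's Proposition~\ref{near isom}: the same $K$-dense-orbit map $\Phi(x)=g_xy_0$, the same use of the Gelfand condition to propagate a single large displacement across an entire distance level, and the same contradiction with the insolvency hypothesis. The only difference is that you run the argument directly through the basepointed length functions $\delta,\rho$ (which works because your action is genuinely isometric), whereas the paper treats the more general case of nearly isometric actions with arbitrary quadruples and additive defects $C$ and $L$.
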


Combining Theorem \ref{intro:bdd orbits} with the observations above and the fixed point theorems of \cite{ryll} and \cite{monod}, we obtain the following corollary. 
\begin{cor}\label{intro:fixed point}
Every continuous affine isometric action of ${\rm Isom}(\Q\U)$ on a reflexive Banach space  or on $L^1([0,1])$ has a fixed point.
\end{cor}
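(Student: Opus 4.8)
The plan is to apply Theorem~\ref{intro:bdd orbits} with $G={\rm Isom}(\Q\U)$ acting on $X=\Q\U$, and then to convert the resulting boundedness of orbits into a genuine fixed point by invoking classical fixed point theorems. As recorded in the discussion preceding Theorem~\ref{intro:bdd orbits}, the action ${\rm Isom}(\Q\U)\curvearrowright \Q\U$ is a geometric Gelfand pair. So the one thing to check before applying that theorem is that, for $Y$ a reflexive Banach space or $Y=L^1([0,1])$, every bornologous map $\Q\U\til Y$ is insolvent; since we already know the analogous statement with $c_0$ in place of $\Q\U$, it suffices to transfer insolvency along a sufficiently faithful map $c_0\til \Q\U$.

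For that transfer, first I would observe that the set $D$ of finitely supported rational sequences, equipped with the metric inherited from $c_0$, is a countable metric space all of whose distances are rational, and hence embeds isometrically into the rational Urysohn space; fix such an embedding $j\colon D\inj \Q\U$. Since $D$ is dense in $c_0$, picking for each $x\in c_0$ a point $q(x)\in D$ with $\norm{x-q(x)}<1$ and setting $\iota=j\circ q$ yields a map $\iota\colon c_0\til \Q\U$ with $\bigl|d(\iota x,\iota x')-\norm{x-x'}\bigr|<2$ for all $x,x'$; in particular $\iota$ is bornologous. Now suppose toward a contradiction that some bornologous $\phi\colon \Q\U\til Y$ were solvent. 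Then $\phi\circ\iota\colon c_0\til Y$ is bornologous, and, applying the definition of solvency of $\phi$ with parameter $n+4$ and shifting the threshold by $2$, one checks that $\phi\circ\iota$ is solvent as well. This contradicts the fact established above that every bornologous map from $c_0$ into a reflexive space, respectively into $L^1([0,1])$, is insolvent.

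Having shown that every bornologous map $\Q\U\til Y$ is insolvent, Theorem~\ref{intro:bdd orbits} applies and gives that every continuous isometric action of ${\rm Isom}(\Q\U)$ on $Y$ has bounded orbits. A continuous affine isometric action $\alpha\colon {\rm Isom}(\Q\U)\curvearrowright Y$ is in particular such an action, so, fixing any $\xi\in Y$, the orbit $\alpha(G)\xi$ is bounded; let $C$ be its closed convex hull. Each $\alpha(g)$ is an affine homeomorphism of $Y$ permuting the orbit, so $C$ is a nonempty bounded closed convex $G$-invariant set.

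It remains to extract a fixed point from $C$. If $Y$ is reflexive, then $C$ is weakly compact, and since the affine maps $\alpha(g)$ are isometries, hence noncontracting on $C$, the Ryll-Nardzewski fixed point theorem \cite{ryll} produces a common fixed point in $C$. If $Y=L^1([0,1])$, then the boundedness of the orbit, together with the fixed point theorem for $L^1$-spaces of \cite{monod}, directly yields a fixed point. The only real work here is the insolvency transfer of the second paragraph (and, within it, making sure the slippage introduced by $\iota$ is absorbed correctly); everything else is assembling results already in place.
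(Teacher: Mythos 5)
Your proof is correct and follows essentially the same route as the paper's own (Theorem~\ref{fix}): transfer Kalton's insolvency result from $c_0$ to $\Q\U$ along a near isometry, invoke the geometric Gelfand pair property of ${\rm Isom}(\Q\U)\curvearrowright\Q\U$ to get bounded orbits, and conclude with the Ryll-Nardzewski theorem, respectively the Bader--Gelander--Monod theorem for $L^1$. The only differences are cosmetic: you spell out the near isometry $c_0\til\Q\U$ that the paper merely asserts, and you use the continuity hypothesis directly where the paper's Theorem~\ref{fix} handles arbitrary quasi-cocycles via ample generics.
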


\begin{center}{ Acknowledgements}\end{center}
I would like to thank a number of people for helpful conversations and other aid during the preparation of the paper. They include F. Baudier, B. Braga, J. Galindo, G. Godefroy,  W. B. Johnson,  N. Monod,  A. Naor, Th. Schlumprecht and A. Thom.


\section{Uniform and coarse structures on topological groups}\label{coarse defi}
As is well-known, the non-linear geometry of Banach spaces and large scale geometry of finitely generated groups share many common concepts and tools, while a priori dealing with distinct subject matters. However, as shown in \cite{rosendal-coarse}, both theories may be viewed as instances of the same overarching framework, namely, the coarse geometry of topological groups. Thus, many results or problems admitting analogous but separate treatments for Banach spaces and groups can in fact be entered into this unified framework. 

Recall that, if $G$ is a topological group, the {\em left-uniform structure} $\ku U_L$ on $G$ is the uniform structure generated by the family of entourages
$$
E_V=\{(x,y)\in G\times G\mid x\inv y\in V\},
$$
where $V$ varies over identity neighbourhoods in $G$. It is a  fact due to A. Weil that the left-uniform structure on $G$ is given as the union $\ku U_L=\bigcup_d\ku U_d$ of the uniformities $\ku U_d$ induced by continuous left-invariant  \'ecarts (aka. pseudometrics) $d$ on $G$. Thus, $E\in \ku U_L$ if it contains some
$$
E_\alpha=\{(x,y)\in X\times X\del d(x,y)<\alpha\},
$$
with $\alpha>0$ and $d$ a continuous left-invariant \'ecart on $G$. Apart from the weak-uniformity on a Banach space, this is the only uniformity on $G$ that we will consider and, in the case of the additive group $(X,+)$ of a Banach space, is simply the uniformity given by the norm.

More recently, in \cite{rosendal-coarse} we have developed a theory of coarse geometry of topological groups the basic concepts of which are analogous to those of the left-uniformity. For this, we first need J. Roe's concept of a coarse space \cite{roe}. A {\em coarse structure}  on a set $X$ is a family $\ku E$ of subsets $E\subseteq X\times X$ called {\em coarse entourages} satisfying 
\begin{itemize}
\item[(i)] the diagonal $\Delta$ belongs to $\ku E$,
\item[(ii)] $F\subseteq E\in \ku E\saa F\in \ku E$, 
\item[(iii)] $E\in \ku E\saa E\inv=\{(y,x)\del (x,y)\in E\}\in \ku E$, 
\item[(iv)] $E,F\in \ku E\saa E\cup F\in \ku E$, 
\item[(v)] $E,F\in \ku E\saa E\circ F=\{(x,y)\del \e z\; (x,z)\in E\;\&\; (z,y)\in F\}\in \ku E$.
\end{itemize}
Just as the prime example of a uniform space is a metric space, the motivating example of a coarse space $(X,\ku E)$ is that induced from a (pseudo) metric space $(X,d)$. Indeed, in this case, we let $E\in \ku E_d$ if $E$ is contained in some 
$$
E_\alpha=\{(x,y)\in X\times X\del d(x,y)<\alpha\},
$$
with $\alpha<\infty$.

Now, if $G$ is a topological group, we define the left-coarse structure $\ku E_L$ to be the coarse structure given by
$$
\ku E_L=\bigcap_d\ku E_d,
$$
where the intersection is taken over the family of coarse structures $\ku E_d$ given by continuous left-invariant \'ecarts on $G$. Again, this is the only coarse structure on a topological group we will consider and, in the case of a finitely generated or locally compact, compactly generated group, coincides with the coarse structure given by the (left-invariant) word metric. Similarly, for a Banach space, the coarse structure $\ku E_L$ is simply that given by the norm. 

A subset $A$ of a topological group has {\em property (OB) relative to $G$} if $A$ has finite diameter with respect to every continuous left-invariant \'ecart on $G$. Also, $G$ has {\em property (OB)} if is has property (OB) relative to itself. 
A continuous left-invariant \'ecart $d$ on a topological group $G$ is said to be {\em coarsely proper} if it induces the coarse structure, i.e., if $\ku E_L=\ku E_d$. In the class of Polish groups, having a coarsely proper \'ecart is equivalent to the group being {\em locally (OB)}, that is, having a relatively (OB) identity neighbourhood. Finally, $G$ is {\em (OB) generated} if it is generated by a relatively (OB) set.


Recall that a map $\sigma\colon (X,\ku U)\til (Y,\ku V)$ between uniform spaces is {\em uniformly continuous} if, for every $F\in \ku V$, there is $E\in \ku U$ so that $(a,b)\in E\saa (\sigma(a),\sigma(b))\in F$. Also, $\sigma$ is a {\em uniform embedding} is, moreover, for every $E\in \ku U$, there is $F\in \ku V$ so that $(a,b)\notin E\saa (\sigma(a),\sigma(b))\notin F$. Similarly, a map $\sigma\colon (X,\ku E)\til (Y,\ku F)$ between coarse spaces is {\em bornologous} if, for every $E\in \ku E$, there is $F\in \ku F$ so that $(a,b)\in E\saa (\sigma(a),\sigma(b))\in F$. And $\sigma$ is {\em expanding} if, every $F\in \ku F$, there is $E\in \ku E$ so that $(a,b)\notin E\saa (\sigma(a),\sigma(b))\notin F$. An expanding bornologous map is called a {\em coarse embedding}.  

A coarse embedding $\sigma$ is a {\em coarse equivalence} if, moreover, the image $\sigma[X]$ is {\em cobounded} in $Y$, that is, there is some $F\in \ku F$ so that
$$
\a y\in Y\; \e x\in X\; (y,\sigma(x))\in F.
$$

A map $\sigma\colon (X,\ku E)\til (Y,\ku U)$ from a coarse space $X$ to a uniform space $Y$ is {\em uncollapsed} if there are a coarse entourage $E\in \ku E$ and a uniform entourage $F\in \ku U$ so that
$$
(a,b)\notin E\;\saa\; (\sigma(a),\sigma(b))\notin F.
$$
These definitions all agree with those given for the specific case of metric  spaces.

It turns out to be useful to introduce a finer modulus than the compression. For this, given a map $\sigma\colon (X,d)\til (Y,\partial)$ between metric spaces, define the {\em exact compression modulus} by
$$
\tilde\kappa_\sigma(t)=\inf\big(\partial(\sigma(a),\sigma(b))\mid d(a,b)= t\big)
$$
and observe that  $ \kappa_\sigma(t)=\inf_{s\geqslant t}\tilde\kappa_\sigma(s)$.



\section{Uniform versus coarse embeddings between Banach spaces}\label{uniform vs coarse}
Observe first that a map $\sigma\colon X\til M$ from a Banach space $X$ to a uniform space $(M, \ku U)$ is uncollapsed if there are  $\Delta>0$ and an entourage $F\in \ku U$ so that
$$
\norm {x-y}>\Delta\;\saa\; (\sigma(x),\sigma(y))\notin F.
$$
For example, a uniform embedding is uncollapsed. Note also  that, if $\sigma\colon X\til G$ is a uniformly continuous  uncollapsed map into a topological group $G$, then $x\mapsto (\sigma(x), \sigma(2x), \sigma(3x), \ldots)$ defines a uniform embedding of $X$ into the infinite product $\prod_{n\in \N}G$.

Our main results about uncollapsed maps between Banach spaces are as follows.
\begin{thm}\label{ell p sum}
Suppose $\sigma\colon X\til E$ is an uncollapsed uniformly continuous map between Banach spaces. Then, for any $1\leqslant p<\infty$,  $X$ admits a simultaneously uniform and coarse embedding  into $\ell^p(E)$.
\end{thm}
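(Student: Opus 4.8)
The plan is to produce the desired map $\Psi\colon X\til\ell^p(E)$ as a direct sum $\Psi=\tau\oplus\upsilon$ of two auxiliary maps: $\tau$ will be a coarse embedding, taking care of large distances, and $\upsilon$ a uniform embedding, taking care of small distances, and I will use throughout that $\ell^p(E)\oplus_p\ell^p(E)\cong\ell^p(E)$ isometrically. As preliminaries, translating we may assume $\sigma(0)=0$ and fix $\Delta,\delta>0$ with $\norm{x-y}\geqslant\Delta\saa\norm{\sigma(x)-\sigma(y)}\geqslant\delta$. As already observed, uniform continuity of $\sigma$ on the convex set $X$ makes $\theta_\sigma$ subadditive, nondecreasing, everywhere finite and vanishing at $0_+$; in particular $\theta_\sigma(2^kt)\leqslant 2^k\theta_\sigma(t)$.

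For $\tau$ I would ``zoom out''. Choose scalars $1\leqslant R_1<R_2<\cdots\to\infty$ with $\theta_\sigma\big(R_k^{-1/2}\big)\leqslant 2^{-k}$ for all $k$ — possible precisely because $\theta_\sigma$ vanishes at $0$ — and set $\tau(x)=\big(\sigma(x/R_k)\big)_{k\geqslant 1}$. The one genuinely non-routine point is that $\tau$ takes values in $\ell^p(E)$: for each $x$ and every $k$ with $R_k\geqslant\norm x^2$ we have $\norm x/R_k\leqslant R_k^{-1/2}$, hence $\norm{\sigma(x/R_k)}\leqslant\theta_\sigma(R_k^{-1/2})\leqslant 2^{-k}$, while the finitely many remaining coordinates are trivially finite; the same bound with $\norm{x-y}$ in place of $\norm x$ gives $\theta_\tau(t)^p\leqslant\sum_k\theta_\sigma(t/R_k)^p<\infty$ for every $t$, and this tends to $0$ as $t\til 0_+$ by dominated convergence, so $\tau$ is uniformly continuous and bornologous. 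For expansiveness: if $\norm{x-y}\geqslant\Delta R_K$ then $\norm{x/R_k-y/R_k}\geqslant\Delta$, hence $\norm{\sigma(x/R_k)-\sigma(y/R_k)}\geqslant\delta$, for every $k\leqslant K$, so $\norm{\tau(x)-\tau(y)}\geqslant\delta K^{1/p}$; since each $R_K$ is finite this forces $\kappa_\tau(t)\til\infty$, and $\tau$ is a coarse embedding.

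For $\upsilon$ I would instead ``zoom in'': put $\upsilon(x)=\big(4^{-k}\sigma(2^kx)\big)_{k\geqslant 0}$. Here the weights are chosen to beat the (at worst linear) growth of $\theta_\sigma(2^k\,\cdot\,)$, since $4^{-k}\theta_\sigma(2^kt)\leqslant 2^{-k}\theta_\sigma(t)$; thus $\upsilon$ is well defined into $\ell^p(E)$ with $\theta_\upsilon(t)\leqslant\big(\sum_{k\geqslant 0} 2^{-kp}\big)^{1/p}\theta_\sigma(t)$, hence uniformly continuous and bornologous. For the compression, given $x\neq y$, every $k$ with $2^k\geqslant\Delta/\norm{x-y}$ satisfies $\norm{\sigma(2^kx)-\sigma(2^ky)}\geqslant\delta$, so $\norm{\upsilon(x)-\upsilon(y)}^p\geqslant\delta^p\sum_{k:\,2^k\geqslant\Delta/\norm{x-y}}4^{-kp}$; this quantity depends only on $\norm{x-y}$ and is strictly positive for each $\norm{x-y}>0$, whence $\kappa_\upsilon(t)>0$ for all $t>0$ and $\upsilon$ is a uniform embedding.

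Finally, with $\Psi(x)=(\tau(x),\upsilon(x))$ we have $\norm{\Psi(x)-\Psi(y)}^p=\norm{\tau(x)-\tau(y)}^p+\norm{\upsilon(x)-\upsilon(y)}^p$, so $\kappa_\Psi\geqslant\kappa_\upsilon>0$ at every scale and $\kappa_\Psi\geqslant\kappa_\tau\til\infty$, while $\theta_\Psi$ is finite and tends to $0$ at $0_+$; hence $\Psi$ is at once a uniform and a coarse embedding of $X$ into $\ell^p(E)\oplus_p\ell^p(E)\cong\ell^p(E)$. I expect the only real obstacle to be the choice of the very rapidly increasing scales $R_k$: uniform continuity provides no quantitative modulus for $\sigma$ near $0$, so the summability $\sum_k\norm{\sigma(x/R_k)}^p<\infty$ must be arranged by hand, after which every remaining step is a routine estimate.
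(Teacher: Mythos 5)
Your argument is correct and follows essentially the same strategy as the paper's: Lemma \ref{unif saa coarse} there likewise interleaves rescaled copies $\sigma(\lambda_n\,\cdot\,)$ of $\sigma$ in separate coordinates, with dilation factors $\lambda_n\til\infty$ producing the uniform embedding and $\lambda_n\til 0$ the coarse embedding, exactly as your $\upsilon$ and $\tau$ do. The only substantive difference is in the coarse half: the paper multiplies each zoomed-out copy by a large scalar so that a single coordinate already witnesses expansion (which is what lets its lemma work for a general projectional decomposition with projections $P_n$), whereas you leave the copies unscaled and extract expansion by accumulating $K$ coordinates each separated by $\delta$, using the exact $\ell^p$-additivity to get the lower bound $\delta K^{1/p}$ --- a harmless simplification available only in the genuine $\ell^p$-sum setting.
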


The next corollary then  follows from observing that all of the classes of spaces  listed are closed under the operation $E\mapsto \ell^p(E)$ for an appropriate $1\leqslant p<\infty$.
\begin{cor}\label{cor to ell p sum}
If a Banach space $X$ is uniformly embeddable into $\ell^p$, $L^p$ (for some $1\leqslant p<\infty$), a reflexive, super-reflexive, stable, super-stable, non-trivial type or cotype space, then $X$ admits a simultaneously uniform and coarse embedding into a space of the same kind.
\end{cor}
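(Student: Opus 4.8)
The plan is to read off the corollary from Theorem~\ref{ell p sum} together with the elementary permanence properties of the listed classes under countable $\ell^p$-sums.

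So fix a uniform embedding $\sigma\colon X\til E$ with $E$ in one of the listed classes. A uniform embedding is in particular uniformly continuous and uncollapsed, so Theorem~\ref{ell p sum} applies and yields, for \emph{every} $1\leqslant p<\infty$, a simultaneously uniform and coarse embedding of $X$ into $\ell^p(E)$. It therefore suffices, for each class separately, to choose an exponent $p$ for which $\ell^p(E)$ again belongs to that class.

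For $E=\ell^p$ or $E=L^p([0,1])$ with the given exponent, I would keep the same $p$ and use the canonical isometric identifications $\ell^p(\ell^p)\cong\ell^p$ and $\ell^p(L^p([0,1]))\cong L^p([0,1])$ (the latter by splitting $[0,1]$ into countably many subintervals). For the reflexive, super-reflexive, non-trivial type, and non-trivial cotype classes I would take $p=2$ (indeed any $1<p<\infty$) and invoke the standard facts that $\ell^p(E)$ is reflexive (resp.\ super-reflexive) whenever $E$ is and $1<p<\infty$, and that $\ell^p(E)$ has non-trivial type (resp.\ cotype) exactly when $E$ does, as long as $p>1$ (resp.\ $p<\infty$). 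For the stable and super-stable classes I would again take $p=2$ and cite the Krivine--Maurey stability of $\ell^p$-sums of stable spaces, together with its analogue for super-stability.

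I do not expect any genuine difficulty here; the one point to keep in mind is that the exponent supplied to Theorem~\ref{ell p sum} must be matched to the invariant at hand --- the given $p$ for the $\ell^p$ and $L^p$ cases, but some $p$ strictly between $1$ and $\infty$ for reflexivity and for non-trivial type, since $\ell^1$-sums destroy both. Because Theorem~\ref{ell p sum} holds for all $1\leqslant p<\infty$, this choice is always available.
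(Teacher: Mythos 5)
Your argument is exactly the paper's: the corollary is derived from Theorem \ref{ell p sum} by observing that each listed class is closed under $E\mapsto\ell^p(E)$ for a suitably chosen $1\leqslant p<\infty$, and your case-by-case choices of exponent (the given $p$ for $\ell^p$ and $L^p$, an exponent in $(1,\infty)$ for the remaining classes) correctly fill in the details the paper leaves implicit. No issues.
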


If we wish to avoid the passage from $E$ to the infinite sum $\ell^p(E)$, we can get by with a direct sum $E\oplus E$, but only assuming that $\sigma$ maps into a bounded set. Also, the resulting map may no longer be a uniform embedding.

\begin{thm}\label{E plus E}
Suppose $\sigma\colon X\til B_E$ is an uncollapsed uniformly continuous map from a Banach space $X$ into the ball of a Banach space $E$. Then $X$ admits a uniformly continuous coarse embedding into $E\oplus E$.
\end{thm}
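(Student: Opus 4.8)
The plan is to normalize first. After translating the target we may assume $\sigma(0)=0$, and after rescaling the norm of $X$ there is $\delta\in(0,1]$ with $\norm{\sigma(x)-\sigma(y)}\geqslant\delta$ whenever $\norm{x-y}\geqslant 1$. Since $\sigma[X]\subseteq B_E$ the map $\sigma$ is automatically bounded, hence bornologous, and its expansion modulus $\theta_\sigma$ is a genuine modulus of uniform continuity with $\theta_\sigma(0^+)=0$ and $\theta_\sigma\leqslant 2$. These are the only features of $\sigma$ I intend to use.

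The map I would build is of the form $\Phi=(\Phi_1,\Phi_2)\colon X\til E\oplus E$, where the first coordinate is responsible for uniform continuity and separation at small and moderate distances, while the second is a \emph{radius-modulated} rescaling of $\sigma$ responsible for separation at large distances. Concretely, $\Phi_1$ is $\sigma$ precomposed with a fixed bounded, increasing, $1$-Lipschitz radial retraction of $X$ onto a ball (something like $\Phi_1(x)=\sigma\big(r(\norm x)\tfrac{x}{\norm x}\big)$ with $r$ bounded), which keeps $\Phi_1$ uniformly continuous while preserving $\norm{\Phi_1(x)-\Phi_1(y)}\geqslant\delta$ on a prescribed band of distances. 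For $\Phi_2$ I would take $\Phi_2(x)=\psi(\norm x)\,\sigma\big(\lambda(\norm x)x\big)$, where $\lambda(r)$ shrinks its argument just enough to keep $\lambda(\norm x)x$ inside a fixed ball and $\psi\colon[0,\infty)\til[0,\infty)$ is a concave, increasing function with $\psi\to\infty$. The prefactor $\psi(\norm x)$ is what makes $\Phi_2$ unbounded and boosts the compression at infinity; and the reason one can afford a \emph{single} $E$-valued coordinate here — rather than an $\ell^p(E)$-sum as in Theorem~\ref{ell p sum} — is precisely that $\sigma$ takes values in a bounded set, which both suppresses the cancellation that would otherwise wreck a sum of rescalings and allows the modulating factor to be inserted safely.

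Three things then need checking. Uniform continuity of $\Phi$: for $\Phi_1$ this is immediate, and for $\Phi_2$ one bounds $\norm{\Phi_2(x)-\Phi_2(y)}$ by a ``direction'' term of size roughly $\psi(\norm x)\,\theta_\sigma\!\big(C\norm{x-y}/(1+\norm x)\big)$ plus a ``magnitude'' term controlled by the modulus of $\psi$; choosing $\psi$ to grow slowly enough relative to $\theta_\sigma$ (so that $\psi(R)\,\theta_\sigma(c/R)\to 0$ for each fixed $c$, which is possible as $\theta_\sigma(0^+)=0$) makes both terms vanish with $\norm{x-y}$. Bornologousness: the same estimate, now with $\norm{x-y}$ bounded and $\theta_\sigma\leqslant 2$, gives $\sup_{\norm{x-y}\leqslant t}\norm{\Phi_2(x)-\Phi_2(y)}<\infty$, using concavity of $\psi$ and boundedness of $\sigma$. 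The delicate point, and the one I expect to be the main obstacle, is that $\Phi$ must be \emph{expanding}: given $t$ and a pair with $\norm{x-y}\geqslant t$, one has to split into regimes according to how $\norm x,\norm y$ compare with $\norm{x-y}$; in the regime where they are comparable, the rescaled $\sigma$ inside $\Phi_2$ separates the pair and the prefactor $\psi(\norm x)\gtrsim\psi(t)$ pushes $\norm{\Phi(x)-\Phi(y)}\gtrsim\delta\,\psi(t)\to\infty$, and the remaining regimes have to be caught by $\Phi_1$ and by the choice of $\lambda$. Arranging that the two copies of $E$ genuinely cover \emph{all} scales — the naive ``one rescaled copy of $\sigma$ per dyadic scale'' construction needs infinitely many copies of $E$, exactly as in Theorem~\ref{ell p sum} — is where the boundedness hypothesis $\sigma[X]\subseteq B_E$ is indispensable, and it is also why one must settle for a uniformly continuous coarse embedding whose compression merely tends to infinity, rather than a uniform embedding with compression positive at every scale.
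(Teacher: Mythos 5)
There is a genuine gap: the map you propose is not expanding, and the obstruction is structural rather than a matter of unfinished estimates. The problem is the regime $\norm x\approx\norm y\gg\norm{x-y}$, which you acknowledge must ``be caught by $\Phi_1$ and by the choice of $\lambda$'' but which neither coordinate can catch. Fix $t$ and take any pair at distance $t$, then translate it far from the origin so that $\norm x=\norm y=R$ with $R\to\infty$. Since $\lambda(R)\leqslant C/R$ (forced by your requirement that $\lambda(\norm x)x$ stay in a fixed ball), the arguments fed to $\sigma$ in $\Phi_2$ are at distance $\leqslant Ct/R\to 0$, the prefactors $\psi(\norm x)=\psi(\norm y)$ cancel exactly, and so $\norm{\Phi_2(x)-\Phi_2(y)}\leqslant \psi(R)\,\theta_\sigma(Ct/R)$ --- which tends to $0$ as $R\to\infty$ by the very condition $\psi(R)\,\theta_\sigma(c/R)\to 0$ that you impose to get uniform continuity of $\Phi_2$. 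Meanwhile $\Phi_1$ factors through a bounded subset of $X$, so the radial retraction sends this pair to points at distance $\leqslant \rho t/R\to 0$ and $\Phi_1$ collapses it as well (for the same reason, $\Phi_1$ does not in fact preserve the separation $\geqslant\delta$ on any band of distances uniformly over $X$; here precomposing with the retraction only hurts, since $\sigma$ itself is already bounded and uncollapsed everywhere). Hence $\kappa_\Phi(t)$ does not tend to infinity. The underlying point is that pairs at distance $t$ occur at every location, so any rescaling of $\sigma$ whose scale is tied to $\norm x$ rather than to $\norm{x-y}$ cannot separate all of them; covering all scales really does require infinitely many location-independent rescalings of $\sigma$.

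The paper's proof accepts this and shows instead that infinitely many rescaled copies can be packed into just \emph{two} copies of $E$ by summing them: one builds bounded uniformly continuous maps $\phi_n,\psi_n$ (each a rescaled copy of $\sigma$) and interleaved radii $r_1<t_1<r_2<t_2<\cdots$ so that $\sum_{i\leqslant n}\phi_i$ separates pairs at distance in $[r_n,t_n]$ by $2^n$, $\sum_{i\leqslant n}\psi_i$ handles $[t_n,r_{n+1}]$, and each new summand is flat ($\leqslant 2^{-n}$) below the previous threshold so the tails cannot destroy the separation already achieved. Boundedness of $\sigma$ is used exactly once and crucially: it bounds the oscillation $S$ of the accumulated partial sum, so that the next summand, rescaled by $\frac{S+2^n}{\delta}$, dominates everything before it. That is the mechanism by which $B_E$-valuedness lets one descend from $\ell^p(E)$ to $E\oplus E$; a single modulated copy of $\sigma$ per coordinate cannot substitute for it. If you want to salvage your approach, you should replace each $\Phi_i$ by such an infinite sum of rescalings of $\sigma$ with alternating scale ranges.
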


Both propositions will be consequences of somewhat finer and more detailed results with wider applicability. Indeed, Theorem \ref{ell p sum} is a direct corollary of Lemma \ref{unif saa coarse} below.

\begin{lemme}\label{unif saa coarse}
Suppose $X$ and $E$ are Banach spaces and $P_n\colon E\til E$ is a sequence of bounded projections onto subspaces $E_n\subseteq E$ so that,  $E_m\subseteq \ker P_n$ for all $m\neq n$. Assume also that $\sigma_n\colon X\til E_n$ are uncollapsed uniformly continuous maps. Then $X$ admits a simultaneously uniform and coarse embedding  into $E$.
\end{lemme}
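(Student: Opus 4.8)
The plan is to build the embedding by summing up rescaled copies of the maps $\sigma_n$, using each $\sigma_n$ to "take care" of a single scale. First I would record what uncollapsed plus uniformly continuous gives us quantitatively: for each $n$ there are constants $\Delta_n>0$ and $\epsilon_n>0$ with $\norm{x-y}\geqslant\Delta_n\saa\norm{\sigma_n(x)-\sigma_n(y)}\geqslant\epsilon_n$ (this is the uncollapsed hypothesis in the Banach-space form noted at the start of Section \ref{uniform vs coarse}), and a modulus $\theta_{\sigma_n}$ with $\lim_{t\to 0_+}\theta_{\sigma_n}(t)=0$. By composing each $\sigma_n$ with a dilation of the domain $x\mapsto\sigma_n(\lambda_n x)$ and a scalar multiple in the range, we may arrange, after relabelling, that the rescaled map $\tau_n$ is $1$-Lipschitz-ish at small scales (more precisely $\theta_{\tau_n}(t)\leqslant 2^{-n}$ for $t\leqslant 1$, say) while still being uncollapsed at the specific scale $t=2^n$, i.e. $\norm{x-y}\geqslant 2^n\saa\norm{\tau_n(x)-\tau_n(y)}\geqslant\delta$ for a fixed $\delta>0$ independent of $n$. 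Shifting by $\tau_n(0)$ we may also assume $\tau_n(0)=0$.

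Next I would define $\Phi\colon X\til E$ (or rather into $\ell^p(E)\cong E$ via the projections $P_n$, identifying $E_n$ with the $n$-th summand) by $\Phi(x)=(\tau_n(x))_{n\in\N}$, viewed as an element of $\bigoplus_p E_n\subseteq E$. The two things to check are that $\Phi$ lands in $E$ and is bornologous, and that it is expanding and uniformly continuous (hence a simultaneous uniform and coarse embedding). For membership and bornologousness: fix $x,y\in X$ with $\norm{x-y}\leqslant R$. Split the sum over $n$ into the indices where the rescaled domain-distance $\lambda_n\norm{x-y}$ is $\leqslant 1$ — there the terms are bounded by $2^{-np}$ and contribute a convergent tail — and the finitely many remaining indices, where we use $\norm{\tau_n(x)-\tau_n(y)}\leqslant\theta_{\tau_n}(\lambda_n R)<\infty$ together with bornologousness of each $\tau_n$; taking $y$ fixed (say $y=0$) this also shows $\Phi(x)\in E$. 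Uniform continuity of $\Phi$ follows the same way: given $\epsilon$, choose $N$ with $\sum_{n>N}2^{-np}<\epsilon^p/2$, handle $n>N$ by the small-scale estimate, and handle the finitely many $n\leqslant N$ by uniform continuity of each $\tau_n$ — choosing $\norm{x-y}$ small enough that every one of those finitely many terms is tiny.

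For the lower bound: given $x\neq y$ in $X$, pick $n$ with $2^{n}\leqslant\lambda_n\norm{x-y}$ — this is possible for all sufficiently separated $\lambda_n\norm{x-y}$, and by choosing the scaling parameters $\lambda_n$ to march off to infinity appropriately we can guarantee that for every $t>0$ there is at least one (in fact eventually a whole range of) $n$ with $2^n\leqslant\lambda_n t$, uniformly in a way that makes $\kappa_\Phi(t)\to\infty$. For that index, $\norm{\Phi(x)-\Phi(y)}\geqslant\norm{P_n\Phi(x)-P_n\Phi(y)}\cdot\norm{P_n}^{-1}\geqslant\delta/\norm{P_n}$. To get compression going to infinity (not merely bounded below), observe that as $\norm{x-y}$ grows, more and more indices $n$ simultaneously satisfy $2^n\leqslant\lambda_n\norm{x-y}$, and summing $\delta^p$ over all of them (divided by $\norm{P_n}^p$) gives a quantity tending to $\infty$; this yields both $\kappa_\Phi(t)>0$ for all $t>0$ (uniform embedding) and $\kappa_\Phi(t)\to\infty$ (coarse embedding). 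The bounded-projection hypothesis $\norm{P_n}<\infty$, with $E_m\subseteq\ker P_n$ for $m\neq n$, is exactly what licenses reading off the $n$-th coordinate with controlled loss.

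The main obstacle I anticipate is the bookkeeping of the two competing demands on the scaling parameters $\lambda_n$: they must grow fast enough that $\bigcup_n[2^n/\lambda_n,\infty)$ covers $(0,\infty)$ with the right multiplicity to force $\kappa_\Phi\to\infty$, yet the small-scale moduli $\theta_{\tau_n}$ must be summably small so that $\Phi$ is well-defined, bornologous and uniformly continuous. Since $\norm{P_n}$ may be unbounded, one must additionally interleave a factor of $\norm{P_n}^{-1}$ (or equivalently pre-multiply $\tau_n$) so that the lower bounds $\delta/\norm{P_n}$ still sum to infinity — this forces a slightly more careful choice, e.g. inserting each scale $2^n$ along a subsequence on which $\norm{P_n}$ is controlled, or weighting so that $\sum\norm{P_n}^{-p}=\infty$ while $\sum 2^{-np}\norm{P_n}^p<\infty$ can be arranged by spacing. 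Once the parameters are fixed, all the estimates are the routine splitting-into-tail-plus-finite-part arguments sketched above. To deduce Theorem \ref{ell p sum}, apply the lemma with $E$ replaced by $\ell^p(E)$, $E_n$ the $n$-th copy of $E$, $P_n$ the coordinate projection (norm one), and $\sigma_n=\sigma$ for every $n$.
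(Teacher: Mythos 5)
Your overall architecture matches the paper's: rescale each $\sigma_n$ in the domain and the range so that it is very flat up to some scale and uncollapsed beyond a larger scale, sum the rescaled maps, and use the projections $P_n$ to read off lower bounds coordinatewise; the application to Theorem \ref{ell p sum} is also exactly as the paper does it. However, there is a genuine gap in your expansion argument. In the lemma, $E$ is an arbitrary Banach space with bounded projections $P_n$ whose kernels contain the other $E_m$; it is \emph{not} an $\ell^p$-sum of the $E_n$, so you may not view $\Phi(x)$ as an element of $\bigoplus_p E_n$ nor "sum $\delta^p$ over all the indices" that are activated at a given distance. The only lower bound the hypotheses license is the single-coordinate estimate $\norm{\Phi(x)-\Phi(y)}\geqslant \norm{P_n}\inv\norm{P_n\Phi(x)-P_n\Phi(y)}$ for one $n$ at a time, and with your normalisation this gives only $\delta/\norm{P_n}$, which can tend to $0$ if $\norm{P_n}\til\infty$ (the projections are given data, so the condition $\sum\norm{P_n}^{-p}=\infty$ cannot be "arranged by spacing"). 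You do mention the correct repair in passing ("or equivalently pre-multiply $\tau_n$") but do not carry it out.

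The paper's resolution is precisely that pre-multiplication: it sets $\phi_n(x)=\frac{n\norm{P_n}}{\delta_n}\,\sigma_n\big(\frac{\xi_n}{n}x\big)$, so that a \emph{single} coordinate already certifies $\norm{\omega(x)-\omega(y)}\geqslant n$ once $\norm{x-y}\geqslant n\Delta_n/\xi_n$, the factor $\norm{P_n}$ cancelling against the loss incurred when reading off the $n$-th coordinate. The price is that the domain-scale $\xi_n$ must be chosen small enough that $\theta_{\sigma_n}(\xi_n)\leqslant \frac{\delta_n}{n\norm{P_n}}2^{-n}$, which restores the summability of the small-scale bounds; this is the bookkeeping your sketch leaves unresolved. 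The paper also splits the construction into two interleaved families — the odd-indexed $\psi_{2n-1}$, uncollapsed at scale $1/(2n-1)$, yield the uniform-embedding lower bound, while the even-indexed $\phi_{2n}$ yield the coarse lower bound tending to infinity — rather than asking one family $\tau_n$ to serve both purposes. If you carry out the pre-multiplication by $n\norm{P_n}$ and separate the two roles in this way, your argument closes up and coincides with the paper's proof.
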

This lemma applies in particular to the case when $E$ is a Schauder sum of a sequence of subspaces $E_n$.

\begin{proof}
By composing with a translation, we may suppose that $\sigma_n(0)=0$ for each $n$. Fix also $\Delta_n,\delta_n,\eps_n>0$ so that
$$
\norm{x-y}\geqslant \Delta_n\;\saa\; \norm{\sigma_n(x)-\sigma_n(y)}\geqslant \delta_n
$$
and
$$
\norm{x-y}\leqslant \eps_n\;\saa\; \norm{\sigma_n(x)-\sigma_n(y)}\leqslant 2^{-n}.
$$
Note that, if $\norm{x-y}\leqslant k\cdot\eps_n$ for some $k\in \N$, then there are $z_0=x, z_1, z_2, \ldots, z_k=y\in X$ so that $\norm{z_i-z_{i+1}}=\frac 1k\norm{x-y}\leqslant \eps_n$, whence
$$
\norm{\sigma_n(x)-\sigma_n(y)}\leqslant \sum_{i=1}^k\norm{\sigma_n(z_{i-1})-\sigma_n(z_i)}\leqslant k\cdot 2^{-n}.
$$
Thus, setting $\psi_n(x)=\frac{\sigma_n(n\Delta_n\cdot x)}{\lceil\frac{n^2\Delta_n}{\eps_n}\rceil}$, we have, for all $x,y\in X$, 
\[\begin{split}
\norm{x-y}\leqslant n
&\;\saa\; \norm{n\Delta_n\cdot x-n\Delta_n\cdot y}\leqslant n^2\Delta_n<\Big\lceil\frac{n^2\Delta_n}{\eps_n}\Big\rceil\cdot \eps_n\\
&\;\saa\; \norm{\sigma_n\big(n\Delta_n\cdot x\big)-\sigma_n\big(n\Delta_n\cdot y\big)}\leqslant \Big\lceil\frac{n^2\Delta_n}{\eps_n}\Big\rceil\cdot 2^{-n}\\
&\;\saa\; \norm{\psi_n(x)-\psi_n(y)}\leqslant 2^{-n},
\end{split}\]
while 
\[\begin{split}
\norm{x-y}\geqslant \frac 1n
&\;\saa\; \norm{n\Delta_n\cdot x-n\Delta_n\cdot y}\geqslant \Delta_n\\
&\;\saa\; \norm{\sigma_n\big(n\Delta_n\cdot x\big)-\sigma_n\big(n\Delta_n\cdot y\big)}\geqslant \delta_n\\
&\;\saa\; \norm{\psi_n(x)-\psi_n(y)}\geqslant \frac {\delta_n}{\big\lceil\frac{n^2\Delta_n}{\eps_n}\big\rceil}.
\end{split}\]


Now choose $\xi_n>0$ so that
$$
\norm{x-y}\leqslant \xi_n\;\saa\; \norm{\sigma_n(x)-\sigma_n(y)}\leqslant \frac{\delta_n}{n\norm{P_n}}\cdot2^{-n}
$$
and set $\phi_n(x)=\frac{n\norm{P_n}}{\delta_n}\cdot\sigma_n\big(\frac{\xi_n}n\cdot x\big)$. Then
\[\begin{split}
\norm{x-y}\leqslant n
&\;\saa\; \norm{\frac {\xi_n}n\cdot x-\frac {\xi_n}n\cdot y}\leqslant \xi_n\\
&\;\saa\; \Norm{\sigma_n\big(\frac {\xi_n}n\cdot x\big)-\sigma_n\big(\frac {\xi_n}n\cdot y\big)}\leqslant \frac{\delta_n}{n\norm{P_n}}\cdot2^{-n}\\
&\;\saa\; \norm{\phi_n(x)-\phi_n(y)}\leqslant 2^{-n},
\end{split}\]
while 
\[\begin{split}
\norm{x-y}\geqslant \frac{n\Delta_n}{\xi_n}
&\;\saa\; \norm{\frac {\xi_n}n\cdot x-\frac {\xi_n}n\cdot y}\geqslant \Delta_n\\
&\;\saa\;\Norm{\sigma_n\big(\frac {\xi_n}n\cdot x\big)-\sigma_n\big(\frac {\xi_n}n\cdot y\big)}\geqslant \delta_n\\
&\;\saa\; \norm{\phi_n(x)-\phi_n(y)}\geqslant n\norm{P_n}.
\end{split}\]

In particular, if $\norm{x-y}\leqslant m$, then $\norm{x-y}\leqslant n$ for all $n\geqslant m$, whence
$$
\sum_{n=1}^\infty\norm{\psi_{2n-1}(x)-\psi_{2n-1}(y)}\leqslant \sum_{n=1}^{m-1}\norm{\psi_{2n-1}(x)-\psi_{2n-1}(y)}+\sum_{n=m}^\infty2^{-2n+1}<\infty
$$
and
$$
\sum_{n=1}^\infty\norm{\phi_{2n}(x)-\phi_{2n}(y)}\leqslant \sum_{n=1}^{m-1}\norm{\phi_{2n}(x)-\phi_{2n}(y)}+\sum_{n=m}^\infty2^{-2n}<\infty.
$$
Setting $y=0$, we see that both $\sum_{n=1}^\infty\psi_{2n-1}(x)$ and $\sum_{n=1}^\infty\phi_{2n}(x)$ are absolutely convergent in $E$, whence we may define $\omega\colon X\til E$ by 
$$
\omega(x)=\sum_{n=1}^\infty\psi_{2n-1}(x)+\sum_{n=1}^\infty\phi_{2n}(x).
$$

First, to see that $\omega$ is uniformly continuous and thus bornologous, let $\eps>0$ and find $m$ large enough so that $2^{-2m+2}<\frac\eps3$. Since each of $\sigma_n$ is uniformly continuous, so are the $\psi_n$ and $\phi_n$. We may therefore choose $\eta>0$ so that 
$$
\NORM{\Big(\sum_{n=1}^{m-1}\psi_{2n-1}(x)+\sum_{n=1}^{m-1}\phi_{2n}(x)\Big)-\Big(\sum_{n=1}^{m-1}\psi_{2n-1}(y)+\sum_{n=1}^{m-1}\phi_{2n}(y)\Big)}<\frac \eps3
$$
whenever $\norm{x-y}<\eta$. Thus, if $\norm{x-y}<\min\{\eta, m\}$, we have 
\[\begin{split}
\norm{\omega(x)-\omega(y)}
\leqslant &\NORM{\Big(\sum_{n=1}^{m-1}\psi_{2n-1}(x)+\sum_{n=1}^{m-1}\phi_{2n}(x)\Big)-\Big(\sum_{n=1}^{m-1}\psi_{2n-1}(y)+\sum_{n=1}^{m-1}\phi_{2n}(y)\Big)}\\
&+\sum_{n=m}^\infty\Norm{\psi_{2n-1}(x)-\psi_{2n-1}(y)}+\sum_{n=m}^\infty\Norm{\phi_{2n}(x)-\phi_{2n}(y)}\\
<& \frac \eps3+\frac \eps3+\frac \eps3,
\end{split}\]
showing uniform continuity.

Secondly, to see that $\omega$ is a uniform embedding, suppose that $\norm{x-y}>\frac 1{2n-1}$ for some $n\geqslant 1$. Then,
\[\begin{split}
\norm{\omega(x)-\omega(y)}
&\geqslant\frac1{ \norm{P_{2n-1}}}\norm{P_{2n-1}\omega(x)-P_{2n-1}\omega(y)}\\
&\geqslant\frac1{ \norm{P_{2n-1}}}\norm{\psi_{2n-1}(x)-\psi_{2n-1}(y)}\\
&\geqslant \frac 1 {\norm{P_{2n-1}}}\cdot \frac {\delta_{2n-1}}{\big\lceil\frac{(2n-1)^2\Delta_{2n-1}}{\eps_{2n-1}}\big\rceil}.
\end{split}\]

Finally, to see that $\omega$ is a coarse embedding, observe  that, if $\norm{x-y}\geqslant \frac{2n\Delta_{2n}}{\xi_{2n}}$, then
\[\begin{split}
\norm{\omega(x)-\omega(y)}
&\geqslant\frac1{ \norm{P_{2n}}}\norm{P_{2n}\omega(x)-P_{2n}\omega(y)}\\
&\geqslant\frac1{ \norm{P_{2n}}}\norm{\phi_{2n}(x)-\phi_{2n}(y)}\\
&\geqslant 2n,
\end{split}\]
which finishes the proof.
\end{proof}

We should mention here that B. Braga \cite{braga1} has been able to use our construction above coupled with a result of E. Odell and T. Schlumprecht \cite{distortion} to show that $\ell^2$ admits a simultaneously uniform and coarse embedding into every Banach space with an unconditional basis and finite cotype.


Our next result immediately implies theorem \ref{E plus E}.
\begin{lemme}\label{unif-coarse}
Suppose $\sigma\colon X\til B_E$ and $\omega\colon X\til B_F$ are uncollapsed uniformly continuous maps from a Banach space $X$ into the balls of Banach spaces $E$ and $F$.  Then $X$ admits  uniformly continuous coarse embedding into $E\oplus F$. 
\end{lemme}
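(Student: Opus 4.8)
The plan is to mimic the structure of the proof of Lemma \ref{unif saa coarse}, but with only two ``slots'' available instead of infinitely many. The key idea is that a single uncollapsed uniformly continuous map $\sigma\colon X\til B_E$ can be rescaled in the source to detect \emph{all} scales of $X$ simultaneously, at the cost of being multivalued no longer uniformly continuous as a single map — so instead we build a sequence of rescaled copies of $\sigma$, sum them with rapidly decaying coefficients inside $E$, and do the same with $\omega$ inside $F$. Concretely, after translating so that $\sigma(0)=\omega(0)=0$, fix for each $n$ constants $\Delta_n,\delta_n,\eps_n>0$ witnessing that $\norm{x-y}\geqslant\Delta_n\saa\norm{\sigma(x)-\sigma(y)}\geqslant\delta_n$ and $\norm{x-y}\leqslant\eps_n\saa\norm{\sigma(x)-\sigma(y)}\leqslant 2^{-n}$ (using the same chaining-along-a-segment trick as in Lemma \ref{unif saa coarse} to upgrade the small-distance estimate to all distances $\leqslant k\eps_n$), and analogously for $\omega$. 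Then set $\psi_n(x)=c_n\sigma(r_n x)$ and $\phi_n(x)=c_n'\omega(r_n' x)$ with the scaling parameters $r_n,r_n'$ chosen so that the ``expanding window'' of the $n$-th copy covers distances around $n$, and the normalising constants $c_n,c_n'$ chosen so that $\psi_n,\phi_n$ are $2^{-n}$-Lipschitz-like on distances $\leqslant n$ while still having compression bounded below by (something growing, say $n$) on distances $\geqslant$ (something). Crucially, because $\sigma$ and $\omega$ take values in the \emph{bounded} sets $B_E$ and $B_F$, the series $\sum_n \psi_n(x)$ and $\sum_n\phi_n(x)$ converge absolutely provided $\sum_n c_n<\infty$ and $\sum_n c_n'<\infty$ — this is exactly where boundedness is used, replacing the role of the disjoint projections $P_n$ in the previous lemma.

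Next I would define $\omega\colon X\til E\oplus F$ by $\omega(x)=\big(\sum_n\psi_n(x),\ \sum_n\phi_n(x)\big)$ — I will rename to avoid the clash with the hypothesis map, calling it $\Omega$ — and verify the three properties. Uniform continuity follows exactly as in Lemma \ref{unif saa coarse}: given $\eps>0$, split the sum at an index $m$ with tail $<\eps/3$ in each coordinate, and use joint uniform continuity of the finite partial sum. Bornologousness is then automatic. For the expanding/coarse-embedding property, the point is that for a pair $x,y$ with $\norm{x-y}$ of size roughly $n$, the $n$-th summand $\psi_n$ (or $\phi_n$) contributes at least $\delta'$ for a fixed lower bound, while \emph{all other} summands are controlled: the summands with index $>n$ are each $\leqslant 2^{-k}\norm{x-y}$-ish by the small-scale estimate applied after chaining, and the summands with index $<n$ are bounded by $2c_k$ since the target is bounded by $1$ in norm. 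So one arranges the constants so that the single good summand beats the total perturbation from the others, giving $\norm{\Omega(x)-\Omega(y)}\gtrsim$ (a quantity tending to $\infty$ with $\norm{x-y}$). This is the step requiring the most bookkeeping and is where I expect the real work to lie: in Lemma \ref{unif saa coarse} the disjoint projections isolated one summand cleanly and for free, whereas here, without projections, one must quantitatively dominate the contribution of the infinitely many ``wrong-scale'' copies, which forces a careful coupled choice of the rates $c_n$ (small, summable), the expansion radii, and the lower compression bounds.

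The main obstacle, then, is precisely this loss of the clean coordinate-isolation that the projections $P_n$ provided in Lemma \ref{unif saa coarse}: one has to show that a single well-tuned copy $\psi_n$ (or $\phi_n$) still ``sticks out'' of the superposition $\sum_k\psi_k$. The boundedness hypothesis $\sigma[X]\subseteq B_E$ is what makes this feasible, since it caps the damage from the low-index copies by $\sum_{k<n}2c_k\leqslant 2\sum_k c_k$, a fixed constant, independent of how far apart $x$ and $y$ are; meanwhile the target compression bound for $\psi_n$ on large distances can be made to grow (e.g.\ like $n$), eventually dwarfing that fixed constant plus the small tail $\sum_{k\geqslant n}2^{-k}\norm{x-y}$-type terms. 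Also, unlike Lemma \ref{unif saa coarse}, one should not expect $\Omega$ to be a uniform embedding — only a uniformly continuous coarse embedding — exactly because the small-scale (uniform) separation that projections would have preserved is now washed out by the summation; this matches the statement of the lemma, which only claims a uniformly continuous coarse embedding. Once the constants are fixed, the verification of the three bullet-style estimates is routine and parallels the displayed chains of implications in the proof of Lemma \ref{unif saa coarse}.
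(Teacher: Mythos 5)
Your overall architecture --- rescaled copies of $\sigma$ and $\omega$ summed telescopically, with one dominant term beating a bounded contribution from the earlier terms and a small tail from the later ones --- is indeed the shape of the paper's argument, and your verification of uniform continuity by splitting the sum is exactly what the paper does. But two of your quantitative mechanisms are wrong, and the first is fatal as written. You require $\sum_n c_n<\infty$ so that boundedness of $\sigma[X]\subseteq B_E$ gives absolute convergence of $\sum_n\psi_n(x)$. But then $\norm{\sum_n\psi_n(x)-\sum_n\psi_n(y)}\leqslant \sum_n 2c_n<\infty$ for \emph{all} $x,y$, so each coordinate of $\Omega$ has image of bounded diameter and cannot be expanding; in particular no single term can have compression ``growing like $n$'', since its compression is at most $2c_n\til 0$. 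In the paper the coefficients \emph{grow}: the $n$-th term is scaled by roughly $(S+2^n)/\delta$, where $S$ is the supremum of the oscillation of the sum of the first $n-1$ terms --- this is where boundedness of the ranges is actually used, namely to guarantee $S<\infty$ so that one new term can dominate all its predecessors combined. Absolute convergence comes instead from the small-scale estimates: for fixed $x$, the distance $\norm{x-0}$ eventually lies below the $n$-th smallness threshold, whence $\norm{\psi_n(x)}\leqslant 2^{-n}$. You have the roles of boundedness and of the small-scale estimates interchanged.

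Second, your expansion argument is missing the interleaving that makes two target spaces necessary. Each rescaled copy has an unavoidable ``blind window'' of distances: it is guaranteed small only up to some threshold and guaranteed large only from a strictly larger one (the gap between the uniform-continuity scale and the uncollapsedness scale $\Delta$, inflated by the large coefficient). For $\norm{x-y}$ in the blind window of $\psi_{n+1}$, that single uncontrolled term --- whose oscillation can be as large as twice its (large) coefficient --- can cancel the lower bound $2^n$ on the partial sum of the first $n$ terms, so a single component is not expanding and the claim that ``the summands with index $>n$ are each small'' fails precisely there. The paper's proof chooses thresholds $0=t_0<r_1<t_1<r_2<\cdots$ so that the $E$-component is reliable on each window $[r_n,t_n]$ and the $F$-component on each window $[t_n,r_{n+1}]$; every large distance then lands in a good window of at least one coordinate, and the sum of the two coordinate norms is expanding. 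Your parenthetical ``$\psi_n$ (or $\phi_n$)'' gestures at this but supplies no mechanism for why one of the two must work, so the verification of expansion does not go through as proposed.
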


\begin{proof}
Since $\sigma$ and $\omega$ are uncollapsed, pick $\Delta\geqslant 2$ and $\delta>0$ so that 
$$
\norm {x-y}>\Delta\;\saa\; \norm{\sigma(x)-\sigma(y)}>\delta \;\;\&\;\; \norm{\omega(x)-\omega(y)}>\delta.
$$
We will inductively define bounded uniformly continuous maps
$$
\phi_1,\phi_2,\ldots\colon X\til E
$$
and
$$
\psi_1,\psi_2,\ldots\colon Y\til F
$$
with $\phi_n(0)=\psi_n(0)=0$ and numbers $0=t_0<r_1<t_1<r_2<t_2<\ldots$ with $\lim_{n}r_n=\infty$ so that, for all $n\geqslant 1$,
$$
\norm{x-y}\geqslant r_n\;\saa\; \Norm{\sum_{i=1}^n\phi_i(x)- \sum_{i=1}^n\phi_i(y)}\geqslant 2^n,
$$
$$
\norm{x-y}\geqslant t_n\;\saa\; \Norm{\sum_{i=1}^n\psi_i(x)- \sum_{i=1}^n\psi_i(y)}\geqslant 2^n,
$$
$$
\norm{x-y}\leqslant t_{n-1}\;\saa\; \Norm{\phi_n(x)- \phi_n(y)}\leqslant 2^{-n}
$$
and
$$
\norm{x-y}\leqslant r_n\;\saa\; \Norm{\psi_n(x)- \psi_n(y)}\leqslant 2^{-n}.
$$

Suppose that this have been done. Then
$$
\norm{x-y}\leqslant t_{n-1}\;\;\saa\;\; \sum_{i=n}^\infty\Norm{\phi_i(x)- \phi_i(y)}\leqslant \sum_{i=n}^\infty2^{-i}\leqslant 1.
$$
In particular, setting $y=0$, we see that the series $\sum_{i=1}^\infty\phi_i(x)$ is absolutely convergent for all $x\in X$. Similarly, 
$$
\norm{x-y}\leqslant r_{n}\;\saa\; \sum_{i=n}^\infty\Norm{\psi_i(x)- \psi_i(y)}\leqslant \sum_{i=n}^\infty2^{-i}\leqslant 1,
$$
showing that also $\sum_{i=1}^\infty\psi_i(x)$ is absolutely convergent for all $x\in X$.
So define $\phi\colon X\til E$ and $\psi\colon X\til F$ by $\phi(x)=\sum_{i=1}^\infty\phi_i(x)$ and $\psi(x)=\sum_{i=1}^\infty\psi_i(x)$.

We claim that $\phi$ and $\psi$ are uniformly continuous. To see this, let $\alpha>0$ be given and pick $n\geqslant 2$ so that $2^{-n+2}<\alpha$. By uniform continuity of the $\phi_i$, we may choose $\beta>0$ small enough so that $\sum_{i=1}^{n-1}\Norm{\phi_i(x)- \phi_i(y)}<\frac \alpha2$ whenever $\norm{x-y}<\beta$.  Thus, if $\norm{x-y}<\min\{\beta, t_{n-1}\}$, we have
\[\begin{split}
\norm{\phi(x)-\phi(y)}
&\leqslant \sum_{i=1}^{n-1}\Norm{\phi_i(x)- \phi_i(y)}+ \sum_{i=n}^\infty\Norm{\phi_i(x)- \phi_i(y)}\\
&<\frac \alpha2+\sum_{i=n}^\infty2^{-i}\\
&<\alpha,
\end{split}\]
showing that $\phi$ is uniformly continuous. A similar argument works for $\psi$.

Now, suppose $\norm {x-y}\geqslant r_m$. Then either $r_n\leqslant \norm {x-y}\leqslant t_n$ or $t_n\leqslant \norm {x-y}\leqslant r_{n+1}$ for some $n\geqslant m$. In the first case,
\[\begin{split}
\Norm{\phi(x)-\phi(y)}
&=\Norm{\sum_{i=1}^\infty\phi_i(x)- \sum_{i=1}^\infty\phi_i(y)}\\
&\geqslant \Norm{\sum_{i=1}^n\phi_i(x)- \sum_{i=1}^n\phi_i(y)}-\sum_{i=n+1}^\infty\Norm{\phi_i(x)- \phi_i(y)}\\
&\geqslant 2^n-1,
\end{split}\]
while, in the second case,
\[\begin{split}
\Norm{\psi(x)-\psi(y)}
&=\Norm{\sum_{i=1}^\infty\psi_i(x)- \sum_{i=1}^\infty\psi_i(y)}\\
&\geqslant \Norm{\sum_{i=1}^n\psi_i(x)- \sum_{i=1}^n\psi_i(y)}-\sum_{i=n+1}^\infty\Norm{\psi_i(x)- \psi_i(y)}\\
&\geqslant 2^n-1.
\end{split}\]
Thus, 
$$
\norm {x-y}\geqslant r_m\;\;\saa\;\; \norm{\phi(x)-\phi(y)}+\norm{\psi(x)-\psi(y)}\geqslant 2^m-1,
$$
showing that $\phi\oplus\psi\colon X\til E\oplus F$ is expanding. As each of $\phi$ and $\psi$ is uniformly continuous, so is $\phi\oplus\psi$ and therefore also bornologous. It follows that $\phi\oplus\psi$ is a coarse embedding of $X$ into $E\oplus F$.

Let us now return to the construction of $\phi_i, \psi_i, r_i$ and $t_i$.  We begin by letting $t_0=0$, $r_1=\Delta$ and $\phi_1=\frac{2}\delta\sigma$. Now suppose that $\phi_1,\ldots, \phi_n$, $\psi_1,\ldots, \psi_{n-1}$ and $t_0<r_1<t_1<\ldots< r_{n}$ have been defined satisfying the required conditions. As the $\psi_i$ are bounded, let 
$$
S=\sup_{x,y\in X}\Norm{\sum_{i=1}^{n-1}\psi_i(x)-\sum_{i=1}^{n-1}\psi_i(y)}.
$$
Also, as $\omega$ is uniformly continuous, pick $0<\eps<1$ so that $\norm{\omega(x)-\omega(y)}<\frac\delta{(S+2^{n})2^{n}}$ whenever $\norm{x-y}\leqslant \eps$ and let $\psi_n(x)=\frac{S+2^{n}}\delta\omega(\frac \eps{r_n}x)$. Then, if $\norm{x-y}\leqslant r_n$, we have $\norm{\frac \eps{r_n}x-\frac \eps{r_n}y}\leqslant \eps$ and so
$$
\norm{\psi_n(x)-\psi_n(y)}=\frac{S+2^{n}}\delta\NORM{\omega\big(\frac \eps{r_n}x\big)-\omega\big(\frac \eps{r_n}y\big)}<2^{-n}.
$$
On the other hand, if we let $t_n=\frac{r_n\Delta}\eps$, then
$$
\norm{x-y}\geqslant t_n\;\saa\; \norm{\psi_n(x)-\psi_n(y)}\geqslant S+2^n\;\saa\; \Norm{\sum_{i=1}^n\psi_i(x)- \sum_{i=1}^n\psi_i(y)}\geqslant 2^n.
$$
Note that, as $\eps<1$ and $\Delta\geqslant 2$, we have $t_n>2r_n$. A similar construction allows us to find $\phi_{n+1}$ and $r_{n+1}>2t_n$ given $\phi_1,\ldots, \phi_n$, $\psi_1,\ldots, \psi_n$ and $t_0<r_1<\ldots< t_n$.
\end{proof}

In the light of the previous results, it would be very interesting to determine when a coarse embedding can be replaced by a uniformly continuous coarse embedding. As mentioned earlier, Naor \cite{naor-nets} was able to construct a bornologous map between two separable Banach spaces, which is not close to any uniformly continuous map. 
\begin{quest}
Suppose $X$ is a separable Banach space coarsely embedding into a separable Banach space $E$. Is there a uniformly continuous coarse embeddding of $X$ into $E$?
\end{quest}


\section{Cocycles and affine isometric representations}\label{affine actions}
By the Mazur--Ulam Theorem, every surjective isometry $A$ of a Banach space $X$ is {\em affine}, that is, there are a unique invertible linear isometry $T\colon X\til X$ and a vector $\eta\in X$ so that $A$ is given by $A(\xi)=T(\xi)+\eta$ for all $\xi \in X$. 
It follows that, if $\alpha\colon G\curvearrowright X$ is an isometric action of a group $G$ on a Banach space $X$, there is an isometric  linear representation $\pi\colon G\curvearrowright X$, called the {\em linear part} of $\alpha$, and a corresponding {\em cocycle} $b\colon G\til X$ so that
$$
\alpha(g)\xi=\pi(g)\xi +b(g)
$$
for all $g\in G$ and $\xi \in X$. In particular, $b$ is simply the orbit map $g\mapsto \alpha(g)0$. Moreover, the cocycle $b$ then satisfies the {\em cocycle equation}
$$
b(gf)=\pi(g)b(f)+b(g)
$$
for $g,f\in G$. 
Finally, as $\alpha$ is an action by isometries, we have
\[\begin{split}
\norm{ b(f)-b(g)}&=\norm{\alpha(f)0-\alpha(g)0}\\
&=\norm{\alpha(g\inv f)0-0}\\
&=\norm{b(g\inv f)}.
\end{split}\]

Now, if $G$ is a topological group, the action $\alpha$ is continuous, i.e., continuous as a map $\alpha\colon G\times X\til X$, if and only if the linear part $\pi$ is {\em strongly continuous}, that is, $g\in G\mapsto \pi(g)\xi\in X$ is continuous for every $\xi \in X$, and $b\colon G\til X$ is continuous. Moreover, since $b$ is simply the orbit map $g\mapsto \alpha(g)0$, in this case, the cocycle $b\colon G\til X$ is both uniformly continuous and bornologous. We say that the action $\alpha$ is {\em coarsely proper} if and only if  $b\colon G\til X$ is a coarse embedding.

If $\pi\colon G\curvearrowright X$ is a strongly continuous isometric linear representation, we let $Z^1(G,\pi)$ denote the vector space of continuous cocycles $b\colon G\til X$ assiciated to $\pi$. Also, let $B^1(G,\pi)$ denote the linear subspace of {\em coboundaries}, i.e., cocycles  $b$ of the form $b(g)=\xi-\pi(g)\xi$ for some $\xi\in X$. Note that the cocycle $b$ has this form if and only if $\xi$ is fixed by the corresponding affine isometric action $\alpha$ induced by $\pi$ and $b$.

As noted above, continuous cocycles are actually uniformly continuous. But, in the case of cocycles between Banach spaces, we have stronger information available. 

\begin{prop}\label{affine banach}
Let $b\colon X\til E$ be a continuous uncollapsed cocycle between Banach spaces $X$ and $E$,  i.e., there are $\Delta,\delta>0$ so that
$$
\norm x>\Delta\;\saa\; \norm{b(x)}>\delta.
$$
Then $b\colon X\til E$ is a uniform embedding. In fact, there are constants $c,C>0$ so that
$$
c\cdot \min\{\norm{x-y},1\}\leqslant  \norm{b(x)-b(y)}\leqslant C\norm{x-y}+C.
$$
\end{prop}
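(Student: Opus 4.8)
The plan is to reduce the proposition to a two-sided estimate of $\norm{b(x)}$ in terms of $\norm x$ and then to extract both inequalities from the cocycle equation together with the fact that the linear part $\pi$ acts by isometries. The first step is the reduction: since $b(0)=0$ and $\norm{b(x)-b(y)}=\norm{b(x-y)}$ for all $x,y\in X$ (as established in the preceding discussion, writing $g\inv f$ additively as $x-y$), it suffices to find constants $c,C>0$, depending only on $\Delta$, $\delta$ and the modulus of uniform continuity of $b$, such that $c\cdot\min\{\norm x,1\}\leqslant\norm{b(x)}\leqslant C\norm x+C$ for every $x\in X$. The basic tool is obtained by iterating the cocycle equation in the form $b\big((k+1)x\big)=\pi(kx)b(x)+b(kx)$, which gives $b(nx)=\sum_{k=0}^{n-1}\pi(kx)b(x)$ for all $n\in\N$; since each $\pi(kx)$ is a linear isometry, the triangle inequality yields the sub-additivity bound $\norm{b(nx)}\leqslant n\norm{b(x)}$, equivalently $\norm{b(x)}\geqslant\frac1n\norm{b(nx)}$.

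For the upper bound I would use that a continuous cocycle is uniformly continuous (as already noted): pick $\eta>0$ with $\norm x\leqslant\eta\saa\norm{b(x)}\leqslant1$, and for arbitrary $x$ write $x=n\cdot(x/n)$ with $n=\lceil\norm x/\eta\rceil$, so that $\norm{x/n}\leqslant\eta$ and hence $\norm{b(x)}\leqslant n\norm{b(x/n)}\leqslant n\leqslant\norm x/\eta+1$. This gives the right-hand inequality with $C=\max\{1/\eta,1\}$.

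For the lower bound, given $x\neq0$ I would set $n=\lfloor\Delta/\norm x\rfloor+1$, so that $\norm{nx}>\Delta$ and therefore $\norm{b(nx)}>\delta$ by the uncollapsedness hypothesis; the sub-additivity bound then gives $\norm{b(x)}\geqslant\frac1n\norm{b(nx)}>\delta/n$. Since $n\leqslant\Delta/\norm x+1$, this reads $\norm{b(x)}>\frac{\delta}{\Delta+1}\norm x$ when $\norm x\leqslant1$ and $\norm{b(x)}>\frac{\delta}{\Delta+1}$ when $\norm x\geqslant1$, so in either case $\norm{b(x)}\geqslant\frac{\delta}{\Delta+1}\min\{\norm x,1\}$, which is the left-hand inequality with $c=\delta/(\Delta+1)$. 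Applying the two inequalities to $x-y$ in place of $x$ and recalling that $b$ is uniformly continuous then shows $\kappa_b(t)\geqslant c\min\{t,1\}>0$ for every $t>0$, i.e.\ that $b$ is a uniform embedding.

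I do not expect a genuine obstacle: the whole argument is elementary, and the single idea driving it is that the cocycle equation plus the isometry property of $\pi$ forces $\norm{b(\cdot)}$ to be sub-additive along each ray from the origin — which bounds $b$ from above directly and, read against the contrapositive of $\norm x>\Delta\saa\norm{b(x)}>\delta$, from below. The only place calling for slight care is combining the regimes $\norm x\leqslant1$ and $\norm x\geqslant1$ to land on the asserted $\min\{\norm{x-y},1\}$ form of the compression estimate.
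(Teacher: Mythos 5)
Your proof is correct and follows essentially the same route as the paper's: both reduce to estimating $\norm{b(x)}$ via $\norm{b(x)-b(y)}=\norm{b(x-y)}$, derive the sub-additivity $\norm{b(nx)}\leqslant n\norm{b(x)}$ from the cocycle identity and the fact that $\pi$ acts by isometries, and read the lower bound off the uncollapsedness hypothesis by choosing $n$ with $n\norm{x}>\Delta$. The only cosmetic difference is that the paper obtains the upper bound by citing the Corson--Klee lemma, whereas you reprove that special case by hand with the same sub-additivity trick.
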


\begin{proof}
Suppose that $\pi\colon X\curvearrowright E$ is the strongly continuous  isometric linear action for which  $b\colon X\til E$ an uncollapsed continuous cocycle.
As noted above, $b$ is uniformly continous and hence, by the Corson--Klee lemma (Proposition 1.11 \cite{lindenstrauss}), also Lipschitz for large distances. This shows the second inequality.

We now show the first inequality with $c=\min\{\delta, \frac \delta{2\Delta}\}$, which implies uniform continuity of  $b\inv$. Since $\norm{b(x)-b(y)}=\norm{b(x-y)}$, it suffices to verify that 
$$
c\cdot \min\{\norm{x},1\}\leqslant  \norm{b(x)}
$$ 
for all $x\in X$. For $\norm{x}>\Delta$, this follows from our assumption and choice of $c$. So suppose instead that $x\in X\setminus \{0\}$ with $\norm x\leqslant \Delta$ and let $n$ be minimal  so that $n\norm{x}> \Delta$. Then $\norm x\leqslant \frac {\Delta}{n-1}\leqslant \frac {2\Delta}{n}$ and
\[\begin{split}
\delta&< \norm{b(n\cdot x)}\\
&=\norm{\pi^{n-1}(x)b(x)+\pi^{n-2}(x)b(x)+\ldots +b(x)}\\
&\leqslant\norm{\pi^{n-1}(x)b(x)}+\norm{\pi^{n-2}(x)b(x)}+\ldots +\norm{b(x)}\\
&=n\cdot\norm{b(x)},
\end{split}\]
i.e., 
$$
\norm{b(x)}\geqslant \frac \delta n \geqslant \frac \delta {2\Delta}\cdot \frac {2\Delta}{n}\geqslant \frac \delta{2\Delta}\cdot\norm x\geqslant c\norm x
$$ 
as required.
\end{proof}
Since any continuous cocycle is both bornologous and uniformly continuous, we see that any coarsely proper continuous cocycle $b\colon X\til E$ between Banach spaces is simultaneously a uniform and coarse embedding.

The following variation is also of independent interest.
\begin{prop}\label{affine ball}
Let $b\colon X\til E$ be a continuous cocycle between Banach spaces satisfying
$$
\inf_{\norm{x}=r}\norm{b(x)}>0
$$
for some $r> 0$. Then $B_X$ uniformly embeds into $B_E$.
\end{prop}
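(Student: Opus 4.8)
The plan is to carry out the amplification argument from the proof of Proposition \ref{affine banach}, but localised to a thin spherical shell around the single radius $r$ at which we control $\norm{b}$. Throughout I would use the facts already recorded: since $b$ is a continuous cocycle it is uniformly continuous and bornologous, $b(0)=0$, each $\pi(z)$ is a linear isometry, and $\norm{b(x)-b(y)}=\norm{b(x-y)}$. Write $\delta=\inf_{\norm x=r}\norm{b(x)}>0$. The first step is to upgrade this to a lower bound on a shell: given $z$ with $r\leqslant\norm z<r+\eta$, the vector $z'=\frac r{\norm z}z$ has $\norm{z'}=r$ and $\norm{z-z'}=\norm z-r<\eta$, so by uniform continuity of $b$ one may fix $\eta\in(0,r]$ once and for all so that $\norm{b(z)-b(z')}<\delta/2$, and hence $\norm{b(z)}\geqslant\delta/2$, for all $z$ with $r\leqslant\norm z<r+\eta$.

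Next comes the amplification, which is the main point. Fix $z$ with $0<\norm z=s<\eta$ and let $n$ be least with $ns\geqslant r$; then $ns<r+s<r+\eta$, so $ns$ lands in the shell, while $n\leqslant r/s+1\leqslant 2r/s$ since $s<\eta\leqslant r$. By the cocycle equation, $b(nz)=\sum_{i=0}^{n-1}\pi(z)^i b(z)$, and since each $\pi(z)^i$ is an isometry we get $\norm{b(nz)}\leqslant n\norm{b(z)}$; combined with $\norm{b(nz)}\geqslant\delta/2$ from the first step this gives
$$
\norm{b(z)}\geqslant\frac{\delta}{2n}\geqslant\frac{\delta}{4r}\,\norm z
\qquad\text{whenever }0<\norm z<\eta.
$$

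To finish, put $\rho=\eta/3$, so that any $x,y\in B_X$ scaled by $\rho$ satisfy $\norm{\rho x-\rho y}\leqslant 2\rho<\eta$; applying the displayed estimate to $z=\rho x-\rho y$ shows $\norm{b(\rho x)-b(\rho y)}=\norm{b(\rho x-\rho y)}\geqslant\frac{\delta\rho}{4r}\norm{x-y}$, which together with the uniform continuity of $b$ makes $x\mapsto b(\rho x)$ a uniform embedding of $B_X$ into $E$. Since $b$ is bornologous, $b[\rho B_X]$ is bounded, say contained in $MB_E$ for some $M\geqslant 1$, and then $x\mapsto\frac1M b(\rho x)$ is the desired uniform embedding of $B_X$ into $B_E$. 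The one step that needs care — and the main obstacle — is the very first one: the integer-dilation trick needs $ns$ to be free to range over an interval, so the hypothesis at the single radius $r$ is not by itself enough, and it is precisely uniform continuity of $b$ that lets one fatten it into a shell; everything after that is the same bookkeeping as in Proposition \ref{affine banach}.
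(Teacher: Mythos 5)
Your proof is correct and follows essentially the same route as the paper's: use uniform continuity to fatten the single sphere $\norm x=r$ into a shell on which $\norm{b}\geqslant\delta/2$, then run the integer-dilation estimate from Proposition \ref{affine banach} to get $\norm{b(z)}\geqslant\frac{\delta}{4r}\norm z$ for small $z$, and rescale. The only differences are cosmetic (the paper takes the shell of width $2/m$ and restricts to $\frac1m B_X$ where you use $\eta$ and $\rho=\eta/3$).
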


\begin{proof}
Set $\delta=\inf_{\norm{x}=r}\norm{b(x)}$ and find, by uniform continuity of $b$, some $m$ so that 
$$
\norm{b(x)}>\frac \delta2
$$
whenever $r\leqslant \norm x\leqslant r+\frac 2m$. Then, if $0<\norm x\leqslant \frac 1m$, fix $n$ minimal so that
$r\leqslant \norm{ nx}\leqslant r+\frac 2m$. With this choice of $n$ we have as in the proof of Proposition \ref{affine banach} that
$$
\norm {b(x)}\geqslant \frac \delta{2n}\geqslant \frac\delta{4r}\norm x,
$$
showing that $b\colon \frac 1mB_X\til E$ is a uniform embedding. The proposition follows by rescaling $b$.
\end{proof}
Though we shall return to the issue later, let us just mention that uniform embeddings between balls of Banach spaces has received substantial attention. For example, Y. Raynaud \cite{raynaud} (see also Section 9.5 \cite{lindenstrauss}) has shown that $B_{c_0}$ does not uniformly embed into a stable metric space, e.g., into $L^p([0,1])$ with $1\leqslant p<\infty$.

Our next result replicates the construction from Section \ref{uniform vs coarse} within the context of cocycles.

\begin{prop}\label{cocycle amplification}
Suppose $b\colon X\til E$ is an uncollapsed continuous cocycle between Banach spaces. Then, for every $1\leqslant p<\infty$, there is a coarsely proper continuous cocycle $\tilde b\colon X\curvearrowright \ell^p(E)$. 
\end{prop}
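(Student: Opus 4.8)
The plan is to build the amplified cocycle exactly as in the proof of Lemma~\ref{unif saa coarse}, exploiting the elementary fact that a rescaling of a cocycle is again a cocycle. Let $\pi\colon X\curvearrowright E$ be the strongly continuous isometric linear representation for which $b$ is a cocycle, so that $b(0)=0$ automatically. Since $b$ is uncollapsed, fix $\Delta,\delta>0$ with $\norm x>\Delta\saa\norm{b(x)}>\delta$, and let $\theta$ be the modulus of uniform continuity of $b$, which is finite and satisfies $\theta(t)\til 0$ as $t\til 0_+$ and $\norm{b(x)}\leqslant\theta(\norm x)$ for all $x$. For each $\lambda>0$ the map $\pi_\lambda\colon g\mapsto\pi(\lambda g)$ is again a strongly continuous isometric linear representation of $X$ on $E$, and for any $\mu>0$ the map $x\mapsto\mu\, b(\lambda x)$ is a continuous cocycle for $\pi_\lambda$. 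So I would set $\mu_n=n/\delta$, choose $\lambda_n>0$ small enough that $\theta(n\lambda_n)\leqslant\delta\cdot 4^{-n}/n$, and put $b_n(x)=\mu_n\, b(\lambda_n x)$, a continuous cocycle for $\pi_n:=\pi_{\lambda_n}$. Finally, let $\tilde\pi$ be the diagonal representation of $X$ on $\ell^p(E)$ given by $\tilde\pi(g)(\xi_n)_n=(\pi_n(g)\xi_n)_n$; each $\tilde\pi(g)$ is a linear isometry of $\ell^p(E)$, and $\tilde\pi$ is strongly continuous by dominated convergence together with the strong continuity of $\pi$.

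Next I would define $\tilde b\colon X\til\ell^p(E)$ by $\tilde b(x)=(b_n(x))_n$ and check three things. It lands in $\ell^p(E)$: as soon as $n\geqslant\norm x$ one has $\norm{b_n(x)}=(n/\delta)\norm{b(\lambda_n x)}\leqslant(n/\delta)\,\theta(n\lambda_n)\leqslant 4^{-n}$, so $\sum_n\norm{b_n(x)}^p<\infty$. The cocycle identity for $\tilde b$ with respect to $\tilde\pi$ holds coordinatewise. And $\tilde b$ is continuous at $0$: given $\eps>0$ pick $N$ with $\sum_{n>N}4^{-np}<\eps/2$, which bounds the tail once $\norm x\leqslant 1$, and then invoke the continuity at $0$ of the finitely many maps $b_1,\ldots,b_N$. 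Since $\tilde\pi$ is strongly continuous, $\tilde b$ is then continuous everywhere, hence — being a continuous cocycle — also uniformly continuous and bornologous.

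It then remains to see that $\tilde b$ is coarsely proper, i.e., a coarse embedding; as it is already bornologous, I only need it to be expanding. By the cocycle equation $\norm{\tilde b(x)-\tilde b(y)}=\norm{\tilde b(x-y)}$, so it suffices to show $\norm{\tilde b(z)}\til\infty$ as $\norm z\til\infty$. But whenever $\norm z>\Delta/\lambda_n$ one has $\norm{\lambda_n z}>\Delta$, hence $\norm{b(\lambda_n z)}>\delta$, and therefore $\norm{\tilde b(z)}\geqslant\norm{b_n(z)}=(n/\delta)\norm{b(\lambda_n z)}>n$; thus $\kappa_{\tilde b}(t)\til\infty$. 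This makes $\tilde b$ a coarsely proper continuous cocycle into $\ell^p(E)$, and since it is in particular uncollapsed, Proposition~\ref{affine banach} moreover renders it a uniform embedding. As in Lemma~\ref{unif saa coarse}, I expect the only delicate point to be the balancing of the two roles of the rescalings: each $\lambda_n$ must be small enough for the defining series of $\tilde b$ to converge and for continuity at $0$, yet the $n$-th coordinate must still take value $>n$ on every vector of norm exceeding $\Delta/\lambda_n$, which is exactly what forces $\tilde b$ to be expanding.
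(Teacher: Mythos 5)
Your proof is correct and follows essentially the same route as the paper: the paper's own argument simply invokes the rescaling construction of Lemma \ref{unif saa coarse} and observes that each $x\mapsto K_n\cdot b(\eps_n x)$ is a cocycle for $x\mapsto\pi(\eps_n x)$, so that the $\ell^p$-sum is a cocycle for the diagonal representation $\tilde\pi(x)=\pi(\eps_1x)\otimes\pi(\eps_2x)\otimes\cdots$, which is exactly your $\tilde\pi$ and $\tilde b$. The only (harmless) difference is that you re-derive the constants explicitly, using only the coarse-embedding half of that construction and recovering the uniform-embedding property afterwards from Proposition \ref{affine banach}.
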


\begin{proof}
Let $b$ be associated to the strongly continuous isometric linear representation $\pi\colon X\curvearrowright E$. 
As in the proof of Lemma \ref{ell p sum}, there are constants $\eps_n$ and $K_n$ so that 
$$
x\mapsto (K_1\cdot b(\eps_1 x), K_2\cdot b(\eps_2 x),\ldots)
$$
defines a simultaneously uniform and coarse embedding of $X$ into $\ell^p(E)$. We claim that the map $\tilde b\colon X\til \ell^p(E)$ so defined is a cocycle for some strongly continuous isometric linear representation $\tilde\pi\colon X\curvearrowright \ell^p(E)$. Indeed, observe that each $x\mapsto K_n\cdot b(\eps_nx)$ is a cocycle for the isometric linear representation $x\mapsto \pi(\eps_n x)$ on $E$, so $\tilde b$ is a cocycle for the isometric linear representation 
$$
\tilde \pi(x)=\pi(\eps_1 x)\otimes \pi(\eps_2x)\otimes \ldots
$$
of $X$ on $\ell^p(E)$.
\end{proof}

\begin{lemme}\label{cocycle isometry group}
Let $E$ be a separable Banach space and ${\rm Isom}(E)$ the group of linear isometries of $E$ equipped with the strong operator topology. If  $\pi\colon {\rm Isom}(E)\curvearrowright \ell^p(E)$ denotes the diagonal isometric linear representation for $1\leqslant p<\infty$, there is a continuous cocycle $b\colon {\rm Isom}(E)\til \ell^p(E)$ associated to $\pi$ that is a uniform embedding  of ${\rm Isom}(E)$ into $\ell^p(E)$.
\end{lemme}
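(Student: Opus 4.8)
The plan is to realise the desired cocycle as a single coboundary built from a countable dense family of test vectors. Using separability of $E$, fix a sequence $(\eta_n)_{n\geqslant 1}$ dense in $E\setminus\{0\}$ and pick scalars $c_n>0$ with $\sum_n c_n^p\norm{\eta_n}^p<\infty$, e.g. $c_n=2^{-n/p}\norm{\eta_n}\inv$. Then $\xi=(c_n\eta_n)_n$ lies in $\ell^p(E)$, and I put
$$
b(g)=\xi-\pi(g)\xi=\big(c_n(\eta_n-g\eta_n)\big)_{n\geqslant 1},
$$
a coboundary for $\pi$ and hence a cocycle. Here $\pi(g)(\zeta_n)_n=(g\zeta_n)_n$, each $\pi(g)$ is an isometry since $\sum_n\norm{g\zeta_n}^p=\sum_n\norm{\zeta_n}^p$, and $\pi$ is strongly continuous by the standard decomposition of an $\ell^p$-sum into a finite head, on which the strong operator topology on ${\rm Isom}(E)$ gives control, and a tail bounded by $2^p\sum_{n>N}\norm{\zeta_n}^p$. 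Consequently $b$ is continuous, hence (being a continuous cocycle) uniformly continuous and bornologous, so the only substantive point left is that $b$ is a uniform embedding.

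For this I use the cocycle identity $\norm{b(g)-b(h)}=\norm{b(h\inv g)}$ together with
$$
\norm{b(g)}^p=\sum_{n\geqslant 1}c_n^p\,\norm{\eta_n-g\eta_n}^p\geqslant c_n^p\,\norm{\eta_n-g\eta_n}^p ,
$$
so that $\norm{g\eta_n-\eta_n}\leqslant c_n\inv\norm{b(g)}$ for every $n$ and every $g$. The left-uniformity of ${\rm Isom}(E)$ in the strong operator topology is generated by the entourages $\{(g,h)\mid h\inv g\in V\}$ with $V=\{T\mid \norm{T\zeta_i-\zeta_i}<\eps,\ i\leqslant k\}$, and by left-invariance of this uniformity and of $g\mapsto\norm{b(g)}$ (note that, the $\pi(g)$ being isometries, each such $V$ is symmetric), it suffices to show: given $\zeta_1,\dots,\zeta_k\in E$ and $\eps>0$ there is $\delta>0$ with $\norm{b(g)}<\delta\saa\max_{i\leqslant k}\norm{g\zeta_i-\zeta_i}<\eps$. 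Choosing $n_1,\dots,n_k$ by density with $\norm{\zeta_i-\eta_{n_i}}<\eps/3$, one gets
$$
\norm{g\zeta_i-\zeta_i}\leqslant 2\norm{\zeta_i-\eta_{n_i}}+\norm{g\eta_{n_i}-\eta_{n_i}}<\frac{2\eps}{3}+c_{n_i}\inv\norm{b(g)} ,
$$
so $\delta=\frac{\eps}{3}\min_{i\leqslant k}c_{n_i}$ works. This makes $b$ a uniform embedding (in fact injective, since $\norm{b(g)}=0$ forces $g\eta_n=\eta_n$ for all $n$, hence $g=\Id$).

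The construction is elementary; the one step that requires a moment's care is the passage, in the verification of the uniform embedding property, from the arbitrary finite tuple $\zeta_1,\dots,\zeta_k$ defining a basic entourage of ${\rm Isom}(E)$ to the \emph{fixed} dense sequence $(\eta_n)$ used to define $\xi$ — this is exactly where separability of $E$ enters, and why a single coboundary suffices rather than an infinite amplification as in Proposition \ref{cocycle amplification}. A second, routine but genuinely needed, point is the strong continuity of the diagonal representation $\pi$ on $\ell^p(E)$ ensuring continuity of $b$, which follows from the usual $\ell^p$-tail estimate.
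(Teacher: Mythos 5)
Your proof is correct and follows essentially the same route as the paper's (very terse) argument: both define $b$ as the coboundary $\xi-\pi(g)\xi$ of a single vector $\xi\in\ell^p(E)$ built from a weighted dense sequence in $E$, and read off the uniform embedding property from the definition of the strong operator topology. Your write-up simply supplies the details the paper leaves as "easily seen".
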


\begin{proof}
Fix a dense subset $\{\xi_n\}_{n\in \N}$ of the sphere $S_E$ and let 
$$
b(g)=\Big(\frac{\xi_1-g(\xi_1)}{2^1},\frac{\xi_2-g(\xi_2)}{2^2},\frac{\xi_3-g(\xi_3)}{2^3},\ldots\Big).
$$ 
Then $b$ is easily seen to be a cocycle for $\pi$. Moreover, by the definition of the strong operator topology, $b$ is a uniform embedding.
\end{proof}

\begin{prop}\label{cocycle amplification banach} 
Supppose $\pi\colon X\til {\rm Isom}(E)$ is an uncollapsed strongly continuous isometric linear representation of a  Banach space $X$ on a separable Banach space $E$. Then, for every $1\leqslant p<\infty$, $X$ admits coarsely proper continuous cocycle $b\colon X\til \ell^p(E)$. 
\end{prop}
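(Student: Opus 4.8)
The plan is to reduce the statement to Proposition \ref{cocycle amplification} by precomposing $\pi$ with the canonical cocycle on ${\rm Isom}(E)$ furnished by Lemma \ref{cocycle isometry group}, and then absorbing an extra $\ell^p$-sum.

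First I would unpack the hypothesis that $\pi\colon X\til {\rm Isom}(E)$ is uncollapsed. Since the coarse structure on $X$ is the metric one and the uniformity on ${\rm Isom}(E)$ is the left-uniformity generated by SOT-neighbourhoods of the identity, uncollapsedness of $\pi$ means there are $\Delta>0$ and an identity neighbourhood $V\subseteq {\rm Isom}(E)$ such that $\norm x>\Delta\saa \pi(x)\notin V$; here I use that $\pi$ is a homomorphism of the abelian group $(X,+)$, so that $\pi(0)\inv\pi(x)=\pi(x)$.

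Next, let $b_0\colon {\rm Isom}(E)\til \ell^p(E)$ be the continuous cocycle of Lemma \ref{cocycle isometry group}, associated to the diagonal representation $\pi_0\colon {\rm Isom}(E)\curvearrowright \ell^p(E)$, and recall that $b_0$ is a uniform embedding with $b_0(\Id)=0$. I set $c=b_0\circ\pi\colon X\til \ell^p(E)$ and verify three things. (a) The map $c$ is a continuous cocycle for the representation $\pi_0\circ\pi$ of $X$ on $\ell^p(E)$: this is a one-line computation from the cocycle identity for $b_0$ together with $\pi(x+y)=\pi(x)\pi(y)$, and $\pi_0\circ\pi$ is strongly continuous because $\pi$ is strongly continuous into the SOT while $\pi_0$ is SOT-continuous. (b) The map $c$ is uncollapsed: feeding the entourage $E_V$ coming from uncollapsedness of $\pi$ into the definition of "uniform embedding" for $b_0$ produces an entourage in $\ell^p(E)$, hence a $\delta>0$, with $\norm x>\Delta\saa \norm{c(x)}=\norm{b_0(\pi(x))-b_0(\Id)}\geqslant\delta$.

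Finally, $c$ is an uncollapsed continuous cocycle between the Banach spaces $X$ and $\ell^p(E)$, so Proposition \ref{cocycle amplification} applied to $c$, with $\ell^p(E)$ in the role of the target "$E$", yields a coarsely proper continuous cocycle $\tilde b\colon X\curvearrowright \ell^p(\ell^p(E))$. Since $\ell^p(\ell^p(E))$ is canonically isometric to $\ell^p(E)$ (reindex $\N\times\N$ by $\N$), transporting $\tilde b$ and its linear part through this isometry gives the desired coarsely proper continuous cocycle $b\colon X\til \ell^p(E)$. The only step requiring real care is (b): correctly matching a basic entourage of the left-uniformity on ${\rm Isom}(E)$ with a norm-ball entourage on $\ell^p(E)$ through the uniform embedding $b_0$; the rest is bookkeeping with cocycle identities and the identification $\ell^p(\ell^p(E))\iso\ell^p(E)$.
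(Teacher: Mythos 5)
Your proposal is correct and follows essentially the same route as the paper: compose the uniform-embedding cocycle of Lemma \ref{cocycle isometry group} with $\pi$ to obtain an uncollapsed continuous cocycle $X\til\ell^p(E)$, then apply Proposition \ref{cocycle amplification} together with the identification $\ell^p(\ell^p(E))=\ell^p(E)$. The extra details you supply (unpacking uncollapsedness of $\pi$ via the homomorphism property and matching entourages through the uniform embedding) are exactly the bookkeeping the paper leaves implicit.
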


\begin{proof}
Let $c\colon {\rm Isom}(E)\til \ell^p(E)$ be the cocycle given by Lemma \ref{cocycle isometry group} associated to the diagonal isometric linear representation $\rho\colon {\rm Isom}(E)\curvearrowright \ell^p(E)$. It follows that $c\circ \pi\colon X\til \ell^p(E)$ is a continuous uncollapsed cocycle associated to the isometric linear representation $\rho\circ \pi\colon X\curvearrowright \ell^p(E)$. By Proposition \ref{cocycle amplification} and the fact that $\ell^p(\ell^p(E))=\ell^p(E)$, we obtain a coarsely proper continuous cocycle $b\colon X \til \ell^p(E)$.
\end{proof}


Cocycles between Banach spaces are significantly more structured maps than general maps. For example, as shown in Proposition \ref{affine banach}, a coarsely proper continuous cocycle $b\colon X\til E$ between Banach spaces $X$ and $E$ is automatically both a uniform and coarse embedding, but, moreover, $b$ also preserves a certain amount of algebraic structure, depending on the isometric linear representation $\pi\colon X\curvearrowright E$ of which it is a cocycle.

To study these algebraic features, we must introduce a topology on the space of cocycles. So fix a strongly continuous isometric linear representation $\pi\colon G\curvearrowright E$  of a topological group $G$ on a Banach space $E$.
Every compact set $K\subseteq G$ determines a seminorm $\norm\cdot_K$ on $Z^1(G,\pi)$ by $\norm{ b}_K=\sup_{g\in K}\norm{b(g)}$ and the family of seminorms thus obtained endows $Z^1(G,\pi)$ with a locally convex topology. With this topology, one sees that a cocycle $b$ belongs to the closure $\overline{B^1(G,\pi)}$ if and only if the corresponding affine action $\alpha=(\pi,b)$ {\em almost has fixed points}, that is, if for any compact set $K\subseteq G$ and $\eps>0$ there is some $\xi=\xi_{K,\eps}\in E$ verifying
$$
\sup_{x\in K}\norm{\big(\pi(x)\xi+b(x)\big)-\xi}=\sup_{x\in K}\norm{b(x)-\big(\xi-\pi(x)\xi\big)}<\eps.
$$
Elements of $\overline{B^1(G,\pi)}$ are called {\em almost coboundaries}.

Note that, if $b$ is a coboundary, then $b(G)$ is a bounded subset of $E$. Conversely, suppose $b(G)$ is a bounded set and $E$ is reflexive. Then any orbit $\ku O$ of the corresponding affine action is bounded and its closed convex hull $C=\overline{\rm conv}(\ku O)$ is a weakly compact convex set on which $G$ acts by affine isometries. It follows by the Ryll-Nardzewski fixed point theorem \cite{ryll} that $G$ fixes a point on $C$, meaning that $b$ must be a coboundary.

Now, if $b\in\overline{B^1(G,\pi)}$ and, for every compact $K\subseteq G$, we can choose $\xi=\xi_{K,1}$ above to have arbitrarily large norm, we see that the supremum
$$
\sup_{x\in K}\Norm{\pi(x)\frac \xi{\norm \xi}-\frac\xi{\norm \xi}}<\frac{\sup_{x\in K}\norm{b(x)}+1}{\norm \xi}
$$
can be made arbitrarily small, which means that the linear action $\pi$ {\em almost has invariant unit vectors}.
If, on the other hand, for some $K$ the choice of $\xi_{K,1}$ is bounded (but non-empty), then the same bound holds for any compact $K'\supseteq K$, whereby  we find that $b(G)\subseteq E$ is a bounded set and so, assuming $E$ is reflexive, that $b\in B^1(G,\pi)$. This shows that, if $E$ is reflexive and $\pi$ does not almost have invariant unit vectors, then $B^1(G,\pi)$ is closed in $Z^1(G,\pi)$.

We define the {\em first cohomology group} of $G$ with coefficients in $\pi$ to be the quotient space $H^1(G,\pi)=Z^1(G,\pi)/B^1(G,\pi)$, while the {\em reduced cohomology group} is $\overline{H^1}(G,\pi)=Z^1(G,\pi)/\overline{B^1(G,\pi)}$.

If $E$ is separable and reflexive, the Alaoglu--Birkhoff decomposition theorem \cite{alaoglu} implies that $E$ admits $\pi(G)$-invariant decomposition into closed linear subspaces $E=E^G\oplus E_G$, where $E^G$ is the set of $\pi(G)$-fixed vectors. We can therefore write $b=b^G\oplus b_G$, where $b^G\colon X\til E^G$ and $b_G\colon X\til E_G$ are cocycles for $\pi$.
In particular, 
$$
b^G(xy)=\pi(x)b^G(y)+b^G(x)=b^G(y)+b^G(x)=b^G(x)+b^G(y),
$$ 
i.e., $b^G$ is a continuous homomorphism from $G$ to $(E^G,+)$.     Also, Theorem 2 of \cite{BRS}, implies that, if $G$ is abelian, then $\overline{H^1}(G, \pi|_{E_G})=0$ and so $b_G\in \overline{B^1(G,\pi)}$.

Now, suppose $\pi\colon X\curvearrowright E$ is a strongly continuous isometric linear representation of a separable Banach space $X$ on a separable reflexive Banach space $E$. Assume that $b\colon X\til E$ is a  continuous cocycle and let $E=E^X\oplus E_X$ and $b^X\colon X\til E^X$ and $b_X\colon X\til E_X$ be the decompositions as above. Being a continuous additive homomorphism, $b^X$ is a bounded linear operator from $X$ to $E^X$. Also, if $b^X$ is coarsely proper or even just uncollapsed, then $b^X$ must be an isomorphic embedding of $X$ into $E^X$.

Now, since $X$ is abelian, $b_X\colon X\til E_X$  belongs to $\overline{B^1(X,\pi)}$, which means that, for every norm-compact subset $C\subseteq X$ and $\eps>0$, there is $\xi \in E_X$ so that
$$
\norm{\xi-\pi(x)\xi-b_X(x)}<\eps
$$
for all $ x\in C$. 

We summarise the discussion so far in the following lemma.
\begin{lemme}\label{banach on banach}
Suppose $\pi\colon X\curvearrowright E$ is a strongly continuous isometric linear representation of a separable Banach space $X$ on a separable reflexive Banach space $E$ and assume that $b\colon X\til E$ is a  continuous cocycle. Then there is a $\pi(X)$-invariant decomposition $E=E^X\oplus E_X$ and a decomposition $b=b^X\oplus b_X$ so that $b^X\colon X\til E^X$ is a bounded linear operator and $b_X\colon X\til E_X$ belongs to  $\overline{B^1(X,\pi)}$.
\end{lemme}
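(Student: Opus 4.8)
The plan is to assemble the lemma directly from the Alaoglu--Birkhoff decomposition together with the two elementary observations already recorded in the paragraphs preceding the statement. First I would invoke the Alaoglu--Birkhoff decomposition theorem \cite{alaoglu}: since $E$ is separable and reflexive, there is a $\pi(X)$-invariant splitting $E=E^X\oplus E_X$ into closed linear subspaces, where $E^X$ is the subspace of $\pi(X)$-fixed vectors and $E_X$ is an invariant linear complement. Because this splitting is $\pi(X)$-invariant, the two coordinate projections commute with every $\pi(x)$; composing the cocycle $b$ with them therefore produces continuous cocycles $b^X\colon X\til E^X$ for $\pi|_{E^X}$ and $b_X\colon X\til E_X$ for $\pi|_{E_X}$, with $b=b^X\oplus b_X$.

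Next I would handle the two summands separately. On $E^X$ the representation is trivial, so the cocycle equation collapses to $b^X(x+y)=b^X(x)+b^X(y)$; that is, $b^X$ is a continuous additive homomorphism from $(X,+)$ to $(E^X,+)$. Such a map is automatically $\Q$-linear, and continuity upgrades this to $\R$-linearity, so $b^X$ is a continuous, hence bounded, linear operator, as required. For the complementary summand I would quote Theorem 2 of \cite{BRS}: since $X$ is abelian, the reduced first cohomology $\overline{H^1}(X,\pi|_{E_X})$ vanishes, which is precisely the assertion that $b_X\in\overline{B^1(X,\pi|_{E_X})}$. Unravelling the definition, for every norm-compact $C\subseteq X$ and $\eps>0$ there is $\xi\in E_X$ with $\norm{\xi-\pi(x)\xi-b_X(x)}<\eps$ for all $x\in C$.

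Finally, since each such $\xi\in E_X$ also witnesses that $x\mapsto\xi-\pi(x)\xi$ is a coboundary for $\pi$ itself, and the family of seminorms defining the topology on $Z^1(X,\pi)$ restricts to the corresponding family on $Z^1(X,\pi|_{E_X})$, we get $b_X\in\overline{B^1(X,\pi)}$, completing the proof. I do not anticipate a serious obstacle: the content has essentially been distilled in the discussion before the lemma, and the only points needing a word of justification are that the Alaoglu--Birkhoff complement is genuinely $\pi(X)$-invariant (so that the restricted cocycles make sense) and that a continuous additive homomorphism between Banach spaces is a bounded linear operator; the single substantive external input is the vanishing of reduced cohomology for abelian groups from \cite{BRS}.
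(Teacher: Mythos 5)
Your proposal is correct and follows exactly the paper's own argument: the lemma is stated as a summary of the preceding discussion, which invokes the Alaoglu--Birkhoff decomposition, observes that $b^X$ is a continuous additive (hence bounded linear) map, and cites Theorem 2 of \cite{BRS} for $b_X\in\overline{B^1(X,\pi)}$ since $X$ is abelian. The only additions you make — checking invariance of the complement and the compatibility of the cocycle topologies — are harmless clarifications of the same route.
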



\section{Amenability}\label{amenability}
Central in our investigation is the concept of amenability, which for general topological groups is defined as follows.
\begin{defi}
A topological group $G$ is {\em amenable} if every continous action $\alpha \colon G\curvearrowright C$ by affine homeohomorphisms  on a compact convex subset $C$ of a locally convex topological vector space $V$ has a fixed point in $C$.
\end{defi} 

If $G$ is a topological group, we let ${\rm LUC}(G)$ be the vector space of bounded left-uniformly continuous functions $\phi\colon G\til \R$ and equip it with the supremum norm induced from the inclusion ${\rm LUC}(G)\subseteq \ell^\infty(G)$. It then follows that the {\em right-regular representation} $\rho\colon G\curvearrowright {\rm LUC}(G)$, $\rho(g)(\phi)=\phi(\cdot\, g)$, is continuous. Moreover, if $G$ is amenable, there exists a $\rho$-invariant mean, that is, a positive continuous linear functional $\go m\colon {\rm LUC}(G)\til \R$ with $\go m(\mathbb 1)=1$, where $\mathbb1 $ is the function with  constant value $1$, and so that $\go m\big(\rho(g)(\phi)\big)=\go m(\phi)$.

While we shall use the existence of invariant means on  ${\rm LUC}(G)$, at one point this does not seem to suffice. Instead, we shall rely on an appropriate generalisation of F\o lner sets present under additional assumptions.

\begin{defi}
A topological group $G$ is said to be {\em approximately compact} if there is a countable chain $K_0\leqslant K_1\leqslant \ldots \leqslant G$ of compact subgroups whose union $\bigcup_nK_n$ is dense in $G$.
\end{defi}

This turns out to be a fairly common phenomenon among non-locally compact Polish groups. For example, the unitary group $U(\ku H)$ of separable infinite-dimensional Hilbert space with the strong operator topology  is approximately compact. Indeed, if $\ku H_1\subseteq \ku H_2\subseteq \ldots\subseteq \ku H$ is an increasing exhaustive sequence of finite-dimensional subspaces and $U(n)$ denotes the group of unitaries pointwise fixing the orthogonal complement $\ku H_n^\perp$, then each $U(n)$ is compact and the union $\bigcup_nU(n)$ is dense in $U(\ku H)$.

More generally, as shown by P. de la Harpe \cite{harpe}, if $M$ is an approximately finite-dimensional von Neumann algebra, i.e., there is an increasing sequence $A_1\subseteq A_2\subseteq\ldots\subseteq M$ of finite-dimensional matrix algebras whose union is dense in $M$ with respect the strong operator topology, then the unitary subgroup $U(M)$ is approximately compact with respect to the strong operator topology. 

Similarly, if $G$ contains a locally finite dense subgroup, this will witness approximate compactness. Again this applies to, e.g., ${\rm Aut}([0,1],\lambda)$ with the weak topology, where the dyadic permutations are dense,  and ${\rm Isom}(\U)$ with the pointwise convergence topology (this even holds for the dense subgroup ${\rm Isom}(\Q\U)$ by an unpublished result of S. Solecki; see \cite{RZ} for a proof).

Of particular interest to us is the case of non-Archimedean Polish groups. By general techniques, these may be represented as automorphism groups of countable locally finite  (i.e., any finitely generated substructure is finite) ultrahomogeneous structures. And, in this setting, we have the following reformulation of approximate compactness.
\begin{prop}[A.S. Kechris \& C. Rosendal \cite{turbulence}]
Let $\bf M$ be a locally finite, countable, ultrahomogeneous structure.
Then ${\rm Aut}(\bf M)$ is approximately compact if and only if, for every finite substructure $\bf A\subseteq \bf M$ and all {\em partial} automorphisms $\phi_1,\ldots,
\phi_n$ of $\bf
A$, there is a  finite substructure  $\bf A\subseteq \bf B\subseteq \bf M$
and {\em full} automorphisms $\psi_1,\ldots,\psi_n$ of $\bf B$ extending $\phi_1,\ldots,\phi_n$ respectively.
\end{prop}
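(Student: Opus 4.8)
The plan is to prove the two implications separately. Throughout I work with the description of the topology on $G:={\rm Aut}(\mathbf M)$ by pointwise convergence on the discrete set $\mathbf M$, whose basic identity neighbourhoods are the pointwise stabilisers $V_{\mathbf A}=\{g\in G\mid g{\restriction}\mathbf A=\mathrm{id}\}$ of finite substructures $\mathbf A$, and I use constantly that local finiteness of $\mathbf M$ keeps all generated substructures finite.

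Suppose first that $G$ is approximately compact, witnessed by a chain $K_0\leqslant K_1\leqslant\cdots$ of compact subgroups with dense union, and let $\mathbf A\subseteq\mathbf M$ be a finite substructure with partial automorphisms $\phi_1,\dots,\phi_n$. By ultrahomogeneity each $\phi_i$ extends to some $g_i\in G$, and since $g_iV_{\mathbf A}$ is an open neighbourhood of $g_i$ it meets $\bigcup_mK_m$; so pick $h_i\in K_{m_i}$ with $h_i{\restriction}\mathbf A=g_i{\restriction}\mathbf A$ (hence $h_i$ extends $\phi_i$), and set $m=\max_im_i$, so $h_1,\dots,h_n\in K_m$. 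As $K_m$ is compact, each orbit $K_m\cdot a$ is finite, so $\bigcup_{a\in\mathbf A}K_m\cdot a$ is a finite $K_m$-invariant set; the substructure $\mathbf B$ it generates is then finite and, as the elements of $K_m$ are automorphisms of $\mathbf M$, invariant setwise under $K_m$. Thus $\psi_i:=h_i{\restriction}\mathbf B$ is a full automorphism of the finite substructure $\mathbf B\supseteq\mathbf A$ extending $\phi_i$, as required.

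For the converse, assume the extension property. First I would use it to produce an exhaustive chain $\mathbf B_0\subseteq\mathbf B_1\subseteq\cdots$ of finite substructures with $\bigcup_m\mathbf B_m=\mathbf M$ such that every partial automorphism of $\mathbf B_m$ extends to a \emph{full} automorphism of $\mathbf B_{m+1}$: fixing an enumeration $a_0,a_1,\dots$ of $\mathbf M$, one passes from $\mathbf B_m$ to $\mathbf C:=\langle\mathbf B_m\cup\{a_m\}\rangle$ and applies the hypothesis to $\mathbf C$ together with its (finitely many) partial automorphisms to get $\mathbf B_{m+1}\supseteq\mathbf C$. Then I would set
$$
K_n=\{g\in G\mid g(\mathbf B_m)=\mathbf B_m\text{ for all }m\geqslant n\},
$$
which form an increasing chain of closed subgroups. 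Each $K_n$ is compact: all its orbits are finite (if $a\in\mathbf B_m$ with $m\geqslant n$ then $K_n\cdot a\subseteq\mathbf B_m$), and a closed subgroup of $G$ with finite orbits is compact — even profinite — since restriction embeds it as a closed subgroup of the compact group $\prod_{m\geqslant n}{\rm Aut}(\mathbf B_m)$.

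It remains to see that $\bigcup_nK_n$ is dense. Given $g\in G$ and a finite substructure $\mathbf A$, choose $m$ with $\mathbf A\cup g(\mathbf A)\subseteq\mathbf B_m$, so $\phi:=g{\restriction}\mathbf A$ is a partial automorphism of $\mathbf B_m$; extending $\phi$ repeatedly along the chain gives full automorphisms $\sigma_{m+1}\subseteq\sigma_{m+2}\subseteq\cdots$ with $\sigma_j\in{\rm Aut}(\mathbf B_j)$, and $h:=\bigcup_{j>m}\sigma_j$ is an automorphism of $\mathbf M$ with $h(\mathbf B_j)=\mathbf B_j$ for all $j>m$, so $h\in K_{m+1}$, while $h{\restriction}\mathbf A=\phi=g{\restriction}\mathbf A$. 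Hence $G$ is approximately compact. The step I expect to be the real obstacle is the converse's choice of the subgroups $K_n$: the obvious candidates — pointwise or setwise stabilisers of a single $\mathbf B_m$ — are never compact, and one needs the subgroups fixing all but finitely many $\mathbf B_m$ setwise, whose density crucially uses that the extension hypothesis lets one \emph{complete} a partial automorphism to a full automorphism of a finite overstructure (this completion fails for, e.g., $(\mathbb Q,<)$, whose automorphism group is indeed not approximately compact). Checking that a closed subgroup of $G$ with finite orbits is compact is routine but should be spelled out.
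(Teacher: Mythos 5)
The paper does not actually prove this proposition --- it is quoted from Kechris--Rosendal \cite{turbulence} without proof --- so there is no in-paper argument to compare against. Your proof is correct and is essentially the standard argument from that reference: in the forward direction you use compactness of some $K_m$ containing extensions $h_i$ of the $\phi_i$ to get a finite $K_m$-invariant substructure $\mathbf B\supseteq \mathbf A$ on which the $h_i$ restrict to full automorphisms; in the converse you build a Fra\"iss\'e-style exhaustion $\mathbf B_0\subseteq\mathbf B_1\subseteq\cdots$ along which partial automorphisms complete to full ones, and take $K_n$ to be the group of automorphisms stabilising every $\mathbf B_m$, $m\geqslant n$, setwise. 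All the steps you flag as routine really are: a closed subgroup of ${\rm Aut}(\mathbf M)$ with finite orbits embeds as a closed subgroup of the profinite group $\prod_{m\geqslant n}{\rm Aut}(\mathbf B_m)$ via restriction, and density of $\bigcup_nK_n$ follows since any basic neighbourhood $gV_{\mathbf A}$ contains the automorphism $h=\bigcup_j\sigma_j$ obtained by iterated completion of $g\restriction\mathbf A$. Your closing remark correctly identifies where the hypothesis is genuinely used (and why, e.g., ${\rm Aut}(\Q,<)$ fails). No gaps.
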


Whereas a locally compact group is amenable if and only if it admits a F\o lner sequence, there is no similar characterisation of general amenable groups. Nevertheless, one may sometimes get by with a little less, which we isolate in the following definition.
\begin{defi}\label{folner}
A Polish group $G$ is said to be {\em F\o lner amenable} if either
\begin{enumerate}
\item $G$ is approximately compact, or
\item there is a continuous homomorphism $\phi\colon H\til G$  from a locally compact second countable amenable group $H$ so that $G=\ov{\phi[H]}$. 
\end{enumerate}
\end{defi}

Apart from the approximately compact or locally compact amenable groups, easy examples of F\o lner amenable Polish groups are, e.g., Banach spaces or, more generally, abelian groups. Indeed, every abelian Polish group $G$ contains a countable dense subgroup $\Gamma$, which, viewed as a discrete group, is amenable and maps densely into $G$.


\section{Embeddability in Hilbert spaces}
We shall now consider Hilbert valued cocycles, for which we need some background material on kernels conditionally of negative type. The well-known construction of inner products presented here originates in work of E. H. Moore \cite{moore}. A full treatment can be found, e.g., in \cite{bekka}, Appendix C.
\begin{defi}
A (real-valued) {\em kernel conditionally of negative type} on a set $X$ is a  function $\Psi\colon X\times X\til \R$  so that
\begin{enumerate}
\item $\Psi(x,x)=0$ and $\Psi(x,y)=\Psi(y,x)$ for all $x,y\in X$,
\item for all $x_1,\ldots, x_n\in X$ and $r_1,\ldots, r_n\in \R$ with $\sum_{i=1}^nr_i=0$, we have
$$
\sum_{i=1}^n\sum_{j=1}^n r_ir_j\Psi(x_i,x_j)\leqslant 0.
$$
\end{enumerate}
\end{defi} 
For example, if $\sigma\colon X\til \ku H$ is any mapping from $X$ into a Hilbert space $\ku H$, then a simple calculation shows that
$$
\sum_{i=1}^n\sum_{j=1}^n r_ir_j\norm{\sigma(x_i)-\sigma(x_j)}^2=-2\Norm{\sum_{i=1}^nr_i\sigma(x_i)}\leqslant 0,
$$
whenever $\sum_{i=1}^nr_i=0$, which implies that $\Psi(x,y)=\norm{\sigma(x)-\sigma(y)}^2$ is a kernel conditionally of negative type.

Suppose that $\Psi$ is a kernel conditionally of negative type on  a set $X$ and let $\M(X)$  denote the vector space of finitely supported real valued functions $\xi$ on $X$ of mean $0$, i.e., $\sum_{x\in X}\xi(x)=0$. We define a positive symmetric linear form  $\langle\cdot\del \cdot\rangle_\Psi$ on $\M(X)$ by
$$
\Big\langle\sum_{i=1}^nr_i\delta_{x_i}\Del \sum_{j=1}^ks_j\delta_{y_i}\Big\rangle_\Psi=-\frac 12\sum_{i=1}^n\sum_{j=1}^k r_is_j\Psi(x_i,y_j).
$$
Also, if $N_\Psi$ denotes the null-space
$$
N_\Psi=\{\xi\in \M(X)\del \langle\xi\del \xi\rangle_\Psi=0\},
$$
then $\langle\cdot\del \cdot\rangle_\Psi$ defines an inner product on the quotient $\M(X)/N_\Psi$ and 
we obtain a real Hilbert space $\ku K$ as the completion of $\M(X)/N_\Psi$ with respect to $\langle\cdot\del \cdot\rangle_\Psi$.

We remark that, if $\Psi$ is defined by a map $\sigma\colon X\til \ku H$ as above and $e\in X$ is any choice of base point, the map $\phi_e\colon X\til \ku K$ defined by $\phi_e(x)=\delta_x-\delta_e$ satisfies $\norm{\phi_e(x)-\phi_e(y)}_\ku K=\norm{\sigma(x)-\sigma(y)}_\ku H$. Indeed,
\[\begin{split}
\norm{\phi_e(x)-\phi_e(y)}^2_\ku K
&=\langle \phi_e(x)-\phi_e(y)\del \phi_e(x)-\phi_e(y)\rangle\\
&=\langle \delta_x-\delta_y\del \delta_x-\delta_y\rangle\\
&=-\frac 12\big(\Psi(x,x)+\Psi(y,y)-\Psi(x,y)-\Psi(y,x)\big)\\
&=\Psi(x,y)\\
&=\norm{\sigma(x)-\sigma(y)}_\ku H^2.
\end{split}\]

Also, if $G\curvearrowright X$ is an action of a group $G$ on $X$ and $\Psi$ is $G$-invariant, i.e., $\Psi(gx,gy)=\Psi(x,y)$, this action lifts to an action  $\pi\colon G\curvearrowright\M(X)$ preserving the form $\langle\cdot\del\cdot\rangle_\Psi$  via $\pi(g)\xi=\xi(g\inv\,\cdot\,)$. It follows that $\pi$ factors through to an orthogonal (i.e., isometric linear) representation $G\curvearrowright\ku K$.

A version of Lemma \ref{pre-maurey} below is originally due to I. Aharoni, B. Maurey and B. S. Mityagin \cite{maurey} for the case of abelian groups and has been extended and refined several times recently in connection with the coarse geometry of Banach spaces and locally compact groups (see, e.g., \cite{tessera, randrianarivony, randrianarivony2}).
Since more care is needed when dealing with general amenable as opposed to locally compact amenable or abelian groups, we include a full proof.

Let us first recall that, if $\sigma \colon X\til Y$ is a map between metric spaces, the exact compression modulus $\tilde \kappa$ of $\sigma$ is given by $\tilde\kappa(t)=\inf_{d(x,x')=t}d(\sigma(x), \sigma(x'))$.

\begin{lemme}\label{pre-maurey}
Suppose $d$ is a compatible left-invariant metric on an amenable  topological group $G$ and $\sigma\colon (G,d)\til \ku H$ is a uniformly continuous and bornologous map into a Hilbert space $\ku H$ with exact compression modulus $\tilde \kappa$ and expansion modulus $\theta$. 
Then there is a continuous $G$-invariant kernel conditionally of negative type $\Psi\colon G\times G\til \R_+$ satisfying
$$
\tilde\kappa\big(d(g,f)\big)^2\leqslant \Psi(g,f)\leqslant \theta\big(d(g,f)\big)^2.
$$
\end{lemme}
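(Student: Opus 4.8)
The natural strategy is to average the ``toy kernel'' $\Psi_0(g,f)=\norm{\sigma(g)-\sigma(f)}^2$ over the group using the invariant mean supplied by amenability, so as to produce a genuinely $G$-invariant kernel while retaining the pointwise bounds. The starting observation is that $\Psi_0$ is itself a kernel conditionally of negative type (this is the simple Hilbert-space calculation recalled just before the statement) and that it satisfies $\tilde\kappa(d(g,f))^2\leqslant\Psi_0(g,f)\leqslant\theta(d(g,f))^2$, using that $d$ is left-invariant so that $d(g,f)$ is the only quantity the moduli see. The idea is then to define, for each fixed pair $(g,f)$, the function $h\mapsto\Psi_0(hg,hf)$ on $G$ and apply the invariant mean $\go m$ to it; left-invariance of $d$ together with uniform continuity and bornologousness of $\sigma$ will ensure this function lies in ${\rm LUC}(G)$, so the mean is defined.

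\textbf{Key steps, in order.} First I would verify that for fixed $g,f$ the map $h\in G\mapsto\Psi_0(hg,hf)=\norm{\sigma(hg)-\sigma(hf)}^2\in\R$ is bounded and left-uniformly continuous: boundedness comes from $d(hg,hf)=d(g,f)$ being constant together with $\Psi_0(hg,hf)\leqslant\theta(d(g,f))^2<\infty$, and left-uniform continuity follows because $\sigma$ is uniformly continuous as a map $(G,d)\til\ku H$ and the left-uniformity is generated by $d$ (more precisely, if $h'$ is close to $h$ in the left-uniformity then $h'g$ is close to $hg$ and $h'f$ close to $hf$). Second, define $\Psi(g,f)=\go m\big(h\mapsto\Psi_0(hg,hf)\big)$. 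Third, check $G$-invariance: for $k\in G$, the function $h\mapsto\Psi_0(hkg,hkf)$ is the right-translate by $k$ of $h\mapsto\Psi_0(hg,hf)$, and $\go m$ being $\rho$-invariant gives $\Psi(kg,kf)=\Psi(g,f)$. Fourth, the two pointwise bounds are preserved: for every $h$ we have $\tilde\kappa(d(g,f))^2\leqslant\Psi_0(hg,hf)\leqslant\theta(d(g,f))^2$ since $d(hg,hf)=d(g,f)$, and a mean is monotone and sends constants to themselves, so the same inequalities pass to $\Psi$; these also give $\Psi(g,g)=0$ and, by the symmetric bound, continuity of $\Psi$ via the $\theta$ upper bound and uniform continuity. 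Fifth, symmetry $\Psi(g,f)=\Psi(f,g)$ is immediate from symmetry of $\Psi_0$. The remaining point is conditional negativity: given $g_1,\dots,g_n\in G$ and $r_1,\dots,r_n\in\R$ with $\sum r_i=0$, for each fixed $h$ the quantity $\sum_{i,j}r_ir_j\Psi_0(hg_i,hg_j)$ is $\leqslant 0$ because $\Psi_0$ is conditionally of negative type; applying the positive linear functional $\go m$ and using linearity gives $\sum_{i,j}r_ir_j\Psi(g_i,g_j)\leqslant 0$.

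\textbf{Main obstacle.} The genuinely delicate step is the first one — checking that $h\mapsto\Psi_0(hg,hf)$ lands in ${\rm LUC}(G)$ so that $\go m$ can be applied, and more importantly that the resulting $\Psi$ is \emph{continuous} in $(g,f)$ jointly rather than just separately; here one must combine the $\theta$-bound (which controls $|\Psi(g,f)-\Psi(g',f')|$ in terms of $\norm{\sigma(hg)-\sigma(hg')}$ and $\norm{\sigma(hf)-\sigma(hf')}$ uniformly in $h$, using left-invariance of $d$ again) with uniform continuity of $\sigma$. Everything else is a routine transfer of properties of $\Psi_0$ through the positive, unital, invariant functional $\go m$; the only conceptual input beyond the Moore-type construction already recalled in the excerpt is that amenability of $G$ furnishes an invariant mean on ${\rm LUC}(G)$, which is exactly the fact quoted in Section~\ref{amenability}.
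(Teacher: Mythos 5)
Your proposal is correct and follows essentially the same route as the paper: average the kernel $\norm{\sigma(\cdot\,g)-\sigma(\cdot\,f)}^2$ over the auxiliary variable using a right-invariant mean on ${\rm LUC}(G)$, after checking that these functions are bounded (via left-invariance of $d$ and the moduli bounds) and left-uniformly continuous (via uniform continuity of $\sigma$ and continuity of conjugation), then transfer invariance, the two-sided bounds, and conditional negativity through the positive unital invariant functional. The one point you flag as delicate, joint continuity of $\Psi$, is handled in the paper exactly as you suggest, by the same uniform-in-$h$ estimate.
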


\begin{proof} 
For fixed $g,h\in G$, we define a function $\phi_{g,h}\colon G\til \R$ via
$$
\phi_{g,h}(f)=\norm{\sigma(fg)-\sigma(fh)}^2.
$$
Since, for all $g,h,f\in G$, we have 
$$
\tilde\kappa\big(d(g,h)\big)^2=\tilde\kappa\big(d(fg,fh)\big)^2\leqslant \norm{\sigma(fg)-\sigma(fh)}^2\leqslant \theta\big(d(g,h)\big)^2,
$$
it follows that
$$
\tilde\kappa\big(d(g,h)\big)^2\leqslant \phi_{g,h}\leqslant\theta\big(d(g,h)\big)^2
$$
and so, in particular,  $\phi_{g,h}\in \ell^\infty(G)$. 

We claim that $\phi_{g,h}$ is {\em left}-uniformly continuous, i.e., that  for all $\eps>0$ there is $W\ni 1$ open so that $|\phi_{g,h}(f)-\phi_{g,h}(fw)|<\eps$, whenever $f\in G$ and $w\in W$. To see this, take some $\eta>0$ so that $4\eta\norm{\phi_{g,h}}_\infty+4\eta^2<\eps$ and find, by uniform continuity of $\sigma$, some open $V\ni1$ so that   $\norm{\sigma(f)-\sigma(fv)}<\eta$ for all $f\in G$ and $v\in V$. 
Pick also $W\ni 1$ open so that $Wg\subseteq gV$ and $Wh\subseteq hV$. Then, if $f\in G$ and $w\in W$, there are $v_1,v_2\in V$ so that $wg=gv_1$ and $wh=hv_2$, whence
\[\begin{split}
\big|\phi_{g,h}(f)-\phi_{g,h}(fw)\big|
&=\Big|  \norm{\sigma(fg)-\sigma(fh)}^2- \norm{\sigma(fwg)-\sigma(fwh)}^2\Big|  \\
&=\Big|  \norm{\sigma(fg)-\sigma(fh)}^2- \norm{\sigma(fgv_1)-\sigma(fhv_2)}^2\Big|\\ 
&<4\eta\norm{\phi_{g,h}}_\infty+4\eta^2\\
&<\eps.
\end{split}\]
Thus, every $\phi_{g,h}$ belongs to the closed linear subspace ${\rm LUC}(G)\subseteq \ell^\infty(G)$ of left-uniformly continuous bounded real-valued functions on $G$ and a similar calculation shows that the map $(g,h)\in G\times G\mapsto \phi_{g,h}\in \ell^\infty(G)$ is continuous. 

Now, since $G$ is amenable, there exists a mean $\go m$ on ${\rm LUC}(G)$ invariant under the {\em right}-regular representation $\rho\colon G\curvearrowright {\rm LUC}(G)$ given by $\rho(g)\big(\phi\big)=\phi(\,\cdot\, g)$. Using this, we can define a continuous kernel $\Psi\colon G\times G\til \R$ by
$$
\Psi(g,h)=\go m(\phi_{g,h})
$$
and  note that $\Psi(fg,fh)=\go m(\phi_{fg,fh})=\go m\big(\rho(f)\big(\phi_{g,h}\big)\big)=\go m(\phi_{g,h})=\Psi(g,h)$ for all $g,h,f\in G$. 

We claim that $\Psi$ is a kernel conditionally of negative type. To verify this, let $g_1,\ldots, g_n\in G$ and $r_1,\ldots, r_n\in \R$ with $\sum_{i=1}^nr_i=0$. Then, for all $f\in G$, 
$$
\sum_{i=1}^n\sum_{j=1}^n r_ir_j\phi_{g_i,g_j}(f)=\sum_{i=1}^n\sum_{j=1}^n r_ir_j\norm{\sigma(fg_i)-\sigma(fg_j)}^2\leqslant 0,
$$
since $(g,h)\mapsto \norm{\sigma(fg)-\sigma(fh)}^2$ is a kernel conditionally of negative type. As $\go m$ is positive, it follows that also
$$
\sum_{i=1}^n\sum_{j=1}^n r_ir_j\Psi(g_i,g_j)
= \go m\Big(\sum_{i=1}^n\sum_{j=1}^n r_ir_j\phi_{g_i,g_j}\Big)
\leqslant 0.
$$

Finally, as $\go m$ is a mean and 
$$
\tilde\kappa\big(d(g,h)\big)^2\leqslant \phi_{g,h}\leqslant \theta\big(d(g,h)\big)^2,
$$
it follows that
$$
\tilde\kappa\big(d(g,h)\big)^2\leqslant \Psi(g,h)\leqslant \theta\big(d(g,h)\big)^2
$$
as required.
\end{proof}

\begin{thm}\label{maurey}
Suppose $d$ is a compatible left-invariant metric on an amenable  topological group $G$ and $\sigma\colon (G,d)\til \ku H$ is a uniformly continuous and bornologous map into a Hilbert space $\ku H$ with exact compression modulus $\tilde \kappa$ and expansion modulus $\theta$. 

Then there is a continuous  cocycle  into a real Hilbert space  $b\colon G\til \ku K$ so that 
$$
\tilde \kappa\big(d(g,f)\big)\leqslant \norm{b(g)-b(f)}\leqslant \theta\big(d(g,f)\big),
$$
for all $g,f\in G$.
\end{thm}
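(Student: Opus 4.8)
The plan is to derive Theorem \ref{maurey} from Lemma \ref{pre-maurey} via the standard GNS-type construction recalled just before the lemma. Concretely, I would first invoke Lemma \ref{pre-maurey} to obtain a continuous $G$-invariant kernel conditionally of negative type $\Psi\colon G\times G\til\R_+$ satisfying $\tilde\kappa(d(g,f))^2\leqslant\Psi(g,f)\leqslant\theta(d(g,f))^2$. Then I would apply the construction preceding the lemma (with $X=G$): form the space $\M(G)$ of finitely supported mean-zero real functions, equip it with the positive symmetric form $\langle\,\cdot\mid\cdot\,\rangle_\Psi$, pass to the quotient by the null space $N_\Psi$, and complete to get a real Hilbert space $\ku K$.

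Next I would produce the cocycle. Choose a base point, namely the identity $1\in G$, and define $b\colon G\til\ku K$ by $b(g)=\delta_g-\delta_1$ (more precisely its image in $\M(G)/N_\Psi\subseteq\ku K$). The metric estimate is then immediate from the computation displayed in the remark after the Hilbert space construction: $\norm{b(g)-b(f)}_\ku K^2=\langle\delta_g-\delta_f\mid\delta_g-\delta_f\rangle_\Psi=-\tfrac12\big(\Psi(g,g)+\Psi(f,f)-\Psi(g,f)-\Psi(f,g)\big)=\Psi(g,f)$, so that $\tilde\kappa(d(g,f))^2\leqslant\norm{b(g)-b(f)}^2\leqslant\theta(d(g,f))^2$, which gives the desired two-sided bound after taking square roots. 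For the cocycle equation and the associated orthogonal representation, I would use the $G$-invariance of $\Psi$: the left-translation action of $G$ on itself lifts to $\pi(g)\xi=\xi(g^{-1}\cdot)$ on $\M(G)$, preserves $\langle\,\cdot\mid\cdot\,\rangle_\Psi$ hence descends to an orthogonal representation $\pi\colon G\curvearrowright\ku K$, and one checks $b(gf)=\delta_{gf}-\delta_1=\pi(g)(\delta_f-\delta_1)+(\delta_g-\delta_1)=\pi(g)b(f)+b(g)$ directly from the definitions.

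It remains to address continuity, which is where the real work lies. I need strong continuity of $\pi$ and continuity of $b$; equivalently, continuity of the affine action $G\times\ku K\til\ku K$. Since $\M(G)/N_\Psi$ is dense in $\ku K$ and the $\pi(g)$ are uniform isometries, it suffices to check $g\mapsto\pi(g)\xi$ is continuous for $\xi$ in the dense set of vectors $\delta_{g_1}-\delta_{g_2}$, and continuity of $b$ reduces to continuity of $g\mapsto\norm{b(g)-b(1)}^2=\Psi(g,1)$ together with weak continuity; but $\Psi$ is already continuous by Lemma \ref{pre-maurey}, and more generally the map $(g,h)\mapsto\Psi(g,h)$ being continuous lets me control all inner products $\langle\pi(g)(\delta_{g_1}-\delta_{g_2})\mid\delta_{h_1}-\delta_{h_2}\rangle_\Psi$ as functions of $g$. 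Expanding $\norm{\pi(g)\xi-\pi(g_0)\xi}^2$ into a finite sum of values of $\Psi$ at arguments depending continuously on $g$, one sees it tends to $0$ as $g\til g_0$; the same computation applied to $b$ gives continuity of $b$. The main obstacle is thus the bookkeeping needed to show that continuity of the scalar kernel $\Psi$ upgrades to continuity (strong operator topology) of $\pi$ and norm-continuity of $b$ on the completion $\ku K$; once that is in hand, the theorem follows.
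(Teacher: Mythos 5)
Your proposal is correct and follows essentially the same route as the paper: invoke Lemma \ref{pre-maurey} to get the $G$-invariant kernel $\Psi$, run the Moore construction on $\M(G)$ to obtain $\ku K$ with the left-regular representation, and set $b(g)=\delta_g-\delta_1$, with $\norm{b(g)-b(f)}^2=\Psi(g,f)$ giving the two-sided estimate. The only cosmetic difference is that the paper computes $\norm{b(g)}^2=\Psi(g,1)$ and then substitutes $f^{-1}g$ using invariance, whereas you compute the difference directly; your extra discussion of continuity just fills in details the paper leaves to the reader.
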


\begin{proof}
Let $\Psi$ be the $G$-invariant kernel conditionally of negative type given by Lemma \ref{pre-maurey}.
As above, we define a positive symmetric form $\langle\cdot\del \cdot\rangle_\Psi$ on $\M(G)$.
Note that, since $\Psi$ is $G$-invariant, the form $\langle\cdot\del \cdot\rangle_\Psi$ is invariant under the left-regular representation $\lambda\colon G\curvearrowright \M(G)$ given by $\lambda(g)(\xi)=\xi(g\inv\,\cdot\,)$ and so $\lambda$ induces a strongly continuous  orthogonal representation $\pi$ of $G$ on the Hilbert space completion $\ku K$ of $\M(G)/N_\Psi$.

Moreover, as is easily checked, the map $b\colon G\til \ku K$ given by $b(g)=(\delta_g-\delta_1)+N_\Psi$ is a cocycle for $\pi$. Now
$$
\norm{b(g)}^2=\langle \delta_g-\delta_1 \del \delta_g-\delta_1\rangle_\Psi=\Psi(g,1),
$$
whence
$$
\tilde \kappa\big(d(g,1)\big)\leqslant \norm{b(g)}\leqslant \theta\big(d(g,1)\big).
$$
Replacing $g$ with $f\inv g$, the theorem follows.
\end{proof}

The first application concerns uniform embeddability of balls. Recall, for example, that a Banach space $X$ whose unit ball $B_X$ is uniformly embeddable into a Hilbert space must have finite cotype (Proposition 5.3 \cite{raynaud}). 
\begin{cor}\label{balls hilbert}
Let $\sigma\colon X\til \ku H$ be a uniformly continuous map from a Banach space $X$ into a Hilbert space satisfying
$$
\inf_{\norm{x-y}=r}\norm{\sigma(x)-\sigma(y)}>0
$$
for just some $r>0$. Then $B_X$ is uniformly embeddable into Hilbert space. 
\end{cor}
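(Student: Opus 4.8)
The plan is to obtain Corollary~\ref{balls hilbert} as a straightforward combination of Theorem~\ref{maurey} and Proposition~\ref{affine ball}. First I would view $X$ as the additive topological group $(X,+)$, which is abelian and hence amenable (Markov--Kakutani), and equip it with the compatible left-invariant metric $d(x,y)=\norm{x-y}$. To feed $\sigma$ into Theorem~\ref{maurey} I must know that $\sigma$ is not merely uniformly continuous but also bornologous; this is where the Corson--Klee lemma enters, exactly as in the proof of Proposition~\ref{affine banach}: a uniformly continuous map out of the metrically convex space $X$ is Lipschitz for large distances, and since $\theta_\sigma$ is finite on a neighbourhood of $0$ by uniform continuity, chaining along segments gives $\theta_\sigma(t)<\infty$ for every $t$, i.e.\ $\sigma$ is bornologous.

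With these hypotheses in place, Theorem~\ref{maurey} yields a strongly continuous orthogonal representation $\pi\colon X\curvearrowright \ku K$ on a real Hilbert space $\ku K$ together with an associated continuous cocycle $b\colon X\til \ku K$ satisfying
$$
\tilde\kappa_\sigma\big(\norm{x-y}\big)\leqslant \norm{b(x)-b(y)}\leqslant \theta_\sigma\big(\norm{x-y}\big)
$$
for all $x,y\in X$. Setting $y=0$ and invoking the hypothesis, which says precisely that $\tilde\kappa_\sigma(r)=\inf_{\norm{x-y}=r}\norm{\sigma(x)-\sigma(y)}>0$, we obtain $\inf_{\norm{x}=r}\norm{b(x)}\geqslant \tilde\kappa_\sigma(r)>0$. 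Hence $b\colon X\til \ku K$ is a continuous cocycle between Banach spaces meeting the hypothesis of Proposition~\ref{affine ball}, which then delivers a uniform embedding of $B_X$ into $B_\ku K$, and a fortiori into a Hilbert space.

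I do not expect a genuine obstacle here: the statement is a corollary, and the only steps that call for a line of justification are the amenability of $(X,+)$ and the upgrade from uniform continuity to bornologousness via Corson--Klee; everything else is just transporting the compression estimate from $\sigma$ to the cocycle $b$ and quoting Proposition~\ref{affine ball}.
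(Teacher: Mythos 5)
Your proposal is correct and follows essentially the same route as the paper: observe that $\tilde\kappa_\sigma(r)>0$, apply Theorem~\ref{maurey} to produce a continuous Hilbert-valued cocycle $b$ inheriting this lower bound, and conclude via Proposition~\ref{affine ball}. The additional remarks on the amenability of $(X,+)$ and on upgrading uniform continuity to bornologousness via Corson--Klee are details the paper leaves implicit, and they are correctly supplied.
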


\begin{proof}
Observe that the exact compression modulus of $\sigma$ satisfies $\tilde\kappa_\sigma(r)>0$. Thus, 
applying Theorem \ref{maurey}, we obtain a Hilbert valued continuous cocycle $b\colon X\til \ku K$ with $\tilde\kappa_b(r)>0$. Finally, an application of Proposition \ref{affine ball}, shows that $B_X$ uniformly embeds into $B_\ku K$.
\end{proof}

The requirement that $\sigma$ be uniformly continuous in Theorem \ref{maurey} above is somewhat superfluous. Indeed, W. B. Johnson and N. L. Randrianarivony (Step 0 in \cite{randrianarivony}) have shown that any separable Banach space admitting a coarse embedding into a Hilbert space also has a uniformly continuous coarse embedding into a Hilbert space and the proof carries over directly to prove the following.
\begin{lemme}\cite{randrianarivony}\label{randrianarivony}
Suppose $\sigma\colon (X,d) \til \ku H$ is a map from a metric space into a Hilbert space and assume that $\sigma$ is Lipschitz for large distances and expanding. Then $(X,d)$ admits  a uniformly continuous coarse embedding into a Hilbert space.
\end{lemme}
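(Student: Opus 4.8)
The plan is to replace $\sigma$ by a genuinely Lipschitz, hence uniformly continuous, map into the same Hilbert space $\ku H$, obtained by restricting $\sigma$ to a net of $X$ (on which it is automatically Lipschitz) and then extending back to $X$ through a Lipschitz partition of unity, which smooths out any pathological small-scale oscillation of $\sigma$ while keeping the new map boundedly close to $\sigma$ composed with a nearest-point projection.

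Concretely, I would first fix constants $C,L>0$ witnessing that $\sigma$ is Lipschitz for large distances, i.e.\ $\norm{\sigma(x)-\sigma(y)}\le L\,d(x,y)$ whenever $d(x,y)\ge C$, and, using Zorn's lemma, choose a maximal $C$-separated subset $\mathcal N\subseteq X$; maximality forces $\mathcal N$ to be $C$-dense. Since any two distinct points of $\mathcal N$ are at distance at least $C$, the restriction $h:=\sigma|_{\mathcal N}\colon\mathcal N\to\ku H$ is \emph{globally} $L$-Lipschitz, and it remains expanding: fixing any $p\colon X\to\mathcal N$ with $d(x,p(x))\le C$, we have $d(p(x),p(y))\ge d(x,y)-2C$, so $\norm{h(p(x))-h(p(y))}\ge\kappa_\sigma(d(x,y)-2C)\to\infty$ as $d(x,y)\to\infty$. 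Thus the task is reduced to extending the Lipschitz map $h$ on the net to a uniformly continuous coarse embedding of all of $X$.

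For the extension, I would take a locally finite partition of unity $\{\rho_j\}_j$ on $X$ subordinate to the cover $\{B(a,2C):a\in\mathcal N\}$, consisting of Lipschitz functions and satisfying a uniform bound $\sum_j|\rho_j(x)-\rho_j(y)|\le K\,d(x,y)$ for some constant $K$ (such partitions are produced by the standard construction of Lipschitz partitions of unity on a metric space subordinate to a cover by balls of a fixed radius). Choosing $a_j\in\mathcal N$ with ${\rm supp}\,\rho_j\subseteq B(a_j,2C)$, define $\tau\colon X\to\ku H$ by $\tau(x)=\sum_j\rho_j(x)\,h(a_j)$; by local finiteness this is a finite convex combination at each point. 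The crucial estimate is that $\tau(x)$ lies in the convex hull of $\{h(a):a\in\mathcal N,\ d(x,a)<2C\}$, a set of diameter at most $4CL$ because $h$ is $L$-Lipschitz; since $d(x,p(x))\le C<2C$, this yields $\norm{\tau(x)-h(p(x))}\le 4CL$ for every $x$.

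It then remains to check the three properties of $\tau$. Combining $\norm{\tau(x)-h(p(x))}\le 4CL$, $\norm{\tau(y)-h(p(y))}\le 4CL$, the bound $\norm{h(p(x))-h(p(y))}\le L(d(x,y)+2C)$, and the lower estimate above gives $\norm{\tau(x)-\tau(y)}\le L\,d(x,y)+10CL$ and $\norm{\tau(x)-\tau(y)}\ge\kappa_\sigma(d(x,y)-2C)-8CL$, so $\tau$ is bornologous and expanding. For uniform continuity, using $\sum_j\rho_j\equiv 1$ one writes $\tau(x)-\tau(y)=\sum_j(\rho_j(x)-\rho_j(y))\big(h(a_j)-h(p(x))\big)$, and for $d(x,y)<C$ every index $j$ contributing to the sum satisfies $d(a_j,p(x))<4C$, hence $\norm{h(a_j)-h(p(x))}\le 4CL$, so that $\norm{\tau(x)-\tau(y)}\le 4CLK\,d(x,y)$. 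Hence $\tau$ is a uniformly continuous coarse embedding of $X$ into $\ku H$. The one step that is delicate rather than routine is the existence of the partition of unity with a uniform modulus bound on $\sum_j|\rho_j(x)-\rho_j(y)|$ over a completely arbitrary metric space: one cannot merely normalise the hat functions $x\mapsto\max(0,2C-d(x,a))$, since in an infinite-dimensional ambient space a ball of radius $2C$ may meet a $C$-separated set in infinitely many points and the normalising sum may diverge; this is precisely what the Stone-type locally finite refinement takes care of, after which everything reduces to the triangle inequality.
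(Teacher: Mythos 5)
Your net construction, the choice of $p$, and the three estimates for $\tau$ are all correct as far as they go, and you have correctly isolated the load-bearing step. Unfortunately that step is not merely delicate: the partition of unity you require does not exist in the stated generality, and for the spaces actually covered by the lemma its existence is at least as strong as (indeed stronger than) the conclusion. A family $\{\rho_j\}$ with supports in balls $B(a_j,2C)$ and $\sum_j|\rho_j(x)-\rho_j(y)|\leqslant K\,d(x,y)$ is exactly a $K$-Lipschitz map $F(x)=(\rho_j(x))_j$ of $X$ into the positive face of the unit sphere of $\ell^1(J)$; since any index with $\rho_j(x)>0$ and $\rho_j(y)>0$ forces $d(x,y)\leqslant 4C$, this map satisfies $\norm{F(x)-F(y)}_{1}=2$ whenever $d(x,y)>4C$, so $F$ is an uncollapsed Lipschitz map into $\ell^1(J)$, hence (for separable $X$) into $\ell^1\subseteq L^1([0,1])$. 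By Theorem \ref{ell p sum} with $p=1$ this would yield a coarse embedding of $X$ into $\ell^1$. Taking $X=c_0$ this contradicts Corollary \ref{kalton3} together with Lemma \ref{solvent reform}, so the ``standard construction on a metric space'' you invoke cannot exist for arbitrary metric spaces; and taking $X=\ell^2$, which satisfies the hypotheses of the lemma via the identity map, the existence of your partition of unity would resolve the well-known open problem of whether $\ell^2$ coarsely embeds into $\ell^1$. A Stone-type locally finite refinement does not rescue this: it controls local finiteness of the supports but gives no uniform bound on the $\ell^1$-modulus $\sum_j|\rho_j(x)-\rho_j(y)|$, and no refinement of a cover of an infinite-dimensional space by sets of diameter $4C$ can have uniformly bounded multiplicity, which is what the naive normalisation would need.

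A second warning sign is that your $\tau$ would be globally Lipschitz, so the argument, if it worked, would show that every space satisfying the hypotheses admits a \emph{Lipschitz} coarse embedding into $\ku H$ --- considerably stronger than the lemma and not known. Note that the paper itself gives no internal proof of Lemma \ref{randrianarivony}; it defers entirely to Step 0 of \cite{randrianarivony}, whose argument does not attempt a quantitatively controlled extension from a net but instead exploits the Hilbertian structure of the target through positive and negative definite kernels, which is precisely what permits smoothing the small-scale oscillation without any $\ell^1$-controlled partition of unity. If you want a self-contained proof, that kernel route (compare Lemma \ref{pre-maurey} and Theorem \ref{maurey} in the amenable setting) is the one to pursue; the partition-of-unity route cannot be completed as stated.
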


For a topological group, the question is then when may may work with Lipshcitz for large distance maps rather than bornologous maps.
\begin{lemme}\label{lipschitz}
Let $\sigma\colon G\til (X,d)$ be a bornologous map from a Polish group into a metric space. Then there is a compatible left-invariant metric $\partial$ on $G$ so that 
$$
\sigma\colon (G,\partial)\til (X,d)
$$
is Lipschitz for large distances.
\end{lemme}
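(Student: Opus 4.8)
The plan is to produce $\partial$ explicitly, built from the ``oscillation'' length function of $\sigma$.

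\textbf{Step 1: the oscillation length function.} Define $\rho\colon G\to[0,\infty)$ by
$$
\rho(g)=\sup_{h\in G}d\big(\sigma(hg),\sigma(h)\big).
$$
For each fixed $g$ the coarse entourage $\{(h,hg)\mid h\in G\}$ lies in $\ku E_L$ (its set of displacements is the singleton $\{g\}$, which is relatively (OB)), so bornologousness of $\sigma$ gives $\rho(g)<\infty$; and direct computations show $\rho(gh)\leqslant\rho(g)+\rho(h)$, $\rho(g\inv)=\rho(g)$, $\rho(1)=0$, so $\rho$ is a (possibly discontinuous) left-invariant length function, and $d(\sigma(g),\sigma(h))\leqslant\rho(g\inv h)$ for all $g,h$. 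The one non-formal point here is that \emph{$\rho$ is bounded on some neighbourhood $V$ of $1$}: otherwise, using metrisability of $G$, choose $g_n\to 1$ with $\rho(g_n)\to\infty$; then $\{g_n\mid n\}\cup\{1\}$ is compact, hence relatively (OB), so $\bigcup_n\{(h,hg_n)\mid h\in G\}\in\ku E_L$, and bornologousness forces $\sup_n\rho(g_n)<\infty$, a contradiction.

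\textbf{Step 2: the metric and its compatibility.} Fix a compatible left-invariant metric $d_G$ on $G$; after rescaling $d_G$ by a positive constant we may assume $\rho\leqslant M$ on the open ball $B=\{g\mid d_G(1,g)<1\}$. Set $w(g)=d_G(1,g)\big(1+\rho(g)\big)$, let $\hat w(g)=\inf\{\sum_{i=1}^k w(s_i)\mid s_1\cdots s_k=g\}$ (the largest symmetric subadditive function $\leqslant w$ vanishing at $1$), and put $\partial(g,h)=\hat w(g\inv h)$. Then $\partial$ is a left-invariant écart, finite-valued since $\partial(g,h)\leqslant w(g\inv h)<\infty$. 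As $1+\rho\geqslant 1$ we get $w\geqslant d_G(1,\cdot)$, hence $\partial\geqslant d_G$; thus $\partial$ separates points and its topology refines that of $G$. Conversely $\partial(1,g)\leqslant w(g)=d_G(1,g)(1+\rho(g))\to 0$ as $g\to1$, because $\rho\leqslant M$ near $1$, so the nbhd filter at $1$ for $\partial$ agrees with that of $G$; by homogeneity $\partial$ is compatible.

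\textbf{Step 3: Lipschitz for large distances.} Given $g,h$, put $a=g\inv h$ and pick a factorisation $a=s_1\cdots s_k$ with $\sum_i w(s_i)<\partial(g,h)+1=:R$. Call $s_i$ \emph{long} if $d_G(1,s_i)\geqslant 1$, \emph{short} otherwise, and re-bracket $a=p_1\cdots p_m$ so that each long letter is its own block while each maximal run of short letters is cut into consecutive blocks in which the numbers $d_G(1,s_i)$ have sum $<1$, each block but the last of its run being maximal with this property. A short block $p_l$ then lies in $B$, so $\rho(p_l)\leqslant M$; a long block $p_l=s_i$ has $w(s_i)\geqslant 1+\rho(s_i)$, so $\rho(s_i)\leqslant w(s_i)-1$. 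Counting: at most $R$ blocks are long (each has $w$-cost $\geqslant1$), hence at most $R+1$ runs of short letters occur; two consecutive short blocks in a run have combined $d_G$-weight $\geqslant1$ and the short letters carry total $d_G$-weight $<R$, so at most $3R+1$ short blocks occur. By subadditivity of $\rho$,
$$
d(\sigma(g),\sigma(h))\leqslant\rho(a)\leqslant\sum_{l=1}^m\rho(p_l)\leqslant R+(3R+1)M\leqslant(1+3M)R+M,
$$
and letting $R\downarrow\partial(g,h)$ yields $d(\sigma(g),\sigma(h))\leqslant(1+3M)\partial(g,h)+M$ for all $g,h$; in particular $d(\sigma(g),\sigma(h))\leqslant(2+3M)\partial(g,h)$ whenever $\partial(g,h)\geqslant M$.

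\textbf{Main obstacle.} The delicate part is Step 3: the naive inequality $\rho(a)\leqslant\sum_i\rho(s_i)$ along the chosen factorisation is useless, since a long stretch of tiny letters contributes unboundedly, so one must regroup the letters into only $O(\partial(g,h))$ blocks on each of which $\rho$ is controlled --- by the block's own $w$-cost for long blocks, and by the neighbourhood bound $M$ for short blocks. The other point needing genuine (though short) argument rather than routine checking is the boundedness of $\rho$ near the identity in Step 1, which is precisely what upgrades $\partial$ from a continuous left-invariant écart to a compatible metric.
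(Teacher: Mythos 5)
Your proof is correct and follows essentially the same route as the paper's: the same displacement function (the paper's $w$, your $\rho$) with the same compactness/relative-(OB) argument for its boundedness near the identity, a word metric built as an infimum over factorisations whose letter cost interpolates between the $d_G$-length near $1$ and the displacement far from $1$, and a Lipschitz estimate obtained by regrouping a near-optimal factorisation into only $O(\partial(g,h))$ blocks, each contributing either its pre-paid cost or the uniform bound $M$. The differences are cosmetic: the paper charges letters via the explicit two-case sum $\sum_{i\in B}D(a_i,1)+\sum_{i\notin B}(w(a_i)+1)$ and bounds the number of letters by coalescing adjacent short ones, whereas you use the multiplicative weight $d_G(1,g)(1+\rho(g))$ with its subadditive envelope and a greedy block decomposition.
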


\begin{proof}
For $a\in G$, set 
$$
w(a)=\sup_{x\inv y=a}d(\sigma(x),\sigma(y)) 
$$
and observe that, as $\sigma$ is bornologous, $w(a)=w(a\inv)<\infty$.
We claim that there is some symmetric open identity neighbourhood $V$ so that 
$$
C=\sup_{a\in V}w(a)<\infty.
$$
If not, there are $a_n\til 1$ so that $w(a_n)>n$. But then $K=\{a_n,1\}_n$ is a compact and thus relatively (OB) set and 
$$
E_K=\{(x,y)\in G\times G\del x\inv y\in K\}
$$
a coarse entourage on $G$. As $\sigma$ is bornologous, we see that $(\sigma\times \sigma)E_K$ is a coarse entourage in $X$, i.e., $\sup_{x\inv y \in K}d(\sigma(x),\sigma(y))<\infty$, contradicting the assumptions on $K$.

So pick $V$ as claimed and let $D\leqslant 1$ be some compatible left-invariant metric on $G$. Fix also $\eps>0$ so that $V$ contains the ball $B_D(2\eps)$. Define 
\[\begin{split}
\partial(x,y)=\inf\Big(\sum_{i\in B}D(a_i,1)+\sum_{i\notin B}\big(w(a_i)+1\big)\Del x=ya_1\cdots a_n\;\;\&\;\; 
a_i\in V\text{ for }i\in B\Big).
\end{split}\]
Since $D$ is a compatible metric and $V\ni 1$ is open, $\partial$ is a continuous left-invariant \'ecart. Moreover, as $\partial\geqslant D$, we see that $\partial$ is a compatible metric on $G$. 

Now, suppose $x,y\in G$ are given and write $x=ya_1\cdots a_n$ for some  $a_i\in G$ with $a_i\in  V$ for $i\in B$ so that $\sum_{i\in B}D(a_i,1)+\sum_{i\notin B}\big(w(a_i)+1\big)\leqslant \partial(x,y)+\eps$. Observe that, if $D(a_i,1)<\eps$ and $D(a_{i+1}, 1)< \eps$ for some $i<n$, then $a_ia_{i+1}\in V$,
so, by coalescing $a_i$ and $a_{i+1}$ into a single term $a_ia_{i+1}\in V$, we only decrease the final sum. We may therefore assume that, for every $i<n$, either $D(a_i,1)\geqslant \eps$ or $D(a_{i+1}, 1)\geqslant \eps$. It thus follows that 
$$
 \partial(x,y)+\eps\geqslant \sum_{i\in B}D(a_i,1)+\sum_{i\notin B}\big(w(a_i)+1\big)\geqslant \sum_{i\leqslant n}D(a_i,1)\geqslant \frac{n-1}2\cdot \eps,
$$
i.e., $n\leqslant \frac {2\partial(x,y)}\eps+3$.

Therefore,
\[\begin{split}
d(\sigma(x),\sigma(y))
\leqslant&
 \sum_{i\in B}d\big(\sigma(ya_1\cdots a_{i}), \sigma(ya_1\cdots a_{i-1})\big)\\
 &+\sum_{i\notin B}d\big(\sigma(ya_1\cdots a_{i}), \sigma(ya_1\cdots a_{i-1})\big)\\
 \leqslant &
C\cdot n+\sum_{i\notin B}w(a_{i})\\
\leqslant &
\Big(\frac{2C}\eps+1\Big)\cdot\partial(x,y) +3C+\eps.
\end{split}\]
In other words, $\sigma\colon (G,\partial)\til (X,d)$ is Lipschitz for large distances.
\end{proof}

We can now extend the definition of the Haage\-rup property (see, e.g., Definition 2.7.5 \cite{bekka} or \cite{CCJJV}) from locally compact groups to the category of all topological groups.
\begin{defi}
A topological group $G$ is said to have the {\em Haagerup property} if it admits a coarsely proper continuous affine isometric action on a Hilbert space.
\end{defi}

Thus, based on Theorem \ref{maurey} and Lemmas \ref{randrianarivony} and \ref{lipschitz}, we have the following reformulation of the Haage\-rup property for amenable Polish groups.

\begin{thm}\label{haagerup equiv}
The following conditions are equivalent for an amenable Polish group $G$,
\begin{enumerate}
\item $G$ coarsely embeds into a Hilbert space,
\item $G$ has the Haagerup property.
\end{enumerate}
\end{thm}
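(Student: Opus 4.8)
The implication (2)$\Rightarrow$(1) is immediate: if $G$ has the Haagerup property it admits, by definition, a coarsely proper continuous affine isometric action on a Hilbert space, and the associated cocycle is then precisely a coarse embedding of $G$ into that Hilbert space.

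For the converse, the plan is to start from a coarse embedding $\sigma_0\colon G\til \ku H$ and run it successively through Lemmas \ref{lipschitz} and \ref{randrianarivony} and then Theorem \ref{maurey}. First I would apply Lemma \ref{lipschitz} to the bornologous map $\sigma_0$ to obtain a compatible left-invariant metric $\partial$ on $G$ for which $\sigma_0\colon (G,\partial)\til \ku H$ is Lipschitz for large distances. The key bookkeeping point is that $\partial$ is then automatically \emph{coarsely proper}, i.e., $\ku E_\partial=\ku E_L$: the inclusion $\ku E_L\subseteq \ku E_\partial$ always holds, while conversely every $\partial$-bounded entourage is carried by $\sigma_0$ to a bounded subset of $\ku H$ (Lipschitz for large distances), hence already lies in $\ku E_L$ because $\sigma_0$, being an \emph{expanding} bornologous map from $(G,\ku E_L)$, satisfies $\ku E_{\sigma_0}=\ku E_L$. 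In particular $\sigma_0\colon (G,\partial)\til \ku H$ is itself a coarse embedding of metric spaces. Next I would feed it into Lemma \ref{randrianarivony} to replace it by a uniformly continuous coarse embedding $\sigma_1\colon (G,\partial)\til \ku H'$, and then apply Theorem \ref{maurey} to $\sigma_1$ — this is the step where amenability of $G$ is genuinely used — obtaining a continuous cocycle $b\colon G\til \ku K$ into a Hilbert space with
$$
\tilde\kappa_{\sigma_1}\big(\partial(g,f)\big)\leqslant \norm{b(g)-b(f)}\leqslant \theta_{\sigma_1}\big(\partial(g,f)\big)
$$
for all $g,f\in G$.

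It then remains to verify that $b$ is coarsely proper, i.e., a coarse embedding of $G$ (with $\ku E_L$) into $\ku K$. Bornologousness is automatic since $b$ is a continuous cocycle (alternatively, it follows from the right-hand inequality together with $\ku E_L=\ku E_\partial$). For the expanding property, fix $R>0$; since $\sigma_1$ is expanding we have $\lim_{t\til\infty}\kappa_{\sigma_1}(t)=\infty$, and using $\kappa_{\sigma_1}(t)=\inf_{s\geqslant t}\tilde\kappa_{\sigma_1}(s)\leqslant\tilde\kappa_{\sigma_1}(t)$ one sees that $\{t\mid \tilde\kappa_{\sigma_1}(t)\leqslant R\}$ is bounded, say by $T$. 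Hence $\{(g,f)\mid \norm{b(g)-b(f)}\leqslant R\}\subseteq\{(g,f)\mid\partial(g,f)\leqslant T\}$, and the latter set lies in $\ku E_\partial=\ku E_L$. Thus $b$ is a coarse embedding, the affine action $(\,\pi,b\,)$ is coarsely proper, and $G$ has the Haagerup property.

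I expect the only real difficulty to be the one already isolated above, namely the passage through Theorem \ref{maurey} (hence the role of amenability); the rest is the delicate but routine translation between the abstract left-coarse structure $\ku E_L$ and an honest left-invariant metric on $G$, which is exactly what Lemmas \ref{lipschitz} and \ref{randrianarivony} are designed to do. The one subtlety worth flagging in the write-up is that one must use that $\sigma_0$ is a genuine coarse embedding, not merely bornologous, in order to know that the metric produced by Lemma \ref{lipschitz} is coarsely proper — without this the final cocycle might fail to be expanding for $\ku E_L$.
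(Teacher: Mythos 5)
Your proposal is correct and follows essentially the same route as the paper: the forward direction via the cocycle being a coarse embedding, and the converse by running the coarse embedding through Lemma \ref{lipschitz}, then Lemma \ref{randrianarivony}, then Theorem \ref{maurey}. The only difference is that you make explicit the bookkeeping point that the metric $\partial$ produced by Lemma \ref{lipschitz} satisfies $\ku E_\partial=\ku E_L$ (so that expansion relative to $\partial$ yields coarse properness relative to $\ku E_L$), a detail the paper leaves implicit in the final sentence of its proof; your argument for it is correct.
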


\begin{proof}(2)$\saa$(1): If $\alpha\colon G\curvearrowright \ku H$ is a coarsely proper continuous affine isometric action, with corresponding cocycle $b\colon G\til \ku H$, then $b\colon G\til \ku H$ is a uniformly continuous coarse embedding.

(1)$\saa$(2): If $\eta\colon G\til \ku H$ is a coarse embedding, then by Lemma \ref{lipschitz} there is a compatible left-invariant metric $d$ on $G$ so that $\eta\colon (G,d)\til \ku H$ is Lipschitz for large distances. Since $\eta$ is a coarse embedding, it must be expanding with respect to the metric $d$.
It follows from Lemma \ref{randrianarivony} that $\eta\colon (G,d)\til \ku H$ may also be assumed to be uniformly continuous. Thus, by Theorem \ref{maurey}, there is a continuous affine isometric action $\alpha\colon G\curvearrowright \ku K$ on a Hilbert space $\ku K$ with associated cocycle $b\colon G\til \ku K$ so that, for all $g\in G$, 
$$
\kappa_\eta\big(d(g,f)\big)\leqslant \norm{b(g)-b(f)}\leqslant\theta_\eta\big(d(g,f)\big).
$$
Since $\eta$ is a coarse embedding, the cocycle $b\colon G\til \ku K$ is coarsely proper and so is the action $\alpha$. 
\end{proof}

U. Haagerup \cite{haagerup} initially showed that finitely generated free groups have the Haagerup property. It is also known that amenable locally compact groups  \cite{BCV} (see also  \cite{CCJJV}) have the Haagerup property. However, this is not the case for amenable Polish groups. For example, a separable Banach space not coarsely embedding into Hilbert space such as $c_0$ of course also fails the Haagerup property.

There is also a converse to this. Namely, E. Guentner and J. Kaminker \cite{GK} showed that, if a finitely generated discrete group $G$ admits a affine isometric action on a Hilbert space whose cocycle $b$ grows faster than the square root of the word length, then $G$ is amenable (see \cite{tessera} for the generalisation to the locally compact case). It is not clear what, if any, generalisation of this is possible to the setting of (OB) generated Polish groups. For example, by Theorem 7.2 \cite{OB}, the homeomorphism group of the $n$-sphere $\mathbb S^n$ has property (OB) and thus is quasi-isometric to a point. It therefore trivially fulfills the assumptions of the Guentner--Kaminker theorem, but is not amenable.

It is by now well-known that the are Polish groups admitting no non-trivial continuous unitary or, equivalently, orthogonal, representations (the first example seems to be due to J. P. R. Christensen and W. Herer \cite{christensen}). Also many of these examples are amenable. While the non-existence of unitary representations may look like a local condition on the group that can be detected in neighbourhoods of the identity, the following result shows that under extra assumptions this is also reflected in the large scale behaviour of the group.

\begin{prop}
Suppose $G$ is an amenable Polish group with no non-trivial unitary representations. Then $G$ is either coarsely equivalent to a point or is not coarsely embeddable into Hilbert space.
\end{prop}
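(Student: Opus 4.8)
The statement to prove is a dichotomy, and it is equivalent to the single implication: \emph{if $G$ coarsely embeds into a Hilbert space, then $G$ has property (OB), i.e.\ is coarsely equivalent to a point}. So assume $G$ coarsely embeds into Hilbert space. Since $G$ is amenable and Polish, Theorem~\ref{haagerup equiv} provides a coarsely proper continuous affine isometric action $\alpha\colon G\curvearrowright\ku H$ on a Hilbert space $\ku H$, with linear part $\pi\colon G\curvearrowright\ku H$ and associated cocycle $b\colon G\til\ku H$. The first point is that $\pi$ is a strongly continuous orthogonal representation of $G$; since, as remarked above, $G$ has no non-trivial orthogonal, equivalently unitary, representation, $\pi$ must be trivial, i.e.\ $\pi(g)=\Id$ for every $g\in G$.

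With $\pi$ trivial, the cocycle equation $b(gf)=\pi(g)b(f)+b(g)$ reduces to $b(gf)=b(g)+b(f)$, so that $b\colon G\til(\ku H,+)$ is a continuous homomorphism into the additive group of $\ku H$. I claim $b\equiv 0$. Suppose not, and fix $g_0$ with $\xi:=b(g_0)\neq 0$. Then $c(g)=\langle b(g),\xi\rangle$ defines a continuous homomorphism $c\colon G\til(\R,+)$ with $c(g_0)=\norm\xi^2>0$, so $c(G)$ is a non-trivial subgroup of $\R$ and is therefore either dense in $\R$ or equal to $\alpha\Z$ for some $\alpha>0$. In the first case $g\mapsto e^{ic(g)}$, and in the second $g\mapsto e^{i(\pi/\alpha)c(g)}$, is a continuous homomorphism from $G$ to the circle group $\T$ with non-trivial image, that is, a non-trivial one-dimensional continuous unitary representation of $G$ --- contradicting the hypothesis. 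Hence $b$ is identically $0$.

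Finally, coarse properness of $\alpha$ means exactly that $b\colon G\til\ku H$ is a coarse embedding, in particular expanding. Applying the expanding condition to the diagonal $\Delta\subseteq\ku H\times\ku H$, which is a coarse entourage by axiom (i) of a coarse structure, we obtain a coarse entourage $E$ on $G$ with $(g,h)\notin E\saa(b(g),b(h))\notin\Delta$. Since $b\equiv 0$, we have $(b(g),b(h))=(0,0)\in\Delta$ for all $g,h\in G$, so this implication can hold only if $E=G\times G$; thus $G\times G\in\ku E_L$, which is precisely property (OB). Therefore $G$ is coarsely equivalent to a point, and the proposition follows. There is no genuine obstacle here --- the substance is carried entirely by Theorem~\ref{haagerup equiv} --- the only step needing a little care being the rescaling of $c$ in the second paragraph, which is what guarantees that the induced circle-valued character is not identically trivial.
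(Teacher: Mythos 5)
Your proof is correct and follows essentially the same route as the paper's: both invoke Theorem \ref{haagerup equiv} to obtain a coarsely proper continuous affine isometric action, observe that the absence of non-trivial unitary (equivalently orthogonal) representations forces the linear part to be trivial so that the cocycle is an additive homomorphism, and then rule out a non-zero homomorphism by composing with a linear functional and exponentiating to a character. The only differences are organizational --- you show $b\equiv 0$ directly and then read off property (OB) from expansion, whereas the paper argues by contraposition from unboundedness of $b$ --- and you make explicit the dense-versus-cyclic rescaling needed so that the circle-valued character is genuinely non-trivial, a point the paper leaves implicit.
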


\begin{proof}Suppose $G$ is not coarsely equivalent to a point.
Then, by Theorem \ref{haagerup equiv}, if $G$ is coarsely embeddable into Hilbert space, there is a coarsely proper continuous affine isometric action of $G$ on Hilbert space. So either the linear part $\pi$ is a non-trivial orthogonal representation of $G$ or the corresponding cocycle $b\colon G\til \ku H$ is a coarsely proper continuous homomorphism. Thus, as $G$ is not coarsely equivalent to a point,  in the second case, $b$ is unbounded and so composing with an appropriate linear functional, we get an unbounded homomorphism into $\R$. In either case, $G$ will admit a non-trivial orthogonal and thus also unitary representation contrary to our assumptions.
\end{proof}

Finally, let us sum up the equivalences for Banach spaces.
\begin{thm}\label{haagerup banach}
The following conditions are equivalent for a separable Banach space $X$,
\begin{enumerate}
\item $X$ coarsely embeds into a Hilbert space,
\item $X$ uniformly embeds into a Hilbert space,
\item $X$ admits an uncollapsed uniformly continuous map into a Hilbert space,
\item $X$ has the Haagerup property.
\end{enumerate}
\end{thm}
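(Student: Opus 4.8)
The plan is to close the loop $(1)\Rightarrow(3)\Rightarrow(2)\Rightarrow(1)$, with $(4)$ inserted via the equivalence $(1)\Leftrightarrow(4)$ already available from Theorem~\ref{haagerup equiv} (a separable Banach space, being abelian, is a F\o lner amenable — hence amenable — Polish group). The implications $(2)\Rightarrow(3)$ and $(4)\Rightarrow(1)$ are immediate: a uniform embedding is uncollapsed, and a coarsely proper cocycle is in particular a coarse embedding. So the real content is the chain linking $(3)$ back up to $(2)$, and this is exactly where the results of Section~\ref{uniform vs coarse} and Section~\ref{affine actions} do the work.

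First I would treat $(3)\Rightarrow(1)$ together with the eventual upgrade to $(2)$. Given an uncollapsed uniformly continuous map $\sigma\colon X\til\ku H$, apply Theorem~\ref{ell p sum} with $p=2$: since $\ell^2(\ku H)$ is again a (separable) Hilbert space, we obtain a simultaneously uniform and coarse embedding of $X$ into a Hilbert space. This already yields $(3)\Rightarrow(1)$ and $(3)\Rightarrow(2)$ in one stroke, so strictly speaking the cycle can be shortened. To make the logical structure transparent, I would record: $(2)\Rightarrow(3)$ trivially; $(3)\Rightarrow(2)$ by Theorem~\ref{ell p sum} with $p=2$ and $\ell^2(\ku H)\iso\ku H$; $(1)\Rightarrow(2)$ by first invoking Lemma~\ref{randrianarivony} (a coarse embedding into Hilbert space, being Lipschitz for large distances after the Corson--Klee lemma and expanding, may be taken uniformly continuous) and then again Theorem~\ref{ell p sum}; and $(2)\Rightarrow(4)$ by producing a coarsely proper cocycle.

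For the passage to the Haagerup property $(4)$, I would feed the uniform-and-coarse embedding $\omega\colon X\til\ku H$ obtained above into Theorem~\ref{maurey}: since $X$ is an amenable Polish group under a compatible left-invariant metric (the norm), and $\omega$ is uniformly continuous and bornologous with $\kappa_\omega(t)\to\infty$, Theorem~\ref{maurey} furnishes a continuous cocycle $b\colon X\til\ku K$ into a Hilbert space with $\tilde\kappa_\omega(d(g,f))\leqslant\norm{b(g)-b(f)}\leqslant\theta_\omega(d(g,f))$. Because $\omega$ is a coarse embedding the lower bound forces $b$ to be expanding, and being a cocycle it is automatically bornologous and uniformly continuous; hence $b$ is coarsely proper, i.e.\ $X$ has the Haagerup property. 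Conversely $(4)\Rightarrow(1)$ is trivial since the cocycle of a coarsely proper affine isometric action is a coarse embedding. (Alternatively, one can cite Randrianarivony's theorem \cite{randrianarivony2} directly for $(1)\Leftrightarrow(2)$ and Theorem~\ref{haagerup equiv} for $(1)\Leftrightarrow(4)$, which is how the statement attributes the Banach-space case.)

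The only genuinely delicate point is already packaged for us: it is the construction in Lemma~\ref{unif saa coarse}/Theorem~\ref{ell p sum} that converts a single uncollapsed uniformly continuous map into one that is simultaneously uniform and coarse, by rescaling copies of $\sigma$ at different scales and summing them in an $\ell^p$-sum along odd and even indices. Everything else here is bookkeeping: checking that $\ell^2$ of a Hilbert space is a Hilbert space, that amenability of $X$ is in place, and that the various moduli inherited from Theorem~\ref{maurey} have the right growth. So I expect no serious obstacle beyond correctly citing and chaining the earlier results; the main thing to be careful about is ensuring the map handed to Theorem~\ref{maurey} is honestly a coarse \emph{embedding} (expanding), which is why the intermediate application of Theorem~\ref{ell p sum} — rather than just an uncollapsed map — is needed before invoking Lemma~\ref{pre-maurey}.
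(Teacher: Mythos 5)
Your proposal is correct and follows essentially the same route as the paper: (4) is a priori the strongest condition, (3)$\Rightarrow$(1),(2) comes from Theorem \ref{ell p sum} with $p=2$ and $\ell^2(\ku H)\iso \ku H$, and the upgrade to a coarsely proper cocycle is exactly the content of Theorem \ref{haagerup equiv} (i.e.\ Lemma \ref{randrianarivony} plus Theorem \ref{maurey}). The only cosmetic difference is that the paper outsources (1)$\Leftrightarrow$(2) to Randrianarivony's result while you rederive (1)$\Rightarrow$(2) internally from Lemma \ref{randrianarivony} followed by Theorem \ref{ell p sum}, which works since a bornologous map on a Banach space is automatically Lipschitz for large distances by the convexity argument behind Corson--Klee.
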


Here Condition (4) of Theorem \ref{haagerup banach} is a priori the strongest of the four since a continuous coarsely proper cocycle $b\colon X\til \ku H$ will be simultaneously a uniform and coarse embedding. The equivalence of (1) and (2) was proved in \cite{randrianarivony} (the implication from (3) to (1) and (2) of course also being a direct consequence of Theorem \ref{ell p sum}), while the implication from (2) to (4) following from Theorem \ref{haagerup equiv}. A seminal characterisation of uniform embeddability into Hilbert spaces utilising a version of Lemma \ref{pre-maurey} appears in \cite{maurey}.


\section{Preservation of local structure}\label{super-refl}
Weakening the geometric restrictions on the phase space from euclidean to uniformly convex, we still have a result similar to Theorem \ref{maurey}. However, in this case, we do not know if amenability suffices, but must rely on strengthenings of this.

Before we state the next result, we recall that a Banach space $X$ is said to be {\em finitely representable} in a Banach space $Y$ if, for every finite-dimensional subspace $F\subseteq X$ and every $\eps>0$, there is a linear embedding $T\colon F\til Y$ so that $\norm T\cdot \norm{T\inv}<1+\eps$. We also say that $X$ is {\em crudely finitely representable} in $Y$ if there is a constant $K$ so that every finite-dimensional subspace of $X$ is $K$-isomorphic to a subspace of $Y$.
Finally, if $E$ is a Banach space and $1\leqslant p<\infty$, we let $L^p(E)$ denote the Banach space of equivalence classes of measurable functions $f\colon [0,1]\til E$ so that the $p$-norm
$$
\norm{f}_{L^p(E)}=\Big(\int_0^1 \norm{f}_E^p \;d\lambda\Big)^{1/p}
$$
is finite.

The next theorem is the basic result for preservation of local structure. Earlier versions of this are due to A. Naor and Y. Peres \cite{naor-peres} and V. Pestov \cite{pestov}  respectively for finitely generated amenable and locally finite discrete groups.  

\begin{thm}\label{pestov}
Suppose that $d$ is a continuous left-invariant \'ecart on a F\o lner amenable Polish group $G$ and  $\sigma\colon (G,d)\til E$ is a uniformly continuous and bornologous map into a Banach space $E$ with exact compression modulus $\tilde\kappa$ and expansion modulus $\theta$. 

Then, for every $1\leqslant p<\infty$, there is a continuous affine isometric action of $G$ on a Banach space $V$ finitely representable in $L^p(E)$ with corresponding cocycle $b$ satisfying
$$
\tilde\kappa\big(d(x,y)\big)\leqslant \norm{b(x)-b(y)}_V\leqslant \theta\big(d(x,y)\big)
$$
for all $x,y\in G$.
\end{thm}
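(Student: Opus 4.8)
The plan is to mimic the Hilbert-space argument of Lemma~\ref{pre-maurey} and Theorem~\ref{maurey}, but replacing the ``conditionally of negative type'' machinery (which produces a Hilbert space) by an analogous construction that produces a space finitely representable in $L^p(E)$. The key classical input is the theory of Banach-space-valued kernels: given a uniformly continuous bornologous $\sigma\colon(G,d)\til E$, for each fixed $g,h\in G$ one forms the bounded function $\phi_{g,h}\in {\rm LUC}(G)$ given by $\phi_{g,h}(f)=\norm{\sigma(fg)-\sigma(fh)}$, exactly as in Lemma~\ref{pre-maurey}; the same left-uniform continuity estimate applies (now without the squaring, but the Corson--Klee / elementary triangle-inequality bookkeeping is even easier), and we have the sandwich $\tilde\kappa(d(g,h))\leqslant\phi_{g,h}\leqslant\theta(d(g,h))$ pointwise.

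The issue is that an invariant mean on ${\rm LUC}(G)$ integrates \emph{scalars}, not the $E$-valued displacements $\sigma(fg)-\sigma(fh)$, and averaging the numerical kernel $\phi_{g,h}$ does not by itself yield a cocycle. This is exactly where F\o lner amenability (rather than mere amenability) must be used: one needs a genuine \emph{averaging of the $E$-valued maps $f\mapsto \sigma(fg)$} along a F\o lner-type net. Concretely, in case~(1) of Definition~\ref{folner}, write $G=\overline{\bigcup_n K_n}$ with $K_n$ compact subgroups; average the translates of $\sigma$ over $K_n$ with respect to normalised Haar measure, landing in $L^p(K_n,E)$, and let the group act by the (measure-preserving) translation action. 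In case~(2), pull back along $\phi\colon H\til G$ from a locally compact amenable $H$, use a F\o lner sequence $F_n\subseteq H$, and average over $F_n$ inside $L^p(F_n,E)$. In either case the natural phase space is an ultraproduct / direct-limit-type space built from the $L^p(K_n,E)$ (or $L^p(F_n,E)$), which is finitely representable in $L^p(E)$ because each $L^p(\mu,E)$ with $\mu$ finite is, and finite representability passes to ultraproducts. The cocycle is the orbit map $b(g)=[f\mapsto \sigma(fg)]-[f\mapsto\sigma(f)]$ (suitably interpreted in the limit space), the linear part being the translation representation, which is isometric on $L^p$ and strongly continuous.

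The lower and upper bounds on $\norm{b(g)-b(f)}_V$ come for free from the pointwise sandwich $\tilde\kappa\le\phi_{g,h}\le\theta$: integrating/averaging a function bounded between two constants keeps it between those constants in $L^p$-norm as well (the $L^p$ norm of a constant over a probability space is that constant), and passing to the limit space preserves the inequalities; left-invariance of $d$ and of the construction reduces everything to the case $h=1$ exactly as in Theorem~\ref{maurey}. Continuity of $b$ and strong continuity of $\pi$ follow from uniform continuity of $\sigma$ together with continuity of translation on $L^p$ of a finite measure space, plus, in case~(2), continuity of $\phi$.

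The main obstacle I expect is the ``limit space'' bookkeeping: one must choose the averaging so that the resulting $V$ is simultaneously (a) a genuine Banach space on which $G$ acts isometrically and strongly continuously, (b) finitely representable in $L^p(E)$, and (c) large enough to retain both the compression lower bound and the cocycle identity in the limit. The cleanest route is probably to fix a non-principal ultrafilter $\ku U$ on $\N$, form the Banach space ultraproduct $V_0=\big(\prod_n L^p(K_n,E)\big)_\ku U$ (resp.\ with $F_n$), observe the diagonal translation action is isometric, take $V$ to be the closed subspace generated by the orbit of the cocycle vector together with its translates, and then verify strong continuity by a standard $3\eps$-argument using uniform continuity of $\sigma$ and the fact that each $K_n$ is compact (so translation is uniformly continuous on $L^p(K_n,E)$ uniformly in $n$ on a common modulus coming from $\sigma$). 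Checking that strong continuity survives the ultraproduct — i.e.\ that the orbit maps into $V$ are genuinely continuous and not merely ``continuous modulo $\ku U$'' — is the delicate point and will require care in how the generating set of $V$ is chosen.
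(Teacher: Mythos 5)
Your proposal is correct and follows essentially the same route as the paper: pull back along $\phi\colon H\til G$ (resp.\ use the chain of compact subgroups), average the translates of $\sigma$ over F\o lner sets $F_n$ (resp.\ the $K_n$ with Haar measure), pass to the ultraproduct of the spaces $L^p(F_n,E)$, take the translation representation with cocycle $b(g)=\rho(g)\sigma-\sigma$, let $V$ be the closed linear span of the cocycle orbit, and get finite representability in $L^p(E)$ from the ultraproduct; the sandwich inequalities and the continuity issues are handled exactly as you anticipate. The one point you flag as delicate --- strong continuity surviving the limit --- is resolved in the paper by verifying uniform continuity of $x\mapsto\rho(x)b(z)$ on the dense subgroup via the estimate $\norm{\rho(x)b(z)-\rho(y)b(z)}\leqslant\theta[d(\phi(xz),\phi(yz))]+\theta[d(\phi(x),\phi(y))]$ and then extending by density, just as you propose.
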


\begin{proof}
Let us first assume that $G$ satisfies the second assumption of Definition \ref{folner} and let $\phi \colon H\til G$ be the corresponding mapping. Then, replacing $H$ by the amenable group $H/\ker \phi$, we may suppose that $\phi$ is injective. Let also $|\cdot|$ denote a right-invariant Haar measure on $H$. Since $H$ is amenable and locally compact, there is  a F\o lner sequence $\{F_n\}$, i.e., a sequence of Borel subsets $F_n\subseteq H$ of finite positive meaure so that $\lim_n\frac{|F_nx\triangle F_n|}{|F_n|}=0$ for all $x\in H$. Let also $L^p(F_n,E)$ denote the Banach space of $p$-integrable functions $f\colon F_n\til E$ with norm 
$$
\norm{f}_{L^p(F_n,E)}=\Big(\int_{F_n}\norm{f(x)}_E^p\Big)^{1/p}.
$$

Now fix a non-principal ultrafilter $\ku U$ on $\N$ and let $\big(\prod_nL^p(F_n,E)\big)_\ku U$ denote the corresponding ultraproduct. That is, if we equip
$$
W=\{(\xi_n)\in \prod_nL^p(F_n,E)\del \sup_n\norm{\xi_n}_{L^p(F_n,E)}<\infty\}
$$ 
with the semi-norm
$$
\norm{(\xi_n)}_\ku U=\lim_\ku U\norm{\xi_n}_{L^p(F_n,E)}
$$
and let $N=\{(\xi_n)\in W\del \norm{(\xi_n)}_W=0\}$ denote the corresponding null-space, then $\big(\prod_n\ell^p(F_n,E)\big)_\ku U$ is the quotient  $W/N$. For simplicity of notation, if $(\xi_n)\in W$, we denote the element $(\xi_n)+N\in \big(\prod_nL^p(F_n,E)\big)_\ku U$ by $(\xi_n)_\ku U$.

Consider the  linear operator $\Theta\colon L^\infty(H, E)\til \big(\prod_nL^p(F_n,E)\big)_\ku U$ given by 
$$
\Theta(f)= \big(|F_n|^{-1/p}\cdot f\begr_{F_n}\big)_\ku U.
$$
Then $\norm{f}_{\ku U,p}=\Norm{\Theta(f)}_\ku U$ defines a semi-norm on $L^\infty(H, E)$ satisfying
\[\begin{split}
\norm{f}_{\ku U, p}^p
&=\NORM{\big(|F_n|^{-1/p}\cdot f\begr_{F_n}\big)_\ku U}_\ku U^p\\
&=\Big(\lim_\ku U\Norm{|F_n|^{-1/p}\cdot f\begr_{F_n}}_{L^p(F_n,E)}\Big)^p\\
&=\lim_\ku U\int_{F_n}\Norm{|F_n|^{-1/p}\cdot f(x)}_{E}^p\\
&=\lim_\ku U\frac 1{|F_n|}\int_{F_n}\Norm{f(x)}_E^p.
\end{split}\]

We claim that $\norm{\cdot }_{\ku U,p}$ is invariant under the right-regular representation $\rho \colon H\curvearrowright L^\infty(H,E)$.
Indeed, for all $f\in L^\infty(H,E)$ and $y\in H$, 
\[\begin{split}
\norm{\rho(y)f}_{\ku U, p}^p-\norm{f}_{\ku U, p}^p
&=\lim_\ku U\frac 1{|F_n|}\int_{x\in F_n}\Norm{\big(\rho(y)f\big)(x)}_E^p-\lim_\ku U\frac 1{|F_n|}\int_{x\in F_n}\Norm{f(x)}_E^p\\
&=\lim_\ku U\frac 1{|F_n|}\Big(\int_{x\in F_n}\Norm{f(xy)}_E^p-\int_{x\in F_n}\Norm{f(x)}_E^p\Big)\\
&=\lim_\ku U\frac 1{|F_n|}\Big(\int_{x\in F_ny}\Norm{f(x)}_E^p-\int_{x\in F_n}\Norm{f(x)}_E^p\Big)\\
&\leqslant \lim_\ku U\frac 1{|F_n|}\int_{x\in F_ny\triangle F_n}\Norm{f(x)}_E^p\\
&\leqslant \norm{f}_{L^\infty(H, E)}^p\cdot \lim_\ku U \frac {|F_ny\triangle F_n|}{|F_n|}\\
&=0.
\end{split}\]

Since $\phi\colon H\til G$ is a homomorphism and $d$ is left-invariant, for $x,y,z\in H$ we have
$$
d\big(\phi(x),\phi(y)\big)=d\big(\phi(z)\phi(x),\phi(z)\phi(y)\big)=d\big(\phi(zx),\phi(zy)\big)
$$
and hence
\[\begin{split}
\tilde\kappa\big[d\big(\phi(x),\phi(y)\big)\big]
&=\tilde\kappa\big[d\big(\phi(zx),\phi(zy)\big)\big]\\
&\leqslant \Norm{\sigma\phi(zx)-\sigma\phi(zy)}_E\\
&= \Norm{\big(\rho(x)\sigma\phi\big)(z)-\big(\rho(y)\sigma\phi\big)(z)}_E\\
&\leqslant 
\theta\big[d\big(\phi(x),\phi(y)\big)\big].
\end{split}\]
This shows that, for $x,y\in H$, we have $\big(\rho(x)\sigma\phi\big)-\big(\rho(y)\sigma\phi\big)\in L^\infty(H,E)$ and 
$$
\tilde\kappa\big[d\big(\phi(x),\phi(y)\big)\big]^p
\leqslant 
\frac 1{|F_n|}\int_{F_n}\Norm{\big(\rho(x)\sigma\phi\big)-\big(\rho(y)\sigma\phi\big)}_E^p
\leqslant 
\theta[d\big(\phi(x),\phi(y)\big)\big]^p
$$
for all $n$. By the expression for $\norm\cdot_{\ku U,p}$, it follows that  
$$
\tilde\kappa\big[d\big(\phi(x),\phi(y)\big)\big]
\leqslant 
\Norm{
\big(\rho(x)\sigma\phi\big)-\big(\rho(y)\sigma\phi\big)
}_{\ku U, p}\leqslant 
\theta\big[d\big(\phi(x),\phi(y)\big)\big].
$$

Therefore, by setting $y=1$, we see that the mapping $b\colon H\til L^\infty(H,E)$ given by
$$
b(x)=\big(\rho(x)\sigma\phi\big)-\sigma\phi
$$
is well-defined. Also, as $b(x)-b(y)= \big(\rho(x)\sigma\phi\big)-\big(\rho(y)\sigma\phi\big)$, we see that
$$
\tilde\kappa\big[d\big(\phi(x),\phi(y)\big)\big]
\leqslant 
\Norm{b(x)-b(y)}_{\ku U,p}\leqslant 
\theta\big[d\big(\phi(x),\phi(y)\big)\big].
$$
Since $\sigma$  uniformly continuous and thus $\lim_{\eps\til 0_+}\theta(\eps)=0$, it follows that
$b$ is uniformly continuous with respect to the metric $d\big(\phi(\cdot),\phi(\cdot)\big)$ on $H$ and the semi-norm  $\norm\cdot_{\ku U,p}$ on $L^\infty(H,E)$. 

Let now $M\subseteq L^\infty(H,E)$ denote the null space of the semi-norm $\norm\cdot_{\ku U,p}$ and $X$ be the completion of $L^\infty(H,E)/M$ with respect to  $\norm\cdot_{\ku U,p}$. Clearly, $X$ is isometrically embeddable into $\big(\prod_nL^p(F_n,E)\big)_\ku U$ and the right-regular representation $\rho\colon H\curvearrowright L^\infty(H,E)$ induces a linear isometric representation of $H$ on $X$, which we continue denoting $\rho$. Similarly, we view $b$ as a map into $X$.

As is easy to see,  $b\in Z^1(H,\rho)$, that is, $b$ satisfies the cocycle identity $b(xy)=\rho(x)b(y)+b(x)$ for $x,y\in H$. So, in particular, the linear span of $b[H]$ is $\rho[H]$-invariant. Moreover, since each $\rho(x)\in \rho[H]$ is an isometry, the same holds for the closed linear span $V\subseteq X$ of $b[H]$. 

We claim that, for every $\xi\in V$, the map $x\mapsto \rho(x)\xi$ is uniformly continuous with respect to the metric $d\big(\phi(\cdot),\phi(\cdot)\big)$. Since the linear span of $b[H]$ is dense in $V$, it suffices to prove this for $\xi\in b[H]$. So fix some $z\in H$ and note that, for $x,y\in H$, we have
\[\begin{split}
\Norm{\rho(x)b(z)-\rho(y)b(z)}_{\ku U,p}
&=\Norm{\big(b(xz)-b(x)\big)-\big(b(yz)-b(y)\big)}_{\ku U,p}\\
&\leqslant\Norm{b(xz)-b(yz)}_{\ku U,p}+\Norm{b(y)-b(x)}_{\ku U,p}\\
&\leqslant\theta\big[ d\big(\phi(xz), \phi(yz)\big)  \big]+\theta\big[ d\big(\phi(x),\phi(y)   \big)\big].
\end{split}\]
Now, suppose that $\eps>0$ is given. Then, by uniform continuity of $\sigma$, there is $\delta>0$ so that $\theta(\delta)<\frac \eps2$. Also, since multiplication by $\phi(z)$ on the right is left-uniformly continuous on $G$, there is an $\eta>0$ so that $d\big(\phi(x),\phi(y)   \big)<\eta$ implies that $d\big(\phi(xz),\phi(yz)   \big)<\delta$. It follows that, provided $d\big(\phi(x),\phi(y)   \big)<\min \{\eta,\delta\}$, we have $\Norm{\rho(x)b(z)-\rho(y)b(z)}_{\ku U,p}<\eps$, hence verifying uniform continuity.

To sum up, we have now an isometric linear representation $\rho\colon H\curvearrowright V$ with associated cocycle $b\colon H\til V$ so that, with respect to the metric $d\big(\phi(\cdot),\phi(\cdot)\big)$ on $H$, the mappings $b$ and $x\mapsto \rho(x)\xi$ are uniformly continuous for all $\xi\in V$.

Identifying $H$ with its image in $G$ via the continuous embedding $\phi$, it follows that there are unique continuous extensions of these mappings to all of $G$, which we continue denoting $b$ and $x\mapsto \rho(x)\xi$. Also, simple arguments using continuity and density show that, for all $x\in G$,  the map $\xi\mapsto \rho(x)\xi$ defines a linear isometry $\rho(x)$ of $V$  so that $\rho(xy)=\rho(x)\rho(y)$ and $b(xy)=\rho(x)b(y)+b(x)$. In other words, $\rho$ is a continuous isometric linear representation and $b\in Z^1(G,\rho)$. Moreover, 
$$
\tilde\kappa\big[d(x,y)\big]
\leqslant 
\Norm{b(x)-b(y)
}_{\ku U, p}\leqslant 
\theta\big[d(x,y)\big]
$$
for all $x,y\in G$.

To finish the proof, it now suffices to verify that $V$ is finitely representable in $L^p(E)$. To see this, note that, since each $L^p(F_n,E)$ is isometrically a subspace of $L^p(E)$, it follows from the properties of the ultraproduct that $\big(\prod_nL^p(F_n,E)\big)_\ku U$ is finitely representable in $L^p(E)$. Also, by construction, $X$ and a fortiori its subspace $V$ are isometrically embeddable into $\big(\prod_nL^p(F_n,E)\big)_\ku U$, which proves the theorem under the second assumption.

Consider now instead the case when $G$ is approximately compact. The proof is very similar to the second case, so we shall just indicate the changes needed. Thus, let $K_1\leqslant K_2\leqslant K_3\leqslant\ldots\leqslant G$ be a chain of compact subgroups with dense union in $G$. Instead of considering the sets $F_n$ with the Haar measure $|\cdot|$ from $H$, we now use the $K_n$ with their respective Haar measures and similarly $L^p(K_n,E)$ in place of $L^p(F_n,E)$. As before, the ultraproduct $\big(\prod_nL^p(K_n,E)\big)_\ku U$ is finitely representable in $L^p(E)$. 

Also, if $x\in \bigcup_nK_n$, then the right-regular representation $\rho(x)$ defines a linear isometry of all but finitely many $L^p(K_n,E)$, which means that $\rho(x)$ induces a linear isometry of the ultraproduct $\big(\prod_nL^p(K_n,E)\big)_\ku U$ simply by letting $\rho(x)\big(\xi_n\big)_\ku U=\big(\rho(x)\xi_n\big)_\ku U$.

Also, observe that, for $x,y\in G$, we have $\rho(x)\sigma,\rho(y)\sigma\in L^p(K_n,E)$ for all $n$ and by computations similar to those above, we find that
$$
\tilde\kappa\big[d\big(\phi(x),\phi(y)\big)\big]
\leqslant 
\NORM{
\big(\rho(x)\sigma-\rho(y)\sigma
\big)_\ku U
}_{\ku U}\leqslant 
\theta\big[d\big(\phi(x),\phi(y)\big)\big].
$$
We may therefore define  $b\colon G\til \big(\prod_nL^p(K_n,E)\big)_\ku U$ by letting 
$$
b(x)=\big(\rho(x)\sigma-\sigma\big)_\ku U
$$
and note that 
$$
\tilde\kappa\big[d\big(\phi(x),\phi(y)\big)\big]
\leqslant 
\Norm{
b(x)-b(y)
}_{\ku U}\leqslant 
\theta\big[d\big(\phi(x),\phi(y)\big)\big].
$$
One easily sees that, when restricted to $\bigcup_nK_n$, $b$ is a cocycle associated to the representation $\rho\colon \bigcup_nK_n\curvearrowright \big(\prod_nL^p(K_n,E)\big)_\ku U$.

Moreover, as before, we verify that, for every $\xi \in V=\ov{b[\bigcup_nK_n]}$, the mapping $x\in \bigcup_nK_n\mapsto \rho(x)\xi\in V$ is left-uniformly continuous, so the representation $\rho$ extends to an isometric linear representation of $G$ on $V$ with associated cocycle $b$. 
\end{proof}

Naor and Peres \cite{naor-peres} showed the above result for finitely generated discrete amenable groups using F\o lner sequences and asked in this connection whether one might bypass the F\o lner sequence of the proof and instead proceed directly from an invariant mean on the group. Of course, for a finitely generated group, this question is a bit vague since having an invariant mean or a F\o lner sequence are both equivalent to amenability. However, for general Polish groups this is not so and we may therefore provide a precise statement capturing the essence of their question.
\begin{prob}\label{naor}
Does Theorem \ref{pestov} hold for a general amenable Polish group $G$?
\end{prob}

\begin{rem} Subsequently to the appearence of the present paper in preprint, F. M. Schneider and A. Thom have developed a more general notion of F\o lner sets in topological groups \cite{thom} and were able to combine this with the mechanics of the above proof to solve Problem \ref{naor} in the affirmative. Moreover, they were also able  to construct an example of an amenable  Polish group, which is not F\o lner amenable, thereby showing that their result is indeed a proper strengthening of the above. 
\end{rem}

Our main concern here being the existence of coarsely proper affine isometric actions, let us first consider the application of Theorem \ref{pestov} to that problem.
\begin{cor}\label{fin repr}
Let $G$ be a F\o lner amenable Polish group admitting a uniformly continuous coarse embedding into a Banach space $E$. Then $G$ admits a coarsely proper continuous affine isometric action on a Banach space $V$ that is finitely representable in $L^2(E)$.
\end{cor}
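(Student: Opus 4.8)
The plan is to deduce this from Theorem~\ref{pestov} by the same route used for the implication $(1)\saa(2)$ of Theorem~\ref{haagerup equiv}, the only difference being that uniform continuity is now built into the hypothesis, so no substitute for Lemma~\ref{randrianarivony} is needed. First I would fix a uniformly continuous coarse embedding $\eta\colon G\til E$. Since $\eta$ is bornologous, Lemma~\ref{lipschitz} yields a compatible left-invariant metric $\partial$ on $G$ with respect to which $\eta$ is Lipschitz for large distances. This $\partial$ need not render $\eta$ uniformly continuous, so, invoking Weil's theorem that $\ku U_L$ is the union of the uniformities induced by continuous left-invariant \'ecarts together with the uniform continuity of $\eta$ for $\ku U_L$, I would pick a continuous left-invariant \'ecart $d_1$ making $\eta\colon(G,d_1)\til E$ uniformly continuous and set $d=\partial+d_1$. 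Then $d$ is a compatible left-invariant metric, and $\eta\colon(G,d)\til E$ is uniformly continuous (as $d\geqslant d_1$) and Lipschitz for large distances (as $d\geqslant\partial$), hence also bornologous.

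The step requiring a little attention is that $d$ is in fact coarsely proper, i.e.\ $\ku E_d=\ku E_L$. The inclusion $\ku E_L\subseteq\ku E_d$ always holds. For the reverse, suppose $A\subseteq\{(x,y)\mid d(x,y)<\alpha\}$; the Lipschitz-for-large-distances estimate $\norm{\eta(x)-\eta(y)}\leqslant L\,d(x,y)+C$ bounds $\norm{\eta(x)-\eta(y)}$ by $L\alpha+C$ on $A$, and since $\eta$ is expanding with respect to $\ku E_L$ there is $E\in\ku E_L$ with $(x,y)\notin E\saa\norm{\eta(x)-\eta(y)}>L\alpha+C$; hence $A\subseteq E$, so $A\in\ku E_L$. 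Thus $\ku E_d=\ku E_L$, and the identical argument shows that $\eta\colon(G,d)\til E$ is expanding as a map of metric spaces, i.e.\ its compression modulus $\kappa_\eta(t)=\inf_{s\geqslant t}\tilde\kappa(s)$ tends to $\infty$.

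Finally I would apply Theorem~\ref{pestov} to the \'ecart $d$, the map $\eta$, and $p=2$, obtaining a continuous affine isometric action of $G$ on a Banach space $V$ finitely representable in $L^2(E)$ whose cocycle $b$ satisfies $\tilde\kappa(d(x,y))\leqslant\norm{b(x)-b(y)}_V$, where $\tilde\kappa$ is the exact compression modulus of $\eta\colon(G,d)\til E$. Being a continuous cocycle, $b$ is automatically bornologous with respect to $\ku E_L$. For expansion: given $R$, since $\eta\colon(G,d)\til E$ is expanding there is $\alpha$ with $\tilde\kappa(s)\geqslant R$ for all $s\geqslant\alpha$, so $d(x,y)\geqslant\alpha$ forces $\norm{b(x)-b(y)}_V\geqslant R$; as $\{(x,y)\mid d(x,y)<\alpha\}\in\ku E_d=\ku E_L$, this shows $b$ is expanding for $\ku E_L$. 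Hence $b$ is a coarse embedding of $G$ into $V$ and the associated action is coarsely proper. The substantive content lies entirely in Theorem~\ref{pestov}; the only genuine subtlety in the corollary is arranging a single continuous left-invariant \'ecart that simultaneously witnesses uniform continuity, bornologousness and coarse properness — and, as shown above, the last of these comes for free once $\eta$ is Lipschitz for large distances.
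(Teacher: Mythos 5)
Your argument is correct and is exactly the route the paper intends: the corollary is stated as an immediate consequence of Theorem \ref{pestov}, obtained by mirroring the proof of Theorem \ref{haagerup equiv} (1)$\Rightarrow$(2) with the uniform continuity hypothesis standing in for Lemma \ref{randrianarivony}. Your only added content --- manufacturing the single \'ecart $d=\partial+d_1$ (where $d_1$ is obtained by summing countably many normalised \'ecarts witnessing uniform continuity) and checking $\ku E_d=\ku E_L$ --- is precisely the routine detail the paper leaves implicit, and you carry it out correctly.
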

Though we do not in general have tools permitting us to circumvent the assumption of uniform continuity as in Lemma \ref{randrianarivony}, for non-Archimedean groups we do. Indeed, assume $\sigma\colon G\til E$ is a coarse embedding of a non-Archimedean Polish group into a Banach  space $E$. Then $G$ is locally (OB) and thus has an open subgroup $V\leqslant G$ with property (OB) relative to $G$. Since $\sigma$ is bornologous,  there is a constant $K>0$ so that $\norm{\sigma(g)-\sigma(f)}\leqslant K$ whenever $f\inv g\in V$.
Letting $X\subseteq G$ denote a set of left-coset representatives for $V$, we define $\eta(g)=\sigma(h)$, where $h\in X$ is the coset representative of $gV$. Then $\norm{\eta(g)-\sigma(g)}\leqslant K$ for all $g\in G$, so $\eta\colon G\til \ku H$ is a coarse embedding and clearly constant on left-cosets of $V$, whence also uniformly continuous on $G$.
\begin{cor}\label{fin repr2}
Let $G$ be a F\o lner amenable non-Archimedean Polish group admitting a coarse embedding into a Banach space $E$. Then $G$ admits a coarsely proper continuous affine isometric action on a Banach space $V$ that is finitely representable in $L^2(E)$.
\end{cor}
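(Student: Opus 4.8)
The plan is to reduce the corollary to Corollary~\ref{fin repr} by promoting the given coarse embedding of $G$ into $E$ to one that is, in addition, uniformly continuous; this promotion is exactly the place where the non-Archimedean hypothesis is used, and it is the maneuver described in the discussion immediately preceding the statement. So the first step is to record that, since $G$ is a non-Archimedean Polish group admitting a coarse embedding into a Banach space, $G$ is locally (OB), and hence we may fix an \emph{open} subgroup $V\leqslant G$ that has property (OB) relative to $G$.

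Given a coarse embedding $\sigma\colon G\til E$, the second step is to replace $\sigma$ by a nearby map that is constant on left cosets of $V$. Since $V$ is relatively (OB), the set $E_V=\{(x,y)\del x\inv y\in V\}$ is a coarse entourage on $G$, so bornologousness of $\sigma$ supplies a constant $K>0$ with $\norm{\sigma(x)-\sigma(y)}\leqslant K$ whenever $x\inv y\in V$. Choosing a transversal $X\subseteq G$ for the partition of $G$ into left cosets $gV$ and setting $\eta(g)=\sigma(h)$, where $h$ is the unique element of $X\cap gV$, we obtain a map $\eta\colon G\til E$ with $\sup_{g\in G}\norm{\eta(g)-\sigma(g)}\leqslant K$.

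The third step is routine verification. As $\eta$ lies within bounded distance of the coarse embedding $\sigma$, it is itself bornologous and expanding for the left-coarse structure, hence a coarse embedding. And since $\eta$ is constant on each coset $gV$ while $V$ is open, for any entourage $F$ of $E$ the left-uniform entourage $E_V$ satisfies $(a,b)\in E_V\saa \eta(a)=\eta(b)\saa(\eta(a),\eta(b))\in F$, so $\eta$ is uniformly continuous. Applying Corollary~\ref{fin repr} to the F\o lner amenable group $G$ and the uniformly continuous coarse embedding $\eta\colon G\til E$ then yields a coarsely proper continuous affine isometric action of $G$ on a Banach space finitely representable in $L^2(E)$, as desired.

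I do not expect a serious obstacle: the corollary is essentially a repackaging of Corollary~\ref{fin repr} via this coset-collapsing trick, and the only ingredient beyond bookkeeping is the structural fact invoked in the first step, namely that coarse embeddability into a Banach space forces a non-Archimedean Polish group to be locally (OB); this is precisely what is asserted in the surrounding discussion, so it is available for free here. The one point that deserves care is to keep straight which uniform and coarse structures are meant — both are the left-invariant ones — and to note that the passage $\sigma\rightsquigarrow\eta$ preserves the coarse-embedding property because $V$ is coarsely bounded, not merely open.
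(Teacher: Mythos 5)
Your proposal is correct and follows essentially the same route as the paper: the text immediately preceding Corollary~\ref{fin repr2} performs exactly this coset-collapsing replacement of $\sigma$ by a map $\eta$ constant on left cosets of an open relatively (OB) subgroup, notes that $\eta$ stays within bounded distance of $\sigma$ and is therefore a uniformly continuous coarse embedding, and then invokes Corollary~\ref{fin repr}. Your added verifications (that $E_V$ is a coarse entourage and that openness of $V$ gives uniform continuity) are just the details the paper leaves implicit.
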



Our second task is now to identify various properties of a Banach space $E$ that are inherited by any space $V$ finitely representable in $L^2(E)$ (or in other $L^p(E)$). Evidently, these must be {\em local properties} of Banach spaces, i.e., only dependent on the class of finite-dimensional subspaces of the space in question.

We recall that a Banach space $V$ is {\em super-reflexive} if every other space crudely finitely representable in $V$ is reflexive. Since evidently $V$ is finitely representable in itself, super-reflexive spaces are reflexive. Also, every space that is crudely finitely representable in a super-reflexive space must not only be reflexive but even super-reflexive. Moreover, super-reflexive spaces are exactly those all of whose ultrapowers are reflexive. By a result of P. Enflo \cite{enflo} (see also G. Pisier \cite{pisier} for an improved result or \cite{fabian} for a general exposition), the super-reflexive spaces can also be characterised as those admitting an equivalent uniformly convex renorming.

It follows essentially from the work of J. A. Clarkson \cite{clarkson} that, if $E$ is uniformly convex, then so is $L^p(E)$ for all $1<p<\infty$. In particular, if $E$ is super-reflexive, then so is every space finitely representable in $L^2(E)$.

For a second application, we shall note the preservation of Rademacher type and cotype in the above construction. For that we fix a {\em Rademacher sequence}, i.e., a sequence $(\eps_n)_{n=1}^\infty$ of mutually independent random variables $\eps_n\colon \Omega\til \{-1,1\}$, where $(\Omega, \mathbb P)$ is some probability space, so that $\mathbb P(\eps_n=-1)=\mathbb P(\eps_n=1)=\frac 12$. E.g., we could take $\Omega=\{-1,1\}^\N$ with the usual coin tossing measure and let $\eps_n(\omega)=\omega(n)$.
\begin{defi}
A Banach space $X$ is  said to have {\em type} $p$ for some $1\leqslant p\leqslant 2$ if there is a constant $C$ so that
$$
\Big(\mathbb E\NORM{\sum_{i=1}^n\eps_ix_i}^p\Big)^\frac 1p\leqslant C\cdot\Big(\sum_{i=1}^n\norm{x_i}^p\Big)^\frac 1p
$$
for every finite sequence $x_1,\ldots, x_n\in X$.

Similarly, $X$ has {\em cotype} $q$ for some $2\leqslant q<\infty$ if there is a constant $K$ so that 
$$
\Big(\sum_{i=1}^n\norm{x_i}^q\Big)^\frac 1q\leqslant K\cdot  \Big(\mathbb E\NORM{\sum_{i=1}^n\eps_ix_i}^q\Big)^\frac 1q
$$
for every finite sequence $x_1,\ldots, x_n\in X$.
\end{defi}
We note that, by the triangle inequality, every Banach space has type $1$. Similarly, by stipulation, every Banach space is said to have cotype $q=\infty$.

Whereas the $p$ in the formula $\Big(\sum_{i=1}^n\norm{x_i}^p\Big)^\frac 1p$ is essential, this is not so with the $p$ in $\Big(\mathbb E\NORM{\sum_{i=1}^n\eps_ix_i}^p\Big)^\frac 1p$. Indeed, the Kahane--Khintchine inequality (see \cite{albiac}) states that, for all $1<p<\infty$, there is a constant $C_p$ so that, for every Banach space $X$ and $x_1,\ldots, x_n\in X$, we have
$$
\mathbb E\NORM{\sum_{i=1}^n\eps_ix_i}
\leqslant 
\Big(\mathbb E\NORM{\sum_{i=1}^n\eps_ix_i}^p\Big)^\frac 1p
\leqslant 
C_p\cdot\mathbb E\NORM{\sum_{i=1}^n\eps_ix_i}.
$$
In particular, for any $p,q\in [1,\infty[$, the two expressions $\Big(\mathbb E\NORM{\sum_{i=1}^n\eps_ix_i}^p\Big)^\frac 1p$ and $\Big(\mathbb E\NORM{\sum_{i=1}^n\eps_ix_i}^q\Big)^\frac 1q$ differ at most by some fixed multiplicative constant independent of the space $X$ and the vectors $x_i\in X$. 

Clearly, if $E$ has type $p$ or cotype $q$, then so does every space crudely finitely representable in $X$.  Also, by results of W. Orlicz and G. Nordlander (see \cite{albiac}),  the space $L^p$ has type $p$ and cotype $2$, whenever $1\leqslant p\leqslant 2$, and type $2$ and cotype $p$, whenever $2\leqslant p<\infty$. Moreover, assuming again that $E$ has type $p$ or cotype $q$, then $L^2(E)$ has type $p$, respectively, cotype $q$.
We refer the reader to \cite{albiac} for more information on Rademacher type and cotype.

By the above discussion, Corollary \ref{fin repr} gives us the following.

\begin{cor}\label{type}
Let $G$ be a F\o lner amenable Polish group admitting a uniformly continuous coarse embedding into a Banach space $E$ that is either (a) super-reflexive, (b) has type $p$ or (c) cotype $q$. Then $G$ admits a coarsely proper continuous affine isometric action on a Banach space $V$ that is  super-reflexive, has type $p$, respectively, has cotype $q$.
\end{cor}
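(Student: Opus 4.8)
The plan is to feed the hypothesis straight into Corollary~\ref{fin repr} and then chase the three local invariants through the resulting construction. First I would apply Corollary~\ref{fin repr} with $p=2$: since $G$ is F\o lner amenable and admits a uniformly continuous coarse embedding into $E$, we obtain a coarsely proper continuous affine isometric action $\alpha\colon G\curvearrowright V$ on a Banach space $V$ that is finitely representable in $L^2(E)$. All the geometric content --- constructing the action from an ultraproduct of the spaces $L^p(F_n,E)$ (or $L^p(K_n,E)$) via F\o lner sets or an exhausting chain of compact subgroups --- is already packaged into Theorem~\ref{pestov} and Corollary~\ref{fin repr}, so no new work is needed there.

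Second, I would check that each of the properties (a), (b), (c) passes from $E$ to $L^2(E)$. For (a): if $E$ is super-reflexive, then by Enflo's theorem \cite{enflo} it admits an equivalent uniformly convex norm, and by Clarkson's inequalities \cite{clarkson} the space $L^2$ of a uniformly convex space is again uniformly convex, so $L^2(E)$ is super-reflexive. For (b) and (c): the stability results of Orlicz--Nordlander type recalled above give that $L^2(E)$ has type $p$ whenever $E$ does and cotype $q$ whenever $E$ does.

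Third, I would transfer these properties from $L^2(E)$ to its subspace $V$. Since $V$ is finitely representable in $L^2(E)$ it is, a fortiori, crudely finitely representable in $L^2(E)$; and all three properties are \emph{local}, hence inherited: a space crudely finitely representable in a super-reflexive space is itself super-reflexive, and a space crudely finitely representable in a space of type $p$ (respectively cotype $q$) again has type $p$ (respectively cotype $q$). Chaining the three steps yields the corollary, with the cocycle of $\alpha$ furnishing the required coarsely proper action on $V$.

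I do not expect a genuine obstacle here, since the hard analytic work is entirely absorbed into Theorem~\ref{pestov}. The only points needing a little care are bookkeeping ones: one must use $L^2(E)$ rather than $L^1(E)$ so that Clarkson's uniform convexity argument applies (it requires $1<p<\infty$), which is harmless because Corollary~\ref{fin repr} already delivers finite representability in $L^2(E)$; and one should explicitly note that finite representability implies crude finite representability, so that the local-invariant arguments genuinely apply to $V$.
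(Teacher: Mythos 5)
Your proposal is correct and follows essentially the same route as the paper: apply Corollary \ref{fin repr} to get a coarsely proper continuous affine isometric action on a space $V$ finitely representable in $L^2(E)$, note that super-reflexivity (via Enflo and Clarkson) and type/cotype pass from $E$ to $L^2(E)$, and then use that all three are local properties inherited under (crude) finite representability. Nothing is missing.
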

In particular, this applies when $G$ is a Banach space.  Also, in fact, any combination of properties (a), (b) and (c) verified by $E$ can be preserved by $V$.

Corollary \ref{type} applies, in particular, when $G$ admits a uniformly continuous coarse embedding into an $L^p$ space. In this connection, let us note that, by results of J. Bretagnolle, D. Dacunha-Castelle and J.-L. Krivine \cite{bretagnolle2, bretagnolle} and M. Mendel and Naor \cite{naor1}, for $1\leqslant q\leqslant p\leqslant \infty$, there is a map $\phi\colon L^q\til L^p$ which is simultaneously a  uniform and coarse embedding. Thus, if $G$ admits a uniformly continuous coarse embedding into $L^p$, $p\leqslant 2$, then it also admits such an embedding into $L^2$.
On the other hand, by results of Mendel and Naor \cite{naor2}, $L^q$ does not embed coarsely into $L^p$, whenever $\max \{2,p\}<q<\infty$.  

Using Theorems \ref{ell p sum} and \ref{pestov}, we conclude the following.
\begin{thm}\label{pestov banach}
Let  $\sigma\colon X\til E$ be an uncollapsed  uniformly continuous map between separable Banach spaces. Then, for every $1\leqslant p<\infty$, $X$ admits a coarsely proper continuous affine isometric action on a Banach space $V$  finitely representable in $L^p(E)$. 
\end{thm}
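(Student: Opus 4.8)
The plan is simply to combine Theorems \ref{ell p sum} and \ref{pestov}, with a small amount of bookkeeping to bring the target space into the stated form. First I would apply Theorem \ref{ell p sum} to the uncollapsed uniformly continuous map $\sigma\colon X\til E$: for the given exponent $p$ this yields a map $\tau\colon X\til \ell^p(E)$ that is simultaneously a uniform and a coarse embedding. In particular $\tau$ is uniformly continuous and bornologous and, being a coarse embedding, it is expanding, so $\theta_\tau$ is finite-valued and $\kappa_\tau(t)\to\infty$ as $t\to\infty$.

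Next I would view the additive group $(X,+)$, equipped with the norm metric, as a Polish group. As recorded in Section \ref{amenability}, every abelian Polish group, hence every separable Banach space, is F\o lner amenable, so Theorem \ref{pestov} applies to the uniformly continuous bornologous map $\tau\colon (X,\norm{\cdot})\til \ell^p(E)$. It produces a continuous affine isometric action of $X$ on a Banach space $V$ that is finitely representable in $L^p(\ell^p(E))$, with associated cocycle $b\colon X\til V$ satisfying
$$
\tilde\kappa_\tau\big(\norm{x-y}\big)\leqslant \norm{b(x)-b(y)}_V\leqslant \theta_\tau\big(\norm{x-y}\big)
$$
for all $x,y\in X$. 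The upper bound shows that $b$ is bornologous, while the lower bound together with $\kappa_\tau(t)\to\infty$ shows that $b$ is expanding; hence $b$ is a coarse embedding and the action is coarsely proper.

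It then remains to replace $L^p(\ell^p(E))$ by $L^p(E)$ in the conclusion. Here I would check that $\ell^p(E)$ embeds isometrically into $L^p(E)=L^p([0,1],E)$ — partition $[0,1]$ into intervals $I_n$ and send the $n$-th coordinate of $\ell^p(E)$ into $L^p(I_n,E)$ by the obvious $L^p$-norm-preserving rescaling — and consequently, applying $L^p(\,\cdot\,)$, that $L^p(\ell^p(E))$ embeds isometrically into $L^p(L^p(E))$, which is isometric to $L^p([0,1]^2,E)$ and hence to $L^p(E)$. Thus $L^p(\ell^p(E))$ is finitely representable in $L^p(E)$, and since finite representability is transitive, the space $V$ above is finitely representable in $L^p(E)$, completing the argument.

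There is no serious obstacle here: all of the analytic content has already been established in Theorems \ref{ell p sum} and \ref{pestov}. The only points requiring any care are verifying that $\tau$ meets the hypotheses of Theorem \ref{pestov} (uniform continuity is immediate and bornologousness is part of being a coarse embedding) and carrying out the routine identification $L^p(\ell^p(E))\hookrightarrow L^p(E)$ so that the phase space can be stated as $L^p(E)$ rather than $L^p(\ell^p(E))$.
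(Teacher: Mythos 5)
Your proposal is correct and is essentially the paper's own argument: the theorem is stated there precisely as a combination of Theorem \ref{ell p sum} (to get a uniformly continuous coarse embedding $\tau\colon X\til \ell^p(E)$) and Theorem \ref{pestov} applied to the F\o lner amenable group $(X,+)$, exactly as you do. Your final bookkeeping step, identifying $L^p(\ell^p(E))$ isometrically with a subspace of $L^p(E)$ so that finite representability transfers, is the intended (and correct) way to put the phase space in the stated form.
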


Also, similarly to the proof of Corollary \ref{balls hilbert}, we may conclude the following.
\begin{cor}\label{balls local}
Let $\sigma\colon X\til E$ be a uniformly continuous map from a Banach space $X$ into a super-reflexive or superstable space satisfying
$$
\inf_{\norm{x-y}=r}\norm{\sigma(x)-\sigma(y)}>0
$$
for just some $r>0$. Then $B_X$ is uniformly embeddable into a super-reflexive, respectively, superstable space. 
\end{cor}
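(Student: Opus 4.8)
The plan is to follow the proof of Corollary \ref{balls hilbert} almost verbatim, replacing the appeal to Theorem \ref{maurey} by one to Theorem \ref{pestov}. After composing $\sigma$ with a translation we may assume $\sigma(0)=0$, and we may assume $X$ is separable, so that its additive group $(X,+)$, equipped with the norm metric, is a F\o lner amenable Polish group; recall also that a uniformly continuous map out of a Banach space is automatically bornologous, by chaining along line segments, so that Theorem \ref{pestov} does apply. The hypothesis says exactly that the exact compression modulus of $\sigma$ satisfies $\tilde\kappa_\sigma(r)>0$.

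First I would apply Theorem \ref{pestov} to $G=(X,+)$ with the norm metric and the map $\sigma\colon X\til E$, taking $p=2$ in the super-reflexive case and any fixed $1\leqslant p<\infty$ in the superstable case. This produces a continuous affine isometric action of $X$ on a Banach space $V$ finitely representable in $L^p(E)$, with cocycle $b\colon X\til V$ satisfying $\tilde\kappa_\sigma(\norm{x-y})\leqslant\norm{b(x)-b(y)}_V\leqslant\theta_\sigma(\norm{x-y})$ for all $x,y\in X$. Since $b$ is a cocycle for the abelian group $X$, we have $\norm{b(x)-b(y)}=\norm{b(x-y)}$, so $\inf_{\norm{z}=r}\norm{b(z)}\geqslant\tilde\kappa_\sigma(r)>0$; equivalently $\tilde\kappa_b(r)>0$.

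Next I would invoke the preservation of local structure already recorded in this section. If $E$ is super-reflexive then $L^p(E)$ is super-reflexive for $1<p<\infty$ (via Clarkson \cite{clarkson}), and hence so is the space $V$ finitely representable in it; and if $E$ is superstable then, by the corresponding results on stability and superstability of $L^p$-spaces (see Raynaud \cite{raynaud}), $L^p(E)$, and therefore $V$, is superstable. Thus $V$ is super-reflexive, respectively superstable. Finally, an application of Proposition \ref{affine ball} to the continuous cocycle $b\colon X\til V$, which satisfies $\inf_{\norm{z}=r}\norm{b(z)}>0$, produces a uniform embedding of $B_X$ into $B_V$; after rescaling $b$ this is the desired uniform embedding of $B_X$ into a super-reflexive, respectively superstable, space.

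There is little genuinely new here. The one point that must be checked, exactly as in Corollary \ref{balls hilbert}, is that the single positive value $\tilde\kappa_\sigma(r)>0$ survives the construction of Theorem \ref{pestov}; this is immediate from the two-sided modulus estimate in its conclusion. The step I expect to require the most attention is the superstable half of the local-structure claim, namely that superstability passes from $E$ to $L^p(E)$ and thence to finitely representable subspaces; this is the analogue for superstable spaces of the Clarkson-type fact used for super-reflexivity, and I would handle it by citing the literature on stability of $L^p$-spaces rather than arguing it from scratch. The only other mild friction is the reduction to a separable, hence Polish, space $X$ so that Theorem \ref{pestov} is available.
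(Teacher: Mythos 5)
Your proposal is correct and follows exactly the route the paper intends: the paper's own proof is simply the remark that one argues ``similarly to the proof of Corollary \ref{balls hilbert}'', i.e., observe $\tilde\kappa_\sigma(r)>0$, apply Theorem \ref{pestov} in place of Theorem \ref{maurey} to get a cocycle $b$ with $\tilde\kappa_b(r)>0$ into a space $V$ finitely representable in $L^p(E)$ (hence super-reflexive, respectively superstable, by the local-structure facts recorded in that section), and finish with Proposition \ref{affine ball}. The separability point you flag is real but is equally glossed over by the paper, which tacitly works with separable $X$ throughout this circle of results.
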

Again, by Proposition 5.3 \cite{raynaud}, in the superstable case, we may further conclude that $X$ has finite cotype. In the super-reflexive case, we may combine this with a result of Kalton to conclude the following.
\begin{cor}\label{kalton-ros}
Let $\sigma\colon X\til E$ be a uniformly continuous map from a Banach space $X$ with non-trivial type into a super-reflexive space satisfying
$$
\inf_{\norm{x-y}=r}\norm{\sigma(x)-\sigma(y)}>0
$$
for just some $r>0$. Then $X$ is super-reflexive. 
\end{cor}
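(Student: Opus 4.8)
The plan is to derive the statement by combining Corollary \ref{balls local} with a theorem of Kalton; there is no computational obstacle, and all the work lies in arranging the hypotheses so that Kalton's result applies.

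\emph{Step 1 (reduce to a uniform embedding of the ball).} Apply Corollary \ref{balls local} in its super-reflexive form. By hypothesis $\sigma\colon X\til E$ is uniformly continuous, $E$ is super-reflexive, and $\inf_{\norm{x-y}=r}\norm{\sigma(x)-\sigma(y)}>0$ for some $r>0$; hence that corollary --- whose proof runs through the ultraproduct construction of Theorem \ref{pestov}, which preserves super-reflexivity --- produces a super-reflexive Banach space $Y$ together with a uniform embedding of $B_X$ into $Y$.

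\emph{Step 2 (apply Kalton's theorem).} Invoke Kalton's result: if a Banach space $X$ has non-trivial type and its unit ball $B_X$ admits a uniform embedding into a super-reflexive Banach space, then $X$ is super-reflexive. Applied to the space $Y$ from Step 1 this gives the conclusion at once.

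\emph{On the hypotheses and the main difficulty.} The assumption of non-trivial type is essential and cannot be weakened to, say, finite cotype. Indeed, $\ell^1$ embeds uniformly --- in fact uniformly and coarsely --- into a Hilbert space, since $\norm{x-y}_1$ is a kernel conditionally of negative type and thus $\ell^1$ carries a uniformly continuous map $\sigma$ into a Hilbert space with $\norm{\sigma(x)-\sigma(y)}=\norm{x-y}_1^{1/2}$; so $B_{\ell^1}$ embeds uniformly into a super-reflexive space, while $\ell^1$ has cotype $2$ yet is not even reflexive. By the Maurey--Pisier theorem, non-trivial type is precisely the statement that $\ell^1$ is not finitely representable in $X$, and --- exactly under this hypothesis --- a uniform embedding of $B_X$ into a uniformly convex space is incompatible with the failure of super-reflexivity, which is the content of Kalton's theorem. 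Thus the only substantive point of the proof is to have Kalton's characterisation of super-reflexivity available in this ball-form; once it is in hand the deduction is purely formal and relies on nothing beyond the estimates already carried out for Corollaries \ref{balls hilbert} and \ref{balls local}.
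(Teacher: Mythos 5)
Your proof is correct and is essentially identical to the paper's: the paper likewise deduces the corollary by applying Corollary \ref{balls local} to obtain a uniform embedding of $B_X$ into a super-reflexive space and then invoking Theorem 5.1 of Kalton's paper on coarse and uniform embeddings into reflexive spaces. The additional remarks on the necessity of non-trivial type are accurate but not part of the argument.
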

\begin{proof}
Indeed, by Theorem 5.1 \cite{kalton}, if $B_X$ is uniformly embeddable into a uniformly convex space or, equivalently, into a super-reflexive space, and $X$ has non-trivial type, then $X$ is super-reflexive. The result now follows by applying Corollary \ref{balls local}.
\end{proof}


\section{Stable metrics, wap functions  and reflexive spaces}
While it would be desirable to have a result along the lines of Theorems \ref{maurey} and \ref{pestov} for reflexive spaces, things are more complicated here and requires  auxiliary concepts, namely, stability and weakly almost periodic  functions. 

\begin{defi}
A  function $\Phi\colon X\times X\til \R$ on a set $X$ is {\em stable} provided that,  for all sequences $(x_n)$ and $(y_m)$ in $X$, we have
$$
\lim_{n\til \infty}\lim_{m\til \infty}\Phi(x_n,y_m)=\lim_{m\til \infty}\lim_{n\til \infty}\Phi(x_n,y_m),
$$
whenever both limits exist in $\R$.
\end{defi}
Observe that, if $\Phi$ is a metric or just an \'ecart on $X$, then, if either of the two limits exists, both $(x_n)$ and $(y_m)$ are bounded. Thus, for an \'ecart to be stable, it suffices to verify the criterion for bounded sequences $(x_n)$ and $(y_m)$. With this observation in mind, one can show that an \'ecart (respectively, a bounded function) $\Phi$ is stable provided that, for all bounded $(x_n)$, $(y_m)$ (respectively all $(x_n)$, $(y_m)$), we have 
$$
\lim_{n\til \ku U}\lim_{m\til \ku V}\Phi(x_n,y_m)=\lim_{m\til \ku V}\lim_{n\til \ku U}\Phi(x_n,y_m)
$$
whenever $\ku U, \ku V$ are non-principal ultrafilters on $\N$.

We remark that a simple, but tedious, inspection shows that, if $d$ is a stable \'ecart on $X$, then so is the uniformly equivalent bounded \'ecart $D(x,y)=\min\{d(x,y),1\}$. 

In keeping with the terminology above, a  Banach space $(X, \norm\cdot)$ is {\em stable} if the norm metric is stable. This class of spaces was initially studied by J.-L. Krivine and Maurey \cite{KM} in which they showed that every stable  Banach space contains a copy of $\ell^p$ for some $1\leqslant p<\infty$. As we shall see, for several purposes including the geometry of Banach spaces, the following more general class of groups plays a central role.
\begin{defi}
A topological group $G$ is {\em metrically stable} if $G$ admits a compatible left-invariant stable metric.
\end{defi}

Recall that a bounded  function $\phi\colon G\til \R$ on a group $G$ is said to be {\em weakly almost periodic (WAP)} provided that its orbit $\lambda(G)\phi=\{\phi(g\inv\,\cdot\, )\del g\in G\}$ under the left regular representation $\lambda\colon G\curvearrowright \ell^\infty(G)$ is a relatively weakly compact subset of $\ell^\infty(G)$.  
The connection between stability and weak almost periodicity is provided by the following well-known criterion of A. Grothendieck \cite{grothendieck}.
\begin{thm}[A. Grothendieck \cite{grothendieck}]
A  bounded  function $\phi\colon G\til \R$ on a group $G$ is weakly almost periodic if and only if the function $\Phi(x,y)=\phi(x\inv y)$ is stable.
\end{thm}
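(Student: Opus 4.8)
The plan is to derive the equivalence directly from Grothendieck's classical double limit criterion for relative weak compactness. Recall that this criterion, specialised to the function space $\ell^\infty(G)$, states: a bounded subset $A\subseteq\ell^\infty(G)$ is relatively weakly compact if and only if, for all sequences $(f_n)$ in $A$ and $(g_m)$ in $G$,
$$
\lim_{n\til\infty}\lim_{m\til\infty}f_n(g_m)=\lim_{m\til\infty}\lim_{n\til\infty}f_n(g_m)
$$
whenever both iterated limits exist in $\R$. I would simply quote this (here $\ell^\infty(G)$ plays the role of $C(\beta G)$ and, as is standard, the criterion need only be tested on point evaluations at elements of the dense set $G$, relative weak compactness being testable sequentially by the Eberlein--\v Smulian theorem), and then translate.

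First I would observe that $A=\lambda(G)\phi$ is automatically bounded, since each $\lambda(g)$ is an isometry of $\ell^\infty(G)$ and hence $\norm{\lambda(g)\phi}_\infty=\norm\phi_\infty$. Next, every element of $A$ has the form $g\mapsto\phi(h\inv g)$ for some $h\in G$, so an arbitrary sequence $(f_n)$ in $A$ may be written $f_n=\phi(h_n\inv\,\cdot\,)$ for a sequence $(h_n)$ in $G$, and then $f_n(g_m)=\phi(h_n\inv g_m)=\Phi(h_n,g_m)$ for any sequence $(g_m)$ in $G$; conversely, every pair of sequences $(x_n),(y_m)$ in $G$ arises in this way, via $f_n=\phi(x_n\inv\,\cdot\,)\in A$ with evaluation points $y_m$. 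Hence the double limit criterion applied to $A=\lambda(G)\phi$ reads exactly: $\phi$ is weakly almost periodic if and only if, for all sequences $(x_n),(y_m)$ in $G$,
$$
\lim_{n\til\infty}\lim_{m\til\infty}\Phi(x_n,y_m)=\lim_{m\til\infty}\lim_{n\til\infty}\Phi(x_n,y_m)
$$
whenever both sides exist — and this is precisely the defining property of stability of $\Phi$. Both implications are thus obtained at once.

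The only genuine difficulty is Grothendieck's criterion itself; once it is in hand, the argument is a purely formal unwinding of the two definitions via the identity $(\lambda(h)\phi)(g)=\Phi(h,g)$. If one preferred a self-contained route avoiding this black box, one could instead establish only the single implication actually used downstream — that stability of a bounded \'ecart forces the associated kernel functions to be WAP, from which a reflexive Banach space is produced by the Davis--Figiel--Johnson--Pe\l czy\'nski factorisation — but since Grothendieck's theorem is standard and already cited, quoting it is the cleaner choice.
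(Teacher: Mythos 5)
Your argument is correct: the paper itself gives no proof of this statement but simply cites Grothendieck, and your derivation --- applying the double limit criterion for relative weak compactness to the bounded orbit $\lambda(G)\phi\subseteq\ell^\infty(G)$ and unwinding $(\lambda(h)\phi)(g)=\Phi(h,g)$ --- is precisely the standard argument underlying that citation. The translation between sequences in $\lambda(G)\phi$ evaluated at points of $G$ and pairs of sequences $(x_n),(y_m)$ in $G$ is handled correctly, so nothing is missing.
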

There is a tight connection between weakly almost periodic functions and representations on reflexive Banach space borne out by several classical results. Central among these is the fact that every continuous bounded weakly almost periodic function $\phi$ on a topological group $G$ is a {\em matrix coefficient} of a strongly continuous isometric linear representation $\pi\colon G\curvearrowright E$ on a reflexive Banach space $E$, that is, for some $\xi \in E$ and $\eta^*\in E^*$,
$$
\phi(x)=\langle \pi(x)\xi, \eta^*\rangle.
$$
Conversely, every such matrix coefficient is weakly almost periodic.

We shall need some more recent results clarifying reflexive representability of Polish groups. Here a topological group $G$ is said to {\em admit a topologically faithful isometric linear representation on a reflexive Banach space $E$} if $G$ is isomorphic to a subgroup of the linear isometry group ${\rm Isom}(E)$ equipped with the strong operator topology. The following theorem combines results due to A. Shtern \cite{shtern}, M. Megrelishvili \cite{megrelishvili2} and I. Ben Yaacov, A. Berenstein and S. Ferri \cite{ben yaacov}.

\begin{thm}\cite{shtern,megrelishvili2, ben yaacov}\label{smben}
For a Polish group $G$, the following are equivalent,
\begin{enumerate}
\item $G$ is  metrically stable,
\item $G$ has a topologically faithful isometric linear representation on a separable reflexive Banach space,
\item for every identity neighbourhood $V\subseteq G$, there is a continuous  weakly almost periodic function $\phi\colon G\til [0,1]$ so that $\phi(1_G)=1$ and ${\rm supp}(\phi)\subseteq V$.
\end{enumerate}
\end{thm}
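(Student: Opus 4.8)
The plan is to prove the cycle of implications $(1)\Rightarrow(3)\Rightarrow(2)\Rightarrow(1)$, relying throughout on the two classical facts recalled above: Grothendieck's criterion \cite{grothendieck} (a bounded $\phi\colon G\to\mathbb R$ is weakly almost periodic iff $\Phi(x,y)=\phi(x\inv y)$ is stable, in both directions), and the correspondence between continuous bounded WAP functions on $G$ and matrix coefficients of strongly continuous isometric representations of $G$ on reflexive Banach spaces (the equivalence $(2)\Leftrightarrow(3)$ itself essentially going back to Shtern \cite{shtern} and Megrelishvili \cite{megrelishvili2}).

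For $(1)\Rightarrow(3)$: let $d$ be a compatible left-invariant stable metric on $G$; replacing $d$ by $\min\{d,1\}$ (still stable and compatible by the remark above) we may assume $d\leqslant 1$. Given an identity neighbourhood $V$, pick $r>0$ with $\{g\colon d(1,g)\leqslant r\}\subseteq V$ and set $\phi(g)=\max\{0,1-d(1,g)/r\}$, so that $\phi\colon G\to[0,1]$ is continuous, $\phi(1)=1$ and $\mathrm{supp}(\phi)\subseteq V$. Since $d$ is left-invariant, $\Phi(x,y):=\phi(x\inv y)=f\big(d(x,y)\big)$ for the continuous $f(t)=\max\{0,1-t/r\}$, and a continuous function of a stable \'ecart is again stable, which is immediate from the ultrafilter reformulation of stability recalled above. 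Hence $\Phi$ is stable, so $\phi$ is WAP by Grothendieck's criterion; this is exactly condition $(3)$.

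For $(3)\Rightarrow(2)$: since $G$ is Polish, fix a decreasing neighbourhood basis $\{V_n\}$ at $1$ with $\bigcap_n V_n=\{1\}$ and continuous WAP functions $\phi_n\colon G\to[0,1]$ with $\phi_n(1)=1$ and $\mathrm{supp}(\phi_n)\subseteq V_n$. Write $\phi_n(g)=\langle\pi_n(g)\xi_n,\eta_n^*\rangle$ for a strongly continuous isometric representation $\pi_n\colon G\curvearrowright E_n$ on a reflexive space; replacing $E_n$ by the closed linear span of $\pi_n(G)\xi_n$ (a $\pi_n(G)$-invariant, hence reflexive, subspace, separable because $G$ is separable and $\pi_n$ strongly continuous) we may assume each $E_n$ separable. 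Put $E=\big(\bigoplus_n E_n\big)_{\ell^2}$, which is separable and reflexive, and let $\pi=\bigoplus_n\pi_n$; one checks directly that $\pi$ is a strongly continuous isometric representation. It is topologically faithful: if $g\neq 1$, choose $n$ with $g\notin V_n$, so $\phi_n(g)=0\neq 1=\langle\xi_n,\eta_n^*\rangle$, forcing $\pi_n(g)\xi_n\neq\xi_n$ and hence $\pi(g)\neq\mathrm{id}$; and if $\pi(g_k)\to\mathrm{id}$ strongly while $g_k\not\to 1$, then some subsequence satisfies $g_{k_j}\notin V_{n_0}$ for a fixed $n_0$, so $\phi_{n_0}(g_{k_j})=0$, whereas evaluating the strong convergence on the vector of $E$ with $\xi_{n_0}$ in coordinate $n_0$ gives $\phi_{n_0}(g_{k_j})=\langle\pi_{n_0}(g_{k_j})\xi_{n_0},\eta_{n_0}^*\rangle\to\langle\xi_{n_0},\eta_{n_0}^*\rangle=1$, a contradiction. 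Thus $\pi$ witnesses $(2)$.

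For $(2)\Rightarrow(1)$: this is the substantive direction, due to Ben Yaacov--Berenstein--Ferri \cite{ben yaacov}. Given a topologically faithful strongly continuous isometric representation $\pi\colon G\curvearrowright E$ with $E$ separable reflexive, the natural candidate for a compatible left-invariant metric is $d(g,h)=\sum_k 2^{-k}\norm{\pi(g)\xi_k-\pi(h)\xi_k}$ for a sequence $\{\xi_k\}$ dense in $B_E$; this is compatible and left-invariant, but it need not be stable, because the norm of a reflexive space need not be a stable function. The real content of this implication --- and the place where reflexivity is used essentially, beyond what was needed for the previous two --- is to manufacture a stable compatible left-invariant \'ecart instead. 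Here one exploits that every matrix coefficient of $\pi$ is WAP, hence stable by the converse half of Grothendieck's criterion, together with the weak compactness of bounded subsets of $E$; combining these as in \cite{ben yaacov} one regularizes the orbit \'ecart above into a stable one whose truncation still corresponds, via Grothendieck's criterion, to a WAP bump, so that compatibility is retained. I expect this regularization step to be the main obstacle; by contrast $(1)\Rightarrow(3)$ and $(3)\Rightarrow(2)$ are essentially formal once the quoted classical results are available.
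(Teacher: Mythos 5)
The paper does not actually prove this theorem: it is stated as a combination of results quoted from \cite{shtern}, \cite{megrelishvili2} and \cite{ben yaacov}, so there is no in-paper argument to compare yours against, and your attempt has to be judged on its own. Your $(1)\Rightarrow(3)$ is correct: truncating $d$ to $\min\{d,1\}$ preserves stability and compatibility, and the ultrafilter reformulation of stability for bounded functions does show that $f\circ d$ is stable for continuous bounded $f$ (ultrafilter limits of bounded sequences commute with continuous functions), so Grothendieck's criterion applies. Your $(3)\Rightarrow(2)$ is also correct and complete modulo the classical fact, which the paper itself invokes, that every continuous bounded WAP function is a matrix coefficient of a strongly continuous isometric representation on a reflexive space; the $\ell^2$-sum construction, the separability reduction to $\ov{\rm span}(\pi_n(G)\xi_n)$, and the two-sided verification of topological faithfulness are all in order.

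The gap is $(2)\Rightarrow(1)$, which you do not prove but only attribute. You correctly diagnose why the obvious orbit metric $d(g,h)=\sum_k 2^{-k}\norm{\pi(g)\xi_k-\pi(h)\xi_k}$ fails --- the norm of a reflexive space is a supremum of matrix coefficients over the dual ball, each of which is stable, but stability is not preserved under suprema --- yet the ``regularization'' you invoke is precisely the content of the Ben Yaacov--Berenstein--Ferri theorem, and no argument for it is supplied. As written, your proposal establishes $(1)\Leftrightarrow(3)$ and $(3)\Rightarrow(2)$ from the quoted classical facts, but closes the cycle only by citing \cite{ben yaacov} for the one implication that is genuinely deep. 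If the intent is a self-contained proof, this step must be filled in (e.g., by constructing from the WAP bump functions of $(3)$ a left-invariant stable \'ecart generating the topology, which is exactly the nontrivial construction of \cite{ben yaacov}); if citation is acceptable, then the entire theorem could equally well have been cited, as the paper in fact does.
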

In particular, the isometry group of a separable reflexive Banach space is metrically stable.
Moreover,  if $E$ is a stable Banach space, then ${\rm Isom}(E)$ is metrically stable. Indeed, by Lemma \ref{cocycle isometry group}, ${\rm Isom}(E)$ admits a  cocycle $b\colon {\rm Isom}(E)\til \ell^2(E)$ which is a uniform embedding. As also $\ell^2(E)$ is stable, it follows that $d(x,y)=\norm{b(x)-b(y)}_{\ell^2(E)}$ is a compatible left-invariant stable metric on ${\rm Isom}(E)$.

Conversely, there are examples, such as the group ${\rm Homeo}_+[0,1]$ of increasing homeomorphisms of the unit interval \cite{megrelishvili1}, that admit no non-trivial continuous isometric linear actions on a reflexive space.

We now aim at extending earlier results of Y. Raynaud \cite{raynaud} on the existence of $\ell^p$ subspaces of Banach spaces. Observe first that every left-invariant compatible metric on a Banach space is in fact bi-invariant and uniformly equivalent to the norm metric. Thus, a Banach space is metrically stable exactly when it admits an invariant stable metric uniformly equivalent to the norm. Formulated in our terminology, Th\'eor\`eme 4.1 \cite{raynaud} states the following.
\begin{thm}[Y. Raynaud]\label{raynaud}
Every metrically stable Banach space contains an isomorphic copy of $\ell^p$ for some $1\leqslant p<\infty$.
\end{thm}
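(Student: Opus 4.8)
The statement is Raynaud's theorem (Th\'eor\`eme 4.1 of \cite{raynaud}), so the ``proof'' below is really an outline of how one adapts the type‑space machinery of Krivine and Maurey \cite{KM} — which is built for a stable \emph{norm} — to a merely stable invariant \emph{metric}, together with the extra step of transporting the resulting $\ell^p$‑structure back to the norm. The natural plan is: (i) normalise the stable metric; (ii) set up the type calculus with the metric in place of the norm; (iii) run the Krivine--Maurey argument to produce an $\ell^p$‑type; (iv) realise it by a basic sequence and transfer the estimates to the norm.

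First I would normalise the data. Since $X$ is metrically stable it carries a compatible left‑invariant metric $d$; as recalled in the text this $d$ is automatically bi‑invariant, uniformly equivalent to $\norm\cdot$, and may be replaced by its truncation at $1$ while staying stable. So fix a stable invariant \'ecart $d\leqslant 1$ with $d$ uniformly equivalent to the norm, and write $d(x,y)=g(x-y)$, so that $g\colon X\til[0,1]$ is symmetric, subadditive, vanishes only at $0$, and satisfies $g(x_n)\to 0\Longleftrightarrow\norm{x_n}\to 0$. Stability of $d$ is precisely the identity
$$
\lim_{n\til\ku U}\lim_{m\til\ku V}g(x_n-y_m)=\lim_{m\til\ku V}\lim_{n\til\ku U}g(x_n-y_m)
$$
for all sequences $(x_n),(y_m)$ in $X$ and all non‑principal ultrafilters $\ku U,\ku V$ on $\N$.

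Next, following Krivine--Maurey, let $\ku T$ be the set of \emph{$d$‑types}, i.e.\ the pointwise limits $\tau(x)=\lim_{n\til\ku U}g(x-x_n)$ of the functions $x\mapsto g(x-x_n)$. As a subset of $[0,1]^X$ it is compact for pointwise convergence, it contains the trivial type $g$, and $X$ embeds into $\ku T$ via $a\mapsto g(\,\cdot\,-a)$. The displayed stability identity is exactly what is needed to define the \emph{convolution} $\tau\ast\sigma$ by iterating the defining limits in either order and to verify that $(\ku T,\ast)$ is an abelian semigroup on which translation by vectors of $X$ is continuous; crucially, no homogeneity of $g$ is used anywhere here. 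Dilating the realising vectors by a scalar $\lambda$ (which is available since $X$ is a vector space) turns a type $\tau$ into a type $\tau_\lambda$, which plays the role of the dilated type in the Banach‑space argument. One then reproduces the Krivine--Maurey dichotomy inside $(\ku T,\ast)$: a compactness/minimality argument yields a non‑trivial \emph{self‑similar} type $\tau_0$, meaning that for some exponent $p\in[1,\infty)$ the $n$‑fold convolution $\tau_0^{\ast n}$ coincides, after dilating the realising vectors by $n^{1/p}$, with $\tau_0$; the value of $p$ is extracted from the growth of $\tau_0$ along scalar dilations. Realising $\tau_0$ by an ``independent'' sequence — a diagonal/Ramsey selection of approximating vectors, exactly as in \cite{KM} — produces vectors $u_1,u_2,\ldots\in X$ for which one gets two‑sided estimates
$$
c\,\Big(\sum_k|a_k|^p\Big)^{1/p}\leqslant\NORM{\sum_k a_k u_k}\leqslant C\,\Big(\sum_k|a_k|^p\Big)^{1/p}
$$
for all finitely supported scalars $(a_k)$ and constants $0<c\leqslant C$, whence $\ov{{\rm span}}\{u_k\}\iso\ell^p$.

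I expect everything up to the construction of the self‑similar type to be a fairly mechanical transcription of \cite{KM} with $\norm\cdot$ replaced by $g$, the only real care being to keep the stability identity doing all the work that the parallelogram‑type inequalities do in the normed case. The genuine obstacle — and the part where Raynaud's argument goes beyond Krivine--Maurey — is the final transfer: the type calculus only delivers an $\ell^p$‑pattern \emph{in the metric $d$}, and one must convert this into the norm estimates above. Since $d$ is uniformly equivalent to $\norm\cdot$ but not bi‑Lipschitz or coarsely equivalent to it, the realising pattern has to be rescaled into a regime of distances where $d$ and $\norm\cdot$ are quantitatively comparable, and the distortion introduced by the uniform equivalence must be controlled quantitatively; this is where I would anticipate the bulk of the technical effort.
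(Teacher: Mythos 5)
The paper does not actually prove this statement: it is imported verbatim from Raynaud (Th\'eor\`eme 4.1 of \cite{raynaud}) and used as a black box, so the only thing to measure your sketch against is Raynaud's original argument. Your overall strategy is indeed his --- normalise the stable invariant metric (the normalisation step you describe is exactly what the surrounding text of the paper justifies) and then run the Krivine--Maurey calculus of types with the gauge $g$ in place of the norm. So the plan is right.

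But as a proof the text has a genuine gap, and it sits precisely at the two places you bracket off with ``I expect'' and ``I would anticipate''. First, producing the self-similar type is not a mechanical transcription of \cite{KM}: in the normed case the exponent $p$ is extracted from the interaction between convolution and the dilation semigroup $\tau\mapsto\tau_\lambda$, and that extraction leans on the homogeneity $\norm{\lambda x}=|\lambda|\,\norm{x}$, which your gauge $g$ does not satisfy. Your parenthetical claim that ``no homogeneity of $g$ is used anywhere here'' is true for setting up the convolution semigroup, but it is false for the step that actually yields a $\tau_0$ with $\tau_0\ast\tau_0=(\tau_0)_{2^{1/p}}$; without homogeneity the map $\lambda\mapsto\tau_\lambda$ has no a priori multiplicative structure, and the existence of a non-trivial type with a one-parameter dilation orbit has to be argued afresh --- this is the real content of Raynaud's paper, not a transcription. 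Second, the two-sided norm estimate you display at the end is asserted rather than derived. What the type calculus can deliver is (approximately) $g\big(\sum_k a_ku_k\big)=h\big(\norm{a}_{p}\big)$ for some gauge $h$; since $g$ is only uniformly, not Lipschitz, equivalent to the norm, this does not immediately give $c\,\norm{a}_p\leqslant\norm{\sum_k a_ku_k}\leqslant C\,\norm{a}_p$. The passage can be made --- fix a single scale $t_0$ at which $h(t_0)$ is small enough (respectively bounded away from $0$), apply the uniform equivalence of $g$ and $\norm\cdot$ at that one scale, and then use homogeneity of the two \emph{norms} on the linear span of the $u_k$ to propagate the one-scale estimate to all scales --- but this requires the realising sequence to be norm-bounded and the errors in the approximate realisation of the iterated type to be summable, and neither point is addressed. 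As written, the proposal is a correct programme with its two load-bearing steps left open.
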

 Also, by Theorem \ref{smben} and Proposition \ref{cocycle amplification banach}, we see that a metrically stable separable Banach space admits a coarsely proper continuous affine isometric action on a separable reflexive Banach space.
 
Furthermore, in Th\'eor\`eme 0.2 \cite{raynaud}, Raynaud showed that any Banach space uniformly embeddable in a {\em superstable} Banach space, i.e., a space all of whose ultrapowers are stable, is metrically stable (in fact, that it has a compatible invariant superstable metric).

Note finally that, since by  \cite{raynaud} the class of superstable Banach spaces is closed under passing from $E$ to $L^p(E)$, $1\leqslant p<\infty$, and under finite representability, Theorem \ref{pestov banach} implies that a separable Banach space $E$ uniformly embeddable into a superstable Banach space also admits a coarsely proper continuous cocycle on a superstable space.

\begin{thm}\label{embedding super-reflexive}
Let $X$ be a separable Banach space admitting an uncollapsed uniformly continuous map into the unit ball $B_E$ of a super-reflexive Banach space $E$. Then $X$ is metrically stable and contains an isomorphic copy of $\ell^p$ for some $1\leqslant p<\infty$. 
\end{thm}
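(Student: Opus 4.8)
\emph{Approach.} The plan is to build from $\sigma$ a topologically faithful strongly continuous isometric linear representation of $X$ on a separable reflexive Banach space. Once this is done, $X$ is metrically stable by Theorem~\ref{smben} (the implication (2)$\,\Rightarrow\,$(1)), and it then contains an isomorphic copy of some $\ell^p$, $1\leqslant p<\infty$, by Raynaud's Theorem~\ref{raynaud}. One proceeds this way because one \emph{cannot} simply pull back a compatible stable metric to $X$: a cocycle $b\colon X\til V$ with $V$ merely super-reflexive gives a metric $\norm{b(x)-b(y)}_V$ that is uniformly equivalent to the norm but need not be stable, as super-reflexive spaces need not be stable. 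A reflexive linear representation, on the other hand, always yields metric stability, and the hypothesis that $E$ be \emph{super}-reflexive is used precisely to make the Banach spaces produced by the construction underlying Theorem~\ref{pestov} reflexive.

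\emph{The per-scale construction.} Since $\sigma$ is uncollapsed, fix $\Delta,\delta>0$ with $\tilde\kappa_\sigma(t)\geqslant\delta$ for all $t\geqslant\Delta$. For each $n\geqslant1$ let $\sigma_n\colon X\til B_E$ be $\sigma_n(x)=\sigma(nx)$. Then $\sigma_n$ is uniformly continuous with $\theta_{\sigma_n}(t)=\theta_\sigma(nt)$, it is bornologous since $\theta_{\sigma_n}(t)\leqslant2$ for all $t$ (its range lies in $B_E$), and $\tilde\kappa_{\sigma_n}(t)=\tilde\kappa_\sigma(nt)\geqslant\delta$ whenever $t\geqslant\Delta/n$. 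A separable Banach space is F\o lner amenable, so Theorem~\ref{pestov}, applied with $p=2$ to the norm metric on $X$ and to $\sigma_n$, produces a continuous affine isometric action $\alpha_n\colon X\curvearrowright V_n$ with linear part $\pi_n$ and cocycle $b_n$, where $V_n$ is finitely representable in $L^2(E)$ and
$$
\tilde\kappa_{\sigma_n}(\norm{x-y})\leqslant\norm{b_n(x)-b_n(y)}_{V_n}\leqslant\theta_{\sigma_n}(\norm{x-y})\leqslant2
$$
for all $x,y\in X$. Since $L^2(E)$ is super-reflexive ($E$ being super-reflexive), $V_n$ is super-reflexive, hence reflexive; it is separable, being the closed linear span of $b_n[X]$. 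Now $b_n[X]$ is a bounded subset of the reflexive space $V_n$, so, exactly as in the Ryll--Nardzewski argument recalled before Lemma~\ref{banach on banach}, the action $\alpha_n$ fixes a point $\xi_n\in V_n$, i.e.\ $b_n(x)=\xi_n-\pi_n(x)\xi_n$. Hence
$$
\norm{\pi_n(x)\xi_n-\xi_n}_{V_n}=\norm{b_n(x)}_{V_n}\geqslant\tilde\kappa_{\sigma_n}(\norm{x})\geqslant\delta\quad\text{whenever }\norm{x}\geqslant\Delta/n.
$$

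\emph{Assembly and the main difficulty.} Put $W=\bigl(\bigoplus_{n\geqslant1}V_n\bigr)_{\ell^2}$ and $\pi=\bigoplus_n\pi_n\colon X\til{\rm Isom}(W)$. Then $W$ is separable and reflexive, since an $\ell^2$-sum of reflexive spaces is reflexive, and a routine tail estimate shows $\pi$ is a strongly continuous isometric linear representation. It is topologically faithful: if $x\neq0$, pick $n$ with $\Delta/n\leqslant\norm{x}$, so that $\norm{\pi_n(x)\xi_n-\xi_n}\geqslant\delta>0$ and $\pi(x)\neq{\rm id}_W$; and if a net $(x_i)$ satisfies $\pi(x_i)\to{\rm id}_W$ in the strong operator topology but does not converge to $0$ in $X$, pass to a subnet with $\inf_i\norm{x_i}=r>0$, choose $n$ with $\Delta/n\leqslant r$, and evaluate $\pi(x_i)$ at the element of $W$ equal to $\xi_n$ in the $n$-th coordinate: this forces $\norm{\pi_n(x_i)\xi_n-\xi_n}\to0$, contradicting $\norm{\pi_n(x_i)\xi_n-\xi_n}\geqslant\delta$. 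Thus $\pi$ identifies $X$ with a topological subgroup of ${\rm Isom}(W)$ with $W$ separable and reflexive, so $X$ is metrically stable and, by Theorem~\ref{raynaud}, contains an isomorphic copy of some $\ell^p$. The one genuinely delicate point is the strategic one isolated in the first paragraph: recognising that metric stability must be obtained through the reflexive-linear-representation characterisation rather than by pulling back a norm, which in turn forces the dilation device $\sigma_n=\sigma(n\,\cdot\,)$ needed to upgrade injectivity of $\pi$ to topological faithfulness near the identity.
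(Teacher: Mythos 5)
Your proof is correct, and its skeleton coincides with the paper's: apply Theorem \ref{pestov} with $p=2$ to the bounded map $\sigma$, observe that the resulting cocycle is bounded and lands in a super-reflexive (hence reflexive) space, invoke Ryll--Nardzewski to see it is a coboundary $\xi-\pi(\,\cdot\,)\xi$, deduce a topologically faithful isometric linear representation on a separable reflexive space, and finish with Theorems \ref{smben} and \ref{raynaud}. Where you genuinely diverge is the faithfulness step. The paper applies Theorem \ref{pestov} only once, to $\sigma$ itself, and then invokes Proposition \ref{affine banach}: an uncollapsed continuous cocycle between Banach spaces is automatically a uniform embedding, with $c\cdot\min\{\norm{x-y},1\}\leqslant\norm{b(x)-b(y)}$. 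Hence the single orbit map $x\mapsto\pi(x)\xi$ is already a uniform embedding and $\pi$ is topologically faithful into ${\rm Isom}(V)$ for one super-reflexive $V$. You instead rerun the construction on every dilate $\sigma(n\,\cdot\,)$, extract a coboundary vector $\xi_n$ at each scale, and assemble the $\ell^2$-sum $W=\big(\bigoplus_nV_n\big)_{\ell^2}$, verifying faithfulness by hand against the vectors $\xi_n$. This works, and your dilation device is essentially Proposition \ref{affine banach} in disguise: that proposition controls $\norm{b(x)}$ for small $x$ by expanding $b(nx)$ through the cocycle identity, which is the same exploitation of the additive structure of $X$ that your $\sigma_n=\sigma(n\,\cdot\,)$ performs before the cocycle is built. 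The costs of your route are a countable family of representations instead of one, a reliance on the (true, but only visible in the proof of Theorem \ref{pestov}) separability of each $V_n$, and the loss of super-reflexivity of the phase space --- an $\ell^2$-sum of super-reflexive spaces with no common modulus of convexity is in general only reflexive --- but none of this matters, since Theorem \ref{smben} asks only for a separable reflexive space.
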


\begin{proof}Let $\sigma\colon X\til B_E$ be the map in question.
By replacing $E$ by the closed linear span of the image of $Y$, we can suppose that $E$ is separable. Let $\kappa$ and $\theta$ be the compression and  expansion moduli of $\sigma$. As $\sigma$ is uncollapsed,  $\kappa(t)>0$ for some $t>0$ and, as $\sigma$ maps into $B_E$, we have $\theta(s)\leqslant 2$ for all $s$. By Theorem
\ref{pestov}, there is a Banach space $V$ finitely representable in $L^2(E)$ and a strongly continuous isometric linear representation $\pi\colon X\curvearrowright V$ with a continuous cocycle $b\colon X\til V$ satisfying 
$$
\kappa\big(\norm{x-y}\big)\leqslant \norm{b(x)-b(y)}_V\leqslant \theta\big(\norm{x-y}\big).
$$
Being finitely representable in the super-reflexive space $L^2(E)$, it follows that $V$ is super-reflexive itself. Also, $b$ is bounded and uncollapsed, whence, by Proposition \ref{affine banach}, $b$ is a uniform embedding of $X$ into $V$. 
Being a bounded cocycle in a super-reflexive space, we conclude by the Ryll-Nardzewski fixed point theorem that $b$ is a coboundary, i.e., that $b(x)=\xi-\pi(x)\xi$ for some $\xi\in V$ and all $x\in X$.  

Now, since $x\mapsto \xi-\pi(x)\xi$ is a uniform embedding, so is $x\mapsto \pi(x)\xi$, which shows that $\pi\colon X\til {\rm Isom}(V)$ is a topologically faithful isometric linear representation on a reflexive Banach space. By Theorem \ref{smben}, we conclude that $X$ is metrically stable. Finally, by Theorem \ref{raynaud}, $X$ contains an isomorphic copy of $\ell^p$ for some $1\leqslant p<\infty$. 
\end{proof}

A question first raised by Aharoni \cite{aharoni2} (see also the discussion in Chapter 8 \cite{lindenstrauss}) is to determine the class of Banach spaces $X$ uniformly embeddable into their unit ball $B_X$. It follows from  \cite{aharoni} that $c_0$ embeds uniformly into $B_{c_0}$ and, in fact, every separable Banach space $X$ containing $c_0$ embeds uniformly into its ball $B_X$.  Similarly, for $1\leqslant p\leqslant 2$, $L^p([0,1])$ is uniformly embeddable into the unit ball of $L^2([0,1])$ \cite{maurey}. Since also, for all $1\leqslant p<\infty$, the unit balls of $L^p([0,1])$ and $L^2([0,1])$ are uniformly homeomorphic by a result of E. Odell and Th. Schlumprecht \cite{distortion}, it follows that $L^p([0,1])$ in uniformly embeddable into the ball $B_{L^p([0,1])}$ for $1\leqslant p\leqslant 2$. This fails for $p>2$ by results of \cite{maurey}.

\begin{cor}
Let $E$ be a separable super-reflexive Banach space not containing $\ell^p$ for any $1\leqslant p<\infty$ or, more generally, which is not metrically stable. Then $E$ is not uniformly embeddable into $B_E$.
\end{cor}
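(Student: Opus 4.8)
The plan is to derive this as an immediate contrapositive of Theorem \ref{embedding super-reflexive}, applied with $X=E$. Suppose, toward a contradiction, that $E$ is uniformly embeddable into its own unit ball $B_E$, and let $\sigma\colon E\til B_E$ be such a uniform embedding. Then $\sigma$ is in particular uniformly continuous, and since it is a uniform embedding it is uncollapsed (indeed $\kappa_\sigma(t)>0$ for every $t>0$). Thus $\sigma$ is an uncollapsed uniformly continuous map from the separable Banach space $E$ into the unit ball of the super-reflexive Banach space $E$, so the hypotheses of Theorem \ref{embedding super-reflexive} are met with the roles of both ``$X$'' and ``$E$'' played by $E$ itself.

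By Theorem \ref{embedding super-reflexive}, $E$ is then metrically stable and contains an isomorphic copy of $\ell^p$ for some $1\leqslant p<\infty$. This contradicts the standing assumption that $E$ is not metrically stable (respectively, contains no such $\ell^p$). Hence no uniform embedding of $E$ into $B_E$ exists. For the ``more generally'' clause one uses only the metric stability conclusion; for the first phrasing one invokes the $\ell^p$-containment conclusion, which in any case is a formal consequence of metric stability via Theorem \ref{raynaud}.

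There is no real obstacle here: the entire content has been packaged into Theorem \ref{embedding super-reflexive}, and the corollary is just its specialization to $X=E$ together with the trivial observation that a uniform embedding is uncollapsed. The only thing worth spelling out is why ``uniformly embeddable into $B_E$'' supplies exactly the data the theorem requires, namely an uncollapsed uniformly continuous map into $B_E$; after that the implication is immediate.

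\begin{proof}
Suppose toward a contradiction that there is a uniform embedding $\sigma\colon E\til B_E$. Then $\sigma$ is uniformly continuous and, being a uniform embedding, satisfies $\kappa_\sigma(t)>0$ for all $t>0$; in particular $\sigma$ is uncollapsed. Applying Theorem \ref{embedding super-reflexive} with $X=E$, we conclude that $E$ is metrically stable and, by Theorem \ref{raynaud}, contains an isomorphic copy of $\ell^p$ for some $1\leqslant p<\infty$. This contradicts the hypothesis on $E$.
\end{proof}
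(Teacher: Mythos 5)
Your proof is correct and matches the paper's intended argument: the corollary is stated there without proof precisely because it is the immediate contrapositive of Theorem \ref{embedding super-reflexive} with $X=E$, exactly as you spell out. The only detail worth noting — that a uniform embedding is automatically an uncollapsed uniformly continuous map into $B_E$ — is handled correctly.
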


For example, this applies to the $2$-convexification $T_2$ of the Tsirelson space and to V. Ferenczi's uniformly convex HI space \cite{ferenczi}.


One may  wonder whether Theorem \ref{embedding super-reflexive} has an analogue for uniform embeddings into super-reflexive spaces as opposed to their balls. I.e., if $X$ is an infinite-dimensional Banach space uniformly embeddable into a super-reflexive space, does it follow that $X$ contains an isomorphic copy of $\ell^1$ or an infinite-dimensional super-reflexive subspace? However, as shown by B. Braga (Corollary 4.15 \cite{braga3}), Tsirelson's space $T$ uniformly embeds into the super-reflexive space $(T_2)_{T_2}$ without, of course, containing $\ell^1$ or a super-reflexive subspace. The best one might hope for is thus some asymptotic regularity property of $X$.


We shall now consider the existence of coarsely proper continuous affine isometric actions on reflexive spaces by potentially non-amenable groups.
For locally compact second countable groups, such actions always exist, since N. Brown and E. Guentner \cite{BG} showed that every countable discrete group admits a proper affine isometric action on a reflexive Banach space and U. Haagerup and  A. Przybyszewska \cite{haagerup-affine} generalised this to locally compact second countable groups.
Also, Kalton \cite{kalton} showed that every stable metric space may be coarsely embedded into a reflexive Banach space, while, on the contrary, the Banach space $c_0$ does not admit a coarse embedding into a reflexive Banach space.

Our goal here is to provide an equivariant counter-part of Kalton's theorem.

\begin{thm}\label{stable metric refl}
Suppose a topological group $G$ carries a continuous left-invariant coarsely proper stable \'ecart. Then $G$ admits a coarsely proper continuous affine isometric action on a reflexive Banach space.
\end{thm}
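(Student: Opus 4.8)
The strategy is to realise $d$ as a cocycle into $\ell^\infty(G)$ for the left-regular representation and then to ``reflexivise'' it one scale at a time, using stability to supply the weak compactness that powers the Davis--Figiel--Johnson--Pe\l czy\'nski interpolation. In effect this is an equivariant rendering of Kalton's theorem that a stable metric space embeds coarsely into a reflexive space, together with the bookkeeping needed to keep the representation isometric and strongly continuous.

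We may assume $d$ is unbounded, since a bounded coarsely proper \'ecart forces property (OB) and then the zero cocycle works. For each $n\geqslant 1$ the truncation $d_n=\min\{d,2^n\}$ is again a continuous left-invariant \'ecart, and it is stable: $d/2^n$ is stable by rescaling, hence $\min\{d/2^n,1\}$ is stable by the remark above, hence so is $d_n=2^n\min\{d/2^n,1\}$. Define $b_n\colon G\til\ell^\infty(G)$ by $b_n(h)(g)=d_n(g,h)-d_n(g,1)$. Using left-invariance of $d_n$ one checks that $b_n$ is a continuous cocycle for the left-regular representation $\lambda\colon G\curvearrowright\ell^\infty(G)$ with $\norm{b_n(h)-b_n(h')}_\infty=d_n(h,h')$, and that the $\lambda(G)$-orbit of $b_n(h)$ consists of functions $b_n(a)-b_n(b)$.

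The crucial point is that, for each $n$, the $2^{-n}$-scaled closed convex hull $L_n=2^{-n}\,\overline{\rm conv}\{b_n(a)-b_n(b)\mid a,b\in G\}$ is an absolutely convex (its generating set is symmetric), $\lambda(G)$-invariant, weakly compact subset of $B_{\ell^\infty(G)}$. Weak compactness is where stability enters: since $d_n$ is bounded, all the iterated limits appearing in Grothendieck's double-limit criterion for relative weak compactness in $\ell^\infty(G)$ exist, and the criterion reduces to the stability of $d_n$; Krein's theorem then promotes this to weak compactness of $L_n$. Running the DFJP construction on $L_n$ produces a reflexive space $Z_n$ --- a subspace of $\ell^\infty(G)$ carrying a finer norm, with the inclusion $Z_n\hookrightarrow\ell^\infty(G)$ bounded, $L_n\subseteq B_{Z_n}$, and the interpolation estimate $\norm x_{Z_n}\leqslant C\sqrt{\norm x_\infty}$ for $x\in L_n$. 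Since each $\lambda(g)$ fixes $L_n$ setwise it restricts to a surjective linear isometry of $Z_n$, yielding a representation $\tilde\lambda_n$; the square-root estimate transfers the $\ell^\infty$-continuity of $b_n$ into $Z_n$, so $\tilde\lambda_n$ is strongly continuous on the $\norm\cdot_{Z_n}$-closed span $V_n$ of $\{2^{-n}b_n(h)\mid h\in G\}$ and $\tilde b_n:=2^{-n}b_n\colon G\til V_n$ is a continuous cocycle for $\tilde\lambda_n$ with, for universal constants $c,C>0$,
\[
c\,2^{-n}\min\{d(h,h'),2^n\}\;\leqslant\;\norm{\tilde b_n(h)-\tilde b_n(h')}_{Z_n}\;\leqslant\;C\sqrt{\min\{d(h,h')/2^n,\,1\}}.
\]

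Finally, assemble $V=\big(\bigoplus_{n}V_n\big)_{\ell^2}$, which is reflexive as an $\ell^2$-sum of reflexive spaces, with the diagonal representation $\pi=\bigoplus_n\tilde\lambda_n$ (strongly continuous and isometric) and the cocycle $\beta=(\tilde b_n)_n$. The upper estimate gives $\sum_n\norm{\tilde b_n(h)-\tilde b_n(h')}_{Z_n}^2\leqslant C^2\big(2+\log_2 d(h,h')\big)<\infty$, so $\beta$ is well defined, continuous and bornologous; the lower estimate gives $\sum_n\norm{\tilde b_n(h)-\tilde b_n(h')}_{Z_n}^2\geqslant c^2\lfloor\log_2 d(h,h')\rfloor\to\infty$, so $\beta$ is expanding. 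Hence $\beta$ is a coarse embedding of $(G,d)$ into the reflexive space $V$, and since $d$ induces $\ku E_L$ the affine isometric action $\alpha=(\pi,\beta)$ is coarsely proper. I expect the main obstacle to be this weak-compactness/DFJP step and its normalisation: each $\tilde b_n$ is individually bounded, so coarse properness can only come from infinitely many scales becoming ``active'' as $d(h,h')\to\infty$, and it is precisely the square-root modulus of the DFJP norm that makes the $\ell^2$-series converge (bornologous) while keeping each active coordinate bounded below (expanding).
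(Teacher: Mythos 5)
Your argument is correct, but it takes a genuinely different route from the paper's. The paper does not attack the stable \'ecart directly: it first proves the more general Theorem \ref{wap refl}, whose hypothesis is only the existence of continuous WAP cutoff functions $\phi$ with $d$-bounded support and $\phi\equiv 1$ on each ball $D_\alpha$ (these are then manufactured from the stable \'ecart as tent functions $\phi_\alpha(g)=2-\max\{1,\min\{d(g,1)/\alpha,2\}\}$, WAP by Grothendieck's criterion). The scale separation there is achieved by averaging $4^n$ nested cutoffs into a single WAP function $\phi_n$ satisfying $\norm{\phi_n-\lambda(g)\phi_n}_\infty\leqslant 4^{-n}$ for $g\in D_n$, the unit cocycles are the coboundaries $b_n(g)=\phi_n-\lambda(g)\phi_n$, and weak compactness of $\overline{\rm conv}(\lambda(G)\phi_n\cup-\lambda(G)\phi_n)$ comes from the WAP property of the single function $\phi_n$. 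You instead truncate $d$ at $2^n$, use the horofunction-type cocycles $b_n(h)=d_n(\cdot,h)-d_n(\cdot,1)$ (which are not coboundaries), obtain weak compactness of the whole difference set $\{b_n(a)-b_n(b)\}$ from stability via the double-limit criterion plus Krein's theorem, and extract the scale separation from the $2^{-n}$ rescaling together with the square-root modulus of the DFJP norm. This is essentially an equivariant rendering of Kalton's coarse embedding of stable metric spaces into reflexive spaces, and it is more direct for Theorem \ref{stable metric refl}; what it does not give is the paper's Theorem \ref{wap refl}, which applies to groups admitting WAP cutoffs that need not come from a stable \'ecart. Two small points you should spell out: the reduction of the double-limit criterion for the \emph{difference} set to stability of $d_n$ requires passing to ultrafilter limits (available for the bounded function $d_n$, as noted in the paper's remark on stable \'ecarts) and using their additivity, since stability as stated concerns a single iterated limit of $d_n(g_m,a_k)$ rather than of $d_n(g_m,a_k)-d_n(g_m,b_k)$; and your upper estimate $C^2(2+\log_2 d(h,h'))$ should be capped by a constant when $d(h,h')\leqslant 1$. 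Neither affects the validity of the proof.
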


Theorem \ref{stable metric refl} is a corollary of the following more detailed result. 

\begin{thm}\label{wap refl}
Suppose $d$ is a continuous left-invariant \'ecart on a topological group $G$ and assume  that, for all $\alpha>0$, there is a continuous weakly almost periodic function $\phi\in \ell^\infty(G)$  with $d$-bounded support so that $\phi\equiv 1$ on $D_\alpha=\{g\in G\del d(g,1)\leqslant \alpha\}$.

Then $G$ admits a continuous  isometric action $\pi\colon G\curvearrowright X$ on a reflexive Banach space and a continuous  cocycle $b\colon G\til X$ that is coarsely proper with respect to the \'ecart $d$.
\end{thm}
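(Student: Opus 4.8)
The plan is to realise the reflexive space as a Hilbert-type $\ell^2$-sum of reflexive Banach spaces, one attached to each ``scale'' of the \'ecart $d$, each carrying a bounded cocycle, and then to choose the pieces so that the assembled cocycle is coarsely proper. This is the reflexive analogue of the Hilbert construction of Theorem~\ref{maurey}, with ``conditionally negative type kernel and GNS completion'' replaced by ``bounded stable kernel and factorisation of the associated weakly compact operator through a reflexive space''. As a single bounded stable kernel yields only a bounded cocycle, working with infinitely many scales is indispensable. For the input, since the space of weakly almost periodic functions on $G$ is a uniformly closed, conjugation-closed subalgebra of $\ell^\infty(G)$, hence a sublattice, the hypothesised functions may be taken $[0,1]$-valued and symmetric in $g\mapsto g\inv$; I would fix an increasing unbounded sequence $1=c_1<c_2<\cdots$ and, for each $n$, a continuous symmetric weakly almost periodic $\phi_n\colon G\til[0,1]$ with $\phi_n\equiv 1$ on $D_{c_n}$ and $\mathrm{supp}\,\phi_n\subseteq D_{R_n}$ for finite $R_n$, arranging after passing to a subsequence that $R_n\to\infty$.

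For the per-scale step, consider the left-invariant kernel $\Psi_n(x,y):=1-\phi_n(x\inv y)$. It is bounded and stable (by Grothendieck's criterion, since $1-\phi_n$ is weakly almost periodic), and a bounded stable kernel induces a weakly compact operator $T_n\colon\ell^1(G)\til\ell^\infty(G)$, $T_n\mu(x)=\sum_y\Psi_n(x,y)\mu(y)$; hence by the Davis--Figiel--Johnson--Pe\l czy\'nski factorisation $T_n=j_n\circ u_n$ with $u_n\colon\ell^1(G)\til X_n$, $j_n\colon X_n\til\ell^\infty(G)$ bounded and $X_n$ reflexive. Since the factorisation is functorial in $T_n$ and the left-regular representation of $G$ on $\ell^1(G)$ and $\ell^\infty(G)$ commutes with $T_n$ (by left-invariance of $\Psi_n$), it descends to a strongly continuous isometric linear representation $\pi_n\colon G\curvearrowright X_n$ for which $b_n(g):=u_n(\delta_g-\delta_1)$ is a continuous cocycle. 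The two estimates I would extract are: boundedness of $j_n$ gives $\norm{b_n(g)}_{X_n}\gtrsim\norm{T_n(\delta_g-\delta_1)}_\infty\geqslant 1-\phi_n(g)$ (evaluate $T_n(\delta_g-\delta_1)$ at the identity), while bounding the DFJP norm of $u_n(\delta_g-\delta_1)$ by the sup-norm of its image in $\ell^\infty(G)$ gives $\norm{b_n(g)}_{X_n}\lesssim\norm{\rho(g)\phi_n-\phi_n}_\infty^{1/2}$, $\rho$ denoting right translation.

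Finally I would assemble: $X:=\big(\bigoplus_nX_n\big)_{\ell^2}$ is reflexive, $\pi:=\bigoplus_n\pi_n$ is strongly continuous, and for suitable weights $w_n>0$ the map $b(g):=(w_nb_n(g))_n$ is a continuous cocycle for $\pi$. The lower bound yields $\norm{b(g)}_X^2\gtrsim\sum_{n\,:\,R_n<d(g,1)}w_n^2$, which tends to $\infty$ with $d(g,1)$ provided $\sum_nw_n^2=\infty$; the upper bound yields $\norm{b(g)}_X^2\lesssim\sum_nw_n^2\norm{\rho(g)\phi_n-\phi_n}_\infty$, which must be finite and bounded on $d$-bounded sets. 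Granting this, $b$ is expanding and (being a continuous cocycle) bornologous with respect to $d$, hence a coarse embedding of $(G,d)$, so the affine isometric action $(\pi,b)$ is coarsely proper with respect to $d$. Theorem~\ref{stable metric refl} then follows by taking $\phi_\alpha(g)=f(d(g,1))$ for a Lipschitz $f$ equal to $1$ on $[0,\alpha]$ and to $0$ beyond $\alpha+1$: such $\phi_\alpha$ is weakly almost periodic by Grothendieck's criterion since $d$ is stable, and the triangle inequality gives $\norm{\rho(g)\phi_\alpha-\phi_\alpha}_\infty\leqslant\mathrm{Lip}(f)\,d(g,1)$, so with $c_n=2^n$ and $w_n\equiv1$ everything balances.

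The hard part is precisely this balance: one needs $\sum_nw_n^2=\infty$ while $\sum_nw_n^2\norm{\rho(g)\phi_n-\phi_n}_\infty<\infty$ for every fixed $g$ and uniformly on $d$-balls. This forces the $\phi_n$ to be \emph{asymptotically right-invariant} -- right-uniformly continuous at scale $n$ with moduli shrinking fast enough in $n$ to be absorbed by weights $w_n\to 0$ (e.g.\ $w_n\asymp n^{-1/2}$) -- which is automatic in the setting of Theorem~\ref{stable metric refl} but, under the bare hypothesis, must be produced by a preliminary smoothing of the given weakly almost periodic functions. Establishing this, together with the quantitative DFJP interpolation estimate $\norm{u_n(\delta_g-\delta_1)}_{X_n}\lesssim\norm{T_n(\delta_g-\delta_1)}_\infty^{1/2}$ and the continuity of $\pi_n$ and $b_n$, is the technical core of the argument.
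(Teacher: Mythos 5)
Your overall strategy --- running the Davis--Figiel--Johnson--Pe\l czy\'nski factorisation on the weakly compact orbit of each WAP cutoff, extracting a bounded cocycle at each scale, and assembling these into an $\ell^2$-sum --- is essentially the strategy of the paper's proof, which applies the DFJP interpolation directly to the set $W_n=\ov{\rm conv}\big(\lambda(G)\phi_n\cup-\lambda(G)\phi_n\big)\subseteq\ell^\infty(G)$ rather than to an operator $\ell^1(G)\til\ell^\infty(G)$; these are equivalent formulations of the same idea. Your lower estimate (a definite drop of $\norm{b_n(g)}$ once $g$ leaves $({\rm supp}\,\phi_n)\inv$) and the $\ell^2$-assembly also match the paper.

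However, there is a genuine gap, located exactly where you defer to ``the technical core'': under the bare hypothesis you only know that for each $\alpha$ there is \emph{some} continuous WAP cutoff equal to $1$ on $D_\alpha$ with $d$-bounded support. You have no control on $\norm{\lambda(g)\phi_n-\phi_n}_\infty$ beyond the trivial bound $2$, and with that bound your convergence requirement $\sum_nw_n^2\norm{\rho(g)\phi_n-\phi_n}_\infty<\infty$ is incompatible with your properness requirement $\sum_nw_n^2=\infty$; no choice of weights can reconcile the two, so the decay must be manufactured, and you do not say how. The paper produces it by an averaging (staircase) construction: for each $n$ it chooses $4^n$ nested cutoffs $\psi_1,\dots,\psi_{4^n}$ with $\psi_i\equiv1$ on $D_{r_{i-1}+n}$ and ${\rm supp}(\psi_i)\subseteq D_{r_i}$, the radii $r_i$ separated by gaps of width at least $2n$, and sets $\phi_n=4^{-n}\sum_{i=1}^{4^n}\psi_i$. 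Translation by $g\in D_n$ moves any point by at most one band of the resulting partition of $G$, and $\phi_n$ varies by at most $4^{-n}$ between adjacent bands, whence $\norm{\phi_n-\lambda(g)\phi_n}_\infty\leqslant 4^{-n}$ for all $g\in D_n$. It is this quantitative bound --- fed into the two-regime gauge estimates $\norm{\phi_n-\lambda(g)\phi_n}_{n,k}\leqslant 2^{-n}$ for $k\leqslant n$ and $\leqslant 2^{-(k-1)}$ for all $k$, which replace your heuristic square-root interpolation inequality --- that yields $\sum_k\norm{\phi_n-\lambda(g)\phi_n}_{n,k}^2\leqslant 2^{-(n-2)}$ and lets the sum converge with unit weights. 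Without supplying this preliminary construction, the assembly step of your argument does not close.
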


\begin{proof}Under the given assumptions, we claim that, for every integer $n\geqslant 1$,  there is a  continuous weakly almost periodic function $0\leqslant \phi_n\leqslant 1$ on $G$ so that
\begin{enumerate}
\item $\norm{\phi_n}_\infty=\phi(1_G)=1$,
\item$\norm{\phi_n-\lambda(g)\phi_n}_\infty\leqslant \frac 1{4^n} \text{ for all } g\in D_n$ and 
\item ${\rm supp}(\phi_n)$ is $d$-bounded. 
\end{enumerate}

To see this, we pick inductively sequences of  continuous weakly almost periodic functions $(\psi_i)_{i=1}^{4^n}$ and radii $(r_i)_{i=0}^{4^n}$ so that
\begin{enumerate}
\item[(i)] $0=r_0<2n<r_1<r_1+2n<r_2<r_2+2n<r_3<\ldots<r_{4^n}$,
\item[(ii)] $0\leqslant \psi_i\leqslant 1$,
\item[(iii)] ${\psi}_{i}\equiv 1$ on $D_{r_{i-1}+n}$,
\item[(iv)] ${\rm supp}(\psi_i)\subseteq D_{r_i}$.
\end{enumerate}
Note first that, by the choice of $r_i$, the sequence
$$
D_{r_0+n}\setminus D_{r_0},\; D_{r_1}\setminus D_{r_0+n}, \;D_{r_1+n}\setminus D_{r_1},\; D_{r_2}\setminus D_{r_1+n}, \; \ldots\;, D_{r_{4^n}}\setminus D_{r_{4^n-1}+n}, \; G\setminus D_{r_{4^n}}
$$
partitions $G$. Also, for all $1\leqslant i\leqslant 4^n$, 
$$
\psi_1\equiv \ldots\equiv  \psi_i\equiv 0, \text{ while } \psi_{i+1}\equiv \ldots\equiv \psi_{4^n}\equiv 1\text{ on }D_{r_i+n}\setminus D_{r_i}
$$
and 
$$
\psi_1\equiv \ldots\equiv \psi_{i-1}\equiv 0, \text{ while } \psi_{i+1}\equiv \ldots\equiv  \psi_{4^n}\equiv 1\text{ on }D_{r_i}\setminus D_{r_{i-1}+n}.
$$
Setting  $\phi_n=\frac 1{4^n}\sum_{i=1}^{4^n}\psi_i$, we note that, for  all $1\leqslant i\leqslant 4^n$, 
$$
\phi_n\equiv \frac {4^n-i}{4^n} \quad \text{ on }D_{r_i+n}\setminus D_{r_i}
$$
and
$$
 \frac {4^n-i}{4^n}\leqslant \phi_n\leqslant  \frac {4^n-i+1}{4^n} \quad \text{ on }D_{r_{i}}\setminus D_{r_{i-1}+n}.
$$

Now, if $g\in D_n$ and $f\in G$, then $|d(g\inv f,1)-d(f,1)|=|d(f,g)-d(f,1)|\leqslant d(g,1)\leqslant n$. So, if $f$ belongs to some term in the above partition, then $g\inv f$ either belongs to the immediately preceding, the same or the immediately following term of the partition. By the above estimates on $\phi_n$, it follows that $|\phi_n(f)-\phi_n(g\inv f)|\leqslant \frac1{4^n}$.
In other words, for $g\in D_n$, we have
\[\begin{split}
\norm{\phi_n-\lambda(g)\phi_n}_\infty= \sup_{f\in G}|\phi_n(f)-\phi_n(g\inv f)|\leqslant \frac 1{4^n},
\end{split}\]
which verifies condition (2). Conditions (1) and (3) easily follow from the construction.

Consider now a specific $\phi_n$ as above and define 
$$
W_n=\ov{\rm conv}\big(\lambda(G)\phi_n\cup -\lambda(G)\phi_n\big)\subseteq B_{\ell^\infty(G)}
$$
and, for every $k\geqslant 1$, 
$$
U_{n,k}=2^kW_n+2^{-k}B_{\ell^\infty(G)}.
$$
Let $\norm{\cdot}_{n,k}$ denote the gauge on ${\ell^\infty}(G)$ defined by $U_{n,k}$, i.e., 
$$
\norm{\psi}_{n,k}=\inf(\alpha>0\del \psi\in \alpha\cdot U_{n,k}).
$$

If $g\in D_n$, then $\norm{\phi_n-\lambda(g)\phi_n}_\infty\leqslant \frac 1{4^n}$ and so, for $k\leqslant n$, 
$$
\phi_n-\lambda(g)\phi_n\in \frac 1{2^n}\cdot 2^{-k}B_{\ell^\infty(G)}\subseteq \frac 1{2^n}\cdot U_{n,k}.
$$
In particular, 
\begin{equation}\label{a}
\norm{\phi_n-\lambda(g)\phi_n}_{n,k}\leqslant \frac 1{2^n},\;\;\text{ for all } k\leqslant n \text{ and }g\in D_n.
\end{equation}

On the other hand,  for all $g\in G$ and $k$, we have $\phi_n-\lambda(g)\phi_n\in 2W_n\subseteq \frac1{2^{k-1}}U_{n,k}$. Therefore,
\begin{equation}\label{b}
\norm{\phi_n-\lambda(g)\phi_n}_{n,k}\leqslant \frac 1{2^{k-1} },\;\;\text{ for all } k\text{ and }g.
\end{equation}

Finally, since $U_{n,1}\subseteq 2B_{\ell^\infty(G)}+\frac 12B_{\ell^\infty(G)}=\frac 52B_{\ell^\infty(G)}$, we have $\norm\cdot_\infty\leqslant \frac 52\norm\cdot_{n,1}$. So,  if $g\notin ({\rm supp}\;\phi_n)\inv$, then 
\begin{equation}\label{c}
\norm{\phi_n-\lambda(g)\phi_n}_{n,1}\geqslant \frac 25,\;\;\text{ for all } g\notin ({\rm supp}\;\phi_n)\inv.
\end{equation}

It follows from (\ref{a}) and (\ref{b}) that, for $g\in D_n$, we have 
\[\begin{split}
\sum_k\norm{\phi_n-\lambda(g)\phi_n}^2_{n,k}
&\leqslant \underbrace{\Big(\frac 1{2^n}\Big)^2+\ldots+\Big(\frac 1{2^n}\Big)^2}_{n\text{ times}}
+ \Big(\frac 1{2^{(n+1)-1}}\Big)^2+ \Big(\frac 1{2^{(n+2)-1}}\Big)^2+\ldots\\
&\leqslant\frac 1{2^{n-2}},
\end{split}\]
while using (\ref{c}) we have, for $g\notin ({\rm supp}\;\phi_n)\inv$, 
$$
\sum_k\norm{\phi_n-\lambda(g)\phi_n}^2_{n,k}\geqslant \frac4{25}.
$$

Define $\triple{\cdot}_n$ on $\ell^\infty(G)$ by $\triple{\psi}_n=\big(\sum_k\norm{\psi}_{n,k}^2\big)^{\frac 12}$ and set
$$
X_n=\{\psi\in \ov{\rm span}(\lambda(G)\phi_n)\subseteq \ell^\infty(G)\del \triple{\psi}_n<\infty\}\subseteq \ell^\infty(G).
$$
By the main result of  W. J. Davis, T. Figiel, W. B. Johnson and A. Pe\l czy\'nski \cite{dfjp}, the interpolation space $(X_n,\triple{\cdot}_n)$ is a  reflexive Banach space.  Moreover, since $W_n$ and $U_{n,k}$ are $\lambda(G)$-invariant subsets of $\ell^\infty(G)$, one sees that $\norm{\cdot}_{n,k}$ and $\triple{\cdot}_n$ are $\lambda(G)$-invariant and hence we have an isometric linear representation $\lambda\colon G\curvearrowright (X_n, \triple{\cdot}_n)$.

Note that, since $\phi_n\in W_n$, we have $\phi_n\in X_n$ and can therefore define a cocycle $b_n\colon G\til X_n$ associated to $\lambda$ by $b_n(g)=\phi_n-\lambda(g)\phi_n$. 
By the estimates above, we have
$$
\triple{b_n(g)}_n=\triple{\phi_n-\lambda(g)\phi_n}_n\leqslant\big( \frac 1{{\sqrt 2}}\big)^{n-2}
$$ 
for $g\in D_n$, while
$$
\triple{b_n(g)}_n=\triple{\phi_n-\lambda(g)\phi_n}_n\geqslant\frac2{5}
$$
for $g\notin ({\rm supp}\;\phi_n)\inv$.

Let now $Y=\big(\bigoplus_n(X_n,\triple{\cdot}_n)\big)_{\ell^2}$ denote the  $\ell^2$-sum of the spaces $(X_n,\triple{\cdot}_n)$. Let also $\pi\colon G\curvearrowright Y$ be the diagonal action and $b=\bigoplus b_n$ the corresponding cocycle. To see that $b$ is well-defined, note that, for $g\in D_n$, we have
\[\begin{split}
\norm{b(g)}_Y
&=\Big(\sum_{m=1}^\infty\triple{b_m(g)}_m^2\Big)^\frac 12\\
&=\Big(\text{finite}+\sum_{m=n}^\infty\triple{b_m(g)}_m^2\Big)^\frac 12\\
&\leqslant\Big(\text{finite}+\sum_{m=n}^\infty\frac 1{2^{m-2}}\Big)^\frac 12\\
&<\infty,
\end{split}\]
so $b(g)\in Y$.

Remark that, whenever $g\notin ({\rm supp}\;\phi_n)\inv$, we have
$$
\norm{b(g)}_Y\geqslant \Big(  \underbrace{\big(\frac 25\big)^2+\ldots+\big(\frac 25\big)^2}_{n\text{ times}}  \Big)^\frac 12\geqslant \frac {\sqrt n}3.
$$  
As $({\rm supp}\;\phi_n)\inv$ is $d$-bounded, this 
shows that the cocycle $b\colon G\til Y$ is coarsely proper with respect to $d$. We leave the verification that the action is continuous to the reader.
\end{proof}

Let us now see how to deduce Theorem \ref{stable metric refl} from Theorem \ref{wap refl}. So fix a continuous left-invariant coarsely proper stable \'ecart $d$ on $G$ with corresponding balls $D_\alpha$. Then, for every $\alpha>0$, we can define a continuous bounded weakly almost periodic function $\phi_\alpha\colon G\til \R$ by
$$
\phi_\alpha(g)=2-\max\Big\{1, \min\big\{\frac{d(g,1)}\alpha, 2\big\}\Big\}.
$$
We note that $\phi_\alpha$ has $d$-bounded support, while $\phi_\alpha\equiv 1$ on $D_\alpha$, thus verifying the conditions of Theorem \ref{wap refl}.


\section{A fixed point property for affine isometric actions}

\subsection{Kalton's theorem and solvent maps}\label{section solvent}
The result of this final section has a different flavor from the preceding ones, in that our focus will be on the interplay between coarse geometry and harmonic analytic properties of groups as related to fixed points of affine isometric actions.  As a first step we must consider a weakening of the concept of coarse embeddings and show how it relates to work of Kalton.

If $M\subseteq \N$ is an infinite set and $r\geqslant 1$, let $P_r(M)$ be the set of all $r$-tuples $n_1<\ldots<n_r$ with $n_i\in M$. We define a graph structure on $P_r(M)$ by letting two $r$-tuples $n_1<\ldots< n_r$ and $m_1<\ldots<m_r$ be related by an edge if
$$
n_1\leqslant m_1\leqslant n_2\leqslant m_2\leqslant \ldots\leqslant n_r\leqslant m_r
$$
or vice versa. When equipped with the induced path metric, one sees that $P_r(M)$ is a finite diameter metric space with the exact diameter depending on $r$.

\begin{thm}[N. J. Kalton \cite{kalton}]\label{kalton1}
Suppose $r\in \N$ and let $E$ be a Banach space such that $E^{(2r)}$ is separable. Then given any uncountable family $\{f_i\}_{i\in I}$ of bounded maps $f_i\colon P_r(\N)\til E$ and any $\epsilon>0$, there exist $i\neq j$ and an infinite subset $M\subseteq \N$ so that
$$
\norm{f_i(\sigma)-f_j(\sigma)}<\theta_{f_i}(1)+\theta_{f_j}(1)+\eps
$$
for all $\sigma\in P_r(M)$.
\end{thm}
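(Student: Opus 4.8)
The plan is to argue by induction on $r$, the reduction consisting of a single weak-$*$ limit in the last coordinate, descending from maps $P_r(\N)\til E$ to maps $P_{r-1}(\N)\til E^{**}$. First observe that since $E^{(2r)}$ is separable, so is every lower dual $E^{(2r-1)},\ldots,E^{*},E$ (a Banach space with separable dual being separable, and $E\hookrightarrow E^{**}$ isometrically); hence every bounded uncountable subset of $E$, or of $E^{**}$, contains two points within any prescribed distance. Moreover $E^{(2r)}=(E^{**})^{(2(r-1))}$, which is exactly what is needed to feed $E^{**}$ back into the inductive statement with parameter $r-1$. Fix throughout a non-principal ultrafilter $\ku U$ on $\N$.

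\emph{Base case $r=1$.} Here $P_1(\N)$ is a complete graph, so $\theta_{f_i}(1)=\operatorname{diam} f_i[P_1(\N)]$ and $\norm{f_i(n)-f_i(m)}\le\theta_{f_i}(1)$ for all $n,m$. For each $i$ let $\xi_i\in E^{**}$ be the weak-$*$ limit of $f_i(n)$ along $\ku U$ (it exists by weak-$*$ compactness of balls in $E^{**}$). As $\{\xi_i\}$ is uncountable and bounded in the separable space $E^{**}$, there are $i\ne j$ with $\norm{\xi_i-\xi_j}_{E^{**}}<\eps$. For each fixed $n$, the vector $\big(f_i(n)-f_j(n)\big)-(\xi_i-\xi_j)$ is the weak-$*$ limit along $m\to\ku U$ of $\big(f_i(n)-f_i(m)\big)-\big(f_j(n)-f_j(m)\big)$, each of which has norm $\le\theta_{f_i}(1)+\theta_{f_j}(1)$; since the norm of $E^{**}$ is weak-$*$ lower semicontinuous, $\norm{\big(f_i(n)-f_j(n)\big)-(\xi_i-\xi_j)}_{E^{**}}\le\theta_{f_i}(1)+\theta_{f_j}(1)$, whence $\norm{f_i(n)-f_j(n)}\le\theta_{f_i}(1)+\theta_{f_j}(1)+\eps$. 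Here $M=\N$ works.

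\emph{Inductive step.} Given the uncountable bounded family $\{f_i\colon P_r(\N)\til E\}$, define $\tilde f_i\colon P_{r-1}(\N)\til E^{**}$ by letting $\tilde f_i(n_1,\ldots,n_{r-1})$ be the weak-$*$ limit of $f_i(n_1,\ldots,n_{r-1},n)$ along $\ku U$ (meaningful since $n>n_{r-1}$ for $\ku U$-almost all $n$). Each $\tilde f_i$ is bounded by $\sup\norm{f_i}$, and the key modulus estimate $\theta_{\tilde f_i}(1)\le\theta_{f_i}(1)$ holds: if $\sigma',\tau'$ are adjacent in $P_{r-1}(\N)$, then for every $n$ larger than all their entries the tuples $(\sigma',n)$ and $(\tau',n)$ are adjacent in $P_r(\N)$ (the staircase condition extends because the appended pair is $n\le n$), so $\norm{f_i(\sigma',n)-f_i(\tau',n)}\le\theta_{f_i}(1)$, and passing to the weak-$*$ limit gives $\norm{\tilde f_i(\sigma')-\tilde f_i(\tau')}_{E^{**}}\le\theta_{f_i}(1)$. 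Applying the inductive hypothesis to $\{\tilde f_i\}$ in $E^{**}$ with parameter $r-1$ and a small $\eps'>0$, we obtain $i\ne j$ and an infinite $M_0\subseteq\N$ with $\norm{\tilde f_i(\sigma')-\tilde f_j(\sigma')}_{E^{**}}<\theta_{\tilde f_i}(1)+\theta_{\tilde f_j}(1)+\eps'\le\theta_{f_i}(1)+\theta_{f_j}(1)+\eps'$ for all $\sigma'\in P_{r-1}(M_0)$. Pulling back: for $\sigma=(n_1,\ldots,n_r)\in P_r(M_0)$ with $\sigma'=(n_1,\ldots,n_{r-1})$, comparing $f_i(\sigma)$ with $f_i(\sigma',n)$ for $n>n_r$ (adjacent tuples) and letting $n\to\ku U$ gives $\norm{f_i(\sigma)-\tilde f_i(\sigma')}_{E^{**}}\le\theta_{f_i}(1)$, likewise for $j$, so $\norm{f_i(\sigma)-f_j(\sigma)}\le\norm{\tilde f_i(\sigma')-\tilde f_j(\sigma')}_{E^{**}}+\theta_{f_i}(1)+\theta_{f_j}(1)$.

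\emph{The main obstacle.} The naive pull-back just described already proves a \emph{weakened} version of the theorem — with $\theta_{f_i}(1)+\theta_{f_j}(1)$ replaced by twice that quantity, or, iterating the reduction crudely, by $r$ times it — and the genuine difficulty, where Kalton's argument is delicate, is to recover the \emph{sharp} additive constant $\theta_{f_i}(1)+\theta_{f_j}(1)$ itself. The plan for this is to stop treating the last coordinate as a black box and instead thin $M_0$ to an infinite $M$ by a recursive (Ramsey-type) construction that exploits the staircase adjacency of $P_r(\N)$ — in particular the fact that shifting every entry of a tuple in $M$ forward by one position produces an adjacent tuple — so that, along $M$, passing from $f_i$ to its iterated weak-$*$ limit costs the expansion modulus only once rather than once per reduction; this will likely have to be intertwined with a more careful selection of the pair $i\ne j$, pigeonholing against the separability of $E^{(2r)}$ using more of the weak-$*$ limit data than the single iterated limit vector. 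I expect the bookkeeping in this simultaneous thinning-and-selection to be the only real obstacle; everything else is weak-$*$ compactness, lower semicontinuity of the norm, and pigeonholing against separability.
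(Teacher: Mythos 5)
This theorem is not proved in the paper at all: it is quoted verbatim from Kalton's article \cite{kalton}, so there is no internal proof to compare against. Judged on its own terms, your proposal has a genuine gap, and you have in fact located it yourself. Every step you actually carry out is correct (separability of all the lower duals, the base case $r=1$, the estimate $\theta_{\tilde f_i}(1)\leqslant\theta_{f_i}(1)$ via the adjacency of $(\sigma',n)$ and $(\tau',n)$, and the pull-back bound $\norm{f_i(\sigma)-\tilde f_i(\sigma')}\leqslant\theta_{f_i}(1)$ by weak-$*$ lower semicontinuity), but what these steps yield is only the weakened inequality. Worse, the weakening is not benign: for the induction to close as a self-contained argument, the inductive statement must carry the degraded constant, so the crude scheme proves the bound $r\big(\theta_{f_i}(1)+\theta_{f_j}(1)\big)+\eps$, not twice the sharp one. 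That $r$-dependent version is useless for the purpose this theorem serves in the paper (the proof of Theorem \ref{kalton2} sends $r\til\infty$ while using that the bound $2\theta_\phi(1)+1$ is independent of $r$), so the sharp, $r$-independent constant is the entire content of the result, not a refinement of it.

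The final paragraph of your proposal is a statement of intent, not a proof. The difficulty is also not mere ``bookkeeping'': the natural decomposition $f_i(\sigma)-f_j(\sigma)=\big[f_i(\sigma)-\tilde f_i(\sigma')\big]-\big[f_j(\sigma)-\tilde f_j(\sigma')\big]+\big[\tilde f_i(\sigma')-\tilde f_j(\sigma')\big]$ already spends $\theta_{f_i}(1)+\theta_{f_j}(1)$ on the first two brackets, so to win one would need the third bracket to be smaller than $\eps$ uniformly on $P_{r-1}(M)$ --- that is, a strictly stronger conclusion than the inductive hypothesis provides, since $\tilde f_i(\sigma')-\tilde f_j(\sigma')$ converges to $\partial^r f_i-\partial^r f_j$ only weak-$*$ and not in norm, and no amount of thinning of $M_0$ or rechoosing of the pair $i\neq j$ by pigeonholing against the separability of $E^{(2r)}$ alone obviously forces norm-smallness (one would need to pigeonhole simultaneously on the uncountable family of intermediate limit functions, which do not live in a separable space). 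So the ``simultaneous thinning-and-selection'' you defer is precisely where Kalton's argument does its real work, and until that step is written out and checked, the theorem is not proved. I would recommend either working through Section 3--4 of \cite{kalton} or honestly presenting what you have as a proof of the weaker, $r$-dependent estimate together with an explanation of why that does not suffice here.
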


Kalton uses this result to show that $c_0$ neither embeds uniformly nor coarsely into a reflexive Banach space. However a close inspection of the proof reveals a stronger result in the coarse setting.

\begin{defi}
A map $\phi\colon (X,d)\til (M,\ku E)$ from a metric space $X$ to a coarse space $M$ is said to be {\em solvent} if, for every coarse entourage $E\in \ku E$ and $n\geqslant 1$, there is a constant $R$ so that
$$
R\leqslant d(x,y)\leqslant R+n\;\;\saa\;\; \big(\phi(x),\phi(y)\big)\notin E.
$$
\end{defi}
In the case $(M,\partial)$ is a metric space, our definition becomes more transparent. Indeed $\phi$ is solvent if there are constants $R_n$ for all $n\geqslant 1$ so that 
$$
R_n\leqslant d(x,y)\leqslant R_n+n\;\;\saa\;\; \partial\big(\phi(x),\phi(y)\big)\geqslant n.
$$
Also, in case $X$ is actually a geodesic metric space, we have the following easy reformulation.
\begin{lemme}\label{solvent reform}
Let $(X,d)$ be a geodesic metric space of infinite diameter and suppose that $\phi\colon (X,d)\til (M,\partial)$ is a bornologous map into a metric space $(M,\partial)$. Then $\phi$ is solvent if and only if
$$
\sup_{t}\tilde\kappa_\phi(t)=\sup_{t}\inf_{d(x,y)=t}\partial\big(\phi(x),\phi(y)\big)=\infty
$$
\end{lemme}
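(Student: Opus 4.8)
The plan is to prove the two implications separately; the forward one is purely formal, while the converse is where the geodesic hypothesis does its work, by ``thickening'' a single good value of the exact compression modulus into a whole interval of good values.

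For the easy direction, suppose $\phi$ is solvent, so that for each $n\geqslant 1$ there is $R_n$ with $R_n\leqslant d(x,y)\leqslant R_n+n\;\saa\;\partial(\phi(x),\phi(y))\geqslant n$. In particular every pair $x,y$ with $d(x,y)=R_n$ satisfies $\partial(\phi(x),\phi(y))\geqslant n$, so $\tilde\kappa_\phi(R_n)\geqslant n$ and therefore $\sup_t\tilde\kappa_\phi(t)=\infty$. Note that this half uses neither geodesity, nor infinite diameter, nor bornologousness.

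For the converse, assume $\sup_t\tilde\kappa_\phi(t)=\infty$ and fix $n\geqslant 1$. Since $\phi$ is bornologous we have $\theta_\phi(n)<\infty$; choose an integer $m$ with $m-\theta_\phi(n)\geqslant n$ and, using the hypothesis, pick $R_n$ with $\tilde\kappa_\phi(R_n)\geqslant m$. I claim that this $R_n$ witnesses solvency at level $n$. Indeed, let $x,y\in X$ with $R_n\leqslant d(x,y)\leqslant R_n+n$ and choose a geodesic $\gamma\colon[0,d(x,y)]\til X$ from $x$ to $y$; since $d(x,y)\geqslant R_n$, the point $z=\gamma(R_n)$ is defined and satisfies $d(x,z)=R_n$ and $d(z,y)=d(x,y)-R_n\leqslant n$. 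Consequently $\partial(\phi(x),\phi(z))\geqslant\tilde\kappa_\phi(R_n)\geqslant m$ while $\partial(\phi(z),\phi(y))\leqslant\theta_\phi(n)$, so by the triangle inequality $\partial(\phi(x),\phi(y))\geqslant m-\theta_\phi(n)\geqslant n$, as required.

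The only delicate point is the existence of the interpolating point $z$ at distance exactly $R_n$ from $x$ along the way to $y$, and this is precisely what a geodesic space provides, its reach from $x$ being at least $d(x,y)\geqslant R_n$. The hypothesis of infinite diameter is used only to keep the statement non-vacuous: in a geodesic space of infinite diameter every distance $t\geqslant 0$ is attained by some pair of points, so each $\tilde\kappa_\phi(t)$ is a genuine finite infimum and $\sup_t\tilde\kappa_\phi(t)=\infty$ really expresses unbounded compression rather than emptiness of the relevant index sets. I anticipate no real obstacle beyond keeping track of the constants $m$ and $\theta_\phi(n)$.
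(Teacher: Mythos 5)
Your proof is correct and follows essentially the same route as the paper's: interpolate a point $z$ on a geodesic from $x$ to $y$ at distance exactly $R_n$ from $x$, apply the exact compression bound there, and subtract the expansion over the remaining distance $\leqslant n$ via the triangle inequality. The only cosmetic difference is that you bound the remainder directly by $\theta_\phi(n)$ and choose $m\geqslant n+\theta_\phi(n)$, whereas the paper first passes to a Lipschitz-for-large-distances estimate $Kn+K$ and takes the threshold $n^2$; both are equivalent bookkeeping.
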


\begin{proof}
Suppose that the second condition holds and find constants $t_n$ with 
$$
\inf_{d(x,y)=t_n}\partial\big(\phi(x),\phi(y)\big)\geqslant n.
$$ 
Since $\phi$ is bornologous and $X$ geodesic, $\phi$ is Lipschitz for large distances and hence
$$
\partial\big(\phi(x),\phi(y)\big)\leqslant K\cdot d(x,y)+K
$$
for some constant $K$ and all $x,y\in X$.

Now, assume $t_{n^2}\leqslant d(x,y)\leqslant t_{n^2}+n$. Then, as $X$ is geodesic, there is some $z\in X$ with $d(x,z)=t_{n^2}$, while $d(z,y)\leqslant n$. It follows that $\partial\big(\phi(x),\phi(z)\big)\geqslant n^2$, while $\partial\big(\phi(z),\phi(y)\big)\leqslant Kn+K$, i.e., 
$$
\partial\big(\phi(x),\phi(y)\big)\geqslant n^2-Kn-K\geqslant n
$$
provided $n$ is sufficiently large. Setting $R_n=t_{n^2}$, we see that $\phi$ is solvent.

The converse is proved by noting that every distance is attained in $X$.
\end{proof}

While a solvent map $\phi\colon X\til M$ remains solvent when post-composed with a coarse embedding of $M$ into another coarse space, then dependence on $X$ is somewhat more delicate. For this and ulterior purposes, we need the notion of near isometries.
\begin{defi} 
A map $\phi\colon X\til Y$ between metric spaces $X$ and $Y$ is said to be a {\em near isometry} if there is a constant $K$ so that, for all $x,x'\in X$, 
$$
d(x,x')-K\leqslant d(\phi(x),\phi(x'))\leqslant d(x,x')+K.
$$
\end{defi}
It is fairly easy to find a near isometry $\phi\colon X\til X$ of a metric space $X$ that is not close to any isometry of $X$, i.e., so that $\sup_xd(\phi(x),\psi(x))=\infty$ for any isometry $\psi$ of $X$. However, in the case of Banach spaces, positive results do exist.  Indeed, by the work of several authors, in particular, P. M. Gruber \cite{gruber} and J. Gervirtz \cite{gervirtz}, if $\phi\colon X\til Y$ is a surjective near isometry between Banach spaces $X$ and $Y$ with defect $K$ as above and $\phi(0)=0$, then there is surjective linear isometry $U\colon X\til Y$ with $\sup_{x\in X}\norm{\phi(x)-U(x)}\leqslant 4K$ (see Theorem 15.2 \cite{lindenstrauss}). 

Now, suppose instead that $\phi$ neither surjective nor is $\phi(0)=0$, but only that $\phi$ $K$-cobounded, i.e.,  $\sup_{y\in Y}\inf_{x\in X}\norm{y-\phi(x)}\leqslant K$. Then $Y$ has density character at most that of $X$ and hence cardinality at most that of $X$, whence a short argument shows that there is a surjective map $\psi\colon X\til Y$ with $\psi(0)=0$ so that $\sup_{x\in X}\norm{\psi(x)+\phi(0)-\phi(x)}\leqslant 42K$. In particular, $\psi$ is a near isometry with defect $85K$, whence there is a surjective linear isometry $U\colon X\til Y$ with $\sup_{x\in X}\norm{\psi(x)-U(x)}\leqslant 340K$. All in all, we find that
$$
\sup_{x\in X}\norm{A(x)-\phi(x)}\leqslant 382K,
$$
where $A\colon X\til Y$ is the surjective affine isometry $A=U+\phi(0)$. In other words, every cobounded near isometry $\phi\colon X\til Y$ is close to a surjective affine isometry $A\colon X\til Y$.

Let us also note the following  straightforward fact.
\begin{lemme}\label{coarse dependence}
Let 
$$
X\overset{\sigma}\longrightarrow Y\overset{\phi}\longrightarrow Z \overset{\psi}\longrightarrow W
$$
be maps between metric spaces $X$, $Y$ and coarse spaces $Z$, $W$ so that $\sigma$ is a near isometry, $\phi$ is solvent and $\psi$ is a coarse embedding. Then the composition $\psi\phi\sigma\colon X\til W$ is also solvent.
\end{lemme}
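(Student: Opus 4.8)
The plan is simply to chase the three definitions, the one point requiring attention being that the near isometry $\sigma$ distorts distances, so the gap in the solvent condition for $\phi$ must be inflated before it is used. Fix a coarse entourage $F\in\ku F$ on $W$ and a natural number $n\geqslant 1$; we must exhibit a radius $R'$ with the property that $R'\leqslant d(x,x')\leqslant R'+n$ implies $(\psi\phi\sigma(x),\psi\phi\sigma(x'))\notin F$.

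First I would invoke that $\psi$ is a coarse embedding, hence expanding: there is a coarse entourage $E\in\ku E$ on $Z$ so that $(z,z')\notin E\saa(\psi(z),\psi(z'))\notin F$. Let $K$ be a defect constant for $\sigma$, so that $d(x,x')-K\leqslant d(\sigma(x),\sigma(x'))\leqslant d(x,x')+K$ for all $x,x'\in X$, and choose a natural number $m\geqslant n+2K$. Applying solvency of $\phi$ to the entourage $E$ and the integer $m$ yields a radius $R$ with
$$
R\leqslant d(y,y')\leqslant R+m\;\;\saa\;\;(\phi(y),\phi(y'))\notin E.
$$

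Now put $R'=R+K$. If $R'\leqslant d(x,x')\leqslant R'+n$, then on one hand $d(\sigma(x),\sigma(x'))\geqslant d(x,x')-K\geqslant R$, and on the other $d(\sigma(x),\sigma(x'))\leqslant d(x,x')+K\leqslant R+n+2K\leqslant R+m$; hence $(\phi\sigma(x),\phi\sigma(x'))\notin E$ by the choice of $R$, and therefore $(\psi\phi\sigma(x),\psi\phi\sigma(x'))\notin F$ by the choice of $E$. Since $F$ and $n$ were arbitrary, $\psi\phi\sigma$ is solvent. There is no genuine obstacle here; the only thing to be careful about is enlarging the gap from $n$ to some $m\geqslant n+2K$ before appealing to solvency of $\phi$, which is harmless precisely because the solvent property permits the gap and the radius to be chosen independently of one another.
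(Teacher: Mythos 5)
Your argument is correct: the paper states this lemma without proof as a ``straightforward fact,'' and your definition-chase — using expansion of $\psi$ to pass from $F$ to $E$, inflating the gap from $n$ to $m\geqslant n+2K$ to absorb the defect of $\sigma$, and shifting the radius to $R'=R+K$ — is exactly the intended routine verification. The one point needing care (that the near isometry perturbs both ends of the window $[R,R+n]$, so the gap must be widened before invoking solvency of $\phi$) is handled correctly.
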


To gauge of how weak the existence of solvable maps is, we may reutilse the ideas of Section \ref{uniform vs coarse}.
\begin{prop}\label{triviality}
Supppose $X$ and $E$ are Banach spaces and that there is no uniformly continuous solvent map $\psi\colon X\til \ell^2(E)$. Then, for every uniformly continuous map $\phi\colon X\til E$, we have 
$$
\sup_r\inf_{\norm{x-y}=r}\Norm{\phi(x)-\phi(y)}=0.
$$
\end{prop}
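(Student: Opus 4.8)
The plan is to argue by contraposition: from a uniformly continuous map $\phi\colon X\til E$ with $\sup_r\tilde\kappa_\phi(r)>0$ I will build a uniformly continuous solvent map $\psi\colon X\til \ell^2(E)$, contradicting the hypothesis. Fix once and for all $r_0>0$ and $\delta>0$ with $\tilde\kappa_\phi(r_0)\geqslant\delta$, and, composing with a translation, assume $\phi(0)=0$. The construction follows the pattern of Lemma \ref{unif saa coarse}: in the $n$-th summand $E_n\subseteq\ell^2(E)$ I place a rescaled and amplified copy of $\phi$ designed to ``see'' exactly one large distance $t_n$ with margin at least $n$, while being contracted so strongly on bounded sets that the resulting series is uniformly continuous.

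Concretely, using that $\theta_\phi(s)\to 0$ as $s\to 0_+$, I would pick for each $n$ a scale $s_n\in(0,r_0/2]$ with $\theta_\phi(s_n)\leqslant \delta/(n2^n)$, set $\lambda_n=s_n/n$ and $t_n=r_0/\lambda_n\geqslant 2n$, and let $\psi_n(x)=\tfrac n\delta\phi(\lambda_n x)$. Two estimates then do all the work: if $\norm{x-y}\leqslant n$ then $\norm{\lambda_n(x-y)}\leqslant s_n$, hence $\norm{\psi_n(x)-\psi_n(y)}\leqslant\tfrac n\delta\theta_\phi(s_n)\leqslant 2^{-n}$; and if $\norm{x-y}=t_n$ then $\norm{\lambda_n(x-y)}=r_0$, hence $\norm{\psi_n(x)-\psi_n(y)}\geqslant\tfrac n\delta\tilde\kappa_\phi(r_0)\geqslant n$. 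The first estimate applied with $y=0$ shows that $\psi(x):=(\psi_n(x))_n$ is an absolutely convergent element of $\ell^2(E)$ for every $x$ (the tail $\sum_{n\geqslant\lceil\norm x\rceil}2^{-n}$ dominates all but finitely many terms), and the standard $\eps/2$ argument --- bound the finite head $\psi_1,\dots,\psi_N$ by their joint uniform continuity and the tail by $\sum_{n>N}4^{-n}$ --- gives uniform continuity of $\psi$.

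It remains to read off solvency. The second estimate gives $\tilde\kappa_\psi(t_n)\geqslant\norm{\psi_n(x)-\psi_n(y)}_E\geqslant n$ for every $n$, so $\sup_t\tilde\kappa_\psi(t)=\infty$. Since a uniformly continuous map between Banach spaces is Lipschitz for large distances by the Corson--Klee lemma, and in particular bornologous, and since $X$ is a geodesic metric space of infinite diameter, Lemma \ref{solvent reform} applies and shows that $\psi$ is solvent; this is the desired contradiction.

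The one genuinely delicate point --- where a naive attempt breaks down --- is the tension between amplification and regularity: pushing $\phi$'s fixed compression margin $\delta$ up to $n$ costs a factor $n/\delta$, which would normally destroy uniform continuity of the sum since we control neither the rate at which $\theta_\phi$ decays near $0$ nor how $\tilde\kappa_\phi$ behaves away from $r_0$. The trick that rescues it is to pay for the amplification in advance by shrinking the domain, i.e.\ replacing $\phi$ by $\phi(\lambda_n\,\cdot\,)$ with $\lambda_n\to 0$ rapidly enough that $\tfrac n\delta\theta_\phi(s_n)\leqslant 2^{-n}$; this is available here precisely because we only ever need detection at the single distance $r_0$, so --- unlike in Lemma \ref{unif saa coarse} --- no chaining of $\phi$ along geodesic segments is needed, and the detected distances $t_n=r_0/\lambda_n$ automatically tend to infinity, which is exactly what solvency requires.
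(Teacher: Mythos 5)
Your proof is correct and follows essentially the same route as the paper's: contraposition, the rescaled amplifications $\psi_n(x)=\tfrac n\delta\phi(\lambda_n x)$ with $\lambda_n$ chosen so that $\tfrac n\delta\theta_\phi(\lambda_n n)\leqslant 2^{-n}$, absolute convergence and uniform continuity of the $\ell^2(E)$-valued sum, and an appeal to Lemma \ref{solvent reform} via $\tilde\kappa_\psi(t_n)\geqslant n$. The only cosmetic difference is your explicit normalisation $s_n\leqslant r_0/2$ forcing $t_n\to\infty$, which is not needed since Lemma \ref{solvent reform} only requires $\sup_t\tilde\kappa_\psi(t)=\infty$.
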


\begin{proof}
Assume for a contradiction that $\phi\colon X\til E$ is a uniformly continuous map and that $r>0$ satisfies $\inf_{\norm{x-y}=r}\Norm{\phi(x)-\phi(y)}=\delta>0$. Without loss of generality, we may assume that $\phi(0)=0$.

Now fix $n\geqslant 1$ and choose $\eps_n>0$ small enough that $\theta_\phi(\eps_n)\leqslant \frac \delta{n2^n}$, where $\theta_\phi$ is the expansion modulus of $\phi$. Set also $\psi_n(x)=\frac n\delta\phi\big(\frac {\eps_n} n x\big)$. Then
\[\begin{split}
\norm{x-y}\leqslant n 
&\;\saa\; \NORM{\phi\Big(\frac {\eps_n} nx\Big)-\phi\Big(\frac {\eps_n} ny\Big)}\leqslant \frac \delta{n2^n}\\
&\;\saa\; \norm{\psi_n(x)-\psi_n(y)}\leqslant \frac 1{2^n},
\end{split}\]
while
\[\begin{split}
\norm{x-y}=\frac {rn}{\eps_n} &\;\saa\; \norm{\psi_n(x)-\psi_n(y)}\geqslant n.
\end{split}\]

Finally, let $\psi\colon X\til \ell^2(E)$ be defined by $\psi(x)=(\psi_1(x), \psi_2(x),\ldots)$. Then the above inequalities show that $\psi$ is well-defined and uniformly continuous and also that, for every $n\geqslant 1$, there is some $R$ with
$$
\norm{x-y}=R \;\saa\; \norm{\psi_n(x)-\psi_n(y)}_E\geqslant n\;\saa\; \norm{\psi(x)-\psi(y)}_{\ell^2(E)}\geqslant n.
$$
Applying Lemma \ref{solvent reform}, we conclude that $\psi$ is solvent, contradicting our assumptions.
\end{proof}

We now come to the improved statement of Kalton's theorem.
\begin{thm}\label{kalton2}
Every bornologous map $\phi\colon c_0\til E$  into a reflexive Banach space $E$ is insolvent. 
\end{thm}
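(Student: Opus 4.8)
The plan is to argue by contradiction: assume $\phi\colon c_0\til E$ is bornologous and solvent, and produce a violation of Kalton's Theorem \ref{kalton1}. Write $\theta=\theta_\phi(1)<\infty$ and fix a positive integer $n>2\theta+1$. By solvency there is $R\geqslant0$ with $R\leqslant\norm{x-y}\leqslant R+n\;\saa\;\norm{\phi(x)-\phi(y)}\geqslant n$; since the interval $[R,R+n]$ has length $n\geqslant1$ it contains a positive integer $r$, which we fix. Everything now hinges on building an uncountable family of bounded maps $P_r(\N)\til E$ to which Theorem \ref{kalton1} applies but whose conclusion fails.

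Here is the gadget. For an infinite set $\omega\subseteq\N$ define $\mu_\omega\colon P_r(\N)\til c_0$ by putting the $m$th coordinate of $\mu_\omega(n_1<\dots<n_r)$ equal to $\#\{k\leqslant r\mid n_k>m\}$ when $m\in\omega$ and to $0$ otherwise; this is a finitely supported vector with entries in $\{0,1,\dots,r\}$, so $\norm{\mu_\omega(\sigma)}_{c_0}\leqslant r$. Set $f_\omega=\phi\circ\mu_\omega$. First one checks that $\mu_\omega$ is $1$-Lipschitz for the path metric on $P_r(\N)$: along an interlacing edge the counting function $m\mapsto\#\{k\mid n_k>m\}$ changes by at most $1$ at every coordinate, so adjacent $\sigma,\tau$ satisfy $\norm{\mu_\omega(\sigma)-\mu_\omega(\tau)}_{c_0}\leqslant1$. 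Hence $\theta_{f_\omega}(1)\leqslant\theta$, and since $\mu_\omega[P_r(\N)]\subseteq rB_{c_0}$ and $\phi$ is bornologous each $f_\omega$ is bounded. Two further observations make Theorem \ref{kalton1} applicable. For fixed $\sigma$ the vector $\mu_\omega(\sigma)$ depends only on $\omega\cap[1,n_r)$, hence takes finitely many values; so $\bigcup_\omega\mu_\omega[P_r(\N)]$ is a \emph{countable} subset of $c_0$, and all the $f_\omega$ therefore take values in the closed linear span $Z$ of $\phi\big[\bigcup_\omega\mu_\omega[P_r(\N)]\big]$ inside $E$, which is separable; being a closed subspace of a reflexive space, $Z$ is reflexive, so $Z^{(2r)}=Z$ is separable.

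The key estimate is that for any two distinct infinite sets $\omega\neq\omega'$ and \emph{any} infinite $M\subseteq\N$ there is $\sigma\in P_r(M)$ with $\norm{\mu_\omega(\sigma)-\mu_{\omega'}(\sigma)}_{c_0}=r$: take $m_0=\min(\omega\triangle\omega')$, choose $n_1<\dots<n_r$ in $M$ all exceeding $m_0$, and observe that at coordinate $m_0$ the difference of the two vectors is $\pm\#\{k\mid n_k>m_0\}=\pm r$, while no coordinate of either vector exceeds $r$. As $R\leqslant r\leqslant R+n$, solvency then gives $\norm{f_\omega(\sigma)-f_{\omega'}(\sigma)}\geqslant n$. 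On the other hand, Theorem \ref{kalton1} applied to $\{f_\omega\}$ with $\eps=1$ furnishes $\omega\neq\omega'$ and an infinite $M$ with $\norm{f_\omega(\sigma)-f_{\omega'}(\sigma)}<\theta_{f_\omega}(1)+\theta_{f_{\omega'}}(1)+1\leqslant2\theta+1<n$ for all $\sigma\in P_r(M)$, contradicting the previous sentence for the corresponding $\sigma$.

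The genuinely substantive point is the design of the gadget: one needs a family of uniformly Lipschitz maps $P_r(\N)\til c_0$ that remains spread out after restriction to an arbitrary subgraph $P_r(M)$, and this is exactly where the naive "scaled interlacing embedding" collapses against a sparse $M$. The device is to mask a single counting staircase by the set $\omega$, which concentrates the discrepancy between $\mu_\omega$ and $\mu_{\omega'}$ onto the one coordinate $m_0=\min(\omega\triangle\omega')$, together with the fact that in $c_0$ a large entry at a single coordinate is free of cost in norm. A secondary point requiring care --- since a bornologous $\phi$ need not have separable image --- is that the reduction to a separable reflexive range is legitimate precisely because the gadget visits only countably many vectors of $c_0$.
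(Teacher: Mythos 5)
Your proof is correct and follows essentially the same route as the paper: your maps $\mu_\omega$ are, up to an immaterial change of convention, Kalton's gadget $f_{r,A}(n_1,\dots,n_r)=\sum_{i=1}^r\chi_{A\cap[1,n_i]}$, and the application of Theorem \ref{kalton1} together with the observation that two distinct masks separate by exactly $r$ at some $\sigma\in P_r(M)$ is identical to the paper's argument. Your explicit reduction to a separable reflexive subspace $Z$ (needed because Theorem \ref{kalton1} assumes $E^{(2r)}$ separable) is a detail the paper leaves implicit, and you handle it correctly.
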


\begin{proof}
Let $(e_k)_{k=1}^\infty$ denote the canonical unit vector basis for $c_0$. If $a\subseteq \N$ is a finite subset, we let $\chi_a=\sum_{k\in a}e_k$.  Now, for $r\geqslant 1$ and $A\subseteq \N$ infinite, define
$$
f_{r,A}(n_1,\ldots, n_r)=\sum_{i=1}^r\chi_{A\cap[1,n_i]}.
$$
Note that, if $n_1\leqslant m_1\leqslant n_2\leqslant m_2\leqslant \ldots\leqslant n_r\leqslant m_r$ with $n_1<n_2\ldots< n_r$ and $m_1<m_2<\ldots<m_r$, then $\norm{f_{r,A}(n_1,\ldots, n_r)-f_{r,A}(m_1,\ldots, m_r)}\leqslant 1$, that is, $\theta_{f_{r,A}}(1)\leqslant 1$ and thus also $\theta_{\phi f_{r,A}}(1)\leqslant \theta_\phi(1)$.

Now fix $r$. Then by Theorem \ref{kalton1} there are  infinite subsets $A,B, M\subseteq \N$ so that
$$
\norm{\phi f_{r,A}(\sigma)-\phi f_{r,B}(\sigma)}<\theta_{\phi f_{r,A}}(1)+\theta_{\phi f_{r,B}}(1)+1\leqslant 2\,\theta_\phi(1)+1.
$$ 
for all $\sigma\in P_r(M)$. On the other hand, as $A\neq B$, there is some $\sigma\in P_r(M)$ so that $\norm{f_{r,A}(\sigma)-f_{r,B}(\sigma)}=r$, whereby
$$
\inf_{\norm{x-y}=r}\norm{\phi(x)-\phi(y)} \leqslant\norm{\phi f_{r,A}(\sigma)-\phi f_{r,B}(\sigma)}\leqslant 2\,\theta_\phi(1)+1.
$$
As any interval of the form $[R,R+n]$, for $n\geqslant 1$, contains an integral point $r$, we see that $\phi$ is insolvent.
\end{proof}

\begin{cor}\label{kalton3}
Every bornologous map $\phi\colon c_0\til L^1([0,1])$ is insolvent. 
\end{cor}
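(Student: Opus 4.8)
The plan is to reduce the $L^1$ statement to the reflexive case already proved in Theorem~\ref{kalton2}, by factoring any bornologous map $c_0\til L^1([0,1])$ through a coarse embedding of $L^1([0,1])$ into a Hilbert space.

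First I would record the classical fact that the kernel $\Psi(f,g)=\norm{f-g}_{L^1}$ is conditionally of negative type on $L^1([0,1])$. Indeed, $(s,t)\mapsto|s-t|$ is a conditionally negative type kernel on $\R$, and $\norm{f-g}_{L^1}=\int_0^1|f(s)-g(s)|\,ds$ is then conditionally of negative type, being an ``integral'' of such kernels (equivalently, $L^1$ embeds isometrically into $L^{1/2}$ via a $1/2$-stable process). By the standard GNS-type construction associated with a kernel conditionally of negative type recalled earlier, there is therefore a map $\psi\colon L^1([0,1])\til \ku H$ into a Hilbert space with $\norm{\psi(f)-\psi(g)}_\ku H=\norm{f-g}_{L^1}^{1/2}$ for all $f,g$. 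Its expansion modulus is $\theta_\psi(t)=t^{1/2}<\infty$ and its compression modulus is $\kappa_\psi(t)=t^{1/2}\til\infty$, so $\psi$ is bornologous and expanding, i.e., a coarse embedding.

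Next, suppose toward a contradiction that some bornologous map $\phi\colon c_0\til L^1([0,1])$ is solvent. Applying Lemma~\ref{coarse dependence} with the identity of $c_0$ as the near isometry, the solvent map $\phi$, and the coarse embedding $\psi$, we conclude that $\psi\circ\phi\colon c_0\til \ku H$ is solvent; it is also bornologous, being a composition of bornologous maps. Since $\ku H$ is reflexive, Theorem~\ref{kalton2} forces $\psi\circ\phi$ to be insolvent, a contradiction. Hence every bornologous map $\phi\colon c_0\til L^1([0,1])$ is insolvent, as claimed.

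I do not expect any genuine obstacle here: the only non-formal ingredient is the negative-type property of the $L^1$ metric, which is standard, while the bookkeeping showing that post-composition with a coarse embedding preserves solvency is already packaged into Lemma~\ref{coarse dependence}, so nothing further is needed.
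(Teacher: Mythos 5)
Your argument is correct and is essentially the paper's own proof: the paper likewise reduces to Theorem~\ref{kalton2} via Lemma~\ref{coarse dependence} by coarsely embedding $L^1([0,1])$ into the reflexive space $L^2([0,1])$, citing Bretagnolle--Dacunha-Castelle--Krivine for that embedding, which is exactly the negative-type/square-root-of-the-metric construction you spell out explicitly.
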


\begin{proof}
By Lemma \ref{coarse dependence}, it suffices to observe that $L^1([0,1])$ coarsely embeds into the reflexive space $L^2([0,1])$, which follows from \cite{bretagnolle2}.
\end{proof}

Applying Proposition \ref{triviality} and the fact that $E\mapsto \ell^2(E)$ preserves reflexivity, we also  obtain the following corollary.
\begin{cor}
Every uniformly continuous map $\phi\colon c_0\til E$  into a reflexive Banach space  satisfies
$$
\sup_r\inf_{\norm{x-y}=r}\Norm{\phi(x)-\phi(y)}=0.
$$
\end{cor}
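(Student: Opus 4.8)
The plan is to read off the corollary as a formal consequence of Proposition \ref{triviality} together with the strengthened Kalton theorem, Theorem \ref{kalton2}, applied with $X=c_0$. The only ingredient not already isolated in the excerpt is the elementary observation that reflexivity is inherited by $\ell^2$-sums.

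First I would record that $\ell^2(E)$ is reflexive whenever $E$ is: indeed $\big(\ell^2(E)\big)^{**}=\ell^2(E^{**})=\ell^2(E)$, or, if one prefers, $E$ admits an equivalent uniformly convex renorming precisely when $\ell^2(E)$ does. Either way this is routine. Next I would invoke Theorem \ref{kalton2} with the reflexive space $\ell^2(E)$ in place of $E$: every bornologous map $c_0\til \ell^2(E)$ is insolvent. Now any uniformly continuous map between Banach spaces is bornologous --- by the Corson--Klee lemma it is even Lipschitz for large distances, exactly as used in the proof of Proposition \ref{affine banach}, since $c_0$ is metrically convex. Hence a uniformly continuous solvent map $\psi\colon c_0\til \ell^2(E)$ would be a bornologous solvent map, contradicting Theorem \ref{kalton2}. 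Therefore no uniformly continuous solvent map $c_0\til \ell^2(E)$ exists.

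Finally I would apply Proposition \ref{triviality} with $X=c_0$: its hypothesis has just been verified, so the conclusion $\sup_r\inf_{\norm{x-y}=r}\Norm{\phi(x)-\phi(y)}=0$ holds for every uniformly continuous $\phi\colon c_0\til E$, as claimed. I do not expect any genuine obstacle; the corollary is purely a matter of assembling the machinery already in place, and the only points requiring a word of care are the passage from uniform continuity to bornologousness (which needs metric convexity of $c_0$ so that Corson--Klee applies) and the stability of reflexivity under $E\mapsto \ell^2(E)$.
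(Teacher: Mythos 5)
Your proposal is correct and follows exactly the route the paper takes: the corollary is derived by combining Proposition \ref{triviality} (with $X=c_0$) with Theorem \ref{kalton2} applied to the reflexive space $\ell^2(E)$, the only auxiliary fact being that $E\mapsto\ell^2(E)$ preserves reflexivity. The points you flag as needing care (uniform continuity implies bornologous, and reflexivity of the $\ell^2$-sum) are handled the same way in the paper, which states the deduction in a single sentence.
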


For a thorough study of regularity properties of Banach spaces preserved under solvent maps, the reader may consult \cite{braga2}.


\subsection{Geometric Gelfand pairs}
Our next step is to identify a class of topological groups whose large scale geometry is partially preserved through images by continuous homomorphisms. This is closely related to the classical notion of Gelfand pairs.

\begin{defi}\label{geom gelfand}
A {\em geometric Gelfand pair} consists of a coarsely proper continuous isometric action $G\curvearrowright X$ of a topological group $G$ on a metric space $X$ so that, for some constant $K$ and all $x,y,z,u\in X$ with $d(x,y)\leqslant d(z,u)$, there is $g\in G$ with $d\big(g(x),z\big)<K$ and $d\big(z,g(y)\big)+d\big(g(y),u\big)<d(z,u)+K$.
\end{defi}

An immediate observation is that, if $G\curvearrowright X$ is a geometric Gelfand pair and $x,z\in X$, we may set
$x=y$ and $z=u$ and thus find some $g\in G$ so that $d\big(g(x),z)<K$. In other words, the action is also cobounded, whereby the orbit map $g\in G\mapsto g(x)\in X$ is a coarse equivalence between $G$ and $X$ for any choice of $x\in X$. 

To better understand the notion of geometric Gelfand pairs, suppose that $G\curvearrowright X$ is a coarsely proper continuous and {\em doubly transitive} isometric action on a geodesic metric space, i.e., so that, for all $x,y,z,v\in X$ with $d(x,y)=d(z,v)$, we have $g(x)=z$ and $g(y)=v$ for some $g\in G$. Then $G\curvearrowright X$ is a geometric Gelfand pair. Indeed, if $d(x,y)\leqslant d(z,u)$, pick by geodecity some $v\in X$ so that $d(z,v)+d(v,u)=d(z,u)$ and $d(z,v)=d(x,y)$. Then, by double transitivity, there is $g\in G$ with $g(x)=z$ and $g(y)=v$, verifying that this is a geometric Gelfand pair.

Observe here that it suffices that $X$ is geodesic with respect to its distance set $S=d(X\times X)$, i.e., that, for all $x,y\in X$ and $s\in S$ with $s\leqslant d(x,y)$, there is $z\in X$ with $d(x,z)+d(z,y)=d(x,y)$ and $d(x,z)=s$.

If $K$ is a compact subgroup of a locally compact group $G$, then $(G,K)$ is said to be a {\em Gelfand pair} if the convolution algebra of compactly supported $K$-bi-invariant continuous functions on $G$ is commutative (see Section 3.3 \cite{bekka}  or Section 24.8 \cite{simonnet}). A basic result due to I. M.  Gelfand (Proposition 24.8.1 \cite{simonnet}) states that, if $G$ admits a involutory automorphism $\alpha$ so that $g\inv \in K\alpha(g)K$ for all $g\in G$, then $(G,K)$ is a Gelfand pair. 

In the case above of a  coarsely proper continuous and doubly transitive isometric action $G\curvearrowright X$ on a geodesic metric space, let $K=\{g\in G\del g(x)=x\}$, which is a closed subgroup of $G$. Then, if $g,f\in G$ and $f\in KgK$, there is some $h\in K$ so that $f(x)=hg(x)$ and so 
$$
d(f(x),x)=d(hg(x),x)=d(g(x), h\inv(x))=d(g(x),x).
$$
Conversely, if $g,f\in G$ and $d(f(x),x)=d(g(x),x)$, then, by double transitivity, there is some $h\in G$ with $hf(x)=g(x)$ and $h(x)=x$, i.e., $g\inv hf\in K$ and $h\in K$ whereby $f\in KgK$. In other words, 
$$
f\in KgK\;\;\equi\;\; d(f(x), x)=d(g(x),x).
$$
As $d(g\inv(x), x)=d(x,g(x))$, this shows that $g\inv\in KgK$ for all $g\in G$. So $(G,K)$ fulfils the condition of Gelfand's result, except for the fact that $G$ and $K$  may not be locally compact, respectively, compact. This motivates the terminology of Definition \ref{geom gelfand}.

\begin{exa}
Probably the simplest non-trivial example of a geometric Gelfand pair is that of the canonical action $D_\infty \curvearrowright \Z$ of the infinite dihedral group $D_\infty$ on $\Z$ with the euclidean metric. This action is doubly transitive and proper. As moreover $\Z$ is geodesic with respect to its distance set $\N$, we see that  $D_\infty \curvearrowright \Z$ is a geometric Gelfand pair.
\end{exa}
\begin{exa}
Similarly, if ${\rm Aff}(\R^n)=O(n)\ltimes \R^n$ denotes the group of (necessarily affine) isometries of the $n$-dimensional euclidean space, then ${\rm Aff}(\R^n)\curvearrowright\R^n$ is a geometric Gelfand pair.
\end{exa}

Another class of examples are constructed as above from Banach spaces. For this, let $X$ be a separable Banach space and ${\rm Isom}(X)$ the group of linear isometries of $X$ equipped with the strong operator topology. As every isometry of $X$ is affine, the group of all isometries of $X$ decomposes as the semi-direct product ${\rm Aff}(X)={\rm Isom}(X)\ltimes (X,+)$.  Also, $(X,\norm\cdot)$ is said to be {\em almost transitive}  if the action ${\rm Isom}(X)\curvearrowright X$ induces a dense orbit on the unit sphere $S_X$ of $X$. 
\begin{prop}
Let $X$ be a separable Banach space. Then ${\rm Aff}(X)\curvearrowright X$ is a geometric Gelfand pair if and only if ${\rm Isom}(X)$ has property (OB) and $(X,\norm\cdot)$ is almost transitive.
\end{prop}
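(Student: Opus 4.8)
The action ${\rm Aff}(X)\curvearrowright X$ is automatically continuous, isometric, and transitive, so to decide whether it is a geometric Gelfand pair (Definition \ref{geom gelfand}) there are exactly two things to verify: that the action is coarsely proper, and that the displayed metric inequality holds. The plan is to match the first of these with property (OB) of ${\rm Isom}(X)$ and the second with almost transitivity of $(X,\norm\cdot)$; each implication of the proposition then follows by combining the two matchings.

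\emph{Coarse properness versus property (OB).} I would first observe that $d(g,h)=\norm{g(0)-h(0)}$ is a continuous left-invariant \'ecart on ${\rm Aff}(X)$, its left-invariance coming from the fact that the linear part of any affine isometry is a linear isometry. Since translations act transitively, the orbit map $g\mapsto g(0)$ is a distance-preserving surjection $({\rm Aff}(X),d)\til X$, hence a coarse equivalence for the coarse structure $\ku E_d$; so the action is coarsely proper precisely when $d$ is a coarsely proper \'ecart on ${\rm Aff}(X)$, i.e.\ when every $d$-bounded subset of ${\rm Aff}(X)$ is relatively (OB). After a left translation by one of its elements, a $d$-bounded set sits inside $S=\{(T,v)\del\norm{v}\leqslant D\}={\rm Isom}(X)\cdot\{(I,v)\del\norm{v}\leqslant D\}$, a product of the subgroup ${\rm Isom}(X)$ with a norm ball of the translation subgroup $(X,+)$. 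Norm balls of a Banach space are always relatively (OB) (continuity of an \'ecart at $0$ plus the triangle inequality), so $S$ is relatively (OB) iff ${\rm Isom}(X)$ is relatively (OB) in ${\rm Aff}(X)$; and by restricting \'ecarts one way, and using the continuous retraction $q\colon(T,v)\mapsto T$ of ${\rm Aff}(X)$ onto ${\rm Isom}(X)$ the other way, the latter is equivalent to ${\rm Isom}(X)$ having property (OB). Chaining these equivalences, the action is coarsely proper iff ${\rm Isom}(X)$ has property (OB).

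\emph{The metric inequality versus almost transitivity.} For the ``if'' direction I would show $K=1$ suffices. Given $x,y,z,u$ with $\norm{x-y}\leqslant\norm{z-u}$: the case $x=y$ is handled by the translation taking $x$ to $z$; when $x\neq y$, write $a=\norm{x-y}$, $b=\norm{z-u}$, let $e,f$ be the unit vectors along $y-x$ and $u-z$, use almost transitivity to choose $T\in{\rm Isom}(X)$ with $\norm{Te-f}<1/a$, and set $g(w)=T(w-x)+z\in{\rm Aff}(X)$. Then $g(x)=z$ exactly, while $g(y)=z+aTe$ lies within $a\norm{Te-f}<1$ of the point $z+\tfrac ab(u-z)$ on the segment $[z,u]$, which yields both required estimates. (Here one dense orbit on $S_X$ is enough, since $T\mapsto T\inv$ then lets one move any unit vector arbitrarily close to any other.) For the converse, given the inequality with constant $K$ and unit vectors $e,f$, I would apply it with $x=0$, $y=Re$, $z=0$, $u=Rf$ for large $R$: the resulting $g=(T,v)$ has $\norm{v}<K$, and the near-triangle-equality of $g(Re)=RTe+v$ with $0$ and $Rf$, together with $\norm{RTe+v}>R-K$, forces $\norm{Te-f}=O(K/R)$; so every orbit of a unit vector is dense and $X$ is almost transitive.

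\emph{Main obstacle.} The one point that is not a formality is upgrading ``${\rm Isom}(X)$ is relatively (OB) in ${\rm Aff}(X)$'' (all that coarse properness supplies directly) to ``${\rm Isom}(X)$ has property (OB)'' as a topological group in its own right: this is exactly where the semidirect decomposition ${\rm Aff}(X)={\rm Isom}(X)\ltimes(X,+)$ is essential, through the continuous retraction $q$ along which one transports a would-be unbounded \'ecart on ${\rm Isom}(X)$ to one on ${\rm Aff}(X)$. A lesser subtlety in the ``if'' direction is that the approximation error $\norm{Te-f}$ enters amplified by $a=\norm{x-y}$, so no single $T$ works uniformly; the resolution is that $a$ is fixed by the given quadruple and that insisting on $g(x)=z$ pushes the entire defect into the term $\norm{g(y)-u}$, where it equals $a\norm{Te-f}$ and is made $<1$.
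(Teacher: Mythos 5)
Your proof is correct, and its overall architecture matches the paper's: both split the statement into (a) coarse properness of the action $\equi$ property (OB) of ${\rm Isom}(X)$, and (b) the Gelfand inequality $\equi$ almost transitivity, with part (b) handled in essentially the same way (reduction of the forward direction to near double transitivity via a point on the segment $[z,u]$, and the same large-radius computation $x=0$, $y=Re$, $z=0$, $u=Rf$ for the converse). The genuine difference is in part (a): the paper simply cites \cite{rosendal-coarse} for the equivalence between coarse properness of ${\rm Aff}(X)\curvearrowright X$ and property (OB) of ${\rm Isom}(X)$, whereas you supply a self-contained proof via the left-invariant \'ecart $d(g,h)=\norm{g(0)-h(0)}$, the decomposition of $d$-balls as ${\rm Isom}(X)$ times a norm ball of the translation subgroup, and the retraction $q\colon (T,v)\mapsto T$ to upgrade ``relatively (OB) in ${\rm Aff}(X)$'' to intrinsic property (OB). That argument is sound (it does use that products of relatively (OB) sets are relatively (OB), which is part of the framework of \cite{rosendal-coarse} that the paper uses freely elsewhere), and it makes the proposition independent of the external reference, at the cost of some length. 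One trivial quantitative slip: with $\norm{Te-f}<1/a$ the chain $d(z,g(y))+d(g(y),u)\leqslant d(z,u)+2a\norm{Te-f}$ gives defect $2$, not $1$, so you should either take $K=2$ or require $\norm{Te-f}<1/(2a)$; since the definition only asks for some constant $K$, this is immaterial.
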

\begin{proof}
First, as shown in \cite{rosendal-coarse}, the tautological action ${\rm Aff}(X)\curvearrowright X$ is coarsely proper if and only if ${\rm Isom}(X)$ has property (OB). So assume that ${\rm Isom}(X)$ has property (OB) and consider the  transitivity condition.

We observe that, if  $(X,\norm\cdot)$ is almost transitive, then ${\rm Aff}(X)\curvearrowright X$ is almost doubly transitive in the sense that, for all $x,y,z,u\in X$ with $\norm{x-y}=\norm{z-u}$ and $\eps>0$ there is $g\in {\rm Aff}(X)$ so that $g(x)=z$ and $\norm{g(y)-u}<\eps$. In this case, ${\rm Aff}(X)\curvearrowright X$ is a geometric Gelfand pair. Conversely, suppose ${\rm Aff}(X)\curvearrowright X$ is a geometric Gelfand pair with a defect $K$. Then, for $y,z\in S_X$ and $\eps>0$, there is some $g\in {\rm Aff}(X)$ so that $\norm{g(0)-0}<K$ and $\norm{g(\frac{2K}{\eps}y)-\frac {2K}{\eps}z}<K$. Letting $f\in {\rm Isom}(X)$ be the linear isometry $f(x)=g(x)-g(0)$, we find that $\norm{f(\frac{2K}{\eps}y)-\frac {2K}{\eps}z}<2K$, i.e., $\norm{f(y)-z}<\eps$, showing that $(X,\norm\cdot)$ is almost transitive. 
\end{proof}
Examples of almost transitive Banach spaces include $L^p([0,1])$ for all $1\leqslant p<\infty$ \cite{rolewicz} and  the Gurarii space $\mathbb G$ \cite{lusky}. Moreover, as all of these spaces are separably categorical in the sense of continuous model theory, it follows from Theorem 5.2 \cite{OB} that their linear isometry group has property (OB). We therefore conclude that ${\rm Aff}(L^p([0,1]))\curvearrowright L^p([0,1])$ with $1\leqslant p<\infty$ and ${\rm Aff}(\mathbb G)\curvearrowright\mathbb G$ are geometric Gelfand pairs.

\begin{exa}
Suppose $\Gamma$ is a countable {\em metrically homogeneous} connected graph, that is, so that any isometry $f\colon A\til B$ with respect to the path metric on $\Gamma$ between two finite subsets extends to a full automorphism of $\Gamma$. Then the action ${\rm Aut}(\Gamma)\curvearrowright \Gamma$ is coarsely proper \cite{rosendal-coarse} and metrically doubly transitive. Since also the path metric is $\Z$-geodesic, it follows that ${\rm Aut}(\Gamma)\curvearrowright \Gamma$ is a geometric Gelfand pair. 

Examples of metrically homogeneous countable graphs include the $n$-regular trees $T_n$ for all $1\leqslant n\leqslant \aleph_0$ and the integral Urysohn metric space $\Z\U$.
\end{exa}

This latter may be described as the Fra\"iss\'e limit of all finite $\Z$-metric spaces (i.e., with integral distances) and plays the role of a universal object in the category of $\Z$-metric spaces.  Alternatively, $\Z\U$ is the unique universal countable $\Z$-metric space so that any isometry between two finite subspaces extends to a full isometry of $\Z\U$.  The rational Urysohn metric space $\Q\U$ is described similarly with $\Q$ in place of $\Z$.

Let us also recall that, since  $\Q\U$ is countable,   
${\rm Isom}(\Q\U)$ is a Polish group when equipped with the {\em permutation group topology}, which is that obtained by declaring pointwise stabilisers to be open.

\begin{exa}
As shown in \cite{autom}, the tautological action ${\rm Isom}(\Q\U)\curvearrowright \Q\U$ is coarsely proper. Since it is also doubly transitive and $\Q\U$ is $\Q$-geodesic, it follows that ${\rm Isom}(\Q\U)\curvearrowright \Q\U$ is a geometric Gelfand pair.
\end{exa}


\subsection{Nearly isometric actions and quasi-cocycles}
Having identified the class of geometric Gelfand pairs, we will now show a strong geometric rigidity property for their actions. 

\begin{defi}
Let $G$ be a group and $M$ a metric space. A {\em nearly isometric action} of $G$ on $M$ is a map $\alpha\colon G\times M\til M$ so that, for some constant $C$ and all $g,f\in G$ and $x,y\in M$, we have
$$
d\big(\alpha(g,x), \alpha(g, y)\big) \leqslant d(x,y)+C
$$
and 
$$
d\big(\alpha(f, \alpha(g,x)), \alpha(fg, x)\big) \leqslant C.
$$
\end{defi}
Thus, if for $g\in G$ we let $\alpha(g)$ denote the map $\alpha(g, \cdot)\colon M\til M$, we see firstly that each $\alpha(g)$ is a contraction $M\til M$ up to an additive defect $C$ and secondly that $\alpha(f)\alpha(g)$ and $\alpha(fg)$ agree as maps on $M$ up to the same additive defect $C$. Thus, a nearly isometric action needs neither be a true action nor do the maps need to be exact isometries. 

Nearly isometric actions can be constructed from Banach space valued quasi-cocycles.
\begin{defi}
A map $b\colon G\til E$ from a group $G$ to a Banach space $E$ is said to be a {\em quasi-cocycle} provided that there is an isometric linear representation $\pi\colon G\curvearrowright E$ so that
$$
\sup_{g,f\in G}\Norm{\pi(g)b(f)+b(g)-b(gf)}<\infty.
$$
\end{defi}
Observe that if $b\colon G\til E$ is a quasi-cocycle associated to an isometric linear representation $\pi\colon G\curvearrowright E$, then the formula
$$
\alpha(g)x= \pi(g)x+b(g)
$$
defines a nearly isometric action of $G$ on $E$. In fact, in this case, each map $\alpha(g)$ is an actual isometry. 
We also have the following converse to this.

\begin{lemme}
Suppose that $\alpha\colon G\times E\til E$ is a nearly isometric action of a group $G$ on a Banach space $E$  so that the $\alpha(g)\colon E\til E$ are uniformly cobounded. Then $b\colon G\til E$ defined by $b(g)=\alpha(g)0$ is a quasi-cocycle associated to an isometric linear representation $\pi\colon G\curvearrowright E$ so that
$$
\sup_{g\in G}\sup_{x\in E}\norm{\pi(g)x+b(g)-\alpha(g)x}<\infty.
$$
\end{lemme}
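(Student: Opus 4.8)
The plan is to show that, uniformly in $g$, each map $\alpha(g)$ is a cobounded near isometry of $E$, to replace it by a genuine surjective affine isometry via the rigidity result quoted just before Lemma \ref{coarse dependence}, and then to read off from the resulting approximation both that the linear parts form a representation and that $b$ is a quasi-cocycle for it.

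First I would record the elementary consequences of the axioms. Let $C$ be the defect in the definition of the nearly isometric action and let $K$ be a common coboundedness constant, so that $\alpha(g)[E]$ is $K$-dense in $E$ for every $g$. Feeding the trivial products $ee$ and $g\inv g$, $gg\inv$ into the two inequalities defining a nearly isometric action, one gets: from $\norm{\alpha(e)\alpha(e)x-\alpha(e)x}\leqslant C$ together with $K$-coboundedness of $\alpha(e)$ that $\sup_{x}\norm{\alpha(e)x-x}\leqslant 2K+2C=:C_1$; and then, inserting $\alpha(e)\approx\Id$ and $\alpha(g\inv)\alpha(g)\approx\alpha(e)$ into a triangle inequality, the lower bound $\norm{\alpha(g)x-\alpha(g)y}\geqslant\norm{x-y}-C_2$ with $C_2=2C_1+3C$, valid for all $g$ and all $x,y$. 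Combined with the given upper estimate $\norm{\alpha(g)x-\alpha(g)y}\leqslant\norm{x-y}+C$, this says precisely that every $\alpha(g)$ is a near isometry of $E$ with defect $\max\{C,C_2\}$ and is $K$-cobounded, with all constants independent of $g$.

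Next I would invoke the discussion preceding Lemma \ref{coarse dependence} (Gruber, Gevirtz, and Theorem 15.2 of \cite{lindenstrauss}): a cobounded near isometry of Banach spaces lies at bounded distance from a surjective affine isometry, with the bound controlled only by the near-isometry defect and the coboundedness constant. Hence there is a constant $C_3$, independent of $g$, and a surjective affine isometry $\beta_g$ of $E$ with $\sup_x\norm{\alpha(g)x-\beta_g x}\leqslant C_3$; by the Mazur--Ulam theorem write $\beta_g=\pi(g)+v_g$ with $\pi(g)\in{\rm Isom}(E)$ a surjective linear isometry and $v_g=\beta_g(0)$. Since $\norm{b(g)-v_g}=\norm{\alpha(g)0-\beta_g 0}\leqslant C_3$, we obtain $\norm{\pi(g)x+b(g)-\alpha(g)x}\leqslant 2C_3$ for all $x\in E$ and $g\in G$, which is already the final displayed inequality of the statement.

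Finally I would extract the homomorphism property and the quasi-cocycle bound simultaneously. Applying the $2C_3$-approximation twice, using that $\pi(f)$ is an isometry, and the composition inequality $\norm{\alpha(f)\alpha(g)x-\alpha(fg)x}\leqslant C$, one gets for all $x\in E$
$$
\Norm{\pi(f)\pi(g)x+\pi(f)b(g)+b(f)-\pi(fg)x-b(fg)}\leqslant C+6C_3 .
$$
The bracketed expression is an affine function of $x$ with linear part $\big(\pi(f)\pi(g)-\pi(fg)\big)x$ and constant part $w:=\pi(f)b(g)+b(f)-b(fg)$; evaluating at $x=0$ bounds $\norm w$, and subtracting shows the linear part is bounded on all of $E$, hence identically zero. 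Thus $\pi(fg)=\pi(f)\pi(g)$, so $\pi$ is a homomorphism $G\to{\rm Isom}(E)$, i.e. an isometric linear representation, and $\norm{\pi(f)b(g)+b(f)-b(fg)}=\norm w\leqslant C+6C_3$ for all $f,g$, which is exactly the quasi-cocycle estimate. The only non-elementary ingredient is the near-isometry rigidity theorem; the one point requiring care is that every constant above depends only on $C$ and $K$ and not on the individual group element, so that all the suprema over $G$ are finite.
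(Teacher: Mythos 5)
Your proposal is correct and follows essentially the same route as the paper: establish that each $\alpha(g)$ is a uniformly cobounded near isometry with defect independent of $g$, invoke the Gruber--Gevirtz rigidity discussion from Section \ref{section solvent} to replace it by a genuine affine isometry $\pi(g)+b(g)$ up to a uniform error, and then deduce both $\pi(fg)=\pi(f)\pi(g)$ (via boundedness plus homogeneity of the linear discrepancy) and the quasi-cocycle estimate. The only deviations are immaterial constant bookkeeping (e.g.\ your final bound should be $2C+6C_3$ rather than $C+6C_3$).
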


\begin{proof}
Since the maps $\alpha(g)$ are uniformly cobounded, there is a constant $C$ larger than the defect of $\alpha$ so that $\inf_{y\in E}\norm{x-\alpha(g)y}< C$ for all $g\in G$ and $x\in E$. 

Given any $x\in E$, find $y\in E$ with $\norm{x-\alpha(1)y}\leqslant C$ and observe that
\[\begin{split}
\norm{\alpha(1)x-x}
&\leqslant \norm{\alpha(1)x-\alpha(1)y}+C\\
&\leqslant \norm{\alpha(1)x-\alpha(1)\alpha(1)y}+2C\\
&\leqslant \norm{x-\alpha(1)y}+3C\\
&\leqslant 4C.
\end{split}\]
Thus, for all $x,y\in E$ and $g\in G$, 
\[\begin{split}
\norm{x-y}
&\leqslant \norm{\alpha(1)x-\alpha(1)y}+8C\\
&\leqslant \norm{\alpha(g\inv)\alpha(g)x-\alpha(g\inv)\alpha(g)y}+10C\\
&\leqslant \norm{\alpha(g)x-\alpha(g)y}+11C\\
&\leqslant \norm{x-y}+12C,
\end{split}\]
showing that $\alpha(g)$ is a near isometry of $E$ with defect $11C$.  Hence, as observed in Section \ref{section solvent},  there is a linear isometry $\pi(g)$ of $E$ so that, for $b(g)=\alpha(g)0$ and $K=5000C$, we have
$$
\sup_{x\in E}\norm{\pi(g)x+b(g)-\alpha(g)x}\leqslant K.
$$
It follows that the map $A\colon G\til {\rm Aff}(E)$ given by $A(g)x=\pi(g)x+b(g)$ defines a {\em rough action} of $G$ on $E$ in the sense of Section 13.1 \cite{monod2}, that is, each $A(g)$ is an affine isometry of $E$ and  $\sup_{x\in E}\norm{A(g)A(f)x-A(gf)x}\leqslant 4K<\infty$. In particular, $\pi\colon G\til {\rm Isom}(E)$ is an isometric linear representation of $G$ and $b\colon G\til E$ an associated quasi-cocycle (Lemma 13.1.2 \cite{monod2}). Indeed $\norm{\pi(g)b(f)+b(g)-b(gf)}=\norm{A(g)A(f)0-A(gf)0}\leqslant 4K$, whereby
\[\begin{split}
\norm{\pi(g)\pi(f)x-&\pi(gf)x}\\
&\leqslant\Norm{\pi(g)\big(\pi(f)x+b(f)\big)+b(g)-\big(\pi(gf)x+b(gf)\big)}+4K\\
&\leqslant\Norm{A(g)A(f)x-A(gf)x}+4K\\
&\leqslant 8K
\end{split}\]
for all $x\in E$. As $\norm{\pi(g)\pi(f)x-\pi(gf)x}$ is positive homogeneous in $x$, we see that $\pi(g)\pi(f)=\pi(gf)$ for all $g,f\in G$.
\end{proof}

As our applications deal with topological groups, it is natural to demand that a nearly isometric action $\alpha$ respects some of the topological group structure. As continuity may be too restrictive, a weaker assumption is that some orbit map is bornologous.

\begin{lemme}\label{baire orbit map}
Let $\alpha\colon G\times M\til M$ be a nearly isometric action of a Polish group. Then, if some orbit map $g\in G\mapsto \alpha(g)x\in M$ is Baire measurable, it is also bornologous.
\end{lemme}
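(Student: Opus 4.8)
The plan is to combine Pettis' theorem with the near-action inequalities. Write $\sigma(g)=\alpha(g)x$ for the given Baire measurable orbit map and let $C$ be the defect of $\alpha$, so that $d\big(\alpha(g)a,\alpha(g)b\big)\le d(a,b)+C$ and $d\big(\alpha(g)\alpha(h)y,\alpha(gh)y\big)\le C$ for all $g,h\in G$ and $a,b,y\in M$. A purely formal computation using these two inequalities together with $\alpha(g)x=\sigma(g)$ gives, for all $g,w\in G$,
\[
d\big(\sigma(gw),\sigma(g)\big)\le d\big(\sigma(w),x\big)+2C .
\]
Now a subset $E\subseteq G\times G$ lies in $\ku E_L$ precisely when it is contained in some $E_A=\{(g,ga)\del a\in A\}$ with $A\ni 1$ having property (OB) relative to $G$ (this is immediate from the definitions $\ku E_L=\bigcap_d\ku E_d$ and left-invariance of the $d$'s). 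Hence the displayed inequality reduces the whole statement to the following claim: \emph{$\sigma$ maps every relatively (OB) subset of $G$ to a bounded subset of $M$}; indeed, if this holds then $d\big(\sigma(g),\sigma(ga)\big)\le \sup_{a\in A}d(\sigma(a),x)+2C<\infty$ for all $g\in G$ and $a\in A$.

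To prove the claim I would first exhibit a single identity neighbourhood on which $\sigma$ is bounded. The function $g\mapsto d\big(\sigma(g),\sigma(1)\big)$ is Baire measurable and everywhere finite, so $G$ is the union of the Baire-measurable sets $B_n=\{g\in G\del d(\sigma(g),\sigma(1))\le n\}$; since $G$ is a Baire space, some $B_N$ is non-meager, and Pettis' theorem then yields an open symmetric $V\ni 1$ with $V\subseteq B_N^{-1}B_N$. For $v\in V$, writing $v=g^{-1}h$ with $g,h\in B_N$ and applying the near-action inequalities to $\alpha(g^{-1})\alpha(h)x$ and to $\alpha(g^{-1})\alpha(g)x$, one gets $d\big(\sigma(v),\sigma(1)\big)\le 2N+3C$, so $\sigma[V]$ is bounded.

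Finally I would bootstrap from $V$ to an arbitrary relatively (OB) set $A$. By the combinatorial description of relatively (OB) sets from \cite{rosendal-coarse}, there are a finite set $F\subseteq G$ and an integer $k\ge 1$ with $A\subseteq (FV)^k$, so every $a\in A$ is a product of at most $2k$ elements of $F\cup V$. Since $\sup_{v\in V}d(\sigma(v),x)<\infty$ (by the previous step, using $1\in V$) and $\max_{f\in F}d(\sigma(f),x)<\infty$, the formal inequality of the first paragraph, applied to successive partial products of such a factorisation, telescopes to $d\big(\sigma(a),\sigma(1)\big)\le 2kL$, where $L=2C+\max\big\{\sup_{v\in V}d(\sigma(v),x),\ \max_{f\in F}d(\sigma(f),x)\big\}$. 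Thus $\sigma[A]$ is bounded, which by the reduction in the first paragraph shows that $\sigma$ is bornologous.

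Everything here is elementary, the only external inputs being Pettis' theorem and the description of relatively (OB) sets. I do not expect a serious obstacle; the one point requiring care is keeping track of the additive defects $C$ through the telescoping estimate of the last step, i.e.\ making the passage from boundedness on a single neighbourhood to boundedness on all coarsely bounded sets fully precise.
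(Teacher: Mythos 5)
Your proposal is correct and follows essentially the same route as the paper: a Baire category argument plus Pettis' theorem to get boundedness of the orbit map on an identity neighbourhood, followed by the characterisation of relatively (OB) sets as subsets of $(FV)^m$ and a telescoping estimate via the near-action inequalities. The only cosmetic differences are that the paper builds symmetric sets $B_n$ from the start (using both $g$ and $g^{-1}$) and performs the telescoping directly on $d\big(\alpha(g)x,\alpha(h)x\big)$ for $h^{-1}g\in A$, whereas you isolate the inequality $d\big(\sigma(gw),\sigma(g)\big)\leqslant d\big(\sigma(w),x\big)+2C$ and reduce to boundedness of $\sigma$ on relatively (OB) sets; both are the same computation.
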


\begin{proof}
For every $n$, let $B_n=\{g\in G\del d\big(\alpha(g)x,x\big)<n\;\&\; d\big(\alpha(g\inv)x,x\big)<n\}$. Then $G=\bigcup_nB_n$ is a covering of $G$ by countable many Baire measurable symmetric subsets, whence by the Baire category theorem some $B_n$ must be nonmeagre. Applying Pettis' theorem, it follows that $B_nB_n$ is an identity neighbourhood in $G$.

Now suppose that $A\subseteq G$ is relatively (OB) and find a finite set $F\subseteq G$ and $m\geqslant 1$ so that $A\subseteq (FB_nB_n)^m$. Let also $K$ be a number larger than all of  $n$, $\max_{f\in F}d\big(\alpha(f)x,x\big)$ and the defect of $\alpha$. Then, if $h,g\in G$ and $h\inv g\in A$, there are $k_1,\ldots, k_{2m}\in B_n$ and $f_1,\ldots, f_m\in F$ with $g=hf_1k_1k_2h_2k_3k_4\cdots f_mk_{2m-1}k_{2m}$. 
Thus, 
\[\begin{split}
d\big(\alpha(g)x,\alpha(h)x\big)
= & d\big(\alpha(hf_1k_1k_2\cdots f_mk_{2m-1}k_{2m})x,\alpha(h)x\big)\\
\leqslant & d\big(\alpha(hf_1k_1k_2\cdots f_mk_{2m-1}k_{2m})x,\alpha(h)\alpha(f_1k_1k_2\cdots f_mk_{2m-1}k_{2m})x\big)\\
&+d\big(\alpha(h)\alpha(f_1k_1k_2\cdots f_mk_{2m-1}k_{2m})x,\alpha(h)x\big)\\
\leqslant & d\big(\alpha(f_1k_1k_2\cdots f_mk_{2m-1}k_{2m})x,x\big)+2K\\
\leqslant & d\big(\alpha(f_1k_1k_2\cdots f_mk_{2m-1}k_{2m})x,\alpha(f_1)\alpha(k_1k_2\cdots f_mk_{2m-1}k_{2m})x\big)\\
&+ d\big(\alpha(f_1)\alpha(k_1k_2\cdots f_mk_{2m-1}k_{2m})x,\alpha(f)x\big)+ d\big(\alpha(f)x,x\big)+2K\\
\leqslant & d\big(\alpha(k_1k_2\cdots f_mk_{2m-1}k_{2m})x,x\big)+5K\\
\leqslant &\ldots\\
\leqslant & (2+3m)K.
\end{split}\]
I.e., 
$$
h\inv g \in A\;\saa \; d\big(\alpha(g)x,\alpha(h)x\big)\leqslant (2+3m)K,
$$ 
showing that the orbit map $g\in G\mapsto \alpha(g)x\in M$ is bornologous.
\end{proof}

In particular, since a quasi-cocycle $b\colon G\til E$ associated to an isometric linear representation $\pi\colon G\curvearrowright E$ is simply the orbit map $g\mapsto \alpha(g)0$ of the associated rough action $\alpha(g)x=\pi(g)x+b(g)$, we have the following corollary. 

\begin{lemme}\label{baire cocycle}
Let $b\colon G\til E$ be a Baire measurable quasi-cocycle on a Polish group $G$. Then $b$ is bornologous.
\end{lemme}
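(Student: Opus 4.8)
The plan is to derive this immediately from Lemma \ref{baire orbit map} by realising $b$ as an orbit map of a nearly isometric action of $G$ on $E$. First I would unpack the definition of quasi-cocycle: by hypothesis there is an isometric linear representation $\pi\colon G\curvearrowright E$ with
$$
C:=\sup_{g,f\in G}\Norm{\pi(g)b(f)+b(g)-b(gf)}<\infty.
$$

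Next I would set $\alpha(g)x=\pi(g)x+b(g)$ and verify that $\alpha\colon G\times E\til E$ is a nearly isometric action with defect $C$. Since each $\pi(g)$ is a linear isometry, each $\alpha(g)$ is an affine isometry of $E$, so $\norm{\alpha(g)x-\alpha(g)y}=\norm{x-y}\leqslant\norm{x-y}+C$, which gives the first condition. For the second, using that $\pi$ is a genuine representation, i.e.\ $\pi(f)\pi(g)=\pi(fg)$, one computes
\[
\alpha(f)\alpha(g)x-\alpha(fg)x=\pi(f)\pi(g)x+\pi(f)b(g)+b(f)-\pi(fg)x-b(fg)=\pi(f)b(g)+b(f)-b(fg),
\]
whose norm is at most $C$ by the choice of $C$.

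Finally, the orbit map $g\in G\mapsto\alpha(g)0\in E$ is precisely $g\mapsto\pi(g)0+b(g)=b(g)$, hence it is Baire measurable by the hypothesis on $b$. Lemma \ref{baire orbit map} then applies verbatim and shows this orbit map, namely $b$ itself, is bornologous. There is essentially no obstacle here beyond bookkeeping; the only place where one really uses that $b$ is a quasi-cocycle for a \emph{linear representation} (rather than merely the orbit map of a rough action) is the cancellation $\pi(f)\pi(g)=\pi(fg)$, which reduces the composition defect of $\alpha$ to exactly the quasi-cocycle constant $C$.
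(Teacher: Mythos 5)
Your proposal is correct and is exactly the paper's argument: the paper derives this lemma as an immediate corollary of Lemma \ref{baire orbit map} by observing that $b$ is the orbit map $g\mapsto\alpha(g)0$ of the rough action $\alpha(g)x=\pi(g)x+b(g)$, whose composition defect is precisely the quasi-cocycle constant. You have merely spelled out the bookkeeping that the paper leaves implicit.
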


\begin{exa}\label{automatic borno}
In a great number of mainly topological transformation groups $G$, we can remove the condition of Baire measurability from Lemmas \ref{baire orbit map} and \ref{baire cocycle}. Indeed, in the above proof, we need only that, whenever $G=\bigcup_nB_n$ is a countable increasing covering by symmetric subsets, then there are  $n$ and $m$ so that ${\rm int}(B_n^m)\neq \emptyset$. One general family of groups satisfying this criterion are Polish groups with {\em ample generics}, i.e., so that the diagonal conjugacy action $G\curvearrowright G^n$ has a comeagre orbit for every $n\geqslant 1$ \cite{turbulence}. For example, both ${\rm Isom}(\Q\U)$ and, by the same proof, ${\rm Isom}(\Z\Q)$ have ample generics, while  ${\rm Aut}(T_{\aleph_0})$ has an open subgroup with ample generics \cite{turbulence}. Thus, in all three cases, every quasi-cocycle defined on the group is bornologous.
\end{exa}

\begin{prop}\label{near isom}
Suppose $G\curvearrowright X$ is a geometric Gelfand pair and that $\alpha$ is a nearly isometric action of $G$ on a metric space $M$ so that every bornologous map $X\til M$ is insolvent. If some orbit map $g\in G\mapsto \alpha(g)\xi\in M$ is bornologous, then every orbit $\alpha[G]\zeta$ is bounded.  
\end{prop}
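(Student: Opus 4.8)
The plan is to show that if some orbit $\alpha[G]\zeta$ is unbounded, then one can manufacture a bornologous \emph{solvent} map $X\til M$, contradicting the hypothesis. First I would fix $x_0\in X$ and consider the orbit map $o\colon G\til X$, $o(g)=g(x_0)$; since $G\curvearrowright X$ is a geometric Gelfand pair, $o$ is a coarse equivalence, in particular cobounded. Set $d_G(g,h)=d\big(o(g),o(h)\big)$, a continuous left-invariant \'ecart on $G$ which is coarsely proper (its coarse structure is the pullback through $o$ of the coarse structure of $X$, hence equals the left-coarse structure of $G$ because $o$ is a coarse equivalence). Write $\beta(g)=\alpha(g)\xi$ for the given bornologous orbit map; as $d_G$ is coarsely proper, $\beta$ is bornologous with respect to $d_G$, so the quantities $D_t:=\sup\{\,d\big(\beta(g),\beta(h)\big)\mid d_G(g,h)\leqslant t\,\}$ are finite for every $t<\infty$. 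Finally, since $d\big(\alpha(g)\zeta,\alpha(g)\xi\big)\leqslant d(\zeta,\xi)+C$ for all $g$, unboundedness of $\alpha[G]\zeta$ forces the range of $\beta$ to be unbounded, and hence $\sup_g d\big(\beta(1),\beta(g)\big)=\infty$.

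The device that makes the argument run — and the one point that needs a little thought — is that, although the maps $\alpha(g)$ are only contractions up to the additive constant $C$ rather than near isometries, one still has, for all $g,h\in G$,
$$
d\big(\beta(g),\beta(h)\big)\;\geqslant\; d\big(\beta(1),\beta(g\inv h)\big)-3C .
$$
Indeed, applying $\alpha(g\inv)$ to $\beta(g)$ and $\beta(h)$ increases their distance by at most $C$, while by the near-action identity $\alpha(g\inv)\beta(g)=\alpha(g\inv)\alpha(g)\xi$ lies within $C$ of $\alpha(1)\xi=\beta(1)$ and $\alpha(g\inv)\beta(h)=\alpha(g\inv)\alpha(h)\xi$ lies within $C$ of $\alpha(g\inv h)\xi=\beta(g\inv h)$; the triangle inequality then gives the bound.

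Next I would show $\beta\colon(G,d_G)\til M$ is solvent. Fix $n\geqslant 1$ and $C_0$, let $K$ be the constant of the geometric Gelfand pair, and — using that the range of $\beta$ is unbounded — pick $g_1\in G$ with $d\big(\beta(1),\beta(g_1)\big)>C_0+3C+D_K+D_{n+2K}$; put $R=d_G(1,g_1)$. Suppose $R\leqslant d_G(g,h)\leqslant R+n$. Transporting the Gelfand condition through $o$, since $d_G(1,g_1)=R\leqslant d_G(g,h)$ there is $m\in G$ with $d_G(m,g)<K$ and $d_G(g,mg_1)+d_G(mg_1,h)<d_G(g,h)+K$; combined with $d_G(g,mg_1)\geqslant d_G(m,mg_1)-d_G(m,g)=d_G(1,g_1)-d_G(m,g)>R-K$ (using left-invariance of $d_G$) this yields $d_G(mg_1,h)<d_G(g,h)-R+2K\leqslant n+2K$. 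Hence $d\big(\beta(m),\beta(g)\big)\leqslant D_K$ and $d\big(\beta(mg_1),\beta(h)\big)\leqslant D_{n+2K}$, while the displayed inequality applied to the pair $(m,mg_1)$ gives $d\big(\beta(m),\beta(mg_1)\big)\geqslant d\big(\beta(1),\beta(g_1)\big)-3C$. Combining the three estimates, $d\big(\beta(g),\beta(h)\big)>C_0$, which is exactly solvency.

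To finish, I would choose a section $\rho\colon X\til G$ of $o$ with $\sup_{x}d\big(o(\rho(x)),x\big)<\infty$ (possible by coboundedness of $o$); then $\rho\colon (X,d)\til (G,d_G)$ is a near isometry. Applying Lemma~\ref{coarse dependence} to $X\overset{\rho}\longrightarrow (G,d_G)\overset{\beta}\longrightarrow M\overset{{\rm id}}\longrightarrow M$, the composite $\phi=\beta\circ\rho\colon X\til M$ is solvent; being a composition of bornologous maps it is also bornologous, contradicting the hypothesis that every bornologous map $X\til M$ is insolvent. Therefore every orbit $\alpha[G]\zeta$ is bounded. The only genuine obstacle is the one flagged above: the maps $\alpha(g)$ need not be near isometries, so a naive transport of a long configuration by a group element could collapse in $M$; the displayed lower bound, obtained by hitting both points with $\alpha(g\inv)$, is what prevents this.
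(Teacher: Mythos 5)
Your argument is correct and is essentially the paper's own proof in a slightly different packaging: the paper also argues by contraposition, builds the map $y\mapsto\alpha(\gamma_y)\xi$ (which is exactly your $\beta\circ\rho$), uses the Gelfand condition to transport one large displacement to every pair in an annulus, and controls the near-action error by hitting both points with $\alpha(g\inv)$, incurring the same $3C$. The only difference is bookkeeping — you pull the metric back to $G$ and invoke Lemma \ref{coarse dependence} at the end, while the paper works directly on $X$ — so there is nothing substantive to add.
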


\begin{proof}
Fix  $\xi \in M$ so that the orbit map $g\in G\mapsto \alpha(g)\xi$ is bornologous and let $C$ be the defect of $\alpha$, i.e., for all $g,f\in G$ and $\zeta, \eta\in M$,  
$$
d_X\big(\alpha(g)\zeta, \alpha(g) \eta\big) \leqslant d_X(\zeta,\eta)+C \;\;\;\;\text{ and }\;\;\;\; 
d_X\big(\alpha(f)\alpha(g)\zeta, \alpha(fg)\zeta\big) \leqslant C.
$$

Let also $K$ be a constant witnessing that  $G\curvearrowright X$ is a geometric Gelfand pair. Fix $x\in X$ and choose for each $y\in X$ some $\gamma_y\in G$ such that $d_X\big(\gamma_y(x),y\big)<K$. Define then $\phi\colon X\til M$ by $\phi(y)=\alpha(\gamma_y)\xi$. We set $V=\{g\in G\del d_X(gx,x)<3K\}$, which is a relatively (OB) symmetric identity neighbourhood in $G$ and let 
$$
L=\sup_{f\inv g\in V}d_M\big(\alpha(g)\xi,\alpha(f)\xi\big).
$$

We claim that $\phi$ is bornologous. Indeed, observe that the composition $y\mapsto \gamma_y(x)$ of  $\gamma\colon X\til G$ with the orbit map $g\in G\mapsto g(x)\in X$ will be close to the identity map on $X$. So, as the orbit map $g\mapsto g(x)$ is coarsely proper and hence expanding, this implies that $\gamma$ and thus also $\phi$ are bornologous

Now, suppose that $m\geqslant 1$ and that 
$$
d_M\big(\phi(x),\phi(y)\big)\geqslant m+3C+2L+\theta_\phi(m+2K)
$$ 
for some $y\in X$. We claim that, for all $z,u\in X$, 
$$
d_X(x,y)\leqslant d_X(z,u)\leqslant d_X(x,y)+m \;\;\saa\;\; d_M\big(\phi(z),\phi(u)\big)\geqslant m.
$$

To see this, suppose that $d_X(x,y)\leqslant d_X(z,u)\leqslant d_X(x,y)+m$ and find some $g\in G$ so that $d_X\big(g(x),z\big)<K$ and $d_X\big(z,g(y)\big)+d_X\big(g(y),u\big)<d_X(z,u)+K$, i.e., 
\[\begin{split}
d_X\big(g(y),u\big)
&\leqslant d_X(z,u)+K-d_X\big(z,g(y)\big)\\
&\leqslant d_X(x,y)+m+K-d_X\big(g(x),g(y)\big)+K\\
&=m+2K.
\end{split}\]
In particular, $d_M\big(\phi(gy),\phi(u)\big)\leqslant \theta_\phi(m+2K)$.

Now, $d_X(g\gamma_x(x),\gamma_z(x))\leqslant d_X(g(x),z)+2K<3K$, so $\gamma_z\inv g\gamma_x\in V$, whence
$$
d_M\big(\alpha(g\gamma_x)\xi, \phi(z)\big)=d_M\big(\alpha(g\gamma_x)\xi, \alpha(\gamma_z)\xi\big)\leqslant L.
$$
Similarly, $d_X\big(g\gamma_y(x),\gamma_{gy}(x)\big)\leqslant d_X\big(g\gamma_y(x),g(y)\big)+d_X\big(g(y),\gamma_{gy}(x)\big)<2K$, so $\gamma_{gy}\inv g\gamma_y\in V$, whence
$$
d_M\big(\alpha(g\gamma_y)\xi, \phi(gy)\big)=d_M\big(\alpha(g\gamma_x)\xi, \alpha(\gamma_{gy})\xi\big)\leqslant L.
$$
Thus
\[\begin{split}
d_M\big(\phi(x),\phi(y)\big) 
&=d_M\big(\alpha(\gamma_x)\xi, \alpha(\gamma_y)\xi\big)\\
&\leqslant d_M\big(\alpha(g\inv )\alpha(g\gamma_x)\xi, \alpha(g\inv )\alpha(g\gamma_y)\xi\big)+2C\\
&\leqslant d_M\big(\alpha(g\gamma_x)\xi, \alpha(g\gamma_y)\xi\big)+3C\\
&\leqslant d_M\big(\phi(z),\phi(gy)\big)+3C+2L\\
&\leqslant d_M\big(\phi(z),\phi(u)\big)+3C+2L+\theta_\phi(m+2K),
\end{split}\]
whence $d_M\big(\phi(z),\phi(u)\big)\geqslant m$, proving the claim.

It follows from our claim that, if $\phi$ is unbounded, then, for every $m$, there is some $R_m$ so that
$$
R_m\leqslant d_X(z,u)\leqslant R_m+m \;\;\saa\;\; d_M\big(\phi(z),\phi(u)\big)\geqslant m
$$
i.e., $\phi$ is solvent, which is absurd. So $\phi$ is a bounded map, which easily implies that every orbit $\alpha[G]\zeta$ is bounded in $M$.
\end{proof}

\begin{prop}
Suppose $G\curvearrowright X$ is a geometric Gelfand pair and $H$ a topological group so that every bornologous map $X\til H$ is insolvent.  Then, if $\pi\colon G\til H$ is a continuous homomorphism, $\pi[G]$ is relatively (OB) in $H$.
\end{prop}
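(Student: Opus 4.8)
The plan is to verify that $\pi[G]$ is relatively (OB) one \'ecart at a time, and for each \'ecart to run the mechanics of the proof of Proposition \ref{near isom} on the translation action of $G$ on $H$. So fix a continuous left-invariant \'ecart $d$ on $H$; it suffices to show $\pi[G]$ has finite $d$-diameter. The formula $\alpha(g)h=\pi(g)h$ defines an action of $G$ on the pseudometric space $(H,d)$ by $d$-isometries (left-invariance of $d$ makes each $\alpha(g)$ a $d$-isometry, and $\alpha(fg)=\alpha(f)\alpha(g)$), hence a nearly isometric action of defect $0$ to which the whole proof of Proposition \ref{near isom} applies verbatim (alternatively one passes first to the metric quotient of $(H,d)$). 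Fixing a base point $x_0\in X$ and using that $G\curvearrowright X$ is cobounded, pick $\gamma\colon X\til G$ with $d_X(\gamma(y)(x_0),y)<K$ for all $y$, where $K$ is the Gelfand constant; since the orbit map $o\colon g\mapsto g(x_0)$ is a coarse equivalence, hence expanding, $\gamma$ is bornologous. As $\pi$ is a continuous homomorphism it is bornologous from $(G,\ku E_L)$ to $(H,\ku E_L)$, so $\Phi:=\pi\circ\gamma\colon X\til H$ is bornologous for $\ku E_L$, a fortiori bornologous into $(H,d)$, and as a map into $(H,d)$ it is exactly the map $\phi(y)=\alpha(\gamma(y))\,1_H$ appearing in the proof of Proposition \ref{near isom}. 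By the standing hypothesis of the proposition, $\Phi$ is \emph{insolvent} as a map into $H$.

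Now suppose toward a contradiction that $\pi[G]$ is $d$-unbounded. The set $\gamma[X]$ is cobounded in $G$ (it contains, up to a relatively (OB) error, the set $\gamma[o[G]]$, and $\gamma\circ o$ is close to the identity on $G$), so choosing $E_0\in\ku E_L$ on $G$ with $\forall g\,\exists y\;(g,\gamma(y))\in E_0$ and noting $(\pi\times\pi)E_0\in\ku E_L\subseteq\ku E_d$ has some finite $d$-diameter $\beta$, we get $d(\pi(g),\Phi(y))<\beta$; hence $\Phi[X]=\pi[\gamma[X]]$ is $d$-cobounded in $\pi[G]$ and therefore $d$-unbounded. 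At this point the second half of the proof of Proposition \ref{near isom}, applied to the action $\alpha$, the vector $\xi=1_H$, and the metric space $M=(H,d)$, shows precisely that the unbounded map $\phi(y)=\alpha(\gamma(y))\xi$ is solvent: there are $R_m$ with $R_m\le d_X(x,y)\le R_m+m\implies d(\phi(x),\phi(y))\ge m$. Since $\phi=\Phi$ here, this says $\Phi\colon X\til(H,d)$ is solvent with respect to the \'ecart $d$.

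Finally I would upgrade solvency with respect to $d$ to solvency with respect to $\ku E_L$: given any entourage $E\in\ku E_L$ on $H$ and any $n$, one has $E\in\ku E_d$, so $E$ is contained in $\{(a,b):d(a,b)<\rho\}$ for some finite $\rho$; applying $d$-solvency with $m=\max\{n,\lceil\rho\rceil\}$ gives an $R$ with $R\le d_X(x,y)\le R+n\implies d(\Phi(x),\Phi(y))\ge m\ge\rho\implies(\Phi(x),\Phi(y))\notin E$. Thus $\Phi$ is solvent as a map into $H$, contradicting its insolvency. Hence $\pi[G]$ is $d$-bounded, and since $d$ was an arbitrary continuous left-invariant \'ecart, $\pi[G]$ is relatively (OB) in $H$.

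The step I expect to require the most care — and the reason one cannot simply quote Proposition \ref{near isom} with $M=(H,d)$ as a black box — is that an arbitrary bornologous map $X\til(H,d)$ need not lift to a bornologous map into the finer coarse space $(H,\ku E_L)$, so the hypothesis ``every bornologous map $X\til H$ is insolvent'' is a priori weaker than ``every bornologous map $X\til(H,d)$ is insolvent.'' What rescues the argument is that the proof of Proposition \ref{near isom} ever uses insolvency only of the single map $\phi$ it builds, and here that map is $\phi=\Phi=\pi\circ\gamma$, which \emph{is} bornologous for $\ku E_L$; the rest is bookkeeping relating $d$-solvency to $\ku E_L$-solvency and checking that $\Phi[X]$ stays $d$-cobounded in $\pi[G]$.
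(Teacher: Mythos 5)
Your proof is correct, and it rests on the same core computation as the paper's --- the composed map $y\mapsto\pi(\gamma_y)$, the relatively (OB) neighbourhood $V=\{g\in G\del d_X(gx,x)<3K\}$, and the Gelfand-pair spreading argument forcing this map to be solvent unless $\pi[G]$ is bounded --- but you package it genuinely differently. The paper does not factor through Proposition \ref{near isom}: it reruns the argument directly in the coarse structure of $H$, replacing the quantities $L$ and $\theta_\phi(m+2K)$ by relatively (OB) sets $A\subseteq H$ and $D\subseteq G$, and in the non-solvent case extracts the explicit covering $G=V\pi\inv(A)U$, hence $\pi[G]\subseteq \pi[V]A\pi[U]$. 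You instead verify relative (OB)-ness one continuous left-invariant \'ecart $d$ at a time, apply the second half of the proof of Proposition \ref{near isom} to the left-translation action of $G$ on the pseudometric space $(H,d)$ (a genuine isometric action, so defect $C=0$ and $L=\sup_{v\in V}d(1,\pi(v))<\infty$ since $\pi[V]$ is relatively (OB)), and then upgrade $d$-solvency to solvency for the coarse structure of $H$ using $\ku E_L\subseteq \ku E_d$. The subtlety you flag is the right one and you resolve it correctly: the hypothesis only gives insolvency of maps that are bornologous for $\ku E_L$, and the one map you need, $\pi\circ\gamma$, is such a map because continuous homomorphisms are bornologous. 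Your route buys reusability of Proposition \ref{near isom} as a ready-made computation, at the cost of the \'ecart-by-\'ecart bookkeeping; the paper's version is self-contained and produces an explicit relatively (OB) superset of $\pi[G]$.
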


\begin{proof}
Pick a constant $K$ witnessing that $G\curvearrowright X$ is a geometric Gelfand pair, fix $x\in X$  and choose for every $y\in X$ some $\gamma_y\in G$ so that $d(\gamma_y(x), y)<K$. As in the proof of Proposition \ref{near isom}, $\gamma\colon X\til G$ is bornologous. Finally, let $V=\{g\in G\del d_X(gx,x)<3K\}$, which is a symmetric relatively (OB) identity neighbourhood in $G$.

Now assume that $A\subseteq H$ is relatively (OB), $m\geqslant 1$ and fix a symmetric relatively (OB) set $D\subseteq G$ so that $\gamma_v\in \gamma_wD$ whenever $d(w,v)\leqslant m+2K$. Suppose that $\gamma_y\notin \gamma_xV\pi\inv(A)DV$ for some $y\in X$. 

Suppose that  $d_X(x,y)\leqslant d_X(z,u)\leqslant d_X(x,y)+m$ for some $z,u\in Z$ and find some $g\in G$ so that $d_X\big(g(x),z\big)<K$ and $d_X\big(z,g(y)\big)+d_X\big(g(y),u\big)<d_X(z,u)+K$, i.e., $d_X\big(g(y),u\big)\leqslant m+2K$. In particular, $\gamma_{gy}\in \gamma_uD$.
Also $d_X(g\gamma_x(x),\gamma_z(x))\leqslant d_X(g(x),z)+2K<3K$ and 
$$
d_X\big(g\gamma_y(x),\gamma_{gy}(x)\big)\leqslant d_X\big(g\gamma_y(x),g(y)\big)+d_X\big(g(y),\gamma_{gy}(x)\big)<2K,
$$ 
so $g\gamma_x\in \gamma_zV$ and  $g\gamma_y\in \gamma_{gy}V$.
It follows that
\[\begin{split}
\gamma_x\inv \gamma_y=(g\gamma_x)\inv g\gamma_y\in V\gamma_{z}\inv\gamma_{gy}V\subseteq V\gamma_{z}\inv \gamma_uDV
\end{split}\]
and so $\pi(\gamma_{z})\inv \pi(\gamma_u)\notin A$. In other words, for all $z,u\in X$, 
$$
d_X(x,y)\leqslant d_X(z,u)\leqslant d_X(x,y)+m \;\; \saa\;\; \pi(\gamma_{z})\inv \pi(\gamma_u)\notin A.
$$

This shows that, if, for all relatively (OB) sets $A\subseteq H$ and $D\subseteq G$, there is some $\gamma_y\notin \gamma_xV\pi\inv (A)DV$, then the map $z\in X\mapsto \pi(\gamma_z)\in H$ is both bornologous and solvent, which is impossible. So choose some $A$ and $D$ for which it fails.
As ${\rm im}(\gamma)$ is cobounded in $G$, it follows that $G=V\pi\inv(A)U$ for some relatively (OB) set $U\subseteq G$ and hence $\pi[G]$ is included in the relatively (OB) set  $\pi[V]A\pi[U]$. In particular, $\pi[G]$ is relatively (OB) in $H$.
\end{proof}

We are now in a position of deducing  the main application of this section.  Note first that  ${\rm Isom}(\Q\U)$ is isomorphic to a closed subgroup of the group $S_\infty$ of all permutations of a countable set and so, in particular, ${\rm Isom}(\Q\U)$ admits continuous unitary representations given by permutations of an othogonal basis $(e_x)_{x\in \Q\U}$. Moreover, by results of \cite{solecki} and \cite{turbulence}, ${\rm Isom}(\Q\U)$ is approximately compact and thus amenable. As shown in \cite{autom}, ${\rm Isom}(\Q\U)$ is coarsely equivalent to $\Q\U$ itself. So, as $c_0$ nearly isometrically embeds into $\Q\U$, this shows that ${\rm Isom}(\Q\U)$ cannot have a coarsely proper affine isometric action on a reflexive space. This is in opposition to the result of Brown--Guentner \cite{BG} and Haagerup--Przybyszewska \cite{haagerup-affine} that every locally compact second countable group admits such an action.  However, our result here indicates a much higher degree of geometric incompatibility.

\begin{thm}\label{fix}
Let $E$ be either a reflexive Banach space or $E=L^1([0,1])$ and let $G$ be ${\rm Isom}(\Z\U)$ or ${\rm Isom}(\Q\U)$. Then every quasi-cocycle $b\colon G\til E$ is bounded. In particular, every affine isometric action $G\curvearrowright E$  has a fixed point.
\end{thm}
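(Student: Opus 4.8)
The plan is to realise the quasi-cocycle as the orbit map of a nearly isometric action and then invoke Proposition \ref{near isom}, the essential input being an insolvency statement derived from Kalton's theorem. First I would take a quasi-cocycle $b\colon G\til E$ associated to an isometric linear representation $\pi\colon G\curvearrowright E$ and note that $\alpha(g)x=\pi(g)x+b(g)$ defines a nearly isometric action (in fact a rough action by affine isometries) of $G$ on $E$, with $b(g)=\alpha(g)0$. Since $G$ is ${\rm Isom}(\Q\U)$ or ${\rm Isom}(\Z\U)$, Example \ref{automatic borno} applies and shows---without any measurability hypothesis---that $b$, being a quasi-cocycle, is bornologous; equivalently, the orbit map $g\mapsto\alpha(g)0$ is bornologous.

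Next I would set $X=\Q\U$, respectively $X=\Z\U$, so that $G\curvearrowright X$ is a geometric Gelfand pair, and establish that every bornologous map $\phi\colon X\til E$ is insolvent. For this I would fix a near-isometric embedding $\iota\colon c_0\til X$: rounding each coordinate of a vector in $c_0$ to the nearest integer is a near isometry of $c_0$ onto the countable $\Z$-metric space of finitely supported integer-valued sequences under the supremum metric, which embeds isometrically into $\Z\U$, and hence also into $\Q\U$. Then, if some bornologous $\phi\colon X\til E$ were solvent, Lemma \ref{coarse dependence} (with $\sigma=\iota$, the given $\phi$, and the identity map of $E$ as the coarse embedding) would make $\phi\iota\colon c_0\til E$ a bornologous solvent map, contradicting Theorem \ref{kalton2} when $E$ is reflexive and Corollary \ref{kalton3} when $E=L^1([0,1])$.

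Having verified the two hypotheses of Proposition \ref{near isom}---insolvency of bornologous maps $X\til E$, and bornologousness of the orbit map $g\mapsto\alpha(g)0=b(g)$---I would conclude that every orbit $\alpha[G]\zeta$ is bounded, so in particular $b[G]=\alpha[G]0$ is bounded; this proves that every quasi-cocycle $b\colon G\til E$ is bounded. For the last assertion, a genuine affine isometric action $G\curvearrowright E$ has a cocycle $c$, which is a quasi-cocycle, so $c[G]$ is bounded; when $E$ is reflexive the closed convex hull of this bounded orbit is weakly compact and the Ryll--Nardzewski fixed point theorem \cite{ryll} yields a fixed point, while when $E=L^1([0,1])$ the same is provided by the fixed point theorem of \cite{monod} for affine isometric actions on $L^1$ admitting a bounded orbit.

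The step I expect to be the main obstacle is the insolvency statement in the second paragraph, that is, transporting Kalton's theorem---stated for $c_0$ mapping into reflexive spaces---to the Urysohn-type spaces $\Z\U$ and $\Q\U$ through a near-isometric copy of $c_0$; once that and Example \ref{automatic borno} are available, the remainder is a direct application of Proposition \ref{near isom}, with the reflexive and $L^1$ cases differing only in which of Theorem \ref{kalton2}/Corollary \ref{kalton3}, respectively which fixed point theorem, one cites.
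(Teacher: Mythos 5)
Your proposal is correct and follows essentially the same route as the paper: realise the quasi-cocycle as the orbit map of the associated rough affine action, use Example \ref{automatic borno} to get bornologousness without measurability, transfer Kalton's insolvency result from $c_0$ to $\Z\U$ and $\Q\U$ via a near isometry (the paper merely asserts its existence, where you supply the coordinate-rounding construction) together with Lemma \ref{coarse dependence}, and then apply Proposition \ref{near isom} followed by Ryll-Nardzewski, respectively Bader--Gelander--Monod. The only difference is that you spell out a couple of steps the paper leaves implicit; the logical structure is identical.
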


\begin{proof}
Observe that there are near isometries from $c_0$ into both $\Z\U$ and $\Q\U$. It thus follows from Theorem \ref{kalton2}, respectively Corollary \ref{kalton3}, that every bornologous map from $\Z\U$ or $\Q\U$ into a reflexive space or into $L^1([0,1])$ is insolvent.  Now, by Example \ref{automatic borno}, every quasi-cocycle on $G$ is bornologous, so it follows from Proposition \ref{near isom} that every quasi-cocycle $b\colon G\til E$ is bounded. 

Now, if $\alpha\colon G\curvearrowright E$ is an affine isometric action, then the associated cocycle in bounded and thus the affine action has a bounded orbit. 
Thus, by the Ryll-Nardzewski fixed point theorem \cite{ryll} for the reflexive case or the fixed point theorem of U. Bader, T. Gelander and N. Monod \cite{monod} for $E=L^1([0,1])$, there is a fixed point in $E$.
\end{proof}

Theorem \ref{fix} is motivated by questions pertaining to the characterisation of property (OB) among non-Archimedean Polish groups. Every continuous affine isometric action of a Polish group with property (OB) on a reflexive space has a fixed point and one may ask if this characterises property (OB) for {\em non-Archimedean} Polish groups, i.e., closed subgroups of $S_\infty$. As ${\rm Isom}(\Q\U)$ acts transitively on the infinite-diameter metric space $\Q\U$, it fails property (OB) and so the answer to our question is no. However, the following question remains open.
\begin{quest}
Suppose $\alpha\colon {\rm Isom}(\Q\U)\curvearrowright E$ is a continuous affine action on a reflexive Banach space $E$. Does $\alpha$ necessarily have a fixed point?
\end{quest}
The difference with Theorem \ref{fix} here is that the action is not required to be isometric and thus the linear part $\pi$ could map onto an unbounded subgroup of the general linear group ${\rm GL}(E)$.



\end{document}